\renewcommand{\epsilon}{\varepsilon}
\newcommand{\N}{\mathbb{N}}
\newcommand{\R}{\mathbb{R}}
\newcommand{\C}{\mathbb{C}}
\renewcommand{\Re}{\operatorname{Re}}
\newcounter{mtheorem}
\newtheorem{mtheorem}[mtheorem]{Theorem}
\newtheorem{mcor}[mtheorem]{Corollary}
\newcommand{{\vol}}{\rm vol}
\newcommand{\p}{\partial}
\newcommand{\Ric}{\operatorname{Ric}}
\def\tr{\operatorname{tr}}
\def\Div{\operatorname{div}}
\newtheoremstyle{fancy}{}{}{\itshape}{}{\textbf\bgroup}{.\egroup}{ }{}
\newtheoremstyle{fancy2}{}{}{\rm}{}{\textbf\bgroup}{.\egroup}{ }{}
\theoremstyle{fancy}
\newtheorem{theorem}{Theorem}[section]
\newtheorem{lemma}[theorem]{Lemma}
\newtheorem{corollary}[theorem]{Corollary}
\newtheorem{prop}[theorem]{Proposition}
\newtheorem{conj}[theorem]{Conjecture}
\theoremstyle{fancy2}
\newtheorem{definition}[theorem]{Definition}
\newtheorem{example}[theorem]{Example}
\newtheorem{remark}[theorem]{Remark}
\newtheorem{claim}[theorem]{Claim}
\setlist{leftmargin=*}
\numberwithin{equation}{section}
\begin{document}
\title{Classification results for expanding and shrinking gradient K\"ahler-Ricci solitons}
\date{\today}
\author{Ronan J.~Conlon}
\address{Department of Mathematics and Statistics, Florida International University, Miami, FL 33199, USA}
\email{rconlon@fiu.edu}
\author{Alix Deruelle}
\address{Institut de math\'ematiques de Jussieu, 4, place Jussieu, Boite Courrier 247 - 75252 Paris}
\email{alix.deruelle@imj-prg.fr}
\author{Song Sun}
\address{Department of Mathematics, University of California, Berkeley, CA 94720 }
\email{sosun@berkeley.edu}

\date{\today}

\begin{abstract}
We first show that a K\"ahler cone appears as the tangent cone of
a complete expanding gradient K\"ahler-Ricci soliton
with quadratic curvature decay with derivatives if and only if it has a smooth canonical model
(on which the soliton lives). This allows us to classify two-dimensional complete expanding gradient K\"ahler-Ricci solitons
with quadratic curvature decay with derivatives. We
then show that any two-dimensional complete shrinking gradient K\"ahler-Ricci soliton
whose scalar curvature tends to zero at infinity is, up to pullback by an element of $GL(2,\,\mathbb{C})$, either the
flat Gaussian shrinking soliton on $\mathbb{C}^{2}$ or the $U(2)$-invariant shrinking gradient K\"ahler-Ricci soliton of Feldman-Ilmanen-Knopf on the
blowup of $\mathbb{C}^{2}$ at one point. Finally, we show that up to pullback by an element of $GL(n,\,\mathbb{C})$, the only complete shrinking gradient K\"ahler-Ricci soliton with bounded Ricci curvature on $\mathbb{C}^{n}$ is the flat Gaussian shrinking soliton and on
the total space of $\mathcal{O}(-k)\to\mathbb{P}^{n-1}$ for $0<k<n$ is the $U(n)$-invariant example of Feldman-Ilmanen-Knopf.
In the course of the proof, we establish the uniqueness of the soliton vector field of a complete shrinking gradient K\"ahler-Ricci soliton with bounded Ricci curvature in the Lie algebra of a torus. A key tool used to achieve this result is the Duistermaat-Heckman theorem from symplectic geometry. This provides the first step towards understanding the relationship between complete shrinking gradient K\"ahler-Ricci solitons and algebraic geometry.
\end{abstract}

\maketitle

\markboth{Ronan J.~Conlon, Alix Deruelle, and Song Sun}{Classification results for expanding and shrinking gradient K\"ahler-Ricci solitons}

\section{Introduction}

\subsection{Overview}
 A \emph{Ricci soliton} is a triple $(M,\,g,\,X)$, where $M$ is a Riemannian manifold with a complete Riemannian metric $g$
and a complete vector field $X$ satisfying the equation
\begin{equation}\label{soliton111}
\Ric(g)+\frac{1}{2}\mathcal{L}_{X}g=\frac{\lambda}{2}g
\end{equation}
for some $\lambda\in\{-1,\,0,\,1\}$. If $X=\nabla^{g} f$ for some real-valued smooth function $f$ on $M$,
then we say that $(M,\,g,\,X)$ is \emph{gradient}. In this case, the soliton equation \eqref{soliton111}
reduces to $$\Ric(g)+\operatorname{Hess}(f)=\frac{\lambda}{2}g.$$

If $g$ is complete and K\"ahler with K\"ahler form $\omega$, then we say that $(M,\,g,\,X)$ (or $(M,\,\omega,\,X)$) is a \emph{K\"ahler-Ricci soliton} if
the vector field $X$ is complete and real holomorphic and the pair $(g,\,X)$ satisfies the equation
 \begin{equation}\label{soliton13}
\Ric(g)+\frac{1}{2}\mathcal{L}_{X}g=\lambda g
\end{equation}
for $\lambda$ as above. If $g$ is a K\"ahler-Ricci soliton and if $X=\nabla^{g} f$ for some real-valued smooth function $f$ on $M$, then we say that $(M,\,g,\,X)$ is \emph{gradient}. In this case, one can rewrite the soliton equation \eqref{soliton13} as
\begin{equation*}
\rho_{\omega}+i\partial\bar{\partial}f=\lambda\omega,
\end{equation*}
where $\rho_{\omega}$ is the Ricci form of $\omega$.

For Ricci solitons and K\"ahler-Ricci solitons $(M,\,g,\,X)$, the vector field $X$ is called the
\emph{soliton vector field}. Its completeness is guaranteed by the completeness of $g$
\cite{Zhang-Com-Ricci-Sol}. If the soliton is gradient, then
the smooth real-valued function $f$ satisfying $X=\nabla^g f$ is called the \emph{soliton potential}. It is unique up to a constant.
Finally, a Ricci soliton and a K\"ahler-Ricci soliton are called \emph{steady} if $\lambda=0$, \emph{expanding}
if $\lambda=-1$, and \emph{shrinking} if $\lambda=1$ in equations \eqref{soliton111} and \eqref{soliton13} respectively.

The study of Ricci solitons and their classification is important in the context of Riemannian geometry. For example, they provide a
natural generalisation of Einstein manifolds. Also, to each soliton, one may associate a self-similar solution of the Ricci flow
\cite[Lemma 2.4]{Chowchow} which are candidates for singularity models of the flow. The difference in
normalisations between \eqref{soliton111} and \eqref{soliton13} reflects the difference between the constants preceding the Ricci term
in the Ricci flow and the K\"ahler-Ricci flow when one takes this dynamic point of view.

In this article, we are concerned with complete expanding and shrinking gradient K\"ahler-Ricci solitons.
We consider primarily complete shrinking (respectively expanding) gradient K\"ahler-Ricci solitons with
quadratic curvature decay (resp.~with derivatives) as this assumption greatly simplifies the situation and already
imposes some constraints on the solitons in question. Indeed, it is known that any complete
shrinking (resp.~expanding) gradient Ricci soliton
whose curvature decays quadratically (resp.~with derivatives) along an end must be asymptotic to a cone with a smooth link along that end \cite{Che-Der, Chow, wangl, Siepmann}. Furthermore, any complete shrinking (resp.~expanding) Ricci soliton whose Ricci curvature decays
to zero (resp.~quadratically with derivatives) at infinity must have quadratic curvature decay (resp.~with derivatives) at infinity
\cite{Der-Asy-Com-Egs, munty} and consequently must be asymptotically conical along each of its ends. Kotschwar and Wang \cite{wangl2} have then shown that any two complete shrinking gradient Ricci solitons asymptotic along some end of each to the same cone with a smooth link must in fact be isometric. Thus, at least for shrinking gradient Ricci solitons, classifying those that are complete with Ricci curvature decaying to zero at infinity reduces to classifying their possible asymptotic cone models. Here we are principally concerned with the classification of complete shrinking (resp.~expanding) gradient K\"ahler-Ricci solitons whose curvature tensor has quadratic decay (resp.~with derivatives). Such examples of expanding type have been constructed in \cite{con-der} on certain equivariant resolutions of K\"ahler cones, whereas such examples of shrinking type
include the flat Gaussian shrinking soliton on $\mathbb{C}^{n}$
and those constructed by Feldman-Ilmanen-Knopf \cite{FIK} on the total space of the holomorphic line bundles $\mathcal{O}(-k)$ over $\mathbb{P}^{n-1}$ for $0<k<n$. These shrinking solitons are $U(n)$-invariant and in complex dimension two, they yield
two known examples of complete shrinking gradient K\"ahler-Ricci solitons with scalar curvature tending to zero at infinity;
the flat Gaussian shrinking soliton on $\mathbb{C}^{2}$ and the aforementioned $U(2)$-invariant example
of Feldman-Ilmanen-Knopf on the blowup of $\mathbb{C}^{2}$ at the origin. One of our main results is that in fact up to pullback by an element of $GL(2,\,\mathbb{C})$, these are the only two examples of complete shrinking gradient K\"ahler-Ricci solitons with
scalar curvature tending to zero at infinity in two complex dimensions. Other examples of complete (and indeed incomplete) shrinking gradient K\"ahler-Ricci solitons with quadratic curvature decay have been constructed on the total space of certain holomorphic vector bundles; see for example \cite{Wang, futaki2, lii, Boo}.

\subsection{Main results}

\subsubsection{General structure theorem}
Our first result concerns the structure of complete shrinking (respectively expanding) gradient K\"ahler-Ricci solitons $(M,\,g,\,X)$
with quadratic curvature decay (resp.~with derivatives). By ``quadratic curvature decay with derivatives'', we mean that the curvature $\operatorname{Rm}(g)$ of the K\"ahler-Ricci soliton $g$ satisfies
\begin{equation*}
A_{k}(g):=\sup_{x\in M}|(\nabla^{g})^{k}\operatorname{Rm}(g)|_{g}(x)d_{g}(p,\,x)^{2+k}<\infty\quad\textrm{for all $k\in\mathbb{N}_{0}$},
\end{equation*}
where $d_{g}(p,\,\cdot)$ denotes the distance to a fixed point $p\in M$ with respect to $g$.
\begin{mtheorem}[General structure theorem]\label{main}
Let $(M,\,g,\,X)$ be a complete expanding (respectively shrinking) gradient K\"ahler-Ricci soliton with complex structure $J$
whose curvature $\operatorname{Rm}(g)$ satisfies
\begin{equation*}
A_{k}(g):=\sup_{x\in M}|(\nabla^{g})^{k}\operatorname{Rm}(g)|_{g}(x)d_{g}(p,\,x)^{2+k}<\infty\quad\textrm{for all $k\in\mathbb{N}_{0}$ (resp.~for $k=0$)},
\end{equation*}
where $d_{g}(p,\,\cdot)$ denotes the distance to a fixed point $p\in M$ with respect to $g$. Then:
\begin{enumerate}[label=\textnormal{(\alph{*})}, ref=(\alph{*})]
  \item $(M,\,g)$ has a unique tangent cone at infinity $(C_{0},\,g_{0})$;
  \item there exists a K\"ahler resolution $\pi:M\to C_{0}$ of $C_{0}$ with exceptional set $E$
  with $g_{0}$ K\"ahler with respect to $J_{0}:=\pi_{*}J$ such that
 \begin{enumerate}[label=\textnormal{(\roman{*})}, ref=(\roman{*})]
  \item the K\"ahler form $\omega$ of $g$ and the curvature form $\Theta$ of the hermitian metric on $K_{M}$ (resp.~$-K_{M}$) induced by $\omega$ satisfy
\begin{equation}\label{condition-main}
\int_{V}(i\Theta)^{k}\wedge\omega^{\dim_{\mathbb{C}}V-k}>0
\end{equation}
for all positive-dimensional irreducible analytic subvarieties $V\subset E$ and for all integers $k$ such that $1\leq k\leq \dim_{\C}V$;
\item the real torus action on $C_{0}$ generated by $J_{0}r\partial_{r}$ extends to a holomorphic isometric torus action of $(M,\,g,\,J)$,
where $r$ denotes the radial coordinate of $g_{0}$;
\item  $d\pi(X)=r\partial_{r}$;
\end{enumerate}
\item with respect to $\pi$, we have that
\begin{equation*}
|(\nabla^{g_{0}})^k(\pi_{*}g-g_{0}-\operatorname{Ric}(g_{0}))|_{g_0} \leq C_{k}r^{-4-k}\quad\textrm{for all $k\in\mathbb{N}_{0}$}.
\end{equation*}
\end{enumerate}
\end{mtheorem}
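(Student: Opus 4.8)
\emph{Overview.} The plan is to extract the asymptotically conical structure and item (a) from existing work, then build the resolution $\pi$ by complex-analytic methods, and finally read off the positivity condition \eqref{condition-main} and the refined asymptotics (c) directly from the soliton equation. For the asymptotic cone: in the shrinking case quadratic curvature decay forces $|\Ric(g)|\to 0$ at infinity, and since the soliton induces a self-similar solution of the K\"ahler-Ricci flow, Shi-type estimates upgrade $A_{0}(g)<\infty$ to $A_{k}(g)<\infty$ for all $k$, so in both cases one has quadratic curvature decay with derivatives. By the work of Chen--Deruelle and Siepmann in the expanding case and of Kotschwar--Wang and Munteanu--Wang in the shrinking case, $(M,g)$ is then asymptotically conical with smooth link along each end, with soliton potential $f=\tfrac{\lambda}{2}r^{2}+O(1)$ outside a compact set, hence a proper exhaustion with compact critical set. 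Uniqueness of the tangent cone, together with connectedness of $M$ at infinity so that there is a single tangent cone $(C_{0},g_{0})$, I would obtain by combining the known uniqueness theorems for asymptotic cones of solitons with a monotonicity/Lojasiewicz argument along the gradient flow of $f$, whose level sets give a canonical exhaustion compatible with the conical structure; this is (a).

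\emph{K\"ahler cone structure and the torus action.} The diffeomorphism realising the asymptotically conical structure can be arranged so that the complex structures agree to leading order; passing to the limit, $g_{0}$ is K\"ahler for a complex structure $J_{0}$ on $C_{0}\setminus\{o\}$ with $r\p_{r}$ real holomorphic and Reeb field $\xi:=J_{0}r\p_{r}$, and the closure of the flow of $\xi$ is a real torus $T$ acting holomorphically and isometrically on $C_{0}$. Since $X$ is real holomorphic so is $JX$, and as $X=\nabla^{g}f$ the form $\iota_{JX}\omega=\mp df$ is exact, so $JX$ preserves $\omega$ and is a Killing field with $JX(f)=g(X,JX)=0$. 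Because $X$ is asymptotic to $r\p_{r}$, the $T$-action acts by isometries on the end of $(M,g)$; using that soliton metrics are real-analytic together with completeness, each commuting Killing field generating $T$ extends to all of $M$ and integrates to a holomorphic isometric $T$-action extending the one at infinity, which is (ii).

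\emph{The resolution $\pi$.} Let $R=\bigoplus_{j}R_{j}$ be the coordinate ring of $C_{0}$, graded by the $\C^{*}$-action generated by $r\p_{r}-i\xi$. Pulling back homogeneous elements of $R$ through the asymptotic biholomorphism produces holomorphic functions on the end of $M$ of polynomial growth; an $L^{2}$-extension/removable-singularity argument extends these across the relevant compact set, and one checks that the ring of polynomial-growth holomorphic functions on $M$ is finitely generated and isomorphic to $R$. The induced morphism $\pi:M\to\operatorname{Spec}R=C_{0}$ is proper and restricts to a biholomorphism off the maximal compact analytic subset $E=\pi^{-1}(o)$, so it is a K\"ahler resolution with $J_{0}=\pi_{*}J$ and $g_{0}$ K\"ahler with respect to $J_{0}$ (item (b)); it is automatically $T$-equivariant, and $X-iJX$ pushes forward to a holomorphic vector field on $C_{0}$ asymptotic to, hence equal to, the Euler field, so $d\pi(X)=r\p_{r}$, which is (iii). (Alternatively, $E$ can be contracted by Grauert's criterion and the resulting normal Stein space identified with $C_{0}$ using the asymptotics.)

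\emph{Positivity, refined asymptotics, and the main obstacle.} For (i), the soliton equation $\rho_{\omega}+i\p\bar\p f=\lambda\omega$ shows that the induced hermitian metric on $-K_{M}$ (shrinking), resp.~$K_{M}$ (expanding), has curvature form $i\Theta=\omega-i\p\bar\p f$, resp.~$\omega+i\p\bar\p f$; expanding $(i\Theta)^{k}$ binomially and integrating over a compact irreducible subvariety $V\subset E$ of dimension $d$, every term containing a factor $i\p\bar\p f$ is exact on $V$ and vanishes by Stokes (valid on $V$, e.g.~after resolving it), leaving $\int_{V}(i\Theta)^{k}\wedge\omega^{d-k}=\int_{V}\omega^{d}>0$. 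For (c) I would write the soliton equation on the asymptotic region as an elliptic system for $h:=\pi_{*}g-g_{0}$ with $\nabla f$ fixing the gauge; quadratic curvature decay with derivatives gives $h\to 0$, and a separation-of-variables analysis on the cone together with the expansion $f=\tfrac{\lambda}{2}r^{2}+O(1)$ identifies $\Ric(g_{0})$ as the leading, degree $-2$, correction with the next admissible rate equal to $r^{-4}$, after which elliptic and Shi-type estimates give the bounds on all derivatives. The main obstacle is the construction of $\pi$ and the identification of the tangent cone with the algebraic model $\operatorname{Spec}R$, which requires finite generation of the ring of polynomial-growth holomorphic functions and their extension across the exceptional set; obtaining the \emph{sharp} rate $r^{-4}$ in (c) via the indicial-root computation is the other technically delicate point.
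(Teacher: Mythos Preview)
Your overall architecture is right, but several of your proposed mechanisms differ from the paper's, and in two places the paper's route is substantially simpler.

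\textbf{Construction of $\pi$.} Your primary proposal is to identify the ring of polynomial-growth holomorphic functions on $M$ with the coordinate ring $R$ of $C_{0}$ and take $\pi$ to be the induced map to $\operatorname{Spec}R$. This is harder than necessary: finite generation of that ring is not obvious (the paper in fact lists it as an open question in its concluding section). The paper instead observes that the asymptotic biholomorphism to a K\"ahler cone makes $M$ $1$-convex, hence holomorphically convex, so $M$ has a Remmert reduction $p:M\to M'$ onto a normal Stein space with isolated singularities. One then embeds $M'$ into $\mathbb{C}^{P}$ and uses Hartogs extension for Stein spaces to promote the biholomorphism $C_{0}\setminus K\cong M'\setminus K'$ to a global biholomorphism $C_{0}\cong M'$; the composition $\pi=p$ followed by this identification is the resolution. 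Your parenthetical ``Grauert contraction'' alternative is essentially this, and is the efficient route.

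\textbf{K\"ahlerity of the cone and the torus action.} Rather than arranging the diffeomorphism so that complex structures agree to leading order, the paper uses the self-similar K\"ahler--Ricci flow $g(t)=t\varphi_{t}^{*}g$ (with $\varphi_{t}$ the flow of $-t^{-1}X$); since $\nabla^{g(t)}J=0$ for all $t$ and $g(t)\to\tilde{g}_{0}$ smoothly on compact sets, $\tilde{g}_{0}$ is K\"ahler for the \emph{same} $J$, and pulling back by $\iota$ gives the K\"ahler cone structure. For (b)(ii) the paper does not appeal to real-analytic continuation of Killing fields from infinity; instead it takes the closure of the flow of $JX$ inside the isometry group of $(M,g)$ fixing $E$, which is compact by Arzel\`a--Ascoli, obtaining a torus acting holomorphically and isometrically on $M$; pushing this action down via $\pi$ and using that $g$ and $g_{0}$ are quasi-isometric shows it coincides with the Reeb torus on $C_{0}$.

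\textbf{The sharp rate in (c).} Your indicial-root/separation-of-variables plan would work but is more delicate than what is actually done. The paper gets $r^{-4}$ by pure scaling: with the diffeomorphism $\iota$ built from the flow of $X$ one has $\iota\circ\hat\varphi_{t}=\varphi_{t}\circ\iota$ and $t\hat\varphi_{t}^{*}g_{0}=g_{0}$, so a first pass gives $|\iota^{*}g(t)-g_{0}|_{g_{0}}\leq C_{k}\,t\,r^{-2-k}$; differentiating in $t$ and using $\partial_{t}g(t)=-2\Ric(g(t))$ gives $|\Ric(\iota^{*}g(t))-\Ric(g_{0})|_{g_{0}}\leq C_{k}\,t\,r^{-4-k}$, and integrating in $t$ from $0$ to $1$ yields $|\iota^{*}g-g_{0}-2\Ric(g_{0})|_{g_{0}}\leq C_{k}r^{-4-k}$. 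No spectral analysis on the link is needed.

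Your argument for (b)(i) via $[i\Theta]=[\omega]$ in $H^{2}$ and Stokes on $V$ is exactly what the paper does.
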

In the expanding case, this theorem provides a converse to \cite[Theorem A]{con-der}.
Also notice that this theorem rules out the existence of shrinking (resp.~expanding) gradient K\"ahler-Ricci solitons with quadratic curvature decay (resp.~with derivatives) on smoothings of K\"ahler cones in contrast to the behaviour in the Calabi-Yau case \cite{Conlon}. This degree of flexibility for expanding and shrinking gradient K\"ahler-Ricci solitons is essentially ruled out due to the requirement of having a conical holomorphic soliton vector field. Finally, it has recently been shown by Kotschwar-Wang \cite[Corollary 1.3]{wangl2} that the isometry group of the link of the asymptotic cone of a complete shrinking gradient K\"ahler-Ricci soliton with quadratic curvature decay embeds into the isometry group of the soliton itself; compare
with statement (b)(ii) of the theorem above.

\subsubsection{Application to expanding gradient K\"ahler-Ricci solitons}
As an application of Theorem \ref{main}, we exploit the uniqueness \cite[Proposition 8.2.5]{ishii} of canonical models of normal varieties to obtain a classification theorem for complete expanding gradient K\"ahler-Ricci solitons whose curvature decays quadratically with derivatives. This provides a partial
answer to question 7 of \cite[Section 10]{FIK}.

\begin{mcor}[Strong uniqueness for expanders]\label{unique}
Let $(C_{0},\,g_{0})$ be a K\"ahler cone with radial function $r$.
Then there exists a unique (up to pullback by biholomorphisms)
complete expanding gradient K\"ahler-Ricci soliton $(M,\,g,\,X)$
whose curvature $\operatorname{Rm}(g)$ satisfies
\begin{equation}\label{hubba}
A_{k}(g):=\sup_{x\in M}|(\nabla^{g})^{k}\operatorname{Rm}(g)|_{g}(x)d_{g}(p,\,x)^{2+k}<\infty\quad\textrm{for all $k\in\mathbb{N}_{0}$},
\end{equation}
where $d_{g}(p,\,\cdot)$ denotes the distance to a fixed point $p\in M$ with respect to $g$, with tangent cone
$(C_{0},\,g_{0})$ if and only if $C_{0}$ has a smooth canonical model. When this is the case,
\begin{enumerate}[label=\textnormal{(\alph{*})}, ref=(\alph{*})]
  \item $M$ is the smooth canonical model of $C_{0}$, and
  \item there exists a resolution map $\pi:M\to C_{0}$ such that $d\pi(X)=r\partial_{r}$ and
\begin{equation*}
|(\nabla^{g_{0}})^k(\pi_{*}g-g_{0}-\operatorname{Ric}(g_{0}))|_{g_0} \leq C_{k}r^{-4-k}\quad\textrm{for all $k\in\mathbb{N}_{0}$}.
\end{equation*}
\end{enumerate}
\end{mcor}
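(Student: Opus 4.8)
The plan is to deduce the corollary from the general structure theorem (Theorem~\ref{main}), the existence result \cite[Theorem~A]{con-der}, and the uniqueness of canonical models \cite[Proposition~8.2.5]{ishii}. The one substantive bridge I would establish is the dictionary between the cohomological positivity \eqref{condition-main} and algebraic geometry: in the expanding case $i\Theta$ represents $c_{1}(K_{M})$, so taking $k=\dim_{\C}V$ in \eqref{condition-main} says precisely that $\int_{V}c_{1}(K_{M})^{\dim_{\C}V}>0$ for every positive-dimensional irreducible subvariety $V$ of the exceptional set $E$ (which is the fibre over the apex $o$ of $C_{0}$, as $C_{0}$ has an isolated singularity there). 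A relative Nakai--Moishezon/Kleiman criterion then identifies $K_{M}$ as $\pi$-ample, and hence $M$ as the canonical model of $C_{0}$.

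For the forward direction I would start with a complete expanding gradient K\"ahler-Ricci soliton $(M,g,X)$ satisfying \eqref{hubba} with tangent cone $(C_{0},g_{0})$, and apply Theorem~\ref{main} to obtain a K\"ahler resolution $\pi:M\to C_{0}$ for which \eqref{condition-main}, $d\pi(X)=r\partial_{r}$, and the asymptotic estimate all hold. By the observation above, $K_{M}$ is ample on $E$, hence $\pi$-ample since $\pi$ is an isomorphism off $E$; therefore $M=\operatorname{Proj}_{C_{0}}\bigoplus_{m\geq 0}\pi_{*}\mathcal{O}_{M}(mK_{M})$ is the relative canonical model of $\pi$, which, $C_{0}$ being affine, is the canonical model of $C_{0}$. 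Being a resolution it is smooth, so $C_{0}$ has a smooth canonical model and it is $M$, which is part~(a); part~(b) is then immediate from parts (b)(iii) and (c) of Theorem~\ref{main}.

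For the converse I would start with a smooth canonical model $\pi:M\to C_{0}$, so that $M$ is a smooth resolution with $K_{M}$ $\pi$-ample. By functoriality of the canonical model the holomorphic torus action on $C_{0}$ generated by $J_{0}r\partial_{r}$ lifts to $M$, and $\pi$-ampleness of $K_{M}$ is exactly the hypothesis under which \cite[Theorem~A]{con-der} produces a complete expanding gradient K\"ahler-Ricci soliton $(M,g,X)$ with $d\pi(X)=r\partial_{r}$, tangent cone $(C_{0},g_{0})$, and quadratic curvature decay with derivatives. This soliton satisfies \eqref{hubba}, so Theorem~\ref{main} applies to it and supplies the refined asymptotics in part~(b), completing the existence half.

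For uniqueness, given two such solitons $(M_{1},g_{1},X_{1})$ and $(M_{2},g_{2},X_{2})$ with common tangent cone $(C_{0},g_{0})$, the forward direction makes each $M_{i}$ the canonical model of $C_{0}$, so \cite[Proposition~8.2.5]{ishii} furnishes a biholomorphism $F:M_{1}\to M_{2}$ with $\pi_{2}\circ F=\pi_{1}$. Then $F^{*}X_{2}$ and $X_{1}$ both lift $r\partial_{r}$, hence agree on $M_{1}\setminus E_{1}$ and so everywhere by continuity, and $F^{*}g_{2}$, $g_{1}$ are complete expanding gradient K\"ahler-Ricci solitons on $M_{1}$ with the same vector field and asymptotic cone; the uniqueness theorem for expanders with a prescribed asymptotic cone \cite{con-der} then gives $F^{*}g_{2}=g_{1}$. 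I expect the main obstacle to be the identification of $M$ with the canonical model of $C_{0}$: one has to check that the relative Nakai--Moishezon criterion genuinely applies on the possibly reducible, non-reduced fibre $E$, that $\bigoplus_{m}\pi_{*}\mathcal{O}_{M}(mK_{M})$ is finitely generated with relative $\operatorname{Proj}$ equal to $M$, and that for the affine cone $C_{0}$ this relative canonical model coincides with the canonical model in the sense of \cite{ishii}; the rest is a direct appeal to Theorem~\ref{main} and \cite{con-der} or a formal consequence of uniqueness of canonical models.
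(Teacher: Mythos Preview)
Your proposal is correct and follows the same route as the paper: Theorem~\ref{main} plus Nakai's criterion identifies $M$ as the canonical model, \cite[Theorem~A]{con-der} gives existence on the smooth canonical model, and uniqueness of canonical models together with \cite[Theorem~C(ii)]{con-der} gives metric uniqueness. The paper resolves your anticipated obstacle by first proving $M$ is quasi-projective (Proposition~\ref{quasi2}) so that Nakai's criterion for a mapping (Theorem~\ref{nakai}) applies directly to the algebraic subvarieties of $E$, and for the converse it first shows (Claim~\ref{brill}) that one hypothesis of \cite[Theorem~A]{con-der} is automatic before verifying the positivity condition via a hyperplane-slicing argument on the projective compactification.
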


This corollary yields an algebraic description of those K\"ahler cones appearing as the tangent cone of a complete expanding gradient K\"ahler-Ricci soliton
with quadratic curvature decay with derivatives -- such cones are precisely those admitting a smooth canonical model. In general, the canonical model will be singular and in particular, for a two-dimensional cone, it is obtained by contracting all exceptional curves with self-intersection $(-2)$ in the minimal resolution. Applying Corollary \ref{unique} to this case yields a classification of two-dimensional complete expanding gradient K\"ahler-Ricci solitons with quadratic curvature decay with derivatives.
\newpage
\begin{mcor}[Classification of two-dimensional expanders]\label{unique2d}
Let $(C_{0},\,g_{0})$ be a two-dimensional K\"ahler cone with radial function $r$.
Then there exists a unique (up to pullback by biholomorphisms) two-dimensional complete expanding gradient K\"ahler-Ricci soliton
$(M,\,g,\,X)$ whose curvature $\operatorname{Rm}(g)$ satisfies
\begin{equation*}
A_{k}(g):=\sup_{x\in M}|(\nabla^{g})^{k}\operatorname{Rm}(g)|_{g}(x)d_{g}(p,\,x)^{2+k}<\infty\quad\textrm{for all $k\in\mathbb{N}_{0}$},
\end{equation*}
where $d_{g}(p,\,\cdot)$ denotes the distance to a fixed point $p\in M$ with respect to $g$, with tangent cone
$(C_{0},\,g_{0})$ if and only if $C_{0}$ is biholomorphic to either:
\begin{enumerate}[label=\textnormal{(\Roman{*})}, ref=(\Roman{*})]
    \item $\mathbb{C}^{2}/\Gamma$, where $\Gamma$ is a finite subgroup of $U(2)$ acting freely on $\mathbb{C}^{2}\setminus\{0\}$
    that is generated by the matrix $\left(\begin{array}{cc}
                                        e^{\frac{2\pi i}{p}} & 0 \\
                                        0 & e^{\frac{2\pi i q}{p} } \\
                                      \end{array}
                                    \right)$ where $p$ and $q$ coprime integers with $p>q>0$, and after writing
$$\frac{q}{p}=r_{1}-\frac{1}{r_{2}-\frac{1}{\cdots-\frac{1}{r_{k}}}},$$ we have that $r_{j}>2$ for $j=1,\ldots,k$;
    \item $L^{\times}$, the blowdown of the zero section of a negative line bundle $L\to C$ over a proper curve $C$ of genus $g>0$;
      \item $L^{\times}/G$, where $G$ is a finite group of automorphisms of a proper curve $C$ of genus $g>0$ and
      $L^{\times}$ is the blowdown of the zero section of a $G$-invariant negative line bundle $L\to C$ over $C$
       with $G$ acting freely on $L^{\times}$ except at the apex, such that the (unique) minimal good resolution $\pi:M\to L^{\times}/G$
       contains no $(-1)$ or $(-2)$-curves.
  \end{enumerate}
When this is the case,
\begin{enumerate}[label=\textnormal{(\alph{*})}, ref=(\alph{*})]
\item there exists a resolution map $\pi:M\to C_{0}$ such that $d\pi(X)=r\partial_{r}$ and
\begin{equation*}
|(\nabla^{g_{0}})^k(\pi_{*}g-g_{0}-\operatorname{Ric}(g_{0}))|_{g_0} \leq C_{k}r^{-4-k}\quad\textrm{for all $k\in\mathbb{N}_{0}$},
\end{equation*}
and
\item $\pi:M\to C_{0}$ is:
  \begin{enumerate}[label=\textnormal{(\roman{*})}, ref=(\roman{*})]
  \item the minimal resolution $\pi:M\to\mathbb{C}^{2}/\Gamma$ when $C_{0}$ is as in (I);
  \item the blowdown map $\pi:L\to L^{\times}$ when $C_{0}$ is as in (II);
  \item the minimal good resolution $\pi:M\to L^{\times}/G$ when $C_{0}$ is as in (III).
  \end{enumerate}
\end{enumerate}
\end{mcor}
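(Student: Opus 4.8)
The plan is to derive this entirely from Corollary~\ref{unique} together with the classical theory of normal surface singularities; the soliton itself will play no further role. By Corollary~\ref{unique}, a complete expanding gradient K\"ahler--Ricci soliton with the stated curvature decay and tangent cone $(C_{0},g_{0})$ exists and is unique exactly when $C_{0}$ admits a smooth canonical model, and in that case $M$ is that canonical model, $d\pi(X)=r\partial_{r}$, and the estimate on $\pi_{*}g-g_{0}-\operatorname{Ric}(g_{0})$ in statement~(a) already holds. So the entire task is to show that a two-dimensional K\"ahler cone $C_{0}$ has a smooth canonical model if and only if it is biholomorphic to one of (I), (II), (III), and to identify the canonical model map in each case.

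First I would recall how the canonical model of a normal surface singularity arises from its minimal resolution $m\colon\widetilde{M}\to C_{0}$: one contracts precisely the exceptional curves $E$ with $K_{\widetilde{M}}\cdot E=0$, which by adjunction and the negative definiteness of the exceptional divisor are exactly the $(-2)$-curves, and each connected configuration of these is an $A$--$D$--$E$ configuration contracting to a rational double point. Since $\widetilde{M}$ has no $(-1)$-curves in its exceptional locus by minimality, the canonical model of $C_{0}$ is smooth if and only if the minimal resolution contains \emph{no} $(-2)$-curve, in which case the canonical model equals $\widetilde{M}$. Via Corollary~\ref{unique}, everything therefore reduces to classifying the two-dimensional K\"ahler cones whose minimal resolution has no $(-2)$-curve, with $\pi$ then the minimal resolution map; for the cones on our list this minimal resolution will turn out to coincide with the minimal good resolution, matching the formulations in statement~(b).

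Next I would invoke the structure theory of quasi-homogeneous normal surface singularities (Orlik--Wagreich, Pinkham): since a K\"ahler cone carries a good $\C^{*}$-action, $C_{0}$ is such a singularity, the dual graph of its minimal (good) resolution is star-shaped --- a central smooth curve $C$ of genus $g$ and self-intersection $-b_{0}$ with finitely many Hirzebruch--Jung chains of smooth rational curves attached, one per orbifold point of the base --- and $C_{0}$ is the affine cone over the corresponding polarized compact orbifold curve. The analysis then falls into three cases. If there are no arms, $C_{0}=L^{\times}$ is the blowdown of the zero section of a negative line bundle $L\to C$, its minimal resolution is $L$ with single exceptional curve $C$ of self-intersection $\deg L<0$, and the absence of a $(-2)$-curve forces $g>0$ (when $g=0$ one gets either the smooth $\C^{2}$, a $(-2)$-curve, or a cyclic quotient treated below); this is case~(II), with $\pi\colon L\to L^{\times}$ the blowdown. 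If the base orbifold is spherical --- equivalently the link has finite fundamental group --- then $C_{0}=\C^{2}/\Gamma$ for a finite $\Gamma\subset U(2)$ acting freely on $\C^{2}\setminus\{0\}$, and the classification of the resolution graphs of quotient surface singularities shows that the only such singularities free of $(-2)$-curves are the cyclic ones $\tfrac{1}{p}(1,q)$, $p>q>0$, whose Hirzebruch--Jung continued fraction has every entry greater than $2$: the binary dihedral and polyhedral quotients always contain an $A$--$D$--$E$ string of $(-2)$-curves, since the associated spherical triangle orbifold always has a point of multiplicity $2$ and hence a $(-2)$-curve arm; this is case~(I), with $\pi$ the minimal resolution. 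In the remaining case the base orbifold is non-spherical, so by the uniformization of two-dimensional orbifolds it is uniformized by a compact Riemann surface $C$ of genus $>0$; writing the base as $C/G$ and $G$-linearizing a sufficiently divisible power of the polarization presents $C_{0}\cong L^{\times}/G$ for a $G$-invariant negative line bundle $L\to C$ with $G$ acting freely off the apex, and the condition that the minimal good resolution contain no $(-1)$- or $(-2)$-curve is precisely the hypothesis of case~(III); conversely, every cone of type~(III) has a smooth canonical model by the same analysis, with $\pi$ its minimal good resolution. Combining the three cases with statement~(a), which is already supplied by Corollary~\ref{unique}, completes the proof.

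The step I expect to be the main obstacle is the passage, in the third case, from the intrinsic description of $C_{0}$ as a cone over a non-spherical polarized orbifold curve --- whose underlying curve may have genus $0$ --- to the global quotient presentation $L^{\times}/G$ with $g(C)>0$, which rests on orbifold uniformization. In particular one must dispose of the ``bad'' two-dimensional orbifolds (teardrops, and footballs with unequal multiplicities), but these have chain-shaped resolution graphs, so their cones are cyclic quotient singularities already accounted for in case~(I). Ensuring that the three cases are exhaustive and essentially disjoint, and matching the self-intersection and continued-fraction data on the two sides of each equivalence, is the part that demands care; by contrast, once $M$ has been identified with the appropriate (minimal or minimal good) resolution of $C_{0}$, the quantitative statement~(a) and the identifications in statement~(b) are immediate from Corollary~\ref{unique} and Theorem~\ref{main}.
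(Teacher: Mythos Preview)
Your proposal is correct and follows the same overall strategy as the paper: reduce via Corollary~\ref{unique} to the problem of determining which two-dimensional K\"ahler cones have a smooth canonical model, observe that this happens exactly when the minimal resolution has no $(-2)$-curves, and then carry out a case analysis.

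The difference lies in how the case analysis is organised. The paper invokes Theorem~\ref{classs} (the Belgun--Pinkham classification of two-dimensional K\"ahler cones) as a black box, which already hands you the trichotomy $\mathbb{C}^{2}/\Gamma$, $L^{\times}$, $L^{\times}/G$ with $g(C)>0$; the remaining work is then just to check, for each of the three types, when the minimal resolution is free of $(-2)$-curves (citing the Hirzebruch--Jung data for (I), adjunction for (II), and Proposition~\ref{sauce} for (III)). You instead reconstruct this trichotomy from first principles via the Orlik--Wagreich star-shaped resolution graph and orbifold uniformization of the base, and then separate the spherical, no-arm, and non-spherical cases yourself. This is more self-contained but also more laborious: you must verify that the non-cyclic finite quotients always carry a $(-2)$-arm, dispose of the bad orbifolds, and produce the global quotient form $L^{\times}/G$ with $g(C)>0$ in the non-spherical case --- exactly the step you flag as the main obstacle. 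The paper sidesteps all of this by quoting Theorem~\ref{classs}, so if you are willing to cite Belgun and Pinkham, the argument shortens considerably and your concerns about exhaustiveness and disjointness of the cases evaporate.
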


Note that the K\"ahler cones of item (III) here are (up to analytic isomorphism) in one-to-one correspondence with the following data which encodes the exceptional set of the minimal good resolution $\pi:M\to L^{\times}/G$.
        \begin{itemize}
          \item A weighted dual graph which is a star comprising a central vertex with $n$ branches, each of finite length, with the $j$-th vertex of the $i$-th branch labelled with an integer $b_{ij}\geq3$ and the central curve labelled with an integer $b\geq1$.
              The central vertex represents the curve $C$ of genus $g>0$ with self-intersection $-b$ and the $j$-th vertex of the $i$-th branch represents a $\mathbb{P}^{1}$ with self-intersection $-b_{ij}$. The intersection matrix given by this graph must be negative-definite. There is a numerical criterion to determine when this is the case.
                                      \item The analytic type of $C$.
           \item A negative line bundle on $C$ of degree $-b$ (which is the normal bundle of $C$ in $M$).
          \item $n$ marked points on $C$.
        \end{itemize}
        The algebraic equations of $C_{0}$ can be reconstructed from this data by the ansatz in \cite[Section 5]{pinkhamm}.

\newpage
\subsubsection{Application to shrinking gradient K\"ahler-Ricci solitons}

Tian-Zhu \cite{tianzhu2} showed that the soliton vector field of a compact shrinking gradient
K\"ahler-Ricci soliton is unique up to holomorphic automorphisms of the underlying complex manifold.
The method of proof there involved defining a weighted volume functional $F$ which was strictly convex and was shown
to be in fact independent of the metric structure of the soliton. The soliton vector field was then characterised
as the unique critical point of this functional.

Tian-Zhu's proof breaks down in the non-compact case due to the fact that in
general, one cannot a priori guarantee that the weighted volume functional (defined analytically in terms of a certain integral) is well-defined
on a non-compact K\"ahler manifold, let alone investigate its convexity properties. Our key observation to circumvent this difficulty is to apply the \emph{Duistermaat-Heckman theorem} from symplectic geometry which provides a localisation formula
to express the weighted volume functional in terms of an algebraic formula involving only the fixed point set of a torus action. This latter algebraic formula is much more amenable. Combined with a version of Matsushima's theorem \cite{matsushima} for complete non-compact shrinking gradient K\"ahler-Ricci solitons (cf.~Theorem \ref{matty}) and Iwasawa's theorem \cite{iwasawa},
we are able to implement Tian-Zhu's strategy of proof for the compact case to obtain the following non-compact analogue of their uniqueness result.

\begin{mtheorem}[Uniqueness of the soliton vector field for shrinkers]\label{uunique}
Let $M$ be a non-compact complex manifold with complex structure $J$ endowed with the effective holomorphic action of a real torus $T$. Denote by $\mathfrak{t}$ the Lie algebra of $T$. Then there exists at most one element $\xi\in\mathfrak{t}$ that admits a complete shrinking gradient K\"ahler-Ricci soliton $(M,\,g,\,X)$ with bounded Ricci curvature with $X=\nabla^g f=-J\xi$ for a smooth real-valued function $f$ on $M$.
\end{mtheorem}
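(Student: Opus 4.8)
The plan is to transplant Tian--Zhu's variational characterization of the soliton vector field to the non-compact setting, using the Duistermaat--Heckman localization formula in place of the (a priori possibly divergent) analytic weighted volume functional. Suppose, for contradiction, that two distinct elements $\xi_{0}\neq\xi_{1}$ of $\mathfrak t$ each admit a complete shrinking gradient K\"ahler--Ricci soliton with bounded Ricci curvature, say $(M,\,g_{0},\,X_{0})$ with $X_{0}=\nabla^{g_{0}}f_{0}=-J\xi_{0}$ and $(M,\,g_{1},\,X_{1})$ with $X_{1}=\nabla^{g_{1}}f_{1}=-J\xi_{1}$. Bounded Ricci curvature forces quadratic curvature decay by the cited results, so Theorem \ref{main} applies to each soliton: each $(M,\,g_{i})$ is asymptotically conical, $f_{i}$ is proper with $f_{i}\sim\tfrac14 r^{2}$ (hence $e^{-f_{i}}$ has Gaussian decay), and $d\pi_{i}(X_{i})=r\partial_{r}$, so that $\xi_{i}=-JX_{i}$ generates, up to sign, the Reeb field of the asymptotic cone. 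Consequently each $\xi_{i}$ lies in an intrinsic open convex cone $\mathcal R\subseteq\mathfrak t$ --- the ``Reeb cone'' attached to $(M,\,J,\,T)$ via the asymptotic cone --- and the Gaussian decay of $e^{-f_{i}}$ is what will make every integration by parts and every boundary term in the localization argument behave as in the compact case.

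I first upgrade the symmetry so that the whole torus $T$ acts isometrically on each $g_{i}$. Since $\mathfrak t$ is abelian and consists of real holomorphic vector fields, every $\eta\in\mathfrak t$ commutes with $X_{i}=-J\xi_{i}$, so $\mathfrak t$ lies in the centralizer $\mathfrak h_{X_{i}}$ of $X_{i}$ in the Lie algebra of holomorphic vector fields of controlled growth. By the Matsushima-type theorem for complete shrinking gradient K\"ahler--Ricci solitons with bounded Ricci curvature (Theorem \ref{matty}), $\mathfrak h_{X_{i}}$ is reductive, namely the complexification of the compact Lie algebra of $g_{i}$-Killing fields commuting with $X_{i}$; let $G_{i}\subseteq\operatorname{Aut}(M,\,J)$ be the corresponding connected subgroup, with maximal compact $K_{i}\subseteq\operatorname{Isom}(g_{i})$. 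As $T\subseteq G_{i}$ is compact, Iwasawa's theorem produces $h_{i}\in G_{i}$ with $h_{i}Th_{i}^{-1}\subseteq K_{i}$; since $h_{i}$ centralizes $X_{i}$, the metric $h_{i}^{*}g_{i}$ is again a shrinking gradient K\"ahler--Ricci soliton with the \emph{same} soliton vector field $X_{i}$, and now $T$ acts isometrically on it. Replacing $g_{i}$ by $h_{i}^{*}g_{i}$, I may thus assume that $T$ acts isometrically on each $(M,\,g_{i})$ and that $f_{i}$ is a $T$-invariant Hamiltonian for $\xi_{i}$ with respect to $\omega_{i}:=g_{i}(J\cdot,\,\cdot)$.

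For $\eta\in\mathcal R$ let $\mu^{i}_{\eta}$ be the Hamiltonian of $\eta$ with respect to $\omega_{i}$, normalized so that $\eta\mapsto\mu^{i}_{\eta}$ is linear and $\mu^{i}_{\xi_{i}}=f_{i}$ --- equivalently, so that the equivariant class $[\omega_{i}+\mu^{i}]_{T}$ equals the equivariant first Chern class $c_{1}^{T}(M)$, which is exactly what the soliton equation $\rho_{\omega_{i}}+i\partial\bar\partial f_{i}=\omega_{i}$ encodes equivariantly. Put
\begin{equation*}
V_{i}(\eta)\ :=\ \int_{M}e^{-\mu^{i}_{\eta}}\,\frac{\omega_{i}^{\,n}}{n!}\,,
\end{equation*}
which converges precisely for $\eta\in\mathcal R$ and is strictly convex there, being the Laplace transform of the (positive) Duistermaat--Heckman measure of $\omega_{i}$, whose support is not contained in a hyperplane because the $T$-action is effective. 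The Atiyah--Bott--Berline--Vergne/Duistermaat--Heckman localization formula applied to the isometric $T$-action --- whose fixed-point set $M^{T}\subseteq\operatorname{Crit}(f_{i})$ is compact, with boundary terms at infinity killed by the Gaussian decay --- expresses $V_{i}(\eta)$ as a finite sum over the connected components $F\subseteq M^{T}$ of explicit expressions in $\eta$ built from the weights of the $T$-action on $N_{F}$, the Chern numbers of $N_{F}$, the values $\mu^{i}_{\eta}(F)$, and the intersection numbers of $[\omega_{i}|_{F}]$. Since $[\omega_{i}|_{F}]=2\pi c_{1}(TM)|_{F}$ (as $\omega_{i}$ and $\rho_{\omega_{i}}$ differ by $i\partial\bar\partial f_{i}$, with $f_{i}|_{F}$ having exact $i\partial\bar\partial$) and since the weights and the normalized values $\mu^{i}_{\eta}(F)$ depend only on $(M,\,J,\,T)$ through $c_{1}^{T}(M)$, the two algebraic formulas coincide: $V_{0}\equiv V_{1}=:V$ on $\mathcal R$. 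Finally, a first-variation computation gives, for $\eta\in\mathfrak t$,
\begin{equation*}
\left.\frac{d}{dt}\right|_{t=0}V_{i}(\xi_{i}+t\eta)\ =\ -\int_{M}\mu^{i}_{\eta}\,e^{-f_{i}}\,\frac{\omega_{i}^{\,n}}{n!}\,,
\end{equation*}
and the standard soliton identities --- legitimate here because the controlled asymptotics justify the integration by parts --- show that the Hamiltonian $\mu^{i}_{\eta}$ of the holomorphic Killing field $\eta$ satisfies $\Delta_{f_{i}}\mu^{i}_{\eta}=-\mu^{i}_{\eta}$, so that $\int_{M}\mu^{i}_{\eta}\,e^{-f_{i}}\,\omega_{i}^{n}=-\int_{M}\operatorname{div}\!\big(e^{-f_{i}}\nabla\mu^{i}_{\eta}\big)\,\omega_{i}^{n}=0$. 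Hence $\xi_{0}$ and $\xi_{1}$ are both critical points of the strictly convex function $V$ on the convex open cone $\mathcal R$, forcing $\xi_{0}=\xi_{1}$, a contradiction.

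The final convexity argument is a faithful copy of Tian--Zhu; the steps I expect to be the real obstacles are the two non-compact reductions it rests on. First, the Matsushima--Iwasawa upgrade of the metric-dependent symmetry generated by $\xi_{i}$ to a genuine isometric action of the given torus $T$ on each soliton, without altering the soliton vector field --- this is where the non-compact Matsushima theorem and Iwasawa's theorem enter and where the interplay between $\mathfrak t$ and $\operatorname{Aut}(M,\,J)$ must be handled carefully. Second, the analytic bookkeeping: verifying that the Gaussian decay coming from bounded Ricci curvature (via asymptotic conicality and Theorem \ref{main}) is strong enough that both the Duistermaat--Heckman localization formula and the drift-Laplacian integrations by parts remain valid on the non-compact manifold, and that the common domain of finiteness of the weighted volume functional is precisely the intrinsic Reeb cone $\mathcal R$ containing $\xi_{0}$ and $\xi_{1}$.
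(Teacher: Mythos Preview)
Your overall strategy matches the paper's: upgrade symmetry via Matsushima--Iwasawa so that $T$ acts isometrically on each $g_{i}$, show the weighted volume functional is metric-independent via Duistermaat--Heckman, then use strict convexity to pin down the critical point. Two problems, however, make your argument incomplete as written.

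\textbf{First, bounded Ricci curvature does not give quadratic curvature decay.} The result you cite (from \cite{munty}) requires the Ricci curvature to \emph{decay to zero} at infinity, not merely to be bounded. Under the hypothesis of Theorem~\ref{uunique} you therefore cannot invoke Theorem~\ref{main}, and there is no asymptotic cone, no Reeb field, and no ``Reeb cone'' $\mathcal R$ to speak of. The paper's proof never uses Theorem~\ref{main} here. What bounded Ricci \emph{does} give you is bounded scalar curvature, hence compactness of the zero set of $X_{i}$ (Lemma~\ref{compactt}), hence compactness of $M^{T}$; this is exactly the hypothesis needed for the non-compact Duistermaat--Heckman theorem (Theorem~\ref{dhthm}). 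The Gaussian-type decay of $e^{-f_{i}}$ you want comes from Cao--Zhou (Theorem~\ref{theo-basic-prop-shrink}(i)), which needs no curvature hypothesis at all.

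\textbf{Second, and more seriously, you assume a common domain of definition.} With no intrinsic $\mathcal R$, the domains $\Lambda_{i}=\{\eta\in\mathfrak t:\mu^{i}_{\eta}\ \text{proper and bounded below}\}$ are a priori different, and it is not clear that $\xi_{1}\in\Lambda_{0}$ or vice versa. The paper handles this carefully: one shows that the Hamiltonian $u_{1}$ of $\xi_{1}$ with respect to $g_{0}$ is \emph{bounded below} (using that every point of $M$ lies on an $X_{1}$-flowline through a fixed compact set, since the zero set of $X_{1}$ is compact), but not that it is proper. Then the segment $Y_{t}=t\xi_{1}+(1-t)\xi_{0}$ has Hamiltonian $tu_{1}+(1-t)f_{0}$, which \emph{is} proper and bounded below for $t\in[0,1)$, so $Y_{t}\in\Lambda_{0}$ for $t\in[0,1)$ and symmetrically $Y_{t}\in\Lambda_{1}$ for $t\in(0,1]$. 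Thus $\Lambda_{0}\cap\Lambda_{1}\supset\{Y_{t}:t\in(0,1)\}$, Duistermaat--Heckman gives $F_{0}=F_{1}$ there, and a continuity-plus-convexity argument on the closed segment $[0,1]$ (each $\xi_{i}$ being a minimizer of its own $F_{i}$) forces $\xi_{0}=\xi_{1}$. Your direct appeal to ``both are critical points of a strictly convex $V$ on $\mathcal R$'' skips exactly this step; without it the argument does not close.
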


We expect that the assumption of bounded Ricci curvature is superfluous in the statement of this theorem
and that, given the uniqueness of the soliton vector field in the Lie algebra of the torus here, the corresponding shrinking gradient K\"ahler-Ricci soliton is also unique up to automorphisms of the complex structure commuting with the flow of the soliton vector field.

Not only does the Duistermaat-Heckman theorem imply the uniqueness of the soliton vector field in our case,
it also provides a formula to compute the unique critical point of the weighted volume functional, the point at which
the soliton vector field is achieved. Using this formula, we compute explicitly the soliton vector field of a
shrinking gradient K\"ahler-Ricci soliton with bounded Ricci curvature on $\mathbb{C}^{n}$ and
on the total space of $\mathcal{O}(-k)$ over $\mathbb{P}^{n-1}$ for $0<k<n$ (see Examples \ref{one} and \ref{three} respectively). This recovers the polynomials of Feldman-Ilmanen-Knopf
\cite[equation (36)]{FIK}. Having identified the soliton vector field on these manifolds,
we then use our non-compact version of Matsushima's theorem (Theorem \ref{matty}) together with an application of Iwasawa's theorem \cite{iwasawa} to deduce that, up to pullback by an element of $GL(n,\,\mathbb{C})$, the corresponding soliton metrics have to be invariant under the action of $U(n)$. Consequently, thanks to a uniqueness theorem of Feldman-Ilmanen-Knopf \cite[Proposition 9.3]{FIK}, up to pullback by an
element of $GL(n,\,\mathbb{C})$, the shrinking gradient K\"ahler-Ricci soliton must be the flat Gaussian shrinking soliton if on $\mathbb{C}^{n}$ or the unique $U(n)$-invariant shrinking gradient K\"ahler-Ricci soliton constructed by Feldman-Ilmanen-Knopf \cite{FIK} if on the total space of $\mathcal{O}(-k)$ over $\mathbb{P}^{n-1}$ for $0<k<n$.

In the complex two-dimensional case, we are actually able to identify the underlying complex manifold of a shrinking gradient K\"ahler-Ricci soliton
whose scalar curvature tends to zero at infinity as either $\mathbb{C}^{2}$ or $\mathbb{C}^{2}$ blown up at the origin using the fact that the soliton, if non-trivial, has an asymptotic cone with strictly positive scalar curvature. Using a classification
theorem for Sasaki manifolds in real dimension three \cite{belgun} and the fact that a shrinking K\"ahler-Ricci soliton can only contain $(-1)$-curves
in complex dimension two, this is enough to identify the asymptotic cone at infinity as $\mathbb{C}^{2}$ from which the identification of the underlying complex manifold easily follows. Combined with the above discussion, this yields a complete classification of such solitons in two complex dimensions.

These conclusions are summarised in the following theorem.
\begin{mtheorem}[Classification of shrinkers]\label{classify}
Let $(M,\,g,\,X)$ be a complete shrinking gradient K\"ahler-Ricci soliton.

{\rm (1)} If $M=\mathbb{C}^{n}$ and $g$ has bounded Ricci curvature, then up to pullback by an element of $GL(n,\,\mathbb{C})$,
$(M,\,g,\,X)$ is the flat Gaussian shrinking soliton.

{\rm (2)} If $M$ is the total space of $\mathcal{O}(-k)$ over $\mathbb{P}^{n-1}$ for $0<k<n$ and $g$ has bounded Ricci curvature, then up to pullback by an element of $GL(n,\,\mathbb{C})$, $(M,\,g,\,X)$ is the unique $U(n)$-invariant shrinking gradient K\"ahler-Ricci soliton constructed by Feldman-Ilmanen-Knopf \cite{FIK}.

{\rm (3)} If $\dim_{\mathbb{C}}M=2$ and the scalar curvature of $g$ tends to zero at infinity, then up to pullback by an element of $GL(2,\,\mathbb{C})$,
$(M,\,g,\,X)$ is the flat Gaussian shrinking soliton on $\mathbb{C}^{2}$ or the unique $U(2)$-invariant shrinking gradient K\"ahler-Ricci soliton
constructed by Feldman-Ilmanen-Knopf \cite{FIK} on the total space of $\mathcal{O}(-1)$ over $\mathbb{P}^{1}$.
\end{mtheorem}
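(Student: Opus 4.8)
The plan is to prove (1) and (2) by a single argument and then to deduce (3) from them. In all three cases write $X=\nabla^{g}f$ for the soliton potential and set $\xi:=-JX$; since $g$ is Kähler this is a real holomorphic Killing field, so $X=-J\xi$ as in Theorem \ref{uunique}. Because $f$ is proper and bounded below with compact level sets, and $\xi(f)=g(\nabla^{g}f,\,-J\nabla^{g}f)=0$, the flow of $\xi$ preserves these level sets, so its closure in $\operatorname{Isom}(M,\,g)$ is a compact real torus $T$ acting holomorphically and isometrically on $M$, with $\xi$ lying in $\mathfrak{t}=\operatorname{Lie}(T)$.

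For (1) and (2) I would first pin down $\xi$. By Theorem \ref{uunique} it is the unique element of $\mathfrak{t}$ carrying such a soliton, so it suffices to exhibit and identify one candidate. Following Tian--Zhu \cite{tianzhu2}, $\xi$ is the unique critical point, in an open cone of $\mathfrak{t}$, of a weighted volume functional of the form $\eta\mapsto\int_{M}e^{-f_{\eta}}\,\omega^{n}/n!$, with $f_{\eta}$ the Hamiltonian potential of $\eta$ normalised so that the integral converges. The key move, replacing Tian--Zhu's convexity argument which is unavailable here, is to apply the Duistermaat--Heckman localisation theorem to rewrite this integral as a finite sum of elementary rational contributions over the connected components of the fixed-point set $M^{T}$ — a single point for $M=\mathbb{C}^{n}$, and carried by the zero section $\mathbb{P}^{n-1}$ for $\mathcal{O}(-k)\to\mathbb{P}^{n-1}$. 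Solving the resulting critical-point equations (this is done in Examples \ref{one} and \ref{three}) recovers exactly the Feldman--Ilmanen--Knopf polynomials \cite{FIK}; in particular $\xi$ is a positive multiple of the generator of the centre of $\mathfrak{u}(n)$ (together with the fibre rotation in case (2)). Hence the group $G$ of biholomorphisms of $M$ commuting with the flow of $X$ is, up to a finite group, $GL(n,\,\mathbb{C})$, and in particular complex reductive. The non-compact Matsushima theorem (Theorem \ref{matty}) then identifies the identity component of $\operatorname{Isom}(M,\,g)$ with a maximal compact subgroup of $G$; by Iwasawa's theorem \cite{iwasawa} these are all conjugate in $G$, so after pulling $g$ back by an element of $GL(n,\,\mathbb{C})$ we may assume it is invariant under the standard $U(n)$. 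Finally, \cite[Proposition 9.3]{FIK} gives uniqueness of a complete $U(n)$-invariant shrinking gradient Kähler--Ricci soliton on $\mathbb{C}^{n}$ (resp.\ on $\mathcal{O}(-k)\to\mathbb{P}^{n-1}$, $0<k<n$), namely the flat Gaussian soliton (resp.\ the Feldman--Ilmanen--Knopf soliton), proving (1) and (2).

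For (3), with $\dim_{\mathbb{C}}M=2$ and scalar curvature tending to zero, I would first invoke the decay results of \cite{munty,Der-Asy-Com-Egs} (using the real four-dimensionality) to deduce that $g$ has quadratic curvature decay, hence bounded curvature and in particular bounded Ricci curvature, so that Theorem \ref{main} applies: there is a resolution $\pi:M\to C_{0}$ of a two-dimensional Kähler cone $C_{0}$ with smooth compact link $L$ and with $d\pi(X)=r\partial_{r}$. Since $R_{g}\geq 0$, the link $L$ has positive scalar curvature, so by Belgun's classification of compact Sasakian $3$-manifolds \cite{belgun} it is a spherical space form and $C_{0}\cong\mathbb{C}^{2}/\Gamma$ for a finite $\Gamma\subset U(2)$ acting freely on $S^{3}$. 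On the other hand, the soliton equation forces $-K_{M}\cdot C>0$ for every compact curve $C\subset M$, and such a curve lies in the exceptional set and has negative self-intersection, so by adjunction it is a $(-1)$-curve; since $M$ dominates the minimal resolution of $\mathbb{C}^{2}/\Gamma$, whose exceptional set for $\Gamma\neq 1$ consists of $\mathbb{P}^{1}$'s of self-intersection at most $-2$, we must have $\Gamma=1$ and $C_{0}\cong\mathbb{C}^{2}$. Thus $\pi:M\to\mathbb{C}^{2}$ is a composition of point blowups; an infinitely near blowup would create a $(-2)$-curve, while blowing up a point other than the apex would produce a rigid $(-1)$-curve $E$ invariant under the flow of $X$, forcing $\pi(E)$ to be invariant under the flow of $r\partial_{r}$ and hence to be the apex — a contradiction. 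So $\pi$ involves at most one blowup, $M\cong\mathbb{C}^{2}$ or $M\cong\mathcal{O}(-1)\to\mathbb{P}^{1}$, and applying (1) and (2) with $n=2$ (and $k=1$) finishes the proof.

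The main obstacle I anticipate is making the localisation step rigorous: defining the weighted volume functional on the non-compact manifold, justifying that its critical point is computed by Duistermaat--Heckman localisation over $M^{T}$, and matching the outcome with the FIK vector field. A secondary delicate point, specific to (3), is the passage from the abstract Sasakian link to the concrete cone $\mathbb{C}^{2}/\Gamma$ via Belgun's theorem, and the bookkeeping with exceptional curves and the flow of $X$ that pins the complex surface down to $\mathbb{C}^{2}$ or $\mathcal{O}(-1)\to\mathbb{P}^{1}$.
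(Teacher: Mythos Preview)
Your proposal is essentially correct and follows the same architecture as the paper: for (1) and (2), torus from the closure of the flow of $JX$, uniqueness of the soliton vector field via the weighted volume functional and Duistermaat--Heckman, then Matsushima plus Iwasawa to force $U(n)$-invariance, then \cite[Proposition~9.3]{FIK}; for (3), quadratic curvature decay, Theorem~\ref{main}, Belgun to identify the cone, adjunction to force $(-1)$-curves only, hence $M=\mathbb{C}^{2}$ or its one-point blowup.

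Two places where the paper is more careful than your sketch. First, in (1)--(2) you apply Duistermaat--Heckman and compare with the FIK examples as if the torus $T$ were already the standard diagonal torus in $GL(n,\mathbb{C})$, but a priori $T$ is just the abstract closure of the flow of $JX$. The paper first uses Proposition~\ref{sexxy} to locate the zero set of $X$ (a point, or inside the zero section), then linearises the $T$-action via \cite{vanC4}, enlarges $T$ to a maximal torus in $GL(n,\mathbb{C})$, and conjugates via Iwasawa to the diagonal torus; only then do both $JX$ and the FIK vector field lie in the \emph{same} $\mathfrak{t}$, so that Theorem~\ref{uunique} applies. You flag this as an anticipated obstacle, and it is exactly the missing bridge. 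Second, in (3) the paper invokes \cite{wang22} (not \cite{munty,Der-Asy-Com-Egs}) for the passage from scalar decay to quadratic curvature decay in real dimension four, and uses the strict lower bound $R_{g}(x)d_{g}(x,o)^{2}>C^{-1}$ of \cite{peng} (Corollary~\ref{rpositive}) rather than merely $R_{g}\geq0$ to obtain $R^{T}>8$ on the link; your Gauss--Bonnet-style elimination of the genus $g>0$ cases would still go through with the non-strict inequality, but the paper also first disposes of the flat case via \cite{pigola}. Your adjunction argument that every exceptional curve is a $(-1)$-curve, and the domination of the minimal resolution to force $\Gamma=\{\mathrm{id}\}$, is a clean variant of the paper's ``blow down to smooth'' step.
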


As exemplified in complex dimension two, in contrast to the expanding case, not many K\"ahler cones appear as tangent cones of complete shrinking gradient K\"ahler-Ricci solitons.

\subsection{Outline of Paper} We begin in Section \ref{prelim} by recalling the basics of K\"ahler cones, K\"ahler-Ricci solitons, metric measure spaces, and the relevant algebraic geometry that we require. We also mention some important properties of the soliton vector field and of real holomorphic vector fields that commute with the soliton vector field. In Section \ref{proofofA}, we prove Theorem \ref{main} for expanding gradient K\"ahler-Ricci solitons. The proof for shrinking K\"ahler-Ricci solitons is verbatim. By a theorem of Siepmann \cite[Theorem 4.3.1]{Siepmann}, under our curvature assumption, a complete expanding gradient Ricci soliton flows out of a Riemannian cone. Our starting point is to prove some preliminary lemmas before
providing a refinement of Siepmann's theorem, namely
Theorem \ref{ac}, where, in the course of its proof, we construct a diffeomorphism between the cone and the end of the Ricci soliton
using the flow of the soliton vector field that encapsulates the asymptotics of the soliton along the end. We then show in Proposition \ref{biholo} that
if the soliton is K\"ahler, then the cone is K\"ahler with respect to a complex structure that makes
the aforementioned diffeomorphism a biholomorphism. In Theorem \ref{hello}, this biholomorphism is then shown to extend to an equivariant resolution with the properties as stated in Theorem \ref{main}.

In the first part of Section \ref{classie}, we use Theorem \ref{main} to prove Corollary \ref{unique}.
This also requires an application of previous work from \cite{con-der}. In the latter part of Section \ref{classie}, we apply Corollary \ref{unique} to two-dimensional expanding gradient K\"ahler-Ricci solitons to conclude the statement of Corollary \ref{unique2d}, making use of the classification of two-dimensional K\"ahler cones, namely Theorem \ref{classs}.

From Section \ref{futake} onwards, we turn our attention exclusively to complete shrinking gradient K\"ahler-Ricci solitons.
We begin in Section \ref{matsushimaa} by proving a Matsushima-type theorem stating that the Lie algebra of real holomorphic vector fields commuting with
the soliton vector field may be written as a direct sum. This is the statement of Theorem \ref{matty}.
Our proof of this theorem follows a manner similar to the proof of Matsushima's theorem on K\"ahler-Einstein Fano manifolds
as presented in \cite[Proof of Theorem 5.1]{kobayashi1}. After deriving some properties of the automorphism groups of a complete shrinking gradient K\"ahler-Ricci soliton $(M,\,g,\,X)$, we then apply Theorem \ref{matty} to prove the maximality of a certain compact Lie group acting on $M$. This is the content of Corollary \ref{maxx}.

We continue in Section \ref{weighted2} by showing in Proposition \ref{tangent} that every real holomorphic Killing vector field on $M$ admits
a Hamiltonian potential satisfying a certain linear equation. This allows us to define a moment map in Definition \ref{moments}
which is used in the definition of the weighted volume functional in Definition \ref{modfut}. The weighted volume functional is vital in proving
the uniqueness statement of Theorem \ref{uunique}, namely that the soliton vector field
of a complete shrinking gradient K\"ahler-Ricci soliton with bounded Ricci curvature in the Lie algebra of a torus is unique. The weighted volume functional is the same as that defined by Tian-Zhu \cite{tianzhu2}, although in our situation it is defined as an integral over the non-compact manifold $M$. This is compensated for by the fact that the domain of definition of the weighted volume functional is restricted to an open cone in the Lie algebra of the torus. Several important properties of the weighted volume functional are then derived in Lemma \ref{critical}, including the crucial fact that it has a unique critical point in its open cone of definition given by the complex structure applied to the soliton vector field $X$.
We conclude this subsection by taking note of the fact that the Duistermaat-Heckman formula may be used to compute the weighted volume functional. In particular, it follows that the weighted volume functional is independent of the complete shrinking gradient K\"ahler-Ricci soliton.

In Section \ref{generalunique}, we prove the uniqueness statement of Theorem \ref{uunique} which has been recalled in the statement of Theorem \ref{uniquee}. The proof of this theorem follows as in \cite[p.322]{tianzhu2} using Iwasawa's theorem \cite{iwasawa} and the corollary of Matsushima's theorem, namely Corollary \ref{maxx} discussed above. Section \ref{pclass} then comprises an application of Theorems \ref{main} and \ref{uunique}
to classify complete shrinking gradient K\"ahler-Ricci solitons with bounded Ricci curvature on $\mathbb{C}^{n}$ and on the total space of $\mathcal{O}(-k)\to\mathbb{P}^{n-1}$ for $0<k<n$. This completes the proof of items (1) and (2) of Theorem \ref{classify}.

In Section \ref{underlying}, we show that the underlying complex manifold $M$ of a two-dimensional shrinking gradient K\"ahler-Ricci soliton
with scalar curvature decaying to zero at infinity is either $\mathbb{C}^{2}$ or $\mathbb{C}^{2}$ blown up at the origin. This is the statement of
Theorem \ref{blue}. Combined with items (1) and (2) of Theorem \ref{classify}, Theorem \ref{blue} suffices to prove item (3) of Theorem \ref{classify}.
The proof of Theorem \ref{blue} relies on first identifying the underlying complex space of the tangent cone. The fact that any non-flat shrinking gradient Ricci soliton has positive scalar curvature implies that the same property holds true on the tangent cone. From this we can identify the cone as a quotient of $\mathbb{C}^{2}$. Theorem \ref{main} then tells us that $M$ is a resolution of this cone which, by virtue of the shrinking K\"ahler-Ricci soliton equation,
can only contain $(-1)$-curves. It turns out that the only possibility is that the cone is biholomorphic to $\mathbb{C}^{2}$ and $M$ is as stated
in Theorem \ref{blue}.

We conclude the paper in Section \ref{conclusion} with some closing remarks and open problems. In Appendix \ref{heckman}, Section \ref{a}, we recall the statement of the Duistermaat-Heckmann theorem on a non-compact symplectic manifold in Theorem \ref{dhthm} as presented in \cite{wu}.
We also provide an outline of its proof in Section \ref{c} after introducing some preliminaries in Section \ref{b}.
We then use it to compute the weighted volume functional and its unique critical point on
$\mathbb{C}^{n}$, on the total space of $\mathcal{O}(-k)$ over $\mathbb{P}^{n-1}$ for $0<k<n$, and on certain holomorphic line bundles over Fano manifolds
in Section \ref{d}. In Section \ref{e}, we characterise algebraically, in the setting of asymptotically conical K\"ahler manifolds, those elements in the Lie algebra of a torus that admit a Hamiltonian potential that is proper and bounded below. A precise statement is given in Theorem \ref{fme}.
For such elements of the Lie algebra of the torus, the weighted volume functional is defined. Finally, in Section \ref{f}, we show directly
that the weighted volume functional is defined on a complete shrinking gradient K\"ahler-Ricci soliton with bounded Ricci curvature
without appealing to the Duistermaat-Heckman theorem. This conclusion follows from the
estimates we derive in Proposition \ref{prop-growth-pot} on the growth of those Hamiltonian potentials
that are proper and bounded below.

\subsection{Acknowledgements} The authors wish to thank Richard Bamler, John Lott,  Bing Wang, and Yuanqi Wang for useful discussions and for their interest in this work, and Brett Kotschwar for his interest, for useful discussions involving \cite{wangl, wangl2}, and for comments on a preliminary version
of this article. Part of this work was carried out while the first two authors where visiting the Institut Henri Poincar\'e
as part of the Research in Paris program in July 2019. They wish to thank the institute for their hospitality
and for the excellent working conditions provided.

The first author is supported by Simons Collaboration Grant $\#581984$. The second author is supported
by grant ANR-17-CE40-0034 of the French National Research Agency ANR (Project CCEM) and Fondation Louis D., Project ``Jeunes G\'eom\`etres''. The third author is supported by the Simons Collaboration Grant on Special
Holonomy in Geometry, Analysis, and Physics ($\#488633$), an Alfred P.~Sloan fellowship, and NSF grant DMS-$1708420$.

\newpage
\section{Preliminaries}\label{prelim}

\subsection{Riemannian cones} For us, the definition of a Riemannian cone will take the following form.
\begin{definition}\label{cone}
Let $(S, g_{S})$ be a closed Riemannian manifold. The \emph{Riemannian cone} $C_{0}$ with \emph{link} $S$ is defined to be $\R_{>0}\times S$ with metric $g_0 = dr^2 \oplus r^2g_{S}$ up to isometry. The radius function $r$ is then characterised
intrinsically as the distance from the apex in the metric completion.
\end{definition}
The following is a simple computation.

\begin{lemma}\label{scalar}
Let $(S,\,g_{S})$ be a closed Riemannian manifold of real dimension $m$ and let $(C_{0},\,g_{0})$ be the Riemannian cone with link $S$ and radial function $r$. Then the Ricci curvature $\Ric(g_{S})$ of $g_{S}$ and the Ricci curvature $\Ric(g_{0})$ of the cone metric $g_{0}$ over $(S,\,g_{S})$ are related by
$$\Ric(g_{0})=\Ric(g_{S})-(m-1)g_{S}.$$
In particular, the scalar curvatures $R_{g_{0}}$ and $R_{g_{S}}$ of $g_{0}$ and $g_{S}$ respectively are related by
\begin{equation*}
R_{g_{0}}=\frac{1}{r^{2}}\left(R_{g_{S}}-m(m-1)\right).
\end{equation*}
\end{lemma}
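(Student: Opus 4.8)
The plan is to compute directly with Cartan's structure equations in an adapted orthonormal coframe, treating $g_0 = dr^2 + r^2 g_S$ as the warped product $\R_{>0}\times_r S$. First I would fix a local $g_S$-orthonormal coframe $\sigma^1,\dots,\sigma^m$ on an open subset of $S$, with connection $1$-forms $\omega^a{}_b = -\omega^b{}_a$ (so that $d\sigma^a = -\omega^a{}_b\wedge\sigma^b$) and curvature $2$-forms $\mathcal{R}^a_{S,b} = d\omega^a{}_b + \omega^a{}_c\wedge\omega^c{}_b$. Then $\theta^0 := dr$, $\theta^a := r\,\sigma^a$ is a $g_0$-orthonormal coframe on $C_0$, with dual frame $e_0 = \partial_r$, $e_a = r^{-1}\tilde e_a$, where $\{\tilde e_a\}$ is the frame on $S$ dual to $\{\sigma^a\}$.

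Second, I would identify the Levi-Civita connection $1$-forms $\Omega^A{}_B$ of $g_0$: a short check shows that $\Omega^a{}_0 = \sigma^a = r^{-1}\theta^a$ and $\Omega^a{}_b = \omega^a{}_b$ (all other components determined by antisymmetry $\Omega^A{}_B = -\Omega^B{}_A$) solve the first structure equations $d\theta^A = -\Omega^A{}_B\wedge\theta^B$. Feeding these into the second structure equations $\mathcal{R}^A{}_B = d\Omega^A{}_B + \Omega^A{}_C\wedge\Omega^C{}_B$ gives $\mathcal{R}^a{}_0 = 0$ and $\mathcal{R}^a{}_b = \mathcal{R}^a_{S,b} - r^{-2}\,\theta^a\wedge\theta^b$. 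Equivalently, in the frame $\{e_0,e_a\}$ the curvature of $g_0$ carries no component with a single radial index, and $R_{abcd}(g_0) = r^{-2}\big(R^S_{abcd} - (\delta_{ac}\delta_{bd} - \delta_{ad}\delta_{bc})\big)$, where $R^S_{abcd}$ are the components of $\Rm(g_S)$ in the $\tilde e$-frame.

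Third, I would contract to read off the Ricci and scalar curvatures. Since $\Rm(g_0)$ has no single radial index, $\Ric(g_0)(e_0,\cdot) = 0$, while $\Ric(g_0)(e_b,e_d) = \sum_a R_{abad}(g_0) = r^{-2}\big(\Ric(g_S)(\tilde e_b,\tilde e_d) - (m-1)\delta_{bd}\big)$; rewriting $\tilde e_a = r\,e_a$ and viewing $\Ric(g_S)$ and $g_S$ as tensors on $C_0 = \R_{>0}\times S$ that annihilate $\partial_r$, this is precisely $\Ric(g_0) = \Ric(g_S) - (m-1)g_S$ as $(0,2)$-tensors on $C_0$. Taking the $g_0$-trace, that is, summing over $\{e_0,e_1,\dots,e_m\}$ and using $\Ric(g_S)(e_a,e_a) = r^{-2}\Ric(g_S)(\tilde e_a,\tilde e_a)$ together with $g_S(e_a,e_a) = r^{-2}$, yields $R_{g_0} = r^{-2}\big(R_{g_S} - m(m-1)\big)$.

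There is no genuine obstacle here: this is simply the warped-product curvature identity for $\R_{>0}\times_r S$, and one could equally well quote O'Neill's warped-product Ricci formulas with base $(\R_{>0},dr^2)$, fiber $(S,g_S)$, and warping function $f = r$, which collapse immediately since $f'' = 0$ and $f' = 1$. The only points requiring a little care are the conformal rescaling $e_a = r^{-1}\tilde e_a$ relating the orthonormal frames on $S$ and on $C_0$, and the bookkeeping convention that tensors on $S$ are understood as pulled back to $C_0$, so that the stated identity between $(0,2)$-tensors (both sides vanishing on $\partial_r$) is meaningful.
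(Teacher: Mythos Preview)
Your proof is correct. The paper does not actually give a proof of this lemma; it simply states ``The following is a simple computation'' and moves on. Your computation via Cartan's structure equations in an adapted orthonormal coframe is a standard way to fill in the details, and every step checks out (the connection forms, the vanishing of $\mathcal{R}^a{}_0$, the tangential curvature identity, and the contraction to Ricci and scalar curvature).
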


\subsection{K\"ahler cones}
We may further impose that a Riemannian cone is K\"ahler, as the next definition demonstrates.
\begin{definition}A \emph{K{\"a}hler cone} is a Riemannian cone $(C_{0},g_0)$ such that $g_0$ is K{\"a}hler, together with a choice of $g_0$-parallel complex structure $J_0$. This will in fact often be unique up to sign. We then have a K{\"a}hler form $\omega_0(X,Y) = g_0(J_0X,Y)$, and $\omega_0 = \frac{i}{2}\p\bar{\p} r^2$ with respect to $J_0$.
\end{definition}

The vector field $r\partial_{r}$ is real holomorphic and $\xi:=J_{0}r\partial_r$ is real holomorphic and Killing \cite[Appendix A]{Yau}. This latter vector field is known as the \emph{Reeb
vector field}. The closure of its flow in the isometry
group of the link of the cone generates the holomorphic isometric action of a real torus on $C_{0}$ that
fixes the apex of the cone. We call a K\"ahler cone ``quasiregular'' if this action is an $S^{1}$-action (and,
in particular, ``regular'' if this $S^{1}$-action is free), and ``irregular'' if the action generated is that of a
real torus of rank $>1$.

Every K\"ahler cone is affine algebraic.
\begin{theorem}\label{t:affine}
For every K{\"a}hler cone $(C_{0},g_0,J_0)$, the complex manifold $(C_{0},J_0)$ is isomorphic to the smooth part of a normal algebraic variety $V \subset \C^N$ with one singular point. In addition, $V$ can be taken to be invariant under a $\C^*$-action $(t, z_1,\ldots,z_N) \mapsto (t^{w_1}z_1,\ldots,t^{w_N}z_N)$ such that all of the $w_i $ are positive
integers.
\end{theorem}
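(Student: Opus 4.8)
The plan is to realise $(C_0,J_0)$ as the punctured affine cone over a polarised projective variety by means of the ring of holomorphic functions of polynomial growth. First I would extract a torus: the Reeb field $\xi=J_0r\partial_r$ is holomorphic and Killing, so the closure of its flow inside $\operatorname{Isom}(C_0,g_0)$ is a compact torus $T\cong T^k$, $k\geq 1$, acting effectively, holomorphically and isometrically on $C_0$, fixing the apex, and with $\xi$ in its Lie algebra $\mathfrak t$. The scaling field $r\partial_r=-J_0\xi$ generates a commuting $\mathbb{R}_{>0}$-action, and together these assemble into a holomorphic action of the complexification $T_{\mathbb C}\cong(\mathbb{C}^*)^k$ on $C_0$ (completeness of the fields $J_0v$, $v\in\mathfrak t$, being forced by the cone structure and the completeness of the scaling flow). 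Let $R\subset\mathcal O(C_0)$ denote the $\mathbb{C}$-algebra of holomorphic functions $f$ with $|f|\leq C(1+r)^N$ for constants depending on $f$.

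Being $T_{\mathbb C}$-stable, $R$ decomposes into weight spaces $R=\bigoplus_{\alpha}R_\alpha$ over the weight lattice of $T$; since a weight vector $f\in R_\alpha$ satisfies $f(e^s\cdot x)=e^{s\langle\alpha,\xi\rangle}f(x)$ under the scaling, $f\neq 0$ forces $|f|\sim r^{\langle\alpha,\xi\rangle}$, so $\langle\alpha,\xi\rangle\geq 0$, with equality only for constants (a $\xi$-invariant, scale-invariant holomorphic function is bounded, holomorphic and constant along the radial rays, hence constant). Thus $R=\mathbb{C}\oplus\bigoplus_{\langle\alpha,\xi\rangle>0}R_\alpha$. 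What remains is to prove three things: (i) each $R_\alpha$ is finite-dimensional; (ii) $R$ is a finitely generated $\mathbb{C}$-algebra; and (iii) $R$ separates points of $C_0$ and furnishes local holomorphic coordinates everywhere, while the map built from the positive-weight part is proper onto $\mathbb{C}^N\setminus\{0\}$.

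Establishing (i)--(iii) is the main obstacle. In the \emph{quasiregular} case, where $\xi$ generates a circle action, the quotient $Z=C_0/\mathbb{C}^*$ is a compact complex orbifold carrying the transverse K\"ahler form induced by $\omega_0$, hence is projective by the Baily embedding theorem; $C_0$ is the complement of the zero section in the total space of the associated negative orbifold line bundle $L$, so $R=\bigoplus_{m\geq 0}H^0(Z,L^{-m})$, and (i)--(iii) follow from Serre's finiteness and vanishing theorems together with Grauert's blow-down criterion. In the \emph{irregular} case I would reduce to this one by choosing a rational $\xi'\in\mathfrak t$ close to $\xi$: this changes neither $C_0$ as a complex manifold nor the $T_{\mathbb C}$-action, only the grading, and since the weights occurring in $R$ lie in a strictly convex cone contained in the open half-space $\{\langle\,\cdot\,,\xi\rangle>0\}$, for $\xi'$ sufficiently close the polynomial-growth condition is unchanged and $R$ is identified with the coordinate ring of the quasiregular cone attached to $\xi'$; alternatively, one can argue finiteness directly from the strictly plurisubharmonic exhaustion $r^2$ of the one-point completion via H\"ormander-type $\bar\partial$-estimates. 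I expect the convexity of the weight cone --- equivalently, the Stein-ness of the one-point completion near the apex in the irregular case --- to be the delicate point.

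Granting (i)--(iii), I would finish as follows. Choose homogeneous generators $f_1,\dots,f_N$ of $R$, with $f_i\in R_{\alpha_i}$ and $\langle\alpha_i,\xi\rangle>0$. The map $F=(f_1,\dots,f_N)\colon C_0\to\mathbb{C}^N$ is $T_{\mathbb C}$-equivariant for the diagonal action of $T_{\mathbb C}$ on $\mathbb{C}^N$ with weights $\alpha_i$ on the coordinates; by (iii) it is an injective immersion, proper onto $\mathbb{C}^N\setminus\{0\}$, hence a closed embedding of $C_0$ onto a smooth closed submanifold of $\mathbb{C}^N\setminus\{0\}$, with $F(x)\to 0$ as $r(x)\to 0$. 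Now pick a one-parameter subgroup $\lambda\colon\mathbb{C}^*\to T_{\mathbb C}$ whose infinitesimal generator pairs positively with each of the finitely many $\alpha_i$ --- possible since they all lie in the open half-space $\{\langle\,\cdot\,,\xi\rangle>0\}$, so a generic rational ray in the dual region works --- and rescale $\lambda$ so that the pairings $w_i=\langle\alpha_i,\dot\lambda\rangle$ are positive integers. Then $V:=F(C_0)\cup\{0\}$ is a closed analytic subset of $\mathbb{C}^N$ invariant under $\lambda(t)\colon(z_1,\dots,z_N)\mapsto(t^{w_1}z_1,\dots,t^{w_N}z_N)$; being a weighted cone under a contracting $\mathbb{C}^*$-action it is cut out by weighted-homogeneous polynomials and hence affine algebraic, $V\setminus\{0\}\cong(C_0,J_0)$ is smooth, and $V$ is normal because $R$ --- the ring of polynomial-growth functions on the smooth manifold $C_0$ --- is integrally closed; thus $\operatorname{Sing}V\subseteq\{0\}$, and $V$, together with the $\mathbb{C}^*$-action $\lambda$ with positive integer weights $w_i$, is the desired model.
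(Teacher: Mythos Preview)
The paper gives no proof of its own; it simply cites van Coevering \cite[\S 3.1]{vanC4}. Your proposal is exactly the argument found there---the ring of polynomial-growth holomorphic functions, its weight decomposition under the complexified Reeb torus, the quasiregular case via the orbifold Kodaira/Baily embedding, and the reduction of the irregular case by rational perturbation of the Reeb vector---so your approach is correct and coincides with the one the paper defers to.
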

\noindent This can be deduced from arguments written down by van Coevering in \cite[\S 3.1]{vanC4}.

K\"ahler cones of complex dimension two have been classified.
\begin{theorem}[{\cite[Theorem 8]{belgun}, \cite[Theorem 1.1]{pinkhamm}}]\label{classs}
Let $C_{0}$ be a K\"ahler cone of complex dimension two. Then $C_{0}$ is biholomorphic to either:
\begin{enumerate}
  \item $\mathbb{C}^{2}/\Gamma$, where $\Gamma$ is a finite subset of $U(2)$ acting freely on $\mathbb{C}^{2}\setminus\{0\}$; or
  \item  the blowdown $L^{\times}$ of the zero section of a negative line bundle $L\to C$ over a smooth proper curve $C$ of
  genus $g$ with $g>0$; or
    \item $L^{\times}/G$, where $G$ is a finite group of automorphisms of a proper curve $C$ of genus $g>0$ and
      $L^{\times}$ is the blowdown of the zero section of a $G$-invariant negative line bundle $L\to C$ over $C$
       with $G$ acting freely on $L^{\times}$ except at the apex.
            \end{enumerate}
In cases (ii) and (iii), the corresponding Reeb vector field is quasi-regular and is generated by a scaling of the standard $S^{1}$-action
on the fibres of $L$.
\end{theorem}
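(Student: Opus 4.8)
The plan is to pass from $C_{0}$ to its link and use the classical correspondence between K\"ahler cones of complex dimension two, Sasakian structures on closed $3$-manifolds, and normal affine surface singularities carrying a good $\mathbb{C}^{*}$-action, and then to bring in Seifert fibration theory together with the uniformization of compact Riemann surface orbifolds.

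First I would record the reduction. The link $S=\{r=1\}$ is a closed $3$-manifold carrying a Sasakian structure whose Reeb field is the restriction of $\xi=J_{0}r\partial_{r}$; equivalently, by Theorem \ref{t:affine}, $(C_{0},J_{0})$ is the smooth locus of a normal affine surface $V$ with isolated singularity equipped with a holomorphic $\mathbb{C}^{*}$-action of positive weights, generated on $C_{0}\setminus\{0\}=\mathbb{R}_{>0}\times S$ by the $(1,0)$-vector field $r\partial_{r}-iJ_{0}(r\partial_{r})$. Let $T$ be the closure of the Reeb flow inside $\mathrm{Isom}(S)$: it is a torus of rank $1$ or $2$, since an effective holomorphic torus action on a surface has rank at most $2$ while the Reeb flow is nontrivial. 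If $T$ has rank $2$, the complexified action makes $C_{0}$ a two-dimensional affine toric variety; having an isolated singularity (or being smooth), it must be $\mathbb{C}^{2}/\mathbb{Z}_{n}$, with $\mathbb{Z}_{n}$ acting by $(z_{1},z_{2})\mapsto(\zeta z_{1},\zeta^{q}z_{2})$ for $\zeta=e^{2\pi i/n}$ and $\gcd(n,q)=1$, which is conjugate into $U(2)$. This is case (i), and here the Reeb field is \emph{irregular} exactly when the ratio of weights is irrational.

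If instead $T$ has rank $1$, then $\xi$ generates a locally free $S^{1}$-action, so the Sasakian structure is quasi-regular and $\Sigma:=S/S^{1}=(C_{0}\setminus\{0\})/\mathbb{C}^{*}$ is a compact complex $1$-orbifold. The quotient exhibits $C_{0}\setminus\{0\}$ as the complement of the zero section in the total space of an orbifold line bundle $L\to\Sigma$, which must be negative so that the zero section contracts to the apex (Grauert's criterion; equivalently because $C_{0}$ is affine), and hence $C_{0}=L^{\times}=\mathrm{Spec}\,\bigoplus_{k\geq0}H^{0}(\Sigma,(L^{*})^{k})$ is the affine cone over the polarized orbifold $(\Sigma,L^{*})$. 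I would then split into cases by the uniformization of $\Sigma$. If $\Sigma$ is a smooth curve of genus $g$: when $g>0$ this is case (ii), and when $g=0$ we obtain the cone over $(\mathbb{P}^{1},\mathcal{O}(k))$, $k\geq1$, which is $\mathbb{C}^{2}/\mathbb{Z}_{k}$ acting by scalars, hence case (i). If $\Sigma$ carries orbifold points and is a good orbifold, write $\Sigma=\tilde{\Sigma}/G$ with $\tilde{\Sigma}$ a smooth curve and $G$ finite, pull $L$ back to a $G$-invariant negative line bundle $\tilde{L}$ on $\tilde{\Sigma}$, and identify $C_{0}=\tilde{L}^{\times}/G$: this is case (iii) when $\tilde{\Sigma}$ has genus $>0$, and reduces to a free quotient $\mathbb{C}^{2}/\Gamma$ with $\Gamma\subset U(2)$, i.e.\ case (i), when $\tilde{\Sigma}=\mathbb{P}^{1}$ (equivalently when $\Sigma$ is spherical). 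If $\Sigma$ is a \emph{bad} orbifold, a teardrop $S^{2}(p)$ or a spindle $S^{2}(p,q)$, I would realize it as a weighted projective line $\mathbb{P}(a,b)$ with the cone $\mathbb{C}^{2}$ carrying the corresponding weighted $\mathbb{C}^{*}$-action, so that $C_{0}$ is again a cyclic quotient of $\mathbb{C}^{2}$, case (i). In cases (ii) and (iii) the Reeb field is automatically quasi-regular, since a base of genus $>0$ cannot support a rank-$2$ torus action (that would force $\Sigma=\mathbb{P}^{1}$), and it is then visibly a scaling of the standard $S^{1}$-action on the fibres of $L$.

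The hard part will be making the orbifold, Seifert and cone pictures genuinely equivalent — controlling orbifold line bundles, the $\mathrm{Spec}$ construction and the negativity condition, and verifying the freeness conditions in (i) and (iii), which encode that the link $S$ is a smooth manifold rather than merely an orbifold — together with checking that the spherical and bad base orbifolds produce only the quotient singularities $\mathbb{C}^{2}/\Gamma$ and that the trichotomy is exhaustive. An essentially equivalent route that avoids the differential geometry is to argue directly on $V$ via the structure theory of normal surface singularities with good $\mathbb{C}^{*}$-action due to Orlik--Wagreich and Pinkham, where the same trichotomy emerges according to the sign of the orbifold Euler characteristic of the central curve of the minimal good resolution; the details of both approaches are carried out in \cite{belgun, pinkhamm}.
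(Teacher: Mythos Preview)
The paper does not supply its own proof of this theorem; it is quoted as a known classification result from \cite{belgun} and \cite{pinkhamm}. Your sketch is a correct outline and indeed follows essentially the routes of those references: the Sasakian/Seifert analysis of the link (Belgun) and, equivalently, the structure theory of normal surface singularities with good $\mathbb{C}^{*}$-action (Orlik--Wagreich, Pinkham). There is nothing to compare against in the present paper beyond the citations you already invoke.
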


Any automorphism of a resolution of a K\"ahler cone preserves the exceptional set of the resolution.
\begin{lemma}\label{auto}
Let $\pi:M\to C_{0}$ be a resolution of a K\"ahler cone $C_{0}$ with exceptional set $E$.
Denote by $J$ the complex structure on $M$. Then for any automorphism $\sigma$ of $(M,\,J)$, $\sigma(E)\subseteq E$. In particular, real holomorphic vector fields on $M$ are tangent to $E$.
\end{lemma}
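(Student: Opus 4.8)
Here is the plan. The key idea is to characterise $E$ intrinsically: it is precisely the union of all positive‑dimensional compact analytic subsets of $M$ (equivalently, it is the largest compact connected analytic subset). Since a biholomorphism of $M$ carries analytic subsets to analytic subsets and preserves dimension, it must then map $E$ into itself; the assertion for real holomorphic vector fields follows by applying this to their (local) flows.

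First I would record the structure of the resolution. By Theorem~\ref{t:affine}, $C_{0}$ is (biholomorphic to) a normal affine variety $V\subset\C^{N}$ with a single singular point, the apex $o$, and by the definition of a resolution, $\pi$ restricts to a biholomorphism $M\setminus E\xrightarrow{\ \sim\ }C_{0}\setminus\{o\}$ with $E=\pi^{-1}(o)$. If $E=\varnothing$ there is nothing to prove. Otherwise $\pi$ is not an isomorphism, so, $C_{0}$ being normal and $\pi$ proper and birational, Zariski's main theorem gives that $E$ is connected; moreover $\dim_{\C}E\geq 1$, since a proper birational morphism onto a normal variety with all fibres single points is an isomorphism, which would force $E=\varnothing$.

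Next I would prove the key claim: every compact connected analytic subset $Z\subseteq M$ with $\dim_{\C}Z\geq 1$ satisfies $Z\subseteq E$. Indeed, the coordinate functions $z_{1},\dots,z_{N}$ of $\C^{N}$ restrict to holomorphic functions on $C_{0}$, hence pull back to holomorphic functions $z_{i}\circ\pi$ on $M$; restricted to the compact connected set $Z$ each is constant by the maximum principle, and since $(z_{1},\dots,z_{N})$ is injective on $C_{0}\subset\C^{N}$ this means $\pi(Z)$ is a single point $\{p\}$. If $p\neq o$ then $Z\subseteq\pi^{-1}(p)$, which is a single point because $\pi$ is biholomorphic over $C_{0}\setminus\{o\}$, contradicting $\dim_{\C}Z\geq 1$; hence $p=o$ and $Z\subseteq\pi^{-1}(o)=E$. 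Granting the claim, for any automorphism $\sigma$ of $(M,\,J)$ the set $\sigma(E)$ is a compact connected analytic subset of $M$ of the same positive dimension as $E$, so $\sigma(E)\subseteq E$ by the claim.

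Finally, for a real holomorphic vector field $V$ on $M$ with local flow $\{\sigma_{t}\}$: since $E$ is compact there is $\epsilon>0$ such that $\sigma_{t}$ is defined on a neighbourhood of $E$ and is a biholomorphism onto its image for $|t|<\epsilon$, so $\sigma_{t}(E)$ is a compact connected positive‑dimensional analytic subset of $M$, whence $\sigma_{t}(E)\subseteq E$ by the claim; thus the flow of $V$ preserves $E$, i.e.\ $V$ is tangent to $E$. I expect the only genuinely delicate points to be (i) the intrinsic characterisation of $E$, which rests on the affineness of $C_{0}$ from Theorem~\ref{t:affine} together with the connectedness of $\pi^{-1}(o)$, and (ii) the minor bookkeeping in the last step caused by $V$ not being complete in general, handled by working on a neighbourhood of the compact set $E$.
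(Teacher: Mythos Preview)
Your proof is correct and follows essentially the same idea as the paper's: both arguments rest on the fact that $E$ is the maximal compact analytic subset of $M$, so that any automorphism must send $E$ into $E$. The paper simply asserts this maximality and concludes in two lines (observing that $\pi\circ\sigma$ is again a resolution with exceptional set $\sigma^{-1}(E)$, which must then lie in $E$), whereas you supply a proof of the maximality via the maximum principle and the affineness of $C_0$, and you also handle more carefully the edge cases ($E=\varnothing$, connectedness and positive dimension of $E$) and the passage from automorphisms to possibly incomplete real holomorphic vector fields.
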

\noindent Such an automorphism of $(M,\,J)$ therefore descends to an automorphism of the cone fixing the apex of the cone.
\begin{proof}[Proof of Lemma \ref{auto}]
If $\sigma$ is an automorphism of $(M,\,J)$, then $\pi\circ\sigma:M\to C_{0}$ is also a resolution of $C_{0}$. The exceptional set of this resolution is then a compact analytic subset of $M$. Since $E$ is the maximal compact analytic subset
of $M$, we must have that $(\pi\circ\sigma)^{-1}(o)\subseteq E$, where $o$ denotes the apex of $C_{0}$, i.e., $\sigma^{-1}(E)\subseteq E$.
\end{proof}

The holomorphic torus action on a K\"ahler cone leads to the notion of an \emph{equivariant resolution}.
\begin{definition}\label{equivariantt}
Let $C_0$ be a K\"ahler cone with complex strucutre $J_{0}$, let $\pi:M\to C_0$ be a resolution of $C_0$, and let
$G$ be a Lie subgroup of the automorphism group of $(C_{0},\,J_{0})$ fixing the apex of $C_{0}$.
We say that $\pi:M\to C_0$ is an \emph{equivariant resolution with respect to $G$} if the action of $G$ on $C_{0}$ extends to a holomorphic action
on $M$ in such a way that $\pi(g\cdot x)=g\cdot\pi(x)$ for all $x\in M$ and $g\in G$.
\end{definition}
\noindent Such a resolution of a K\"ahler cone always exists; see \cite[Proposition 3.9.1]{kollar}.

A closed Riemannian manifold $(S,\,g_{S})$ is \emph{Sasaki} if and only if its Riemannian cone is a K\"ahler cone \cite{book:Boyer}, in which case we identify $(S,\,g_{S})$ with the level set $\{r=1\}$ of its corresponding K\"ahler cone, $r$ here denoting the radial function of the cone. The restriction of the Reeb vector field to this level set induces a non-zero vector field $\xi\equiv J_{0}r\partial_r|_{\{r=1\}}$ on $S$. Let $\eta$ denote the $g_{S}$-dual one-form of $\xi$. Then we get a $g_{S}$-orthogonal decomposition $TS=\mathcal{D}\oplus\langle\xi\rangle$, where $\mathcal{D}$ is the kernel of $\eta$ and $\langle\xi\rangle$ is the $\mathbb{R}$-span of $\xi$ in $TS$, and correspondingly a decomposition of the metric $g_{S}$ as $g_{S}=\eta\otimes\eta + g^{T}$, where $g^{T}=g_{S}|_{\mathcal{D}}$. The metric $g^{T}$ is invariant under the flow of $\xi$ and induces a Riemannian metric on the local leaf space of the foliation of $S$ induced by the flow of $\xi$. We call $g^{T}$ the \emph{transverse metric}. We can then define the \emph{transverse scalar curvature} $R^{T}$ and the \emph{transverse Ricci curvature $\Ric^{T}$} as the corresponding curvatures of $g^{T}$. We also get an induced \emph{transverse complex structure $J^{T}$} on the local leaf space of the foliation with respect to which $g^{T}$ is K\"ahler
given by the restriction of the complex structure of the cone to $\mathcal{D}$. In particular, $\Ric^{T}$ will be $J^{T}$-invariant.
We have the following relationships between the various curvatures.

\begin{lemma}[{\cite[Theorem 7.3.12]{book:Boyer}}]\label{ricci}
Let $(S,\,g_{S})$ be a $(2n+1)$-real dimensional Sasaki manifold. Then the following identities hold.
\begin{enumerate}[label=\textnormal{(\roman{*})}, ref=(\roman{*})]
\item $\Ric(g_{S})(X,\,\xi)=2n\eta(X)$ for any vector field $X$.
\item $\Ric(g_{S})(X,\,Y)=\Ric^{T}(X,\,Y)-2g_{S}(X,\,Y)$ for any vector fields $X,\,Y\in\mathcal{D}$.
\end{enumerate}
\end{lemma}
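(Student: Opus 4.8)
We sketch a proof that uses only the cone picture together with Lemma~\ref{scalar}; the identities themselves are classical (they appear as \cite[Theorem 7.3.12]{book:Boyer}). The plan is to transport the curvature information from the K\"ahler cone $(C_{0},\,g_{0})$ over $(S,\,g_{S})$ down to $S$. Writing $m=2n+1$, Lemma~\ref{scalar} reads, as $(0,2)$-tensors on $S$,
\begin{equation*}
\Ric(g_{S})=\Ric(g_{0})|_{TS}+2n\,g_{S},
\end{equation*}
so it is enough to understand $\Ric(g_{0})$ along $TS$-directions in terms of the Reeb vector field $\xi=J_{0}r\p_{r}|_{\{r=1\}}$ and the transverse K\"ahler data. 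Throughout I use that $g_{0}$ is K\"ahler, so $\Ric(g_{0})$ is $J_{0}$-invariant.

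For (i), I would invoke the fact that the homothetic vector field of the cone satisfies $\nabla^{g_{0}}_{V}(r\p_{r})=V$ for every $V$; a one-line computation with the definition of the curvature tensor then gives $R_{g_{0}}(\,\cdot\,,\,\cdot\,)(r\p_{r})\equiv 0$, and tracing yields $\Ric(g_{0})(r\p_{r},\,\cdot\,)=0$. By $J_{0}$-invariance this forces $\Ric(g_{0})(\xi,\,\cdot\,)=\Ric(g_{0})(J_{0}r\p_{r},\,\cdot\,)=0$ as well. Feeding this into the displayed identity and using $g_{S}(X,\,\xi)=\eta(X)$ gives $\Ric(g_{S})(X,\,\xi)=2n\,\eta(X)$, which is (i). (In particular $\Ric(g_{S})(\xi,\xi)=2n$, since $\eta(\xi)=1$.)

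For (ii), the point is to compute the Ricci form $\rho_{g_{0}}$ of the cone in coordinates adapted to the Reeb foliation. By (i) together with $\mathcal{L}_{r\p_{r}}\rho_{g_{0}}=0$ (the Ricci form is unchanged by the homothety, as $\log\det g_{0}$ only shifts by a constant under scaling), $\rho_{g_{0}}$ is basic for the foliation by $\C^{*}$-orbits and hence descends to the local leaf space, which carries the transverse K\"ahler metric $g^{T}$ with K\"ahler form $\omega^{T}=\tfrac{i}{2}\p\bar{\p}\log r^{2}$ (equivalently $d\eta=2\omega^{T}$). In local holomorphic ``Sasaki coordinates'' $(w,\,z^{1},\ldots,z^{n})$ adapted to this fibration, in which $r^{2}=|w|^{2}e^{h(z,\bar z)}$ with $i\p\bar{\p}h=2\omega^{T}$, one computes $\det(g_{0})=c\,|w|^{2n}e^{(n+1)h}\det(\p_{i}\p_{\bar j}h)$ for a constant $c$, whence
\begin{equation*}
\rho_{g_{0}}=-(n+1)\,i\p\bar{\p}h+\rho^{T}=\rho^{T}-(2n+2)\,\omega^{T}
\end{equation*}
on the leaf space, $\rho^{T}$ being the transverse Ricci form. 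Converting forms to tensors via $J^{T}=J_{0}|_{\D}$ gives $\Ric(g_{0})(X,\,Y)=\Ric^{T}(X,\,Y)-(2n+2)\,g^{T}(X,\,Y)$ for $X,\,Y\in\D$, and substituting this into the first display (with $g_{S}|_{\D}=g^{T}$) produces $\Ric(g_{S})(X,\,Y)=\Ric^{T}(X,\,Y)-2\,g_{S}(X,\,Y)$, which is (ii).

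The main obstacle is the foliated input used in (ii): the existence of the adapted holomorphic coordinates and the identification of $i\p\bar{\p}h$ with $2\omega^{T}$, which must be set up carefully (using orbifold charts near non-free orbits if necessary) from the standard local description of the leaf space of the Reeb foliation. An alternative that avoids the cone entirely is to work directly on $S$ with the Sasaki structure equations $\nabla^{g_{S}}_{X}\xi=-\phi X$ and $(\nabla^{g_{S}}_{X}\phi)Y=g_{S}(X,\,Y)\xi-\eta(Y)X$ (where $\phi$ extends $J^{T}$ by $\phi\xi=0$), which themselves follow from $g_{0}$ being K\"ahler; both identities then drop out of a short computation of $R_{g_{S}}(\,\cdot\,,\,\cdot\,)\xi$ and an O'Neill-type argument, and this is essentially the route of \cite{book:Boyer}.
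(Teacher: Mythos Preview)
The paper does not supply a proof of this lemma; it is quoted directly from \cite[Theorem~7.3.12]{book:Boyer} and used as a black box. So there is no ``paper's own proof'' to compare against.

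Your sketch is correct. The reduction via Lemma~\ref{scalar} to computing $\Ric(g_{0})|_{TS}$ is sound, and both steps go through: for (i), $R_{g_{0}}(\,\cdot\,,\,\cdot\,)(r\p_{r})=0$ follows from $\nabla^{g_{0}}_{V}(r\p_{r})=V$ and torsion-freeness, and the pair symmetry of the curvature tensor then gives $\Ric(g_{0})(r\p_{r},\,\cdot\,)=0$, whence $\Ric(g_{0})(\xi,\,\cdot\,)=0$ by $J_{0}$-invariance. For (ii), your determinant computation $\det(g_{0,\alpha\bar\beta})=c|w|^{2n}e^{(n+1)h}\det(h_{i\bar\jmath})$ is right (a column reduction kills the off-diagonal block), and with the convention $\omega^{T}=\tfrac{i}{2}\p\bar\p h$ on the local leaf space one obtains $\rho_{g_{0}}=\rho^{T}-(2n+2)\omega^{T}$, which after adding $2n\,g_{S}$ yields exactly (ii).

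Your final paragraph already identifies the only genuine subtlety: the existence of the adapted holomorphic chart $(w,z^{1},\ldots,z^{n})$ with $r^{2}=|w|^{2}e^{h}$. Since the statement is a pointwise tensor identity, it suffices to work in a foliation chart where the local leaf space is a genuine K\"ahler manifold; this is always available regardless of whether the Sasaki structure is regular, quasi-regular, or irregular, so no global orbifold input is actually needed. The alternative route you mention via the structure equations $\nabla^{g_{S}}_{X}\xi=-\phi X$ and $(\nabla^{g_{S}}_{X}\phi)Y=g_{S}(X,Y)\xi-\eta(Y)X$ is indeed what Boyer--Galicki do, and is arguably cleaner since it avoids setting up the cone coordinates; your cone-based argument has the advantage of making the K\"ahler geometry of $(C_{0},g_{0})$ do the work, which fits the spirit of this paper.
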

In particular, we deduce that:
\begin{corollary}\label{morescalar}
Let $(S,\,g_{S})$ be a $(2n+1)$-real dimensional Sasaki manifold with scalar curvature $R_{g_{S}}$. Then $$R^{T}=R_{g_{S}}+2n.$$
\end{corollary}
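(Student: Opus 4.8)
The plan is to obtain the identity by taking the $g_S$-trace of $\Ric(g_S)$ in a frame adapted to the splitting $TS=\mathcal{D}\oplus\langle\xi\rangle$ and invoking the two identities of Lemma \ref{ricci}. First I would fix a point $p\in S$ and choose a local $g_S$-orthonormal frame $\{e_1,\dots,e_{2n},\xi\}$ near $p$ with $e_1,\dots,e_{2n}$ spanning $\mathcal{D}$. This is legitimate because $\xi$ has unit $g_S$-length (from $g_S=\eta\otimes\eta+g^T$ together with $\eta(\xi)=1$ and $g^T(\xi,\cdot)=0$) and is $g_S$-orthogonal to $\mathcal{D}=\ker\eta$, so an orthonormal frame of $\mathcal{D}$ completed by $\xi$ is an orthonormal frame of $TS$.

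Next I would write $R_{g_S}=\sum_{i=1}^{2n}\Ric(g_S)(e_i,e_i)+\Ric(g_S)(\xi,\xi)$. By Lemma \ref{ricci}(i) the last term is $2n\,\eta(\xi)=2n$, and by Lemma \ref{ricci}(ii) each summand equals $\Ric^{T}(e_i,e_i)-2g_S(e_i,e_i)=\Ric^{T}(e_i,e_i)-2$. Hence $R_{g_S}=\sum_{i=1}^{2n}\Ric^{T}(e_i,e_i)-4n+2n$. Finally, since $g^{T}=g_S|_{\mathcal{D}}$ and $\mathcal{D}$ is naturally identified (via the quotient) with the tangent space of the local leaf space of the Reeb foliation, the frame $\{e_1,\dots,e_{2n}\}$ is $g^{T}$-orthonormal there, so $\sum_{i=1}^{2n}\Ric^{T}(e_i,e_i)=R^{T}$ by definition of the transverse scalar curvature. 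Rearranging $R_{g_S}=R^{T}-2n$ gives the claim.

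There is no real obstacle here: the statement is a pointwise linear-algebra computation with the trace, and the only point requiring a moment of care is the identification of an orthonormal frame of $\mathcal{D}$ with an orthonormal frame on the local leaf space so that the trace of $\Ric^{T}$ over $\mathcal{D}$ is exactly $R^{T}$; this is immediate from $g^{T}=g_S|_{\mathcal{D}}$ and the flow-invariance of $g^{T}$ already recorded before the statement.
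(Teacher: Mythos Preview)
Your proof is correct and is exactly the trace computation the paper has in mind: the corollary is stated without proof immediately after Lemma~\ref{ricci} with the words ``In particular, we deduce that,'' and your argument is the natural (and only reasonable) way to carry out that deduction.
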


\subsection{Canonical models}

Resolutions of K\"ahler cones that are consistent with admitting an expanding K\"ahler-Ricci soliton are of the following type.
\begin{definition}[{\cite[Definition 8.2.4]{ishii}}]\label{cannon}
A partial resolution $\pi:M\to C_{0}$ of a normal isolated singularity $x\in C_{0}$ is called a \emph{canonical model} if
\begin{enumerate}[label=\textnormal{(\roman{*})}, ref=(\roman{*})]
\item $M$ has at worst canonical singularities;
\item $K_{M}$ is $\pi$-ample.
\end{enumerate}
\end{definition}
Note that the choice of partial resolution $\pi$ is part of the data here. The existence of a canonical model $\pi:M\to C_{0}$ is guaranteed by \cite{BCHM}
and it is unique up to isomorphisms over $C_{0}$ \cite[Proposition 8.2.5]{ishii}. We have the following criterion to determine when $K_{M}$ is $\pi$-ample.
\begin{theorem}[Nakai's criterion for a mapping {\cite[Corollary 1.7.9]{lazarfeld}}]\label{nakai}
Let $\pi:M\to C_{0}$ be a proper morphism of schemes. A $\mathbb{Q}$-divisor $D$ on $M$ is $\pi$-ample
if and only if $(D^{\operatorname{dim}_{\mathbb{C}}V}\cdot V)>0$ for every irreducible subvariety
$V\subset M$ of positive dimension that maps to a point in $C_{0}$.
\end{theorem}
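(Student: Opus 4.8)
\emph{Proof proposal.} The plan is to derive this relative criterion from the absolute Nakai--Moishezon--Kleiman criterion \cite{lazarfeld} by a fibrewise reduction. After clearing denominators we may assume that $D$ is an integral divisor, since $mD$ is $\pi$-ample precisely when $D$ is and $((mD)^{k}\cdot V)=m^{k}(D^{k}\cdot V)$, so no information is lost. For the easy implication, suppose $D$ is $\pi$-ample and let $V\subset M$ be a positive-dimensional irreducible subvariety contracted by $\pi$ to a point $s\in C_{0}$. Then $V$ is proper over $\kappa(s)$ and $D|_{V}$ is ample, being the restriction of a relatively ample divisor to a subscheme of the fibre over $s$; applying the absolute Nakai--Moishezon theorem to $D|_{V}$ (equivalently, using the positivity of top self-intersections of ample divisors on projective varieties, which follows from asymptotic Riemann--Roch) yields $(D^{\dim_{\C}V}\cdot V)=((D|_{V})^{\dim_{\C}V})>0$.

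For the substantive implication I would invoke the standard fact that, for a proper morphism $\pi\colon M\to C_{0}$ over a locally Noetherian base (as here), an invertible sheaf is $\pi$-ample if and only if its restriction to every scheme-theoretic fibre $M_{s}$ is ample; one may pass to $(M_{s})_{\mathrm{red}}$ since ampleness is insensitive to nilpotents. Fix $s\in C_{0}$ and let $V\subseteq(M_{s})_{\mathrm{red}}$ be an arbitrary positive-dimensional irreducible closed subvariety. Then $V$ is in particular an irreducible subvariety of $M$ that $\pi$ maps to the point $s$, and the number $(D^{\dim_{\C}V}\cdot V)$ depends only on the pair $(V,D|_{V})$, hence coincides with the intersection number in the hypothesis and is positive. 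As this holds for every such $V$, the absolute Nakai--Moishezon--Kleiman criterion applied on $M_{s}$ shows that $D|_{M_{s}}$ is ample; letting $s$ vary over $C_{0}$ we conclude that $D$ is $\pi$-ample.

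The main obstacle is the fibrewise characterisation of relative ampleness used in the second paragraph: this is exactly the ingredient that promotes the absolute criterion to a relative one, and although standard (it goes back to Grothendieck's study of proper morphisms and the openness of the ample locus in families), it is the genuinely nontrivial step. One could instead avoid it by carrying out Kleiman's Noetherian induction on $C_{0}$ directly in the relative setting, controlling $\chi\bigl(M_{s},(D|_{M_{s}})^{\otimes m}\bigr)$ uniformly in $s$ via Snapper's lemma; but given that the absolute criterion is already at hand, the fibrewise reduction above is the cleanest path, and in our applications $C_{0}$ is moreover a normal affine variety, so all Noetherian hypotheses are automatic.
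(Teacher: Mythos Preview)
The paper does not prove this statement; it is quoted verbatim as \cite[Corollary 1.7.9]{lazarfeld} and used as a black box. So there is no ``paper's own proof'' to compare against.

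Your argument is correct and is in fact precisely the route Lazarsfeld takes: the relative criterion is deduced from the absolute Nakai--Moishezon--Kleiman theorem via the fibrewise characterisation of relative ampleness (Lazarsfeld's Theorem~1.2.17, going back to EGA~III~4.7.1), exactly as you indicate. Your identification of that fibrewise criterion as the genuinely nontrivial ingredient is also accurate, and your proposed alternative via Noetherian induction on the base is the other standard path. Nothing to fix.
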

\noindent In particular, in complex dimension two, item (ii) of Definition \ref{cannon} implies that the exceptional set of the canonical model cannot contain any $(-1)$ or $(-2)$-curves.

In our case, $C_{0}$ will be a K\"ahler cone, hence is affine algebraic, and $x$ will be the apex of the cone. As the next lemma shows, the canonical model of $C_{0}$ is quasi-projective.
\begin{lemma}\label{quasi}
Let $\pi:M\to C_{0}$ be the canonical model of a K\"ahler cone $C_{0}$. Then $M$ is quasi-projective.
\end{lemma}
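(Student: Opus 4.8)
The plan is to exploit the fact that $C_0$ is affine algebraic (Theorem \ref{t:affine}), that canonical models are unique over $C_0$, and that the canonical model $\pi : M \to C_0$ is constructed via the MMP (Mori theory / \cite{BCHM}), which produces projective morphisms. Concretely, $C_0$ embeds as a normal affine variety $V \subseteq \mathbb{C}^N$ with a single singular point at the apex; in particular $C_0$ is quasi-projective, say $C_0 \subseteq \overline{C_0} \subseteq \mathbb{P}^N$. The essential point is that the morphism $\pi$ from the canonical model is \emph{projective}: the existence statement in \cite{BCHM} produces the relative canonical model $\operatorname{Proj}_{C_0} \bigoplus_{m \geq 0} \pi_*'\mathcal{O}(m K_{M'})$ (for any resolution $M'$ with $\pi'$ the resolution map), and a $\operatorname{Proj}$ of a sheaf of graded algebras over an affine (hence quasi-projective) base is projective over that base. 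By the uniqueness of the canonical model over $C_0$ (\cite[Proposition 8.2.5]{ishii}), $M$ is isomorphic over $C_0$ to this relative $\operatorname{Proj}$, so $\pi : M \to C_0$ is a projective morphism.

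The remaining step is purely formal: a variety admitting a projective morphism to a quasi-projective variety is itself quasi-projective. Indeed, if $\pi : M \to C_0$ is projective then $M$ admits a closed immersion into $\mathbb{P}^r \times C_0$ for some $r$; composing with $\mathbb{P}^r \times C_0 \hookrightarrow \mathbb{P}^r \times \mathbb{P}^N$ and then the Segre embedding $\mathbb{P}^r \times \mathbb{P}^N \hookrightarrow \mathbb{P}^{(r+1)(N+1)-1}$ realises $M$ as a locally closed subvariety of projective space. Hence $M$ is quasi-projective. (Equivalently, one can note that $K_M$ is $\pi$-ample by Definition \ref{cannon}(ii), so $K_M$ is relatively ample over the quasi-projective base $C_0$, and a line bundle relatively ample over a quasi-projective base can be twisted by the pullback of an ample bundle on a projective compactification to become genuinely ample; this again yields quasi-projectivity, although one must be slightly careful since the compactification of $C_0$ need not be smooth.)

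I expect no serious obstacle here; the content is essentially bookkeeping with the definitions. The one point requiring a little care is making precise that the canonical model, \emph{as constructed}, comes with a projective structure map rather than merely a proper one — this is where one must invoke the $\operatorname{Proj}$ construction underlying \cite{BCHM} (or Definition \ref{cannon}(ii) together with relative ampleness of $K_M$) rather than Definition \ref{cannon}(i)–(ii) read naively. Once projectivity of $\pi$ is in hand, quasi-projectivity of $M$ follows from quasi-projectivity of $C_0$, which is Theorem \ref{t:affine}.
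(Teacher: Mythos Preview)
Your argument is correct but takes a different route from the paper. You observe that $\pi$ is a projective morphism (either via the relative $\operatorname{Proj}$ underlying \cite{BCHM} or directly from the $\pi$-ampleness of $K_M$ in Definition~\ref{cannon}(ii)) and then invoke the standard fact that projective over quasi-projective is quasi-projective, using Theorem~\ref{t:affine} for the base. The paper instead compactifies first: it takes the weighted projective closure $\overline{C_0}$ of $C_0$ using the $\mathbb{C}^*$-action of Theorem~\ref{t:affine}, adding an ample divisor $D$ at infinity along which $\overline{C_0}$ has at worst orbifold singularities; it then takes the canonical model $\sigma:\overline{N}\to\overline{C_0}$ of the compactification, which is projective, and notes that restricting $\sigma$ over $C_0$ yields a canonical model of $C_0$, hence is $M$ by uniqueness. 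Thus $\overline{N}$ is an explicit projective compactification of $M$.

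Your approach is shorter and avoids constructing anything beyond what the definitions already give. The paper's approach, on the other hand, produces a concrete projective compactification with a specific boundary divisor $\sigma^{-1}(D)$; this extra structure is not incidental, as the compactification of $C_0$ by $D$ is reused later in the proof of Proposition~\ref{quasi2}.
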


\begin{proof}
From Theorem \ref{t:affine}, we see that $C_{0}$ admits an affine embedding that is invariant under a $\mathbb{C}^{*}$-action with positive integer weights. Taking the weighted projective closure of $C_{0}$ with respect to this action, we obtain a projective compactification $\overline{C_{0}}$ of $C_{0}$ by adding an ample divisor $D$ at infinity. In particular, $\overline{C_{0}}$ will have at worst orbifold singularities along $D$. Let $\sigma:\overline{N}\to\overline{C_{0}}$ denote the canonical model of $\overline{C_{0}}$. By construction, $\overline{N}$ is projective and the restricted map $\sigma|_{N}:N\to C_{0}$, where $N:=\overline{N}\setminus\sigma^{-1}(D)$, is a canonical model of $C_{0}$. By uniqueness of canonical models, $M$ must be biholomorphic to $N$, hence $\overline{N}$ provides a projective compactification of $M$ obtained by adjoining the set
$\sigma^{-1}(D)$ to $M$ at infinity. $M$ is therefore quasi-projective, as claimed.
\end{proof}

In addition, uniqueness of the canonical model implies that
the canonical model of a K\"ahler cone is equivariant with respect to the torus action on $C_{0}$ generated
by the flow of $J_{0}r\partial_{r}$ when the canonical model is smooth.

\begin{lemma}\label{liftt}
Let $C_{0}$ be a K\"ahler cone with complex structure $J_{0}$ and radial function $r$
and let $\pi:M\to C_{0}$ denote the canonical model of $C_{0}$. If $M$ is smooth, then the resolution $\pi:M\to C_{0}$ is equivariant
with respect to the holomorphic isometric torus action on $C_{0}$ generated by $J_{0}r\partial_{r}$. In particular, there
exists a holomorphic vector field $X$ on $M$ such that $d\pi(X)=r\partial_{r}$.
\end{lemma}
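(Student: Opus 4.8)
The plan is to exploit the uniqueness of the canonical model, \cite[Proposition 8.2.5]{ishii}, together with the smoothness of $M$, to reduce the statement to a lifting problem for automorphisms of $C_0$. Write $o$ for the apex of $C_0$ and $E=\pi^{-1}(o)$ for the exceptional set of $\pi$. Since the singularity of $C_0$ is isolated at $o$, the partial resolution $\pi$ restricts to a biholomorphism of $M\setminus E$ onto $C_0\setminus\{o\}$, and because $M$ is smooth and connected while $\pi$ is proper and birational, $E$ is a nowhere dense analytic subset of $M$. The real torus $T$ generated by the flow of $J_0 r\partial_r$ acts by automorphisms of $(C_0,J_0)$ fixing $o$, and so does the one-parameter dilation group $\delta_t(r,x)=(e^t r,x)$; the goal is to lift all of these to $M$ compatibly with $\pi$.

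The first step is to observe that for \emph{any} automorphism $\sigma$ of $(C_0,J_0)$ — note $\sigma(o)=o$ automatically, $o$ being the only singular point — the composition $\sigma\circ\pi\colon M\to C_0$ is again a canonical model of $C_0$: it is a partial resolution of the isolated singularity $o$, $M$ is smooth and in particular has at worst canonical singularities, and $K_M$ is $(\sigma\circ\pi)$-ample because relative ampleness is unaffected by an isomorphism of the base. By uniqueness of the canonical model there is then a biholomorphism $\hat\sigma\colon M\to M$ over $C_0$, i.e.\ $\pi\circ\hat\sigma=\sigma\circ\pi$. The second step — the one where smoothness of $M$ is essential — is that $\hat\sigma$ is uniquely determined by $\sigma$: any automorphism $\psi$ of $M$ with $\pi\circ\psi=\pi$ restricts to the identity on $M\setminus E$, where $\pi$ is injective, hence equals the identity on all of $M$ by the identity theorem on the connected complex manifold $M$; thus $\operatorname{Aut}(M/C_0)$ is trivial, and two lifts of $\sigma$ differ by an element of it and therefore coincide. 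Uniqueness of the lift immediately upgrades $\sigma\mapsto\hat\sigma$ to a group homomorphism from the automorphism group of $(C_0,J_0)$ into that of $(M,J)$, since $\widehat{\sigma\tau}$ and $\hat\sigma\circ\hat\tau$ are both lifts of $\sigma\tau$, and likewise $\widehat{\mathrm{id}}=\mathrm{id}$.

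The third step is to check holomorphic dependence on the group parameter. On $M\setminus E$ the lift is simply $\hat\sigma=(\pi|_{M\setminus E})^{-1}\circ\sigma\circ(\pi|_{M\setminus E})$, which depends real-analytically on $\sigma$ as $\sigma$ ranges over a Lie subgroup acting real-analytically on $C_0$, in particular over $T$ or over $\{\delta_t\}$; since each $\hat\sigma$ preserves $E$ (by Lemma \ref{auto}, or directly because $\hat\sigma(E)=\pi^{-1}(\sigma(o))=E$) and $E$ is nowhere dense, the action extends real-analytically across $E$. Restricting to $T$ yields the asserted holomorphic torus action on $M$ with $\pi\circ\hat\sigma=\sigma\circ\pi$ for all $\sigma\in T$, i.e.\ $\pi$ is $T$-equivariant. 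For the final clause, I would instead restrict the homomorphism to the dilation group $\{\delta_t\}_{t\in\mathbb R}$, obtaining a one-parameter group $\{\hat\delta_t\}$ of biholomorphisms of $M$ and hence a complete real holomorphic vector field $X:=\tfrac{d}{dt}\big|_{t=0}\hat\delta_t$; differentiating $\pi\circ\hat\delta_t=\delta_t\circ\pi$ at $t=0$ gives $d\pi(X)=\tfrac{d}{dt}\big|_{t=0}\delta_t=r\partial_r$.

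The main obstacle is the uniqueness of the lift, equivalently the triviality of $\operatorname{Aut}(M/C_0)$: without it the family $\{\hat\sigma\}$ would only be a ``pseudo-action'' and might fail to assemble into a genuine $T$-action or a one-parameter group of dilations, and it is precisely at this point that the smoothness hypothesis on the canonical model enters, via the identity theorem on the connected manifold $M$. Recognising $\sigma\circ\pi$ as a canonical model, deducing the group-homomorphism property, and verifying the real-analytic dependence on the parameter are then all formal. (Alternatively, one can sidestep the uniqueness-of-lift bookkeeping by writing $M=\operatorname{Proj}_{C_0}$ of the relative canonical algebra $\bigoplus_{m\geq 0}\pi_*\mathcal{O}_M(mK_M)$, which is intrinsic to $C_0$ and therefore carries a natural action of every automorphism of $C_0$; but the route above stays closer to the tools already recorded.)
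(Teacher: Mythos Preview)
Your overall strategy matches the paper's: recognize $\sigma\circ\pi$ as another canonical model, use uniqueness to produce a lift $\hat\sigma$, note that the lift is unique because the identity theorem forces $\operatorname{Aut}(M/C_0)=\{\operatorname{id}\}$, and conclude that $\sigma\mapsto\hat\sigma$ is a group homomorphism. The paper does exactly this, working with the complexified torus $T_{\mathbb C}$ rather than with $T$ and the dilation group separately.

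The genuine gap is in your third step. You write that ``since each $\hat\sigma$ preserves $E$ and $E$ is nowhere dense, the action extends real-analytically across $E$.'' But knowing that the set map $\phi\colon G\times M\to M$ is defined everywhere, is real-analytic on $G\times(M\setminus E)$, and is holomorphic in the $M$-variable for each fixed $g$, does \emph{not} by itself yield joint regularity across $G\times E$; neither density of $M\setminus E$ nor invariance of $E$ under each $\hat\sigma$ gives you this. You have also mis-identified the main obstacle: uniqueness of the lift is the easy part (as you correctly argue, it is just the identity theorem), while joint regularity of $(g,z)\mapsto\hat g(z)$ near $E$ is where the real work lies.

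This is precisely the point the paper addresses at length. Working with $G=T_{\mathbb C}$ so that the problem becomes one of \emph{holomorphic} extension, the paper fixes $(h,x)$ with $x\in E$, uses continuity of each individual $\widetilde{\psi}_{h'}$ and of $\phi$ on $T_{\mathbb C}\times(M\setminus E)$ to trap the image of a small product neighborhood in a coordinate ball $B_y$, and then foliates a neighborhood of $E$ by punctured holomorphic discs coming from a $\mathbb C^*$-action inside $T_{\mathbb C}$. The maximum principle on these discs yields a uniform bound on $\phi$ over $U\times V$, after which the Riemann extension theorem gives holomorphicity across $E$. The $\mathbb C^*$-orbit structure of the K\"ahler cone is used in an essential way to manufacture the discs on which the maximum principle is applied. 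Your parenthetical alternative via $M=\operatorname{Proj}_{C_0}\bigoplus_{m\geq 0}\pi_*\mathcal O_M(mK_M)$ is a legitimate route in the algebraic category and would indeed bypass this analytic difficulty, but it is not the argument you develop.
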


\begin{proof}
Let $T$ denote the torus generated by the flow of $J_{0}r\partial_{r}$ and let $T_{\mathbb{C}}$
denote its complexification. We will show that the holomorphic action of $T_{\C}$ on $C_{0}$ lifts to a holomorphic action on $M$.

For $h\in T_{\C}$, let $\psi_{h}:C_{0}\to C_{0}$ denote the corresponding automorphism.
Since $\psi_{h}\circ\pi:M\to C_{0}$ is again a canonical model and since
$\pi:M\to C_{0}$ is unique up to isomorphisms over $C_{0}$ \cite[Proposition 8.2.5]{ishii},
there exists a unique biholomorphism $\widetilde{\psi}_{h}:M\to M$ such that $\pi\circ\widetilde{\psi}_{h}=\psi_{h}\circ\pi$.
This is the desired lift of $\psi_{h}$. Thus, we have a well-defined map $\phi:T_{\C}\times M\to M$
defined by $\phi(h,\,x)=\widetilde{\psi}_{h}(x)$.
Since $\widetilde{\psi}_{h}$ coincides with $\psi_{h}$ off of
the exceptional set $E$ of the resolution $\pi:M\to C_{0}$, $\phi|_{T_{\C}\times(M\setminus E)}$ is
holomorphic. We wish to show that $\phi$ is holomorphic globally.

To this end, let $h\in T_{\C}$, let $x\in E$, let $y=\widetilde{\psi}_{h}(x)\in E$, let $B_{x}$ be an open ball in
a chart containing $x$ with $x$ in its interior and let $B_{y}$ be an open ball in a chart containing
$y$ with $y$ in its interior. Since $\widetilde{\psi}_{h}$ is continuous, by shrinking $B_{x}$ if necessary, we may assume that
$\widetilde{\psi}_{h}(B_{x})\subset B_{y}$. Let $U$ be a neighbourhood of $h$ in $T_{\C}$ such that $\widetilde{\psi}_{h'}(B_{x}\setminus E)\subset B_{y}$
for all $h'\in U$. Again, this is possible because $\phi|_{T_{\C}\times(M\setminus E)}$ is continuous.
Then since $\widetilde{\psi}_{h'}:M\to M$ is itself continuous and preserves $E$ for each fixed $h'\in U$ and
since $B_{x}\cap E$ lies in the closure of $B_{x}\setminus E$,
we have, after shrinking $B_{x}$ further if necessary, that $\widetilde{\psi}_{h'}(B_{x})\subset B_{y}$ for all $h'\in U$.

Next, let $N_{\epsilon}:=\{x\in C_{0}:r(x)<\epsilon\}$ for $\epsilon>0$. Then $N_{\epsilon}\setminus\{o\}$ is
foliated by disjoint punctured discs, obtained as the orbits in $N_{\epsilon}\setminus\{o\}$ of a $\mathbb{C}^{*}$-action from within $T_{\C}$.
The open set $\widehat{N}_{\epsilon}:=\pi^{-1}(N_{\epsilon})$ will then be a neighbourhood of $E$ in $M$ with $\widehat{N}_{\epsilon}\setminus E$ foliated by
disjoint punctured discs. Let $B_{x}'\subset B_{x}$ be an open ball containing $x$ strictly contained in $B_{x}$
such that $\partial B_{x}\cap\partial B_{x}'=\emptyset$ and let $\epsilon>0$
be sufficiently small so that each point of $(\widehat{N}_{\epsilon}\cap B_{x}')\setminus E$ is contained in a punctured holomorphic disk of radius $\epsilon$ which itself is contained in $(\widehat{N}_{\epsilon}\cap B_{x})\setminus E$.
Let $V:=\widehat{N}_{\epsilon}\cap B_{x}'$. Then this is an open neighbourhood of $x$ in $M$ and each point $z\in V\setminus E$ will
lie on a unique punctured holomorphic disk which we shall denote by $D_{z}$. We have that $D_{z}\subseteq(\widehat{N}_{\epsilon}\cap B_{x})\setminus E$
and that $\partial D_{z}\subseteq B_{x}\cap\partial\widehat{N}_{\epsilon}\subset M\setminus E$. Let $\overline{D}_{z}$ denote the closure of $D_{z}$ in $M$.
Since $\widetilde{\psi}_{h'}:M\to M$ is holomorphic for each fixed $h'\in U$, we see
from the maximum principle that for all $z\in V\setminus E$ and all $h'\in U$,
$$|\widetilde{\psi}_{h'}(z)|\leq\sup_{w\,\in\,\overline{D}_{z}}|\widetilde{\psi}_{h'}(w)|\leq\sup_{\{w\,\in\,\partial D_{z}\}}|\widetilde{\psi}_{h'}(w)|
\leq\sup_{\{w\,\in\,B_{x}\cap\partial\widehat{N}_{\epsilon}\}}|\phi(h',\,w)|\leq C$$
for some constant $C>0$, where the last inequality follows from the fact that
$\phi|_{T_{\C}\times(M\setminus E)}$ is holomorphic, hence continuous.
Thus, $\phi|_{U\times(V\setminus E)}$
is a bounded holomorphic function. Since $E\cap V$ is an analytic subset of $V$, it follows from the Riemann
Extension Theorem that $\phi|_{U\times(V\setminus E)}$ has a unique extension to a holomorphic function
$\widetilde{\phi}:U\times V\to B_{y}$. Due to the fact that $\phi(h',\,\cdot):M\to M$
is holomorphic for each fixed $h'\in T_{\mathbb{C}}$, we have from uniqueness of holomorphic extensions
that $\widetilde{\phi}(h',\,\cdot)=\phi(h',\,\cdot):V\to B_{y}$ for all $h'\in U$ so that
in fact $\widetilde{\phi}=\phi|_{U\times V}$. Thus, we see that $\phi|_{U\times V}$ is holomorphic.
Since being holomorphic is a local property, this suffices to show that $\phi:T_{\mathbb{C}}\times M\to M$ is holomorphic, as desired.
\end{proof}

\subsection{Minimal models}

We consider two types of resolution of a normal isolated singularity of complex dimension two, the first being the minimal resolution.
\begin{definition}[{\cite[Definition 7.1.14]{ishii}}]
A resolution $\pi:M\to C_{0}$ of a normal isolated singularity $x\in C_{0}$ is called a
\emph{minimal resolution} if for every resolution $\pi':M'\to C_{0}$ of $C_{0}$ there exists a unique
morphism $\varphi:M'\to M$ such that $\pi'$ factors as $\pi'=\pi\circ\varphi$.
\end{definition}

By definition, if there exists a minimal resolution, then it is unique up to isomorphisms over $C_{0}$. The following shows that there exists a minimal resolution for
two-dimensional isolated normal singularities.
\begin{theorem}[{\cite[Theorem 7.1.15]{ishii}}]\label{minn}
Assume that $\operatorname{dim}_{\mathbb{C}}C_{0}= 2$. A resolution $\pi:M\to C_{0}$
of an isolated normal singularity $x\in C_{0}$ is the minimal resolution if and only if $\pi^{-1}(\{x\})$ does not contain a
$(-1)$-curve. In particular, there exists a minimal resolution.
\end{theorem}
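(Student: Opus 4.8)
The plan is to reduce the statement to two classical facts about smooth algebraic surfaces: Castelnuovo's contractibility criterion (a smooth rational curve $E$ with $E^{2}=-1$ on a smooth surface can be contracted to a smooth point, producing again a smooth surface) and the structure theorem for birational morphisms of smooth surfaces (any proper birational morphism $f\colon X\to Y$ of smooth surfaces is a composition of blow-downs of $(-1)$-curves, with $\rho(X)=\rho(Y)+(\text{number of contractions})$). I will also use throughout that for a resolution $\pi\colon M\to C_{0}$ of a normal surface singularity $x\in C_{0}$, the intersection matrix of the irreducible components of $\pi^{-1}(x)$ is negative definite (Mumford), so that every effective divisor supported on $\pi^{-1}(x)$ has strictly negative self-intersection.

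\emph{Step 1: a resolution with no $(-1)$-curve over $x$ exists.} Starting from any resolution of $C_{0}$, whenever the fibre over $x$ contains a $(-1)$-curve $E$ I would contract $E$ by Castelnuovo's criterion; since the resolution map is constant on $E$ it descends to a resolution of $C_{0}$ whose fibre over $x$ has one fewer irreducible component. Iterating, after finitely many steps one arrives at a resolution $\pi_{0}\colon M_{0}\to C_{0}$ with $\pi_{0}^{-1}(x)$ containing no $(-1)$-curve.

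\emph{Step 2: any such $\pi_{0}$ satisfies the universal property of the minimal resolution.} Let $\pi'\colon M'\to C_{0}$ be an arbitrary resolution; a morphism $\varphi\colon M'\to M_{0}$ with $\pi_{0}\circ\varphi=\pi'$ is unique if it exists, since any two such agree on the dense open set $\pi'^{-1}(C_{0}\setminus\{x\})$. For existence, I would choose a smooth surface $Z$ with proper birational morphisms $g\colon Z\to M'$ and $h\colon Z\to M_{0}$ agreeing over $C_{0}$ (that is, $\pi'\circ g=\pi_{0}\circ h$, this common map being an isomorphism over $C_{0}\setminus\{x\}$; such $Z$ is obtained by resolving the closure of the graph of $\pi_{0}^{-1}\circ\pi'$), and among all such $(Z,g,h)$ take one with $\rho(Z)$ minimal. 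If $g$ is an isomorphism, $\varphi:=h\circ g^{-1}$ works. If not, then by the structure theorem $g=g_{1}\circ c$ where $c\colon Z\to Z_{1}$ blows down a $(-1)$-curve $E\subset Z$. Should $h$ also contract $E$, then $h$ factors as $h_{1}\circ c$ (the graph closure of $h\circ c^{-1}$ has the point $h(E)$ as its fibre over $c(E)$, so it is a morphism there by Zariski's main theorem), and $(Z_{1},g_{1},h_{1})$ would contradict minimality of $\rho(Z)$. So $h$ does not contract $E$, and $D:=h(E)$ is a curve with $\pi_{0}(D)=\pi'(g(E))=x$; hence $D\subseteq\pi_{0}^{-1}(x)$ and $D^{2}<0$. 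But $h$ is a composition of blow-downs, so $D^{2}\geq E^{2}=-1$; therefore $D^{2}=-1=E^{2}$, which forces the successive images of $E$ to be disjoint from the $(-1)$-curves contracted by $h$, so $h$ restricts to an isomorphism $E\xrightarrow{\,\sim\,}D$ and $D$ is a $(-1)$-curve inside $\pi_{0}^{-1}(x)$ — contradicting Step 1. Hence $g$ is an isomorphism and $\varphi$ exists.

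\emph{Step 3: conclusion.} By Steps 1 and 2 the minimal resolution exists, given by $\pi_{0}$. A resolution $\pi\colon M\to C_{0}$ is the minimal resolution if and only if $\pi^{-1}(x)$ contains no $(-1)$-curve: if $\pi$ is minimal it is isomorphic to $\pi_{0}$ over $C_{0}$, so its fibre over $x$ has no $(-1)$-curve; conversely, if $\pi^{-1}(x)$ has no $(-1)$-curve, applying Step 2 both to $\pi_{0}$ and to $\pi$ gives morphisms $M\to M_{0}$ and $M_{0}\to M$ over $C_{0}$ whose two composites agree with the identity on a dense open set, hence are the identity, so $M\cong M_{0}$ over $C_{0}$ and $\pi$ is minimal. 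The main obstacle is Step 2, and specifically the case in which $h$ does not contract $E$: the delicate point is confirming, via the negative-definiteness of the exceptional intersection form, that the forward image $h(E)$ is once more a \emph{smooth} $(-1)$-curve rather than a singular rational curve, which is what makes the contradiction with Step 1 go through.
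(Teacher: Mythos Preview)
The paper does not prove this theorem; it is quoted without proof from Ishii's book \cite[Theorem 7.1.15]{ishii} and used as a black box. There is therefore no ``paper's own proof'' to compare against.

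Your argument is correct and is essentially the standard one. The only point worth flagging is the inequality $D^{2}\geq E^{2}$ in Step~2: you should make explicit that if $\sigma$ is the blow-down of a $(-1)$-curve $F$ and $E\neq F$, then $\sigma(E)^{2}=E^{2}+(E\cdot F)^{2}$, so the self-intersection is non-decreasing under each blow-down in the factorisation of $h$, and equality throughout forces $E$ to miss every centre. With that spelled out, the conclusion that $D=h(E)$ is a smooth rational $(-1)$-curve (and not merely a curve with $D^{2}=-1$) is fully justified, and the contradiction with the hypothesis on $\pi_{0}$ goes through.
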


We also consider ``good'' resolutions. We henceforth follow \cite{orlik, pinkhamm}.
\begin{definition}
A resolution $\pi:M\to C_{0}$ of a normal isolated surface singularity $x\in C_{0}$ is called \emph{good} if
\begin{enumerate}
  \item all of the components of the exceptional divisor of $\pi:M\to C_{0}$ are smooth and intersect transversally;
  \item not more than two components pass through any given point;
  \item two different components intersect at most once.
\end{enumerate}
\end{definition}

It is known that there is a unique resolution which is minimal among all good resolutions for two-dimensional isolated normal singularities \cite[Theorem 5.12]{laufer} which we henceforth refer to as the ``minimal good resolution'' (not to be confused with the ``minimal resolution''). In general, the minimal resolution of a two-dimensional isolated normal singularity will not be the minimal good resolution of the singularity; by Theorem \ref{minn}, the two coincide precisely when the minimal good resolution does not contain any $(-1)$-curves.

For the K\"ahler cones of Theorem \ref{classs}(ii), the minimal good resolution is given by $\pi:L\to L^{\times}$, that is, by contracting
the zero section of $L$. By adjunction, this resolution will be the canonical model of the singularity.

As for the K\"ahler cones of Theorem \ref{classs}(iii), the situation is slightly more complicated. The minimal good resolution of $L^{\times}/G$ is obtained as follows. A partial resolution is given by the induced map $\pi:L/G\to L^{\times}/G$ between the quotient spaces. The variety $L/G$ will only have isolated cyclic quotient singularities along the exceptional set of $\pi$ \cite[Lemma 3.5]{pinkhamm}, which comprises a single curve $C$ of genus $g>0$.
Each of these cyclic quotient singularities has a minimal resolution with exceptional set a string of $\mathbb{P}^{1}$'s.
Resolving them with this resolution then yields the minimal good resolution of $L^{\times}/G$, the exceptional set of which will then comprise the curve $C$ (of genus $g>0$) with branches of $\mathbb{P}^{1}$'s stemming from finitely many points of $C$. The original singularity is determined up to analytic isomorphism by this data which can be succinctly stored in a ``weighted dual graph''. This we now explain.

The \emph{weighted dual graph} of a good resolution is a graph each vertex of which represents a component of the exceptional divisor, weighted by
self-intersection. Two vertices are connected if the corresponding components intersect.

In our case, the weighted dual graph of the minimal good resolution of $L^{\times}/G$ is represented by a \emph{star}, that is, a connected tree where at most
one vertex is connected to no more than two other vertices. $C$ itself is contained in the exceptional set of the minimal good resolution,
hence one of the vertices of the star will represent $C$. We call $C$ the \emph{central curve}. The
connected components of the graph minus the central curve are called the \emph{branches} of the graph and are indexed by
$i$, $1\leq i \leq n$. The curves of the $i$-th branch are denoted by $C_{ij}$, $1\leq j\leq r_{i}$, where $C_{i1}$ intersects $C$ and $C_{ij}$
intersects $C_{i,\,j+1}$. Let $b=-C.C$ and $b_{ij}=-C_{ij}.C_{ij}$. Then $b_{ij}\geq 2$ and $b\geq 1$. Finally, set
$$\frac{d_{i}}{e_{i}}=b_{i1}-\cfrac{1}{b_{i2}-\cfrac{1}{{\ddots}\cfrac{1}{b_{ir_{i}}}}}$$
with $e_{i}<d_{i}$ and $e_{i}$ and $d_{i}$ relatively prime. Then one has:
\begin{theorem}[{\cite[Theorem 2.1]{pinkhamm}}]\label{keylargo}
The singularity $L^{\times}/G$ is determined up to analytic isomorphism by the following data:
\begin{enumerate}
  \item The weighted dual graph of the minimal good resolution.
  \item The analytic type of the central curve $C$ (of genus $g>0$).
  \item The conormal bundle of $C$ in the resolution.
  \item The $n$ points $P_{i}=C\cap C_{i1}$ on $C$.
\end{enumerate}
Conversely, given any data as above, there exists a unique singularity of the form
$L^{\times}/G$ having this data, provided that the intersection matrix given by the graph in (i) is
negative-definite; this condition can be written as $$b-\sum_{i=1}^{n}\frac{e_{i}}{d_{i}}>0.$$
\end{theorem}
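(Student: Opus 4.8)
The plan is to derive this from Pinkham's structure theory for normal surface singularities carrying a good $\mathbb{C}^{\ast}$-action, by routing everything through the Dolgachev--Pinkham--Demazure (DPD) description of such singularities in terms of graded rings. First I would note that the fibrewise scaling action of $\mathbb{C}^{\ast}$ on the total space of $L$ commutes with the $G$-action --- $G$ acts by line-bundle automorphisms since $L$ is $G$-invariant --- so it descends to a good $\mathbb{C}^{\ast}$-action on $X:=L^{\times}/G$ whose only fixed point is the apex. By the DPD correspondence, $X$ is recovered up to analytic isomorphism from its graded coordinate ring $R=\bigoplus_{m\ge 0}R_{m}$, and $R\cong R(C,D):=\bigoplus_{m\ge 0}H^{0}\!\left(C,\mathcal{O}_{C}(\lfloor mD\rfloor)\right)$ for a unique smooth projective curve $C$ and a unique $\mathbb{Q}$-divisor $D=D_{0}+\sum_{i=1}^{n}\tfrac{e_{i}}{d_{i}}P_{i}$ with the $P_{i}$ distinct, $0<e_{i}<d_{i}$ coprime, and $\deg D<0$; here $D_{0}=\lfloor D\rfloor$ is an honest divisor.

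The heart of the matter is a dictionary between the pair $(C,D)$ and the four items in the statement. I would compute $(C,D)$ for $X=L^{\times}/G$ using the partial resolution $L/G\to L^{\times}/G$ described above: its exceptional set is the central curve $C$, of genus $g>0$, carrying one isolated cyclic quotient singularity of $L/G$ over each point of the base with non-trivial $G$-isotropy; the fractional coefficient $\tfrac{e_{i}}{d_{i}}$ of $D$ at $P_{i}$ is read off the $i$-th such singularity via its Hirzebruch--Jung continued fraction $d_{i}/e_{i}=b_{i1}-1/(b_{i2}-\cdots-1/b_{ir_{i}})$, the integral part $D_{0}$ has class the normal bundle of $C$ in the resolution (degree $-b$, equivalently the conormal bundle of item (iii)), and the $P_{i}$ are the marked points of item (iv). By the Orlik--Wagreich description, the minimal good resolution of a good $\mathbb{C}^{\ast}$-singularity is star-shaped with exactly this structure --- central curve $C$ of self-intersection $-b$, and $i$-th branch the Hirzebruch--Jung string resolving the cyclic quotient singularity recorded by $d_{i}/e_{i}$ --- so the weighted dual graph (item (i)) is equivalent to the knowledge of $b$ together with the fractional parts of $D$. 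Hence items (i)--(iv) determine $(C,D)$, thus $R$, thus --- since a good $\mathbb{C}^{\ast}$-action on an isolated singularity germ is essentially unique and so the germ is determined by its graded ring --- the analytic type of $L^{\times}/G$. This proves the first assertion.

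For the converse I would run the construction backwards: from items (i)--(iv) set $d_{i}/e_{i}$ from the branches, $b=-C.C$, $D_{0}$ the divisor class of the prescribed conormal bundle (degree $-b$), $P_{i}$ the prescribed points, and $D=D_{0}+\sum_{i}\tfrac{e_{i}}{d_{i}}P_{i}$. The quoted criterion $b-\sum_{i}e_{i}/d_{i}>0$ says exactly that $\deg D<0$, and under it $R(C,D)$ is a finitely generated normal graded ring, so $X:=\operatorname{Spec}R(C,D)$ is a normal surface singularity, and blowing up the vertex and then resolving each resulting cyclic quotient singularity produces the prescribed star as its minimal good resolution. What remains --- and what I expect to be the main obstacle --- is to exhibit $X$ in the form $L^{\times}/G$ with $G$ finite acting freely off the apex. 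Here I would take a finite cyclic (or abelian) Galois cover $\varpi:\widetilde{C}\to C$ ramified to order $d_{i}$ over $P_{i}$ (which exists by the Riemann existence theorem, after raising the degree if necessary, and automatically has genus $>0$ since $g>0$); then $\varpi^{\ast}D$ is an honest divisor of negative degree, so $R(\widetilde{C},\varpi^{\ast}D)$ is the coordinate ring of the cone $\widetilde{L}^{\times}$ over a genuine negative line bundle on $\widetilde{C}$, and the identity $(\varpi_{\ast}\mathcal{O}_{\widetilde{C}}(m\varpi^{\ast}D))^{G}=\mathcal{O}_{C}(\lfloor mD\rfloor)$ gives $R(C,D)=R(\widetilde{C},\varpi^{\ast}D)^{G}$, hence $X=\widetilde{L}^{\times}/G$ with $G=\operatorname{Gal}(\varpi)$ acting by bundle automorphisms; the action is free away from the apex because $\gcd(e_{i},d_{i})=1$ forces the isotropy characters on the fibres over the ramification points to be faithful. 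Uniqueness of $X$ given the data is then the dictionary of the previous paragraph read in reverse. The delicate steps are thus (a) pinning down the DPD dictionary between $(C,D)$ and the resolution graph, in particular the identities $\deg D_{0}=-b$ and the Hirzebruch--Jung bookkeeping relating $d_{i}/e_{i}$ to the branch self-intersections and to the cyclic quotient type, and (b) the cover argument realising a general DPD singularity with positive-genus central curve as an $L^{\times}/G$ with controlled $G$-action; the passage from ``graded ring determines the germ algebraically'' to ``analytically'' uses the essential uniqueness of the torus action near an isolated fixed point, which is standard.
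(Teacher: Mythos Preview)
The paper does not prove this theorem; it is quoted from Pinkham with the citation \cite[Theorem 2.1]{pinkhamm} and used as a black box (in Proposition~\ref{sauce}). So there is no ``paper's own proof'' to compare against. Your outline is in fact Pinkham's own argument: pass to the good $\mathbb{C}^{*}$-action, invoke the Dolgachev--Pinkham--Demazure description $X\cong\operatorname{Spec}R(C,D)$, and translate $(C,D)$ into the star-shaped resolution graph via Orlik--Wagreich. That strategy is sound.

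There is, however, a genuine sign error in your dictionary. You set $R(C,D)=\bigoplus_{m\ge 0}H^{0}(C,\mathcal{O}_{C}(\lfloor mD\rfloor))$, identify $D_{0}=\lfloor D\rfloor$ with the \emph{normal} bundle of $C$ (degree $-b$), take fractional parts $+e_{i}/d_{i}$, and conclude that negative-definiteness reads $\deg D<0$. But with $\deg D<0$ on a curve of genus $g>0$ one has $H^{0}(C,\mathcal{O}_{C}(\lfloor mD\rfloor))=0$ for every $m>0$, so $R(C,D)=\mathbb{C}$ and $\operatorname{Spec}R(C,D)$ is a point, not a surface. The DPD construction requires $\deg D>0$. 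In Pinkham's conventions the integral part of $D$ is tied to the \emph{conormal} direction and the fractional coefficients enter with the opposite sign, so that $\deg D=b-\sum_{i}e_{i}/d_{i}>0$ is exactly the stated criterion. (Relatedly, you write ``normal bundle \ldots\ equivalently the conormal bundle of item (iii)'', conflating a line bundle with its dual.) This is a bookkeeping slip rather than a conceptual gap---you even flag the Hirzebruch--Jung bookkeeping as the delicate step---but as written the converse direction produces the zero ring, so the signs must be fixed before the argument goes through. Once corrected, your cover construction for realising $X$ as $L^{\times}/G$ is the right idea and matches what is done in \cite[Section 5]{pinkhamm}.
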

\noindent Indeed, the algorithm that recovers the algebraic equations cutting out $L^{\times}/G$ is laid out in \cite[Section 5]{pinkhamm}.
However, it does not identify the group $G$.

For the K\"ahler cones of Theorem \ref{classs}(iii),
the minimal good resolution does not contain any $(-1)$-curves, hence it coincides with the minimal model of the singularity.
Moreover, since the central curve has trivial or negative anti-canonical bundle, adjunction tells us that the canonical model is obtained
from the minimal good resolution by further contracting all of its $(-2)$-curves. However,
the result of this will be singular unless the minimal good resolution does not contain any $(-2)$-curves. Thus, the canonical model will
be smooth and coincide with the minimal good resolution if the minimal good resolution does not contain any $(-2)$-curves.
Conversely, if the canonical model of the singularity is smooth, then, since it cannot contain any $(-1)$-curves, it coincides with the minimal model which itself coincides with the minimal good resolution for the cones in question so that the minimal good resolution does not contain any $(-2)$-curves
since the canonical model cannot contain any $(-2)$-curves. Combining this observation with Theorem \ref{keylargo},
we are able to characterise those cones of Theorem \ref{classs}(iii) that admit a smooth canonical model.
\begin{prop}\label{sauce}
A K\"ahler cone of Theorem \ref{classs}(iii) admits a smooth canonical model if and only if the minimal good resolution does not contain any $(-2)$-curves.
These cones are in one-to-one correspondence with the data (i)-(iv) listed in Theorem \ref{keylargo}
with the intersection matrix of the graph in (i) being negative-definite and with the labels $b_{ij}$ of this graph being $\geq3$.
Moreover, the canonical model and the minimal resolution of such a cone are given by the minimal good resolution.
\end{prop}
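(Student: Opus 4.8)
The plan is to make precise the discussion preceding the statement, using the adjunction formula on the minimal good resolution together with Nakai's criterion for a mapping (Theorem~\ref{nakai}), the uniqueness of canonical models (\cite[Proposition~8.2.5]{ishii}), and Theorem~\ref{minn}. Fix such a cone $L^{\times}/G$ and its minimal good resolution $\pi\colon M\to L^{\times}/G$. By Theorem~\ref{keylargo} and the description of its weighted dual graph, the exceptional set of $\pi$ consists of the central curve $C$ of genus $g>0$ with $C^{2}=-b$, $b\geq 1$, together with the branch curves $C_{ij}\cong\mathbb{P}^{1}$ with $C_{ij}^{2}=-b_{ij}$, $b_{ij}\geq 2$. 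Since $M$ is smooth, adjunction gives $K_{M}\cdot C=2g-2+b>0$ (as $g\geq 1$ and $b\geq 1$) and $K_{M}\cdot C_{ij}=b_{ij}-2\geq 0$, with equality exactly when $b_{ij}=2$, i.e.\ exactly when $C_{ij}$ is a $(-2)$-curve. Moreover, since $C$ has positive genus and every $C_{ij}$ has self-intersection $\leq -2$, the fibre $\pi^{-1}(x)$ contains no $(-1)$-curve, so Theorem~\ref{minn} identifies $M$ with the minimal resolution of $L^{\times}/G$.

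Next I would run both directions of the equivalence through these numerical data. Since $M$ is smooth and its $\pi$-exceptional irreducible curves are exactly $C$ and the $C_{ij}$, Nakai's criterion (Theorem~\ref{nakai}) shows that $K_{M}$ is $\pi$-ample if and only if $K_{M}\cdot C>0$ and $K_{M}\cdot C_{ij}>0$ for all $i,j$; by the previous paragraph the former always holds and the latter holds exactly when every $b_{ij}\geq 3$, i.e.\ exactly when $M$ contains no $(-2)$-curve. When this is the case, $\pi\colon M\to L^{\times}/G$ satisfies the two conditions of Definition~\ref{cannon} (a smooth variety has at worst canonical singularities), so it is a canonical model, and by uniqueness it is \emph{the} canonical model, which is therefore smooth and coincides with the minimal good resolution and, by the first paragraph, with the minimal resolution. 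Conversely, if the canonical model $\rho\colon N\to L^{\times}/G$ is smooth, then $\rho$ is a resolution; as $K_{N}$ is $\rho$-ample and a $\rho$-exceptional $(-1)$-curve $E$ would satisfy $K_{N}\cdot E=-1<0$, the fibre $\rho^{-1}(x)$ contains no $(-1)$-curve, so $N$ is the minimal resolution by Theorem~\ref{minn}, hence $N=M$ by the first paragraph. Then $\rho$-ampleness of $K_{N}=K_{M}$ forces $K_{M}\cdot C_{ij}>0$, i.e.\ $M$ has no $(-2)$-curve. This proves the equivalence and the stated identifications of the canonical model and the minimal resolution with the minimal good resolution.

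Finally, the one-to-one correspondence follows by feeding the equivalence into Theorem~\ref{keylargo}, which puts the singularities $L^{\times}/G$ in bijection with the data (i)--(iv) whose intersection matrix is negative-definite: by the second paragraph, those admitting a smooth canonical model are precisely the ones whose minimal good resolution has no $(-2)$-curve, equivalently the ones with every $b_{ij}\geq 3$. I do not anticipate a serious obstacle, as the argument is an assembly of standard facts about surface singularities; the one point requiring care is the bookkeeping connecting ``smooth canonical model'' with ``no $(-2)$-curve in the minimal good resolution'' through the uniqueness of canonical models and of the minimal resolution — in particular, one never has to analyze the contraction of the $(-2)$-curves directly, since the forward direction only checks $K_{M}$-positivity on the exceptional components and the reverse direction only uses that $\rho$-ampleness of $K_{N}$ excludes $(-1)$- and $(-2)$-curves.
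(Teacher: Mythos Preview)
Your proposal is correct and follows essentially the same approach as the paper, which presents the argument in the paragraph immediately preceding the proposition: both use adjunction on the exceptional curves of the minimal good resolution, the absence of $(-1)$-curves to identify it with the minimal resolution via Theorem~\ref{minn}, and the $\pi$-ampleness of $K_M$ to characterise the canonical model. Your version is in fact slightly cleaner in the forward direction, since you verify directly via Nakai's criterion that the minimal good resolution is the canonical model when no $(-2)$-curves are present, whereas the paper phrases it as ``the canonical model is obtained from the minimal good resolution by contracting all $(-2)$-curves'' without further justification.
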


\newpage
\subsection{Ricci solitons}
The metrics we are interested in are the following.
\begin{definition}
A \emph{Ricci soliton} is a triple $(M,\,g,\,X)$, where $M$ is a Riemannian manifold with a complete Riemannian metric $g$
and a vector field $X$ satisfying the equation
\begin{equation}\label{soliton2}
\Ric(g)+\frac{1}{2}\mathcal{L}_{X}g=\frac{\lambda}{2}g
\end{equation}
for some $\lambda\in\{-1,\,0,\,1\}$.  We call $X$ the \emph{soliton vector field} and say that $(M,\,g,\,X)$
 is a \emph{gradient} Ricci soliton if $X=\nabla^{g} f$ for some real-valued smooth function $f$ on $M$.
  In this latter case, equation \eqref{soliton2} reduces to
\begin{equation}\label{soliton1}
\Ric(g)+\operatorname{Hess}_{g}(f)=\frac{\lambda}{2}g,
\end{equation}
where $\operatorname{Hess}_{g}$ denotes the Hessian with respect to $g$.

If $g$ is complete and K\"ahler with K\"ahler form $\omega$, then we say that $(M,\,g,\,X)$ is a \emph{gradient K\"ahler-Ricci soliton} if
$X=\nabla^{g} f$ for some real-valued smooth function $f$ on $M$, $X$ is complete and real holomorphic, and
\begin{equation}\label{krseqn}
\rho_{\omega}+i\partial\bar{\partial}f=\lambda\omega,
\end{equation}
where $\rho_{\omega}$ is the Ricci form of $\omega$ and $\lambda$ is as above. For gradient Ricci solitons and
gradient K\"ahler-Ricci solitons, the function $f$ satisfying $X=\nabla^g f$ is called the \emph{soliton potential}.

Finally, a Ricci soliton and a K\"ahler-Ricci soliton are said to be \emph{expanding} if $\lambda=-1$ and \emph{shrinking} if $\lambda=1$
in equations \eqref{soliton2} and \eqref{krseqn} respectively.

\end{definition}
Note that for a gradient K\"ahler-Ricci soliton $(M,\,g,\,X)$ with complex structure $J$, the vector field $JX$ is Killing
by \cite[Lemma 2.3.8]{fut2}. We also have the following asymptotics on the soliton potential of a complete expanding gradient Ricci soliton with quadratic Ricci curvature decay.
\begin{prop}\label{pot-fct-est}
Let $(M^n,g,\nabla^g f)$ be a complete expanding gradient Ricci soliton of real dimension $n$, i.e., $2\Ric(g)-\mathcal{L}_{\nabla^g f}(g)=-g$.
If $\Ric(g)=O(d_g(p,\,\cdot)^{-2})$, where $d_g(p,\,\cdot)$ denotes the distance to a fixed point $p\in M$,
then the function $(-f)$ is equivalent to $d_g(p,\,\cdot)^2/4$ as $d_g(p,\,\cdot)$ tends to $+\infty$.
\end{prop}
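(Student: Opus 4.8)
The plan is to convert the soliton equation into a pointwise second-order relation for the restriction of $f$ to geodesics issuing from $p$, and then to integrate it twice; the quadratic decay of $\Ric(g)$ will ensure that every error term grows at most linearly in $d_g(p,\,\cdot)$, hence is negligible against the leading term $d_g(p,\,\cdot)^2/4$.

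First I would record that the soliton equation gives $\operatorname{Hess}_g(-f)=\tfrac12 g+\Ric(g)$. Choose $R_0>0$ and $C>0$ with $|\Ric(g)|_g(y)\le C\,d_g(p,\,y)^{-2}$ whenever $d_g(p,\,y)\ge R_0$, and set $M_0:=\sup_{\overline{B(p,R_0)}}\bigl(|f|+|\nabla^g f|_g+|\Ric(g)|_g\bigr)$, which is finite because $\overline{B(p,R_0)}$ is compact by completeness of $g$. Fix $x\in M$ with $\rho:=d_g(p,\,x)\ge R_0$, let $\gamma\colon[0,\rho]\to M$ be a unit-speed minimizing geodesic from $p$ to $x$, and put $h(s):=-f(\gamma(s))$. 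Since subsegments of minimizing geodesics are minimizing, $d_g(p,\,\gamma(s))=s$ for all $s\in[0,\rho]$; feeding this into the Hessian identity yields
\begin{equation*}
h''(s)=\tfrac12+\Ric(g)(\gamma'(s),\,\gamma'(s)),\qquad\text{so}\qquad \bigl|h''(s)-\tfrac12\bigr|\le C\,s^{-2}\ \ (s\ge R_0),\quad |h''(s)|\le\tfrac12+M_0\ \ (s\le R_0).
\end{equation*}

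Integrating this identity once on $[R_0,\rho]$, using $|h'(R_0)|\le M_0$ and $\int_{R_0}^{\infty}C\,s^{-2}\,ds=C/R_0<\infty$, I obtain $\bigl|h'(s)-\tfrac{s}{2}\bigr|\le C_1$ for all $s\in[R_0,\rho]$, with $C_1$ depending only on $R_0$, $C$, $M_0$. Integrating once more on $[R_0,\rho]$ and using $|h(R_0)|\le M_0$ gives $\bigl|h(\rho)-\tfrac{\rho^2}{4}\bigr|\le C_2(1+\rho)$, again with $C_2$ independent of $x$. Since $x$ was arbitrary, this is the estimate
\begin{equation*}
\Bigl|\,(-f)(x)-\tfrac14\,d_g(p,\,x)^2\,\Bigr|\le C_2\bigl(1+d_g(p,\,x)\bigr)\qquad\text{whenever }d_g(p,\,x)\ge R_0,
\end{equation*}
and dividing by $\tfrac14 d_g(p,\,x)^2$ shows that $(-f)(x)/\bigl(\tfrac14 d_g(p,\,x)^2\bigr)\to 1$ as $d_g(p,\,x)\to\infty$, which is the asserted equivalence.

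There is no serious obstacle here beyond this bookkeeping; the one point on which the argument genuinely turns — and the reason the hypothesis is phrased as quadratic decay of $\Ric(g)$ measured against $d_g(p,\,\cdot)$ — is that along a minimizing geodesic from $p$ the arclength parameter $s$ coincides with $d_g(p,\,\cdot)$, so the error term $h''(s)-\tfrac12=\Ric(g)(\gamma'(s),\gamma'(s))$ decays like $s^{-2}$ and its iterated integral over $[R_0,\rho]$ grows only like $\rho$, rather than like $\rho^2$; this is exactly what lets the leading quadratic term survive the limit. Any decay rate faster than $d_g(p,\,\cdot)^{-1}$ would in fact be enough, but quadratic decay is the natural companion assumption in the setting of asymptotically conical geometry considered here.
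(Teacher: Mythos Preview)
Your argument is correct and is precisely the standard proof: restrict the Hessian identity coming from the soliton equation to a unit-speed minimizing geodesic from $p$, use that the arclength parameter equals $d_g(p,\cdot)$ along it so the Ricci error is $O(s^{-2})$, and integrate twice. The paper does not actually give a proof of this proposition; it simply cites \cite{Che-Der} and \cite[Lemma~4.2.1]{Siepmann}, where exactly this computation is carried out.

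One minor caveat on signs: if you take the equation in the proposition's statement literally, $2\Ric(g)-\mathcal{L}_{\nabla^g f}(g)=-g$ yields $\operatorname{Hess}_g(f)=\tfrac12 g+\Ric(g)$, not $\operatorname{Hess}_g(-f)$. The paper has an internal sign slip here---compare the original convention \eqref{soliton2} (under which your identity $\operatorname{Hess}_g(-f)=\tfrac12 g+\Ric(g)$ and the conclusion $(-f)\sim d^2/4$ are both correct) with the displayed bound \eqref{boundd} a few lines later, which is stated for $f$ under the flipped convention. Your reading is the one that makes the proposition's conclusion true; the analysis is of course identical either way.
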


\begin{proof}
See \cite{Che-Der} or {\cite[Lemma $4.2.1$] {Siepmann}}.
\end{proof}

Because of Proposition \ref{pot-fct-est}, we prefer to deal with an asymptotically positive soliton potential. Henceforth, an \textit{expanding} gradient Ricci soliton will be a triple $(M,\,g,\,X)$, where $X=\nabla^gf$ for some real-valued smooth function $f$ on $M$, such that the equation
\begin{equation}
\begin{split}
2\Ric(g)-\mathcal{L}_{X}g=-g\label{exp-sol-equ-riem}
\end{split}
\end{equation}
is satisfied. When the Ricci curvature of $g$ decays quadratically, the bound of Proposition \ref{pot-fct-est} on $f$ may then be given as
\begin{equation}\label{boundd}
\frac{d^{2}_{g}(p,\,x)}{4}-c_{1}d_{g}(p,\,x)-c_{2}\leq f(x)\leq\frac{d^{2}_{g}(p,\,x)}{4}+c_{1}d_{g}(p,\,x)+c_{2},
\end{equation}
where $p\in M$ is fixed and $c_{1}$ and $c_{2}$ are positive constants depending on $p$. In particular, $f$ is proper
under the assumption of quadratic Ricci curvature decay on the expanding soliton metric.

In the case of a shrinking gradient Ricci soliton, the quadratic growth of the soliton potential is always satisfied without further conditions on the decay of the Ricci tensor at infinity. More precisely, one has the following.
\begin{theorem}\label{theo-basic-prop-shrink}
Let $(M,\,g,\,X)$ be a complete non-compact shrinking gradient Ricci soliton satisfying \eqref{soliton2} with
$\lambda=1$ with soliton vector field $X=\nabla^{g}f$ for a smooth real-valued function $f:M\to\mathbb{R}$.
Then the following properties hold true.
\begin{enumerate}
\item \textnormal{(Growth of the soliton potential {\cite[Theorem 1.1]{caoo}}).}~For $x\in M$, $f$ satisfies the estimates
$$\frac{1}{4}(d_{g}(p,\,x)-c_{1})^{2}\leq f(x)\leq\frac{1}{4}(d_{g}(p,\,x)+c_{2})^{2},$$
where $d_{g}(p,\,\cdot)$ denotes the distance to a fixed point $p\in M$ with respect to $g$.
Here, $c_{1}$ and $c_{2}$ are positive constants depending only on the real dimension of $M$ and
the geometry of $g$ on the unit ball $B_{p}(1)$ based at $p$.\\

\item \textnormal{(Polynomial volume growth \cite[Theorem 1.2]{caoo}).}~For each $x\in M$, there exists a positive constant $C>0$
such that $$\operatorname{vol}_{g}(B_{r}(x))\leq Cr^{n}\quad\textrm{for $r > 0$ sufficiently large},$$
where $n=\operatorname{dim}_{\mathbb{R}}M$.\\

\item \textnormal{(Regularity at infinity).}~If the curvature tensor decays quadratically, i.e., if $A_0(g)<+\infty$, then the soliton metric has quadratic curvature decay with derivatives, i.e., $A_k(g)<+\infty$ for all $k\in \N$.
\end{enumerate}
\end{theorem}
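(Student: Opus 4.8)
The plan is immediate for (1) and (2) and needs real work for (3). Parts (1) and (2) are exactly the Cao--Zhou estimates cited in the statement, so nothing further is required there; assume henceforth that $A_{0}(g)<\infty$, which is the hypothesis of (3).

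First I would record two consequences of $A_{0}(g)<\infty$. Since $\Rm(g)$ is continuous, hence bounded, on the unit ball about $p$ and satisfies $|\Rm(g)|_{g}\leq A_{0}(g)\,d_{g}(p,\cdot)^{-2}$ outside it, the curvature of $g$ is globally bounded; and, by the structure results recalled in the introduction, $(M,g)$ is asymptotically conical with a fixed smooth link, so that $\inj_{g}(x)\geq c\,d_{g}(p,x)$ once $d_{g}(p,x)$ is large and $\nabla^{g}f$ is, to leading order along the end, one half of the Euler field of the tangent cone. I would \emph{not} run an elliptic bootstrap directly on the Lichnerowicz-type equation $\Delta\Rm-\langle\nabla^{g}f,\nabla\Rm\rangle=\Rm+\Rm\ast\Rm$ for the curvature, because the drift coefficient has size $|\nabla^{g}f|_{g}\sim d_{g}(p,\cdot)/2$, which scales badly on balls of bounded radius; instead I would pass to the self-similar Ricci flow and invoke Shi's local derivative estimates, as Deruelle does in the expanding case \cite{Der-Asy-Com-Egs} and as is carried out for shrinkers in \cite{munty}.

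Concretely: let $g(t)=(1-t)\psi_{t}^{*}g$, $t\in(-\infty,1)$, be the self-similar shrinking Ricci flow of $(M,g,\nabla^{g}f)$, where $\psi_{t}$ is the flow of $(1-t)^{-1}\nabla^{g}f$ with $\psi_{0}=\operatorname{id}$ (these diffeomorphisms exist for all $t<1$ because $\nabla^{g}f$ is complete). It is complete, has curvature norm $|\Rm(g(t))|_{g(t)}(y)=(1-t)^{-1}|\Rm(g)|_{g}(\psi_{t}(y))$, and this norm is bounded for $t\leq 0$. Now fix $x\in M$ with $R:=d_{g}(p,x)$ large and parabolically rescale: put $h(t):=R^{-2}g(R^{2}t)$ for $t\in[-1,0]$, a Ricci flow with $h(0)=R^{-2}g$. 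The crucial step is to show that $h(t)$ has curvature bounded independently of $R$ on a parabolic neighbourhood of $(x,0)$: since $\psi_{s}$ sends a point at distance $\rho$ from $p$ to one at distance $\approx\rho(1-s)^{-1/2}$, the point $\psi_{R^{2}t}(x)$ sits at distance $\approx R(1-R^{2}t)^{-1/2}\geq 1$ from $p$, whence $|\Rm(g)|_{g}(\psi_{R^{2}t}(x))\leq C\min\!\big(R^{-2}(1-R^{2}t),\,1\big)$, and multiplying by the two scaling factors $R^{2}$ and $(1-R^{2}t)^{-1}$ the $(1-R^{2}t)$ factors cancel and one is left with $|\Rm(h(t))|_{h(t)}\leq C$ on $B_{h(0)}(x,\tfrac12)\times[-1,0]$, uniformly in $R$. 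Together with the uniform lower bound $\inj_{h(0)}(x)\geq c_{0}$ inherited from the conical structure, Shi's local higher-derivative estimates for the Ricci flow give $|(\nabla^{h(0)})^{k}\Rm(h(0))|_{h(0)}(x)\leq C_{k}$ for all $k$, with $C_{k}$ independent of $R$. Undoing the homothety $h(0)=R^{-2}g$, which multiplies the $k$-th covariant-derivative norm of $\Rm$ by $R^{2+k}$, yields $|(\nabla^{g})^{k}\Rm(g)|_{g}(x)\leq C_{k}R^{-(2+k)}=C_{k}\,d_{g}(p,x)^{-(2+k)}$, i.e.\ $A_{k}(g)<\infty$.

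I expect the main obstacle to be this uniform curvature bound for $h(t)$: it hinges on controlling the spatial displacement of points under the soliton flow $\psi_{s}$ finely enough that the factor $(1-R^{2}t)$ gained from the quadratic decay of $|\Rm(g)|$ exactly cancels the one lost to the self-similar scaling, and this in turn rests on the soliton being asymptotically conical with a fixed smooth link --- which is precisely where the hypothesis $A_{0}(g)<\infty$ is genuinely used. Pinning down the uniform injectivity-radius lower bound along the end is routine given that structure, and everything else --- existence, completeness and curvature scaling of the self-similar flow, and the local Shi estimates --- is standard.
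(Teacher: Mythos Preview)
Your approach for part (iii) --- pass to the self-similar ancient Ricci flow and apply Shi's local derivative estimates after parabolic rescaling --- is exactly what the paper does (it simply cites \cite[Section 2.2.3]{wangl} for the details), and the scaling computation you outline is correct.

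There is, however, a circularity you should remove. You invoke the asymptotically conical structure of $(M,g)$ to obtain both the injectivity-radius lower bound and the displacement law $d_{g}(p,\psi_{s}(y))\approx d_{g}(p,y)(1-s)^{-1/2}$. In the literature you cite (and in this paper), the asymptotically conical structure for shrinkers with $A_{0}(g)<\infty$ is established only \emph{after} proving the derivative decay $A_{k}(g)<\infty$; using it here to prove (iii) is circular. Fortunately neither ingredient is needed. Shi's local estimates require only a curvature bound on a parabolic cylinder, not an injectivity-radius bound. And the displacement estimate follows directly from part (i) together with the soliton identity $|\nabla^{g}f|^{2}+R_{g}=2f+\textrm{const.}$: since $R_{g}$ is bounded (as $|\Rm(g)|$ is), one has $\frac{d}{ds}f(\psi_{s}(y))=(1-s)^{-1}|\nabla^{g}f|^{2}(\psi_{s}(y))=(1-s)^{-1}\bigl(2f(\psi_{s}(y))+O(1)\bigr)$, which integrates to $f(\psi_{s}(y))\approx f(y)(1-s)^{-1}$, and the claim follows from $f\approx\tfrac{1}{4}d_{g}(p,\cdot)^{2}$. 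With this modification your argument goes through without appeal to the cone structure.
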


\begin{proof}
References for items (i) and (ii) have been provided above. Item (iii)
concerning the covariant derivatives of the curvature tensor follows from Shi's estimates for ancient solutions of the Ricci flow; see {\cite[Section $2.2.3$]{wangl}} for a proof.
\end{proof}

\begin{remark}
The regularity at infinity stated in Theorem \ref{theo-basic-prop-shrink}(iii) does not hold for expanding gradient Ricci solitons; see \cite{Der-Smo-Pos-Cur-Con} for examples of expanding gradient Ricci solitons coming out of metric cones with a finite amount of regularity at infinity.
\end{remark}

The next lemma collects together some well-known Ricci soliton identities concerning shrinking gradient K\"ahler-Ricci solitons that we require.
\begin{lemma}[Ricci soliton identities]\label{solitonid}
Let $(M,\,g,\,X)$ be a shrinking gradient K\"ahler-Ricci soliton of complex dimension $n$ satisfying \eqref{krseqn} with $\lambda=1$
with soliton vector field $X=\nabla^{g}f$ for a smooth real-valued function $f:M\to\mathbb{R}$. Then the trace and first order soliton identities are:
\begin{equation*}
\begin{split}
&\Delta_{\omega} f +\frac{R_{g}}{2}=n,\\
&\nabla^g R_{g}-2 \Ric(g)(X)=0, \\
&|\nabla^g f|^2+R_{g}-2f=\operatorname{const.},\\
\end{split}
\end{equation*}
where $R_{g}$ denotes the scalar curvature of $g$ and $|\nabla^g f|^2:=g^{ij}\partial_if\partial_{j}f$.
\end{lemma}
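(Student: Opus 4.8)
The plan is to derive all three identities from the gradient K\"ahler-Ricci soliton equation $\rho_\omega + i\partial\bar\partial f = \omega$, or equivalently, in real terms, $\Ric(g) + \operatorname{Hess}_g(f) = g$ (using that $X = \nabla^g f$ and $\frac{1}{2}\mathcal{L}_X g = \operatorname{Hess}_g(f)$), together with the fact that $X$ is real holomorphic and $JX$ is Killing. First I would establish the trace identity: taking the metric trace of $\Ric(g) + \operatorname{Hess}_g(f) = g$ gives $R_g + \Delta_g f = 2n$ in the Riemannian normalization, where $\Delta_g$ is the Riemannian Laplacian. Since $g$ is K\"ahler of complex dimension $n$, the complex (Hodge) Laplacian $\Delta_\omega$ satisfies $\Delta_\omega = \frac{1}{2}\Delta_g$ on functions, so this becomes $\Delta_\omega f + \frac{R_g}{2} = n$, which is the first identity.

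Next I would prove the second identity, $\nabla^g R_g - 2\Ric(g)(X) = 0$. This is the standard contracted second Bianchi computation for (Riemannian) gradient Ricci solitons, specialized to $\lambda = 1$: differentiating the soliton equation and using the contracted second Bianchi identity $\operatorname{div}\Ric = \frac{1}{2}\nabla R_g$ together with the commutation of derivatives on $\operatorname{Hess} f$ (the Bochner-type identity $\operatorname{div}\operatorname{Hess} f = \nabla \Delta f + \Ric(\nabla f)$) yields $\nabla R_g = 2\Ric(\nabla f) = 2\Ric(X)$, i.e. Hamilton's identity. Note the K\"ahler condition is not needed here; it is a general shrinking gradient Ricci soliton identity, and I would simply cite or reproduce the short computation.

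Finally, for the third identity $|\nabla^g f|^2 + R_g - 2f = \operatorname{const.}$, I would compute the gradient of the left-hand side and show it vanishes: $\nabla(|\nabla f|^2) = 2\operatorname{Hess} f(\nabla f) = 2(g - \Ric)(\nabla f) = 2\nabla f - 2\Ric(X)$ using the soliton equation, while $\nabla R_g = 2\Ric(X)$ by the second identity and $\nabla(2f) = 2\nabla f$; adding these, $\nabla(|\nabla f|^2 + R_g - 2f) = 2\nabla f - 2\Ric(X) + 2\Ric(X) - 2\nabla f = 0$, so the quantity is locally constant, hence constant since $M$ is connected. I do not anticipate a genuine obstacle here — all three are classical and the only mild subtlety is bookkeeping the factor-of-two conventions relating the K\"ahler normalization in \eqref{krseqn} to the Riemannian normalization in \eqref{soliton1} (with $\lambda = 1$ these agree up to the trace conventions already accounted for above), and confirming $\frac{1}{2}\mathcal{L}_X g = \operatorname{Hess}_g f$ and $\Delta_\omega = \frac12\Delta_g$, which are standard for gradient K\"ahler metrics.
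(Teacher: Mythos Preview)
Your proposal is correct and follows essentially the same approach as the paper: trace the real soliton equation $\Ric(g)+\operatorname{Hess}_g f=g$ to obtain $R_g+\Delta_g f=2n$ and convert via $\Delta_\omega=\tfrac12\Delta_g$; take the divergence of the soliton equation and use the contracted Bianchi identity together with $\operatorname{div}\operatorname{Hess} f=\nabla\Delta_g f+\Ric(\nabla f)$ to get $\nabla R_g=2\Ric(X)$; then compute $\nabla(|\nabla f|^2+R_g-2f)=0$ using the first two identities. The paper presents exactly these computations, so there is nothing to add.
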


\begin{remark}\label{rk-sol-id}
We henceforth normalize the soliton potential $f$ of a shrinking gradient K\"ahler-Ricci soliton of complex dimension $n$ satisfying \eqref{krseqn} with $\lambda=1$ so that $|\nabla^g f|^2+R_{g}-2f=2n$. The choice of constant $2n$ is dictated by the following equation satisfied by $f$:
\begin{equation*}
\Delta_{\omega}f-\frac{X}{2}\cdot f=-f.
\end{equation*}
This choice of constant also implies that $f+n$ is non-negative on $M$ since the scalar curvature $R_g$ of $g$ is necessarily non-negative.
\end{remark}

\begin{proof}
The proof of Lemma \ref{solitonid} is classic. Compose \eqref{krseqn} with $\lambda=1$ with the complex structure of $M$
in the first argument and trace the resulting identity to obtain
\begin{eqnarray}
R_g+\Delta_gf=2n,\label{riem-trace-id}
\end{eqnarray}
where $\Delta_g$ denotes the Riemannian Laplacian with respect to $g$ acting on functions. Recalling that $2\Delta_{\omega}=\Delta_g$, the first identity then follows by dividing (\ref{riem-trace-id}) across by $2$.

Next, take the divergence of \eqref{krseqn} with $\lambda=1$ to obtain
\begin{eqnarray*}
0&=&\Div_g\left(\Ric(g)+\frac{1}{2}\mathcal{L}_{\nabla^gf}(g)\right)\\
&=&\frac{\nabla^gR_g}{2}+\frac{1}{2}\left(\frac{1}{2}\nabla^g\left(\tr_g\mathcal{L}_{\nabla^gf}(g)\right)+\Delta_g\nabla^gf+\Ric(g)(\nabla^gf)\right)\\
&=&\frac{\nabla^gR_g}{2}+\frac{1}{2}\left(\nabla^g\Delta_gf+\Delta_g\nabla^gf+\Ric(g)(\nabla^gf)\right)\\
&=&\frac{\nabla^gR_g}{2}+\nabla^g\Delta_gf+\Ric(g)(\nabla^gf)\\
&=&-\frac{\nabla^gR_g}{2}+\Ric(g)(\nabla^gf),
\end{eqnarray*}
where we have used the trace version of the Bianchi identity in the first line together with the Bochner formula on functions in the third line and the trace identity (\ref{riem-trace-id}) in the last line. This proves the second identity.

Finally, combining \eqref{krseqn} with $\lambda=1$ with the previous identity, we obtain
\begin{eqnarray*}
0=\nabla^g R_{g}-2 \Ric(g)(\nabla^gf)&=&\nabla^gR_g-2\nabla^gf+\mathcal{L}_{\nabla^gf}(g)(\nabla^gf)\\
&=&\nabla^g\left(R_g-2f+|\nabla^gf|^2_g\right).
\end{eqnarray*}
Since $M$ is connected, the function $R_g-2f+|\nabla^gf|^2_g$ is constant on $M$. This verifies the third identity.
\end{proof}

K\"ahler cones are quasi-projective. This property is inherited by complete expanding and shrinking gradient K\"ahler-Ricci solitons
 on resolutions of K\"ahler cones.
\begin{prop}\label{quasi2}
Let $(M,\,g,\,X)$ be a complete expanding or shrinking gradient K\"ahler-Ricci soliton on a resolution $\pi:M\to C_{0}$
of a K\"ahler cone $C_{0}$. Then $M$ is quasi-projective.
\end{prop}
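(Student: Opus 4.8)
The plan is to extract a $\pi$-ample line bundle on $M$ directly from the soliton equation and then to invoke the fact that a projective morphism onto an affine variety has quasi-projective total space. Concretely, set $D:=-\lambda K_{M}$, so that $D=-K_{M}$ when $(M,\,g,\,X)$ is shrinking and $D=K_{M}$ when it is expanding; I will show that $D$ is $\pi$-ample. Since $M$ is smooth, $D$ is a genuine Cartier divisor, so Nakai's criterion for a mapping (Theorem \ref{nakai}) applies to it.

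First I would note that, since $\pi:M\to C_{0}$ resolves the isolated singularity $o\in C_{0}$, it restricts to an isomorphism over $C_{0}\setminus\{o\}$; hence every positive-dimensional irreducible subvariety $V\subset M$ contracted by $\pi$ is contained in the compact exceptional set $E:=\pi^{-1}(o)$. Next I would invoke the gradient K\"ahler-Ricci soliton equation \eqref{krseqn}, namely $\rho_{\omega}+i\partial\bar\partial f=\lambda\omega$ with $\lambda\in\{-1,\,1\}$: because $f$ is a globally defined smooth function, $i\partial\bar\partial f$ is exact on $M$, so $[\rho_{\omega}]=\lambda[\omega]$ in $H^{2}(M;\mathbb{R})$. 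As $\tfrac{1}{2\pi}[\rho_{\omega}]$ represents $c_{1}(-K_{M})$ and $\lambda^{2}=1$, it follows that $c_{1}(D)=\tfrac{1}{2\pi}[\omega]$ in $H^{2}(M;\mathbb{R})$, and therefore, for every irreducible $V\subseteq E$ of complex dimension $d$,
\[
(D^{d}\cdot V)=\int_{V}c_{1}(D)^{d}=\frac{1}{(2\pi)^{d}}\int_{V}\omega^{d}>0,
\]
the positivity holding because $\omega$ is a K\"ahler form. By Theorem \ref{nakai}, $D$ is then $\pi$-ample, and hence $\pi:M\to C_{0}$ is a projective morphism.

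Finally, since $C_{0}$ is affine by Theorem \ref{t:affine}, the total space $M$ of the projective morphism $\pi$ is quasi-projective, as required. There is essentially no serious obstacle here beyond matching the sign of $\lambda$ correctly to the choice $D=-\lambda K_{M}$; the one point worth emphasizing is that Nakai's criterion need only be tested on subvarieties of the compact set $E$, over which the exact form $i\partial\bar\partial f$ has vanishing integral — which is precisely why the soliton equation enforces relative positivity of $D$ there even though $\rho_{\omega}$ itself need not be a positive $(1,1)$-form.
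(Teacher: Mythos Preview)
Your approach is different from the paper's and the core observation is correct, but there is a genuine gap in the invocation of Nakai's criterion.

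The paper does not use Theorem~\ref{nakai} here. Instead it compactifies: it takes the weighted projective closure $\overline{C_{0}}$ of $C_{0}$ by adding an ample orbifold divisor $D$ at infinity, compactifies $M$ to an orbifold $\overline{M}$ by the same divisor, and then constructs by hand a positively curved hermitian metric on $K_{\overline{M}}+p[D]$ for $p\gg 0$ (using the soliton equation on $K_{M}$ and positivity of the normal bundle of $D$) to conclude via Kodaira that $\overline{M}$ is projective. The underlying positivity fact---that $e^{f}h$ on $K_{M}$ (or $e^{-f}h$ on $-K_{M}$ in the shrinking case) has curvature exactly $\omega$---is the same as yours, but the paper packages it as a differential-geometric embedding rather than as a numerical criterion.

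The gap in your argument is that Theorem~\ref{nakai} is stated for proper morphisms of \emph{schemes}, and at this point $M$ is only known to be a complex manifold; its algebraicity is precisely what you are trying to prove. Indeed, the paper itself applies Nakai's criterion only later (in the proof of Corollary~\ref{unique}), and explicitly invokes Proposition~\ref{quasi2} there to justify that the intersection-theoretic quantities make sense algebraically. So your one-line appeal to Theorem~\ref{nakai} is circular as written. To repair it along your lines, you could either cite an analytic relative-ampleness criterion over a Stein base, or observe directly that the hermitian metric $e^{-\lambda f}h$ on $-\lambda K_{M}$ has curvature $\omega>0$ and then run the paper's compactification argument---at which point you have essentially reproduced the paper's proof.
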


\begin{proof}
We prove this proposition in the case that $(M,\,g,\,X)$ is an expanding gradient K\"ahler-Ricci soliton. The proof for the shrinking case
is similar.

As explained in the proof of Lemma \ref{quasi}, by adding an appropriate ample divisor $D$ to $C_{0}$ at infinity, we obtain a projective compactification $\overline{C_{0}}$ of $C_{0}$ so that $\overline{C_{0}}$ will have at worst orbifold singularities along $D$.
Using $D$, we then compactify $M$ at infinity to obtain a compact complex orbifold $\overline{M}$ such that $M=\overline{M}\setminus D$.
We claim that $\overline{M}$ admits an ample line bundle, hence is projective.

Indeed, since the normal orbibundle of $D$ in $\overline{C_{0}}$ is positive, the normal orbibundle of $D$ in $\overline{M}$ will also be positive, hence by the proof of \cite[Lemma 2.3]{Conlon2}, we may endow the line orbibundle $[D]$ on $\overline{M}$ with a non-negatively curved hermitian metric with strictly positive curvature on some tubular neighbourhood $U$ of $D$ in $\overline{M}$. Next note that the curvature $h$ of the hermitian metric induced on $K_{M}$ by the expanding gradient K\"ahler-Ricci soliton metric $g$ is $-\rho_{\omega}$, where $\rho_{\omega}$ is the Ricci form of the K\"ahler form $\omega$ associated to $g$. Let $f$ denote the soliton potential so that $X=\nabla^{g}f$. Then by virtue of the expanding soliton
equation, the curvature of the hermitian metric $e^{f}h$ on $K_{M}$ is precisely the K\"ahler form $\omega$ of $g$.
In particular, the curvature of $e^{f}h$ on $K_{M}$ is a positive form. Extend the hermitian metric $e^{f}h$ on $K_{M}$ to a hermitian metric on $K_{\overline{M}}$ by amalgamating $e^{f}h$ with an arbitrary hermitian metric on $K_{\overline{M}}|_{U}$ using an appropriate bump function
supported on $U$. Then the line orbibundle $K_{\overline{M}}+p[D]$ will be ample for $p$ sufficiently large. A high tensor power
of the resulting line orbibundle will then be an ample line bundle on $\overline{M}$ so that $\overline{M}$ is projective and $M$ is quasi-projective, as claimed.
\end{proof}

Finally, note that to each complete gradient Ricci soliton, one can associate a Ricci flow that evolves via diffeomorphisms and scaling.
We describe this picture for an expanding gradient Ricci soliton next.

For a complete expanding gradient Ricci soliton $(M,\,g,\,X)$ with soliton potential $f$, set $$g(t):=t\varphi_{t}^{*}g,\,t>0,$$
where $\varphi_{t}$ is a family of diffeomorphisms generated by the gradient vector field $-\frac{1}{t}X$ with $\varphi_{1}=\operatorname{id}$, i.e.,
\begin{equation}\label{flowbaby}
\frac{\partial\varphi_{t}}{\partial t}(x)=-\frac{\nabla^g f(\varphi_{t}(x))}{t},\quad\varphi_{1}=\operatorname{id}.
\end{equation}
Then $\partial_t g(t)=-2\operatorname{Ric}(g(t))$ for $t>0$, $g(1)=g$, and defining $f(t)=\varphi_{t}^{*}f$ so that $f(1)=f$, $g(t)$ satisfies
\begin{equation}\label{rfsoliton}
\operatorname{Ric}(g(t))-\operatorname{Hess}_{g(t)}f(t)+\frac{g(t)}{2t}=0\quad\textrm{for all $t>0$.}
\end{equation}
Taking the divergence of this equation and using the Bianchi identity yields
\begin{equation}\label{div}
R_{g(t)}+|\nabla^{g(t)}f(t)|^{2}_{g(t)}-\frac{f(t)}{t}=\frac{C_{1}}{t}
\end{equation}
for some constant $C_{1}$, where $R_{g(t)}$ denotes the scalar curvature of $g(t)$.

Similarly, for a complete expanding gradient K\"ahler-Ricci soliton with K\"ahler form $\omega$, one obtains a solution of the
K\"ahler-Ricci flow $\partial_t \omega(t)=-\rho_{\omega(t)}$, where $\rho_{\omega(t)}$ denotes the Ricci form
of $\omega(t)$. The difference in normalisations between \eqref{soliton1} and \eqref{krseqn} is accounted for by the fact that the constant
preceding the Ricci term in the Ricci flow is $-2$ and that preceding the Ricci term in the
K\"ahler-Ricci flow is $-1$. In the same way that a K\"ahler-Ricci flow yields a solution of the Ricci flow and vice-versa,
a solution of the Ricci flow which is K\"ahler yields a solution of the K\"ahler-Ricci flow, the same holds
true for gradient Ricci solitons and gradient K\"ahler-Ricci soliton. Indeed, a
 solution $(M,\,g,\,X)$ of \eqref{krseqn} yields a solution of \eqref{soliton1} by replacing $g$ with $2g$
 and composing \eqref{krseqn} with the complex structure in the first arguments.
 Conversely, a solution $(M,\,g,\,X)$ of \eqref{soliton1} for which $g$ is K\"ahler and $X$ is
  real holomorphic defines a solution of $\eqref{krseqn}$ after replacing $g$ with $\frac{g}{2}$ and composing \eqref{soliton1} with the complex structure in the first arguments.

\subsection{Properties of the soliton vector field}
In this subsection, we provide sufficient conditions for which the zero set of the soliton vector field
of a complete shrinking gradient K\"ahler-Ricci soliton is compact. We begin with the following simple observation.
\begin{lemma}\label{compactt}
Let $(M,\,g,\,X)$ be a complete shrinking gradient K\"ahler-Ricci soliton with bounded scalar curvature.
Then the zero set of $X$ is compact.
\end{lemma}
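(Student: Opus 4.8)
The plan is to combine the third Ricci soliton identity from Lemma~\ref{solitonid} with the quadratic growth of the soliton potential recorded in Theorem~\ref{theo-basic-prop-shrink}(i). Write $X=\nabla^{g}f$ and normalize $f$ as in Remark~\ref{rk-sol-id}, so that $|\nabla^{g}f|^{2}+R_{g}-2f=2n$ everywhere on $M$, where $n=\dim_{\C}M$. At any point $x$ lying in the zero set $Z:=\{X=0\}=\{\nabla^{g}f=0\}$, this identity reduces to $R_{g}(x)-2f(x)=2n$, hence $f(x)=\tfrac12\bigl(R_{g}(x)-2n\bigr)$. Since $R_{g}\geq 0$ for a complete shrinking gradient Ricci soliton (as noted in Remark~\ref{rk-sol-id}) and $R_{g}$ is bounded above by hypothesis, this gives $f\leq A$ on $Z$, where $A:=\tfrac12\bigl(\sup_{M}R_{g}-2n\bigr)$.

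I would then invoke the lower bound $f(x)\geq\tfrac14\bigl(d_{g}(p,x)-c_{1}\bigr)^{2}$ from Theorem~\ref{theo-basic-prop-shrink}(i), valid for a fixed basepoint $p\in M$ and a constant $c_{1}>0$ depending only on $n$ and the local geometry of $g$ near $p$. Together with the bound $f\leq A$ on $Z$, this yields $\bigl(d_{g}(p,x)-c_{1}\bigr)^{2}\leq 4A$ for every $x\in Z$; in particular $Z=\emptyset$ if $A<0$, while otherwise $d_{g}(p,\cdot)\leq c_{1}+2\sqrt{A}$ on $Z$. In all cases $Z$ is a bounded subset of $M$. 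Since $Z$ is also closed, being the zero locus of the continuous function $|\nabla^{g}f|^{2}$, and $g$ is complete, the Hopf--Rinow theorem shows that $Z$ is compact.

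There is no genuine obstacle here: the only points requiring care are matching the normalization of the soliton potential $f$ to the form of the soliton identity being used, and invoking the non-negativity of $R_{g}$ in the direction that bounds $f$ from above on $Z$. Note that neither the K\"ahler condition nor the holomorphicity of $X$ is actually used for this statement --- only the structure of a complete shrinking gradient Ricci soliton together with bounded scalar curvature.
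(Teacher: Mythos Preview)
Your proof is correct and follows essentially the same approach as the paper: both combine the soliton identity $|\nabla^g f|^2 + R_g - 2f = \text{const.}$ with the boundedness of $R_g$ and the properness of $f$ from Theorem~\ref{theo-basic-prop-shrink}(i). The paper phrases this slightly more succinctly by observing that $|\nabla^g f|^2 = 2f - R_g + \text{const.}$ is itself proper, so its zero set is automatically compact; you instead restrict to $Z$ first and bound $f$ there, but the content is the same. One minor remark: the non-negativity of $R_g$ is not actually needed anywhere in your argument---only the upper bound on $R_g$ is used to get $f\le A$ on $Z$.
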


\begin{proof}
With $f$ denoting the soliton potential, the boundedness of the scalar curvature $R_{g}$ of $g$ together with the properness of $f$ as a consequence of Theorem \ref{theo-basic-prop-shrink}(i) imply that $2f-R_{g}$ is proper. From the soliton identity $|\nabla^g f|^2+R_g=2f$ (Lemma \ref{solitonid}),
we then see that the function $|\nabla^{g} f|^{2}$ is proper. The compactness of the zero set of $X$ is now immediate.
\end{proof}

In the case that $M$ is in addition ``$1$-convex'', meaning
that $M$ carries a plurisubharmonic exhaustion function which is strictly plurisubharmonic
outside of a compact set, we can be more precise. Since a $1$-convex space is in particular holomorphically convex,
$M$ in this case will admit a ``Remmert reduction'' $p:M\to M'$ \cite{Grau:62}, i.e., a proper holomorphic map $p:M\to M'$
onto a normal Stein space $M'$ with finitely many isolated singularities obtained by contracting the maximal compact analytic subset $E$ of $M$.
As a Stein space with only finitely many isolated singularities, \cite[Theorem 3.1]{SCV6} asserts that $M'$ admits an embedding
$h:M'\to\mathbb{C}^{P}$ into $\mathbb{C}^{P}$ for some $P\in\mathbb{N}$. We have:
\begin{prop}\label{sexxy}
Let $(M,\,g,\,X)$ be a complete shrinking gradient K\"ahler-Ricci soliton of complex dimension $n$
with bounded scalar curvature. Assume that $M$ is $1$-convex with maximal compact analytic subset $E$. Then the zero set of $X$ is compact and:
\begin{enumerate}[label=\textnormal{(\roman{*})}, ref=(\roman{*})]
\item if $E=\emptyset$, then the zero set of $X$ comprises a single point and $M$ is biholomorphic to $\mathbb{C}^{n}$, or
\item if $E\neq\emptyset$, then the zero set of $X$ is contained in $E$.
\end{enumerate}
\end{prop}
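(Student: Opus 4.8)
The plan is to play three descriptions of the zero set $Z:=\{X=0\}$ off against one another. Since $df=g(X,\cdot)$, $Z$ is exactly the critical set of the soliton potential $f$. Since $JX$ is Killing and real holomorphic and $f$ is $JX$-invariant (as $JX\cdot f=g(X,JX)=0$) and proper, the closure $T:=\overline{\exp(\R\,JX)}$ is a compact torus acting holomorphically and isometrically on $M$ with $Z=\operatorname{Fix}(T)$; in particular $Z$ is a (possibly disconnected) compact complex submanifold of $M$. Finally $Z$ is the zero set of the holomorphic vector field $\tfrac12(X-iJX)$. The compactness in the statement is Lemma \ref{compactt}, so nothing more is needed there. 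A preliminary fact I would record first: the (complete) flow of $X$ acts by automorphisms, hence preserves every compact analytic subset, so $X$ is tangent to $E$; running this flow on the \emph{compact} set $E$, where $f$ is bounded and nondecreasing along orbits, the $\omega$-limit of any orbit lies in $Z\cap E$, whence $Z\cap E\neq\emptyset$ whenever $E\neq\emptyset$.

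The technical core I would isolate as a lemma: if $p\in Z$ and $\operatorname{Hess}f$ has a negative eigenvalue at $p$ (necessarily in a direction transverse to $T_pZ$), then $p$ lies on a compact rational curve in $M$, and hence $p\in E$. Here one passes to a suitable circle subgroup of $T$ adapted to that direction, complexifies it (valid locally near $p$, or globally after a density argument in the irregular case), and takes the closure of the orbit of a nearby point along that direction; this closure is a $T$-invariant one-dimensional analytic subvariety $\Sigma\ni p$, and $f|_\Sigma$ is a Hamiltonian for a circle action on the surface $\Sigma$ with a \emph{negative} weight at $p$. Such a Hamiltonian surface is either $\C$, on which $f$ would be proper and bounded \emph{above}, contradicting that $f\geq -n$ is proper (Theorem \ref{theo-basic-prop-shrink}(i), Remark \ref{rk-sol-id}), or a compact rational curve $\Sigma\cong\P^{1}$. (That $\operatorname{Hess}f$ is $J$-invariant, so its indices are even, follows from $\nabla(JX)=J\operatorname{Hess}f$ being skew while $\operatorname{Hess}f$ is symmetric; and $\ker\operatorname{Hess}_pf=T_pZ$ because this kernel is the kernel of the linearization of $\tfrac12(X-iJX)$ at $p$, so $f$ is Morse--Bott along $Z$.) The upshot: every point of $Z\setminus E$ is a \emph{non-degenerate local minimum} of $f$; and if $E=\emptyset$ then $\operatorname{Hess}f$ is positive definite transverse to $Z$ everywhere.

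For (i), $E=\emptyset$ forces $M$ Stein, so the compact complex submanifold $Z$ is finite, and by the core lemma it is a union of non-degenerate local minima. Thus $f$ is Morse--Bott with all critical components of index $0$, so there are no index-$1$ critical values, the sublevel sets $\{f\leq c\}$ have one connected component per point of $Z$ and never merge, and $\{f\leq c\}\nearrow M$; connectedness of $M$ then forces $Z$ to be a single point $p_{0}$. The linearization of $-\tfrac12(X-iJX)$ at $p_{0}$ is $-\operatorname{Hess}_{p_{0}}f<0$, so $p_{0}$ is a hyperbolic attracting fixed point of the complete holomorphic flow of $-\tfrac12(X-iJX)$, and since the backward $X$-flow carries every point into the $\alpha$-limit set $Z=\{p_{0}\}$, this flow attracts all of $M$ to $p_{0}$; the basin of attraction of a hyperbolic attracting fixed point of a complete holomorphic flow is biholomorphic to $\C^{n}$, giving (i). For (ii), $E\neq\emptyset$: the positive-dimensional components of $Z$ are compact complex subvarieties of positive dimension, hence lie in the maximal such, $E$. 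To exclude an isolated point $p\in Z\setminus E$, note that by the core lemma $p$ is a non-degenerate local minimum, so its basin $W^{u}(p)$ for the backward $X$-flow is open, biholomorphic to $\C^{n}$, and -- being $X$-invariant -- disjoint from the $X$-invariant set $E$; it is moreover the \emph{unique} index-$0$ basin (two would disconnect $M$, by the component count above), hence dense. One then shows $W^{u}(p)=M$, contradicting $E\neq\emptyset$; so $Z\setminus E=\emptyset$, and with $Z\cap E\neq\emptyset$ this gives $Z\subseteq E$.

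The step I expect to be hardest is making the orbit-closure lemma fully rigorous: one must know the orbit closures are genuine one-dimensional analytic subvarieties and one must reduce the irregular torus to the quasiregular case. The second delicate point is the last step of (ii), upgrading ``$W^{u}(p)$ open and dense'' to ``$W^{u}(p)=M$'': the obstruction is the possible existence of $X$-orbits issuing from $p$ (hence contained in $M\setminus E$) but accumulating on $Z\cap E\subseteq E$, along which $f$ stays bounded, so that $f|_{W^{u}(p)}$ need not be proper. Ruling these out -- presumably by a barrier argument near $E$, or by examining which strata of the Morse--Bott picture can be adjacent -- is where I would expect to have to work.
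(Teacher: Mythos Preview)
Your overall architecture matches the paper's: compactness from Lemma~\ref{compactt}, Morse--Bott structure of $f$ with even indices, and the dichotomy that points of $Z\setminus E$ must have index $0$. But your two self-identified ``hardest steps'' are precisely where the paper takes a different and cleaner route, and you should know what that route is.

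\textbf{The core lemma.} Rather than arguing via orbit closures and classifying the resulting Hamiltonian surface, the paper uses Bryant's linearisation to write $X^{1,0}=\sum a_j z_j\partial_{z_j}$ near an isolated $x\in M^{(j)}\cap(M\setminus E)$ with some $a_n<0$, and then builds a nonconstant holomorphic map $\bar R:\mathbb{C}\to M$ by flowing a periodic $JX$-orbit on the $z_n$-axis by the negative gradient flow of $f$. Since $f$ is decreasing along this flow and bounded below, the image of $\bar R$ lies in a compact set; composing with the Remmert reduction $p:M\to M'$ and an embedding $M'\hookrightarrow\mathbb{C}^P$ yields a bounded nonconstant holomorphic map $\mathbb{C}\to\mathbb{C}^P$, contradicting Liouville. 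This bypasses the orbit-closure analyticity question entirely and never needs the ``$\Sigma\cong\mathbb{C}$ or $\mathbb{P}^1$'' dichotomy. Your surface-classification approach would work if made rigorous, but the step ``$f|_\Sigma$ proper and bounded above on $\Sigma\cong\mathbb{C}$'' already presupposes that $\Sigma$ is closed in $M$ and that the orbit closure is globally $\mathbb{C}$ rather than something more complicated, which is exactly the difficulty you flag.

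\textbf{The step in (ii).} The paper does \emph{not} try to upgrade ``$W^u(p)$ open and dense'' to ``$W^u(p)=M$''; that is genuinely hard and the paper avoids it. Instead: having established that $M^{(0)}=\{x\}$ is a single point with $x\notin E$ and $\bigcup_{j\geq 1}M^{(2j)}\subseteq E$ is nonempty, take the \emph{smallest} critical value $A>f(x)$ and a point $y\in f^{-1}(A)\cap M^{(2k)}\subseteq E$. Run the same $\bar R:\mathbb{C}\to M$ construction at $y$. Since there are no critical values in $(f(x),A)$, Proposition~\ref{alix} forces $\lim_{z\to\infty}\bar R(z)=x$, so Riemann removable singularities extends $\bar R$ to $\bar R':\mathbb{P}^1\to M$ with $\bar R'(0)=y\in E$ and $\bar R'(\infty)=x\notin E$. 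This is a rational curve not contained in $E$, contradicting maximality of $E$. So the resolution of your gap is not a barrier argument near $E$, but rather the construction of a $\mathbb{P}^1$ that \emph{links} $E$ to the isolated minimum.
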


Before we prove this proposition, an auxiliary result is required that will be used several times throughout.
\begin{prop}\label{alix}
Let $(N,\,g)$ be a complete Riemannian manifold and let $u:N\rightarrow\mathbb{R}$ be a $C^2$ function that is proper and bounded below. Assume that the flow $\phi_{x}(t)$ of $\nabla^g u$ with $\phi_{x}(0)=x\in N$ exists for all $t\in(-\infty,\,0]$. Then, for any $x\in N$, the orbit $(\phi_{x}(t))_{t\,\leq\,0}$ accumulates in the critical set of $u$, i.e., for all sequences $(t_i)_i$ diverging to $-\infty$, there exists a subsequence $(t'_i)_i$ such that $(\phi_{t'_i}(x))_i$ converges to a point $x_{\infty}\in N$ satisfying $\nabla^gu(x_{\infty})=0$.
\end{prop}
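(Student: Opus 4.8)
The plan is to use the monotonicity of $u$ along the flow together with the properness of $u$ to confine the negative-time orbit of $x$ to a fixed compact set, and then to identify any subsequential limit of $(\phi_{x}(t_{i}))_{i}$ as a critical point of $u$ by the standard Lyapunov argument.

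First I would record that $\frac{d}{dt}u(\phi_{x}(t))=du\big(\nabla^{g}u(\phi_{x}(t))\big)=\big|\nabla^{g}u(\phi_{x}(t))\big|^{2}\geq 0$, so that $t\mapsto u(\phi_{x}(t))$ is non-decreasing on $(-\infty,0]$. Since $u$ is bounded below, the finite limit $\ell:=\lim_{t\to-\infty}u(\phi_{x}(t))$ exists, and $u(\phi_{x}(t))\in[\ell,u(x)]$ for all $t\leq 0$. Hence, by properness of $u$, the compact set $K:=u^{-1}([\ell,u(x)])$ contains the entire negative-time orbit $\{\phi_{x}(t):t\leq 0\}$. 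Now fix an arbitrary sequence $t_{i}\to-\infty$. As the points $\phi_{x}(t_{i})$ all lie in $K$, after passing to a subsequence $(t'_{i})_{i}$ we may assume $\phi_{x}(t'_{i})\to x_{\infty}$ for some $x_{\infty}\in K$, and it remains to prove that $\nabla^{g}u(x_{\infty})=0$.

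For this, note that since $\nabla^{g}u$ is $C^{1}$ the flow line $\phi_{x_{\infty}}(s)$ through $x_{\infty}$ exists for $|s|<\eta$ for some $\eta>0$, and continuous dependence on initial conditions gives $\phi_{x}(t'_{i}+s)=\phi_{\phi_{x}(t'_{i})}(s)\to\phi_{x_{\infty}}(s)$ for each fixed $s$ with $|s|<\eta$, where the left-hand side is defined for all large $i$ because $t'_{i}+s<0$ then. Since $t'_{i}+s\to-\infty$ we have $u(\phi_{x}(t'_{i}+s))\to\ell$, and passing to the limit yields $u(\phi_{x_{\infty}}(s))=\ell$ for every $s$ with $|s|<\eta$. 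Thus $s\mapsto u(\phi_{x_{\infty}}(s))$ is constant near $0$, and differentiating at $s=0$ gives $\big|\nabla^{g}u(x_{\infty})\big|^{2}=0$, so $x_{\infty}$ is a critical point of $u$, as required. I do not anticipate a genuine difficulty here; the only thing to watch is that the various orbits invoked really exist on the required time intervals, which is guaranteed by short-time existence for the $C^{1}$ field $\nabla^{g}u$ together with the hypothesis that $\phi_{x}$ exists on all of $(-\infty,0]$. An alternative to the last paragraph is to integrate the monotonicity identity to get $\int_{-\infty}^{0}\big|\nabla^{g}u(\phi_{x}(t))\big|^{2}\,dt=u(x)-\ell<\infty$ and then to combine this with a bound on $\operatorname{Hess}u$ over $K$ to show directly that $\big|\nabla^{g}u(\phi_{x}(t))\big|\to0$ as $t\to-\infty$; with the compactness of $K$ this again forces every divergent sequence of times to have a subsequence along which $\phi_{x}$ converges to a critical point.
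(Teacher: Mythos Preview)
Your proof is correct. Your primary argument differs slightly from the paper's: you use continuous dependence on initial data to show that any subsequential limit $x_\infty$ lies on a short flow segment along which $u$ is constant (equal to $\ell$), whence $|\nabla^g u(x_\infty)|^2=0$. The paper instead follows precisely the alternative you sketch at the end: from $\int_{-\infty}^{0}|\nabla^g u(\phi_x(\tau))|^2\,d\tau<\infty$ and the Lipschitz continuity of $\tau\mapsto|\nabla^g u(\phi_x(\tau))|^2$ on the compact set $K$ (which follows from $u\in C^2$), it deduces the stronger conclusion $\lim_{\tau\to-\infty}|\nabla^g u(\phi_x(\tau))|=0$, so that \emph{every} accumulation point is critical. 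Your main argument is perhaps a touch more elementary in that it avoids the Lipschitz/integrability step, at the cost of not yielding that stronger full-limit statement; but for the proposition as stated either route suffices.
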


\begin{proof}[Proof of Proposition \ref{alix}]
Let $x\in N$ and let $(\phi_{x}(t))_{t\,\leq\,0}$ denote the flow of $\nabla^g u$ passing through $x$ at $t=0$ and defined for all non-negative times. Since $\partial_t\phi_{x}(t)=\nabla^g u(\phi_{x}(t))$, the function $t\in(-\infty,0]\mapsto u(\phi_{x}(t))\in\mathbb{R}$ is a non-decreasing function and for all non-positive times $t$,
\begin{eqnarray}
u(x)-u(\phi_{x}(t))=\int_t^0|\nabla^gu|^2_g(\phi_{x}(\tau))\,d\tau\geq 0.\label{finite-ene-orbit}
\end{eqnarray}
In particular, the orbit $(\phi_{x}(t))_{t\,\leq\, 0}$ lies in the sub-level set $\{y\in M:u(y)\leq u(x)\}$ of $u$ which is compact since $u$ is proper and bounded below. Moreover, since $u(\phi_{x}(t))$ is bounded from below, the estimate (\ref{finite-ene-orbit}) implies that the function $\tau\in(-\infty,0]\mapsto |\nabla^gu|^2(\phi_{x}(\tau))\in\mathbb{R}$ is integrable on $(-\infty,0]$; that is,
\begin{eqnarray}
\int_{-\infty}^0|\nabla^gu|^2(\phi_{x}(\tau))\,d\tau<+\infty.\label{finiteness-energy}
\end{eqnarray}

Now, since $u$ is $C^2$ and the orbit $(\phi_{x}(t))_{t\,\leq\,0}$ lies in a compact subset of $M$, the function $\tau\in(-\infty,0]\mapsto |\nabla^gu|^2(\phi_{x}(\tau))\in\mathbb{R}$ is Lipschitz, i.e., there is a positive constant $C$ such that $$\left||\nabla^gu|^2(\phi_{x}(t))-|\nabla^gu|^2(\phi_{x}(s))\right|\leq C|t-s|\quad\textrm{for all $s$ and $t$ in $(-\infty,0]$.}$$ This fact, together with (\ref{finiteness-energy}), implies that $\lim_{\tau\to-\infty}|\nabla^gu|(\phi_{x}(\tau))=0.$
This allows us to conclude that any accumulation point of $(\phi_{x}(t))_{t\,\leq\,0}$ lies in the critical set of $u$.
\end{proof}

We now provide the proof of Proposition \ref{sexxy}.
\begin{proof}[Proof of Proposition \ref{sexxy}]
Let $f$ denote the soliton potential and let $M_{0}(X)$ denote the zero set of $X$, a set which is compact by Lemma \ref{compactt}.
Our first claim encapsulates the structure of $M_{0}(X)$.
\begin{claim}\label{properties}
Each connected component of $M_{0}(X)$ is a smooth compact complex submanifold of $M$ contained in a level set of $f$.
\end{claim}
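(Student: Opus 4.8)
The plan is to analyze $M_0(X)$ via the Hamiltonian Killing field $JX$ and the soliton potential $f$. First I would recall that $X=\nabla^g f$ is real holomorphic and that $JX$ is a Killing field (by \cite[Lemma 2.3.8]{fut2}), so $JX$ generates a one-parameter group of isometries of $(M,g)$. The zero set $M_0(X)$ coincides with the zero set of $JX$ (since $J$ is an isomorphism on each tangent space), and the fixed-point set of an isometric $S^1$- or $\mathbb{R}$-action — more precisely, the zero set of a Killing field — is a disjoint union of totally geodesic closed submanifolds of even codimension, each of which is moreover $J$-invariant because the isometries in question are holomorphic; hence each component is a complex submanifold. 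Compactness of $M_0(X)$ is given by Lemma \ref{compactt} (the scalar curvature is bounded), so each component is a \emph{compact} complex submanifold. This disposes of the first part of the claim.

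For the statement that each component lies in a level set of $f$, the key observation is that $f$ is constant on $M_0(X)$: on the zero set of $X=\nabla^g f$ one has $df = g(\nabla^g f,\cdot) = g(X,\cdot) = 0$, so $f$ is locally constant there, hence constant on each connected component. Thus each connected component of $M_0(X)$ is contained in a single level set $\{f = c\}$ of $f$. Assembling these two observations gives the claim.

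The only point requiring a little care — and the step I expect to be the mild obstacle — is justifying that the components of the zero set of the Killing field $JX$ are genuinely smooth submanifolds (and complex ones) in this non-compact setting. The standard argument is local and does not use compactness: near a zero $p$ of the Killing field $V:=JX$, the isometries $\exp(tV)$ fix $p$, their differentials at $p$ form a group of orthogonal (and, since the flow is holomorphic, $J$-commuting, i.e.\ unitary) transformations of $T_pM$, and the fixed-point set is, via the Riemannian exponential map, locally modeled on the fixed subspace of this unitary group acting on $T_pM$ — hence a totally geodesic complex submanifold through $p$. This is classical (see e.g.\ Kobayashi's transformation-groups theory), and I would simply cite it; the soliton structure enters only through Lemma \ref{compactt} to upgrade ``closed submanifold'' to ``compact submanifold'' and through $df|_{M_0(X)}=0$ to place each component in a level set of $f$.
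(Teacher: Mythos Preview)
Your proposal is correct and follows essentially the same approach as the paper: both use that $JX$ is Killing to conclude the zero set is a totally geodesic (hence smooth) submanifold, invoke the holomorphicity of the flow to get $J$-invariance, and observe that $df=g(X,\cdot)$ vanishes on $M_0(X)$ so $f$ is constant on each component. The only cosmetic differences are that the paper separates the complex-submanifold argument into ``analytic subvariety via $X^{1,0}$'' plus ``smooth via Killing field'', and deduces compactness from containment in a level set of the proper function $f$ rather than citing Lemma~\ref{compactt} directly as you do.
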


\begin{proof}[Proof of Claim \ref{properties}]
Let $J$ denote the complex structure of $M$ and let $F$ be a connected component of $M_{0}(X)$. Then since $F$ is locally the zero set of the holomorphic
vector field $X^{1,\,0}=\frac{1}{2}(X-iJX)$, it is a complex analytic subvariety of $M$. Furthermore, as a connected component of the zero
set of the Killing vector field $JX$, it is a totally geodesic submanifold by \cite[Theorem 5.3, p.60]{kobayashi1}. Hence
$F$ is a smooth complex submanifold of $M$.

Next observe that along any geodesic $\gamma(t)$ in $F$, we have for the soliton potential $f$,
$$\frac{d}{dt}f(\gamma(t))=df(\gamma'(t))=g(X,\,\gamma'(t))=0$$
so that $f$ is constant on $F$. Consequently, $F$ is contained in a level set of $f$.
From Theorem \ref{theo-basic-prop-shrink}(i), we know that $f$ is proper
so that the level sets of $f$ are compact. Thus, as a closed subset of a compact set, $F$ is compact.
\end{proof}

Now note that, by \cite[Proof of Lemma 1]{frankel}, $f$ is a Morse-Bott function on $M$. The critical submanifolds of $f$
are precisely the connected components of $M_{0}(X)$. Since $M$ is K\"ahler,
the Morse indices (i.e., the number of negative eigenvalues of $\operatorname{Hess}(f)$) of the critical submanifolds are all even \cite{frankel}.
Write $$M_{0}(X)=M^{(0)}\cup\bigcup_{k=1}^{n}M^{(2k)},$$
where $M^{(j)}$ denotes the disjoint union of the critical submanifolds of $M_{0}(X)$ of index $j$.
As a consequence of Claim \ref{properties}, we see that each connected component of $M_{0}(X)$, being a compact complex submanifold of $M$, is either
contained in the maximal compact analytic subset $E$ of $M$ or is an isolated point contained in $M\setminus E$.
Suppose that there exists an isolated point $x\in M^{(j)}\cap(M\setminus E)$ for some $j\geq2$.

Using ideas from \cite[p.3332]{chen-soliton} in this paragraph, we see from \cite{bryant} that the holomorphic vector field $X^{1,\,0}$ is linearisable at each of its critical points, meaning in particular that there exist local holomorphic coordinates $(z_{1},\ldots,z_{n})$ centred at $x$
such that $X^{1,\,0}=\sum_{j\,=\,1}^{n}a_{j}z_{j}\partial_{z_{j}}$ with $a_{j}\in\mathbb{R}$ for all $j=1,\ldots,n$. Since $\operatorname{Hess}(f)$ has at least one negative eigenvalue at $x$, we have that $a_{i}<0$ for some $i$. Without loss of generality, we may assume that
$i=n$ so that $a_{n}<0$. Now, clearly the orbits of $JX$ on the $z_{n}$-axis are all periodic. Fix one
such orbit $\theta:S^{1}\to M$. Then we can construct a map $R:S^{1}(\simeq\mathbb{R}/T\mathbb{Z})\times\mathbb{R}\to M$
by defining $R(s,\,t)$ to be $\phi_{t}(\theta(s))$, where $\phi_{t}$
is the integral curve of the negative gradient flow of $f$ and $T$ is the period of the orbit of $\theta$. Since
$[X,\,JX]=0$, $R$ is holomorphic and by the Riemann removable singularity theorem, $R$ extends to a non-trivial holomorphic map $\bar{R}:\mathbb{C}\to M$
with $\bar{R}(0)=x$.

Since $f$ is decreasing along its negative gradient flow and is bounded from below, we see that
$f(\bar{R}(z))$ is bounded for all $z\in\mathbb{C}$. Hence, by properness of $f$, the set
$\{\bar{R}(z):z\in\mathbb{C}\}$ is contained in a compact subset of $M$.
Letting $p:M\to M'$ denote the Remmert reduction of $M$ and recalling that $M'$ admits an embedding
$h:M'\to\mathbb{C}^{P}$ into $\mathbb{C}^{P}$ for some $P\in\mathbb{N}$,
we therefore obtain a bounded non-trivial holomorphic map $h\circ p\circ\bar{R}:\mathbb{C}\to\mathbb{C}^{P}$. By Liouville's theorem, such a map is constant. This is a contradiction. Thus $M_{0}(X)\cap(M\setminus E)$, if non-empty, is contained in $M^{(0)}$.

The next claim concerns the structure of $M^{(0)}$.
\begin{claim}\label{end}
$M^{(0)}$ is a non-empty, connected, compact complex submanifold of $M$ that comprises the global minima of $f$.
\end{claim}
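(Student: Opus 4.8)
The plan is to combine Claim \ref{properties} with a Morse-theoretic analysis of the negative gradient flow of $f$, the crucial input being the parity of the Morse indices just recorded. First, $f$ is proper and bounded below by Theorem \ref{theo-basic-prop-shrink}(i), so it attains its infimum $m:=\inf_{M}f$; any point realising $m$ is a critical point of $f$ at which $\operatorname{Hess}(f)\geq 0$, hence lies in $M^{(0)}$, so $M^{(0)}\neq\emptyset$. Since $M^{(0)}\subseteq M_{0}(X)$, and $M_{0}(X)$ is compact (Lemma \ref{compactt}) with each of its components a compact complex submanifold of $M$ (Claim \ref{properties}) --- so in particular there are only finitely many components --- it follows that $M^{(0)}$ is automatically a compact complex submanifold once we know it is connected, and it then lies in a single level set of $f$. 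Moreover, once connectedness is known, $M^{(0)}$ is a single critical submanifold on which $f$ is constant; since the global minimum of $f$ is attained on some index-$0$ critical submanifold --- necessarily this one --- and since conversely every global minimiser of $f$ is an index-$0$ critical point, $M^{(0)}$ coincides with the set of global minima. Thus the whole claim reduces to the connectedness of $M^{(0)}$.

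To prove this, let $\psi_{t}$ be the flow of $-X=-\nabla^{g}f$, which is complete since $X$ is, and which pushes every point into smaller and smaller sublevel sets of $f$ (these being compact by properness). Write $M^{(0)}=F_{1}\sqcup\dots\sqcup F_{k}$ for its finitely many components. For $x\in M$ the $\omega$-limit set $\omega(x):=\bigcap_{s\geq0}\overline{\{\psi_{t}(x):t\geq s\}}$ is non-empty and connected (the forward orbit being relatively compact), and since $f(\psi_{t}(x))$ is non-increasing and bounded below it converges, so $f$ is constant on the $\psi_{t}$-invariant set $\omega(x)$; consequently $\nabla^{g}f\equiv0$ on $\omega(x)$, i.e.\ $\omega(x)\subseteq M_{0}(X)$, and by connectedness $\omega(x)$ lies in a single critical submanifold $F$ of $f$. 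Using the Morse--Bott normal form for $f$ near $F$ together with the local stable manifold theorem, the set $W^{s}(F):=\{x:\omega(x)\subseteq F\}$ is seen to be a submanifold of codimension $\mathrm{index}(F)$, which is open precisely when $\mathrm{index}(F)=0$. As the indices are all even, we obtain a decomposition $M=\big(\bigsqcup_{i}W^{s}(F_{i})\big)\sqcup Z$ with $Z:=\bigsqcup_{\mathrm{index}(F')\geq2}W^{s}(F')$ a finite union of submanifolds of codimension at least $2$.

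Now $Z$ is closed, since its complement $\bigsqcup_{i}W^{s}(F_{i})$ is a union of open sets; and removing from the connected manifold $M$ a closed set that is a finite union of submanifolds of codimension $\geq2$ leaves a connected set. Hence $\bigsqcup_{i}W^{s}(F_{i})=M\setminus Z$ is connected. On the other hand each $W^{s}(F_{i})$ is itself open and connected: picking a connected, $\psi_{t}$-forward-invariant tubular neighbourhood $N_{i}$ of the local minimum $F_{i}$ with $\bigcap_{t\geq0}\psi_{t}(N_{i})=F_{i}$ (possible by the Morse--Bott lemma), one has $W^{s}(F_{i})=\bigcup_{t\geq0}\psi_{t}^{-1}(N_{i})$, an increasing union of open connected sets (each $\psi_{t}$ being a diffeomorphism of $M$). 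A disjoint union of non-empty open connected sets being connected forces $k=1$, so $M^{(0)}=F_{1}$ is connected, completing the proof.

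The only genuinely technical point I anticipate is the identification of $\{x:\omega(x)\subseteq F\}$ with an embedded submanifold of codimension $\mathrm{index}(F)$, i.e.\ the Morse--Bott stable manifold theorem in this complete non-compact setting; the rest is soft. One could bypass part of this by using that K\"ahler--Ricci soliton metrics are real-analytic and invoking the {\L}ojasiewicz gradient inequality, which gives directly that each trajectory $\psi_{t}(x)$ converges to a single point of $M_{0}(X)$.
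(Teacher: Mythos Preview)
Your proof is correct and follows essentially the same approach as the paper: both use the Morse--Bott stratification of $M$ by stable manifolds of the negative gradient flow, exploit that all Morse indices are even so the non-minimal strata have real codimension $\geq 2$, and conclude that the union of index-$0$ basins is connected. Your write-up is a bit more careful than the paper's (you explicitly verify each $W^s(F_i)$ is open and connected before drawing the conclusion, and you correctly flag the Morse--Bott stable manifold theorem as the one technical input, which the paper simply cites from \cite{austin}).
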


\begin{proof}
$M^{(0)}$ is clearly non-empty since $f$ attains a global minimum and, as a closed
subset of the compact set $M_{0}(X)$, comprises finitely many connected, compact, complex submanifolds of $M$ by Claim \ref{properties}. To see that $M^{(0)}$
comprises one connected component only, recall that the soliton vector field $X$ is complete. Then by Proposition \ref{alix}, for any
point $x\in M$, the forward orbit of the negative gradient flow of $f$ beginning at $x$ converges to a point of $M_{0}(X)$. This gives rise to a stratification of $M$, namely $M=\bigsqcup_{k\,=\,0}^{n}W^{s}(M^{(2k)}),$ where
$$W^{s}(M^{(2k)})=\{x\in M:\lim_{t\to-\infty}\phi_{x}(t)\in M^{(2k)}\},$$
$\phi_{x}:\mathbb{R}\to M$ here denoting the gradient flow of $f$ beginning at $x$. Note that
$$M^{(0)}=W^{s}(M^{(0)})=M\mathbin{\big\backslash}\bigsqcup_{k\,=\,1}^{n}W^{s}(M^{(2k)}).$$
Now, since $M_{0}(X)$ is compact, for each $k$,
$W^{s}(M^{(2k)})$ comprises finitely many connected components,
each of which is an open submanifold of $M$ of real dimension $2n-2k$
\cite[Proposition 3.2]{austin}. The complement of finitely
many submanifolds of real codimension at least two in a
connected manifold is still connected. Hence $M^{(0)}$ is connected, as desired.
Finally, since $M^{(0)}$ contains all of the local minima of $f$ and, comprising only one connected component,
is contained in a level set of $f$ by Claim \ref{properties},
it must be the set of global minima of $f$.
\end{proof}

Now, we have already established the fact that $M_{0}(X)\cap(M\setminus E)$, if non-empty, is contained in $M^{(0)}$.
Thus, if $E=\emptyset$, then, since $M_{0}(X)$ is non-empty as $f$ attains a global minimum, we
must have that $M_{0}(X)=M^{(0)}$ so that $M_{0}(X)$ is a non-empty, connected, compact complex submanifold of $M$ by Claim \ref{end}.
Since $M$ is affine if $E=\emptyset$, we deduce that $M_{0}(X)$ must comprise a single point if $E=\emptyset$.
It then follows from \cite{bryant} that $M$ is biholomorphic to $\mathbb{C}^{n}$ if $E=\emptyset$. This is case (i) of the proposition.

Next consider the case when $E\neq\emptyset$. If $M_{0}(X)\cap(M\setminus E)=\emptyset$,
then $M_{0}(X)\subseteq E$ and we are in case (ii) of the proposition. So, to derive a contradiction, suppose that $E\neq\emptyset$ and $M_{0}(X)\cap(M\setminus E)\neq\emptyset$. In light of the above, we must have that $M^{(0)}\cap E=\emptyset$ and that $M_{0}(X)\cap(M\setminus E)=M^{(0)}$ which comprises a single point $x$ say. Moreover, $\left(\bigcup_{j\,=\,1}^{n}M^{(2j)}\right)\cap E\neq\emptyset$ since otherwise $M$ would be biholomorphic to $\mathbb{C}^{n}$ by \cite{bryant}, thereby yielding a contradiction. Thus, noting that $f(M^{(0)})$ is the global minimum value of $f$ by Claim \ref{end}, let $A$ be the smallest critical value of $f$ with $A>f(M^{(0)})$ and let $y\in f^{-1}(\{A\})$. Then we must have that $y\in M^{(k)}\subseteq E$ for some $k\geq 2$ by what we have just said. As before, we can construct a holomorphic map $\bar{R}:\mathbb{C}\to M$
with $\bar{R}(0)=y$. Since $f$ is decreasing along its negative gradient flow and since there are no critical values of
$f$ in the open interval $(f(M^{(0)}),\,A)$, we see from Proposition \ref{alix}
that necessarily $\lim_{z\to+\infty}\bar{R}(z)=x$. The Riemann removable singularity theorem
then applies and allows us to extend $\bar{R}$ to a holomorphic map $\bar{R}':\mathbb{P}^{1}\to M$. Since $x\neq y$ and $x\notin E$,
what we have constructed is a non-trivial holomorphic curve in $M$ that is not contained in $E$. This contradicts the maximality of $E$.
Thus, cases (i) and (ii) of the proposition are the only two possibilities that can occur. This completes the proof.
\end{proof}

\subsection{Properties of real holomorphic vector fields commuting with the soliton vector field}

In this subsection, we mention some properties of real holomorphic vector fields that commute with the soliton vector field
on a complete shrinking gradient K\"ahler-Ricci soliton. As the next proposition demonstrates, a bound on the
Ricci curvature yields control on the growth of the norm of these vector fields.

\begin{prop}\label{keywest}
Let $(M,\,g,\,X)$ be a complete shrinking gradient K\"ahler-Ricci soliton with bounded Ricci curvature
and let $d_{g}(p,\,\cdot)$ denote the distance to a fixed point $p\in M$
with respect to $g$. Then there exists $a>0$ such that for every real holomorphic vector field $Y$ on $M$ with $[X,\,Y]=0$, $|Y|^{2}_{g}(x)=O(d_{g}(p,\,x)^{a})$.
\end{prop}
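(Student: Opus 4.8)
The plan is to run a Grönwall estimate for $u:=|Y|^2_g$ along the flow of $X$ and then to bound the flow‑time needed to reach a fixed compact set in terms of $d_g(p,\cdot)$, using the growth of the soliton potential. First I would differentiate $u$ along $X$. Since $u=g(Y,Y)$ and $\mathcal{L}_XY=[X,Y]=0$, the Leibniz rule gives $X\cdot u=(\mathcal{L}_Xg)(Y,Y)+2g(\mathcal{L}_XY,Y)=(\mathcal{L}_Xg)(Y,Y)$, and the shrinking soliton equation $\Ric(g)+\tfrac12\mathcal{L}_Xg=g$ yields $\mathcal{L}_Xg=2g-2\Ric(g)$. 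Hence $X\cdot u=2u-2\Ric(g)(Y,Y)$, and since $\Ric(g)$ is bounded there is a constant $C_0>0$, depending only on $g$, with $X\cdot u\le(2+2C_0)u=:a\,u$. Writing $\{\phi_t\}$ for the flow of $X$ (complete because $g$ is complete), integrating $\tfrac{d}{dt}(u\circ\phi_t)\le a\,(u\circ\phi_t)$ gives $u(\phi_t(x))\le u(x)e^{at}$ for all $t\ge0$ and all $x\in M$. Note that holomorphicity of $Y$ is not used here, only $[X,Y]=0$.

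Next I would convert flow‑time into distance. Normalise $f$ as in Remark \ref{rk-sol-id} and set $h:=f+n\ge0$; the soliton identity of Lemma \ref{solitonid} gives $X\cdot h=|\nabla^gf|^2=2h-R_g$, where $0\le R_g\le C_1$ for some constant $C_1$ (bounded Ricci curvature bounds $R_g$, and $R_g\ge0$ by Remark \ref{rk-sol-id}). In particular $h$ is non‑decreasing along every forward $X$‑orbit. Put $c_0:=\tfrac{C_1}{2}+1$; by properness of $f$ (Theorem \ref{theo-basic-prop-shrink}(i)) the sublevel set $K:=\{h\le c_0\}$ is compact. Fix $y\in M$ with $d_g(p,y)$ large enough that $h(y)>c_0$, and consider the backward orbit $s\mapsto\phi_{-s}(y)$, $s\ge0$. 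If it stayed in $\{h\ge c_0\}$ for all $s$, then along it $\tfrac{d}{ds}h(\phi_{-s}(y))=-(2h-R_g)\le-(2c_0-C_1)=-2$, forcing $h(\phi_{-s}(y))\to-\infty$, contradicting $h\ge0$; so there is a finite time $s_1=s_1(y)$ with $x_0:=\phi_{-s_1}(y)\in K$, in fact $h(x_0)=c_0$. To bound $s_1$, set $\psi(t):=h(\phi_t(x_0))$, so $\psi(0)=c_0$ and $\psi'=2\psi-R_g\ge2\psi-C_1$; comparison with the solution of $z'=2z-C_1,\ z(0)=c_0$, namely $z(t)=\tfrac{C_1}{2}+e^{2t}$, gives $h(y)=\psi(s_1)\ge e^{2s_1}$, hence $s_1\le\tfrac12\log h(y)$. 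Since Theorem \ref{theo-basic-prop-shrink}(i) bounds $h(y)=f(y)+n$ by a fixed multiple of $d_g(p,y)^2$, we get $s_1\le\log d_g(p,y)+C_3$ for $d_g(p,y)$ large.

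Combining the two steps, $u(y)=u(\phi_{s_1}(x_0))\le u(x_0)\,e^{as_1}\le(\sup_K u)\,e^{aC_3}\,d_g(p,y)^a$, where $\sup_K u<\infty$ because $u$ is continuous and $K$ compact (this constant, but not the exponent $a$, depends on $Y$). As $u$ is bounded on any fixed bounded region, this gives $|Y|^2_g(x)=O(d_g(p,x)^a)$ with $a=2+2C_0$ independent of $Y$. The only delicate point is the middle step — showing that the backward $X$‑orbit through $y$ reaches the fixed compact set $K$ after flow‑time at most logarithmic in $d_g(p,y)$, and then rises back up at a controlled exponential rate — which is precisely what the elementary drift estimate for $h$ plus the ODE comparison provide; the bounded‑Ricci hypothesis enters twice, once to make $a$ finite in the Grönwall step and once (through boundedness of $R_g$) in this drift/comparison argument.
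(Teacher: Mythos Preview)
Your proof is correct and follows essentially the same approach as the paper: both differentiate $|Y|^2$ along the flow of $X$, use the soliton equation and the Ricci bound to obtain a Gr\"onwall-type inequality $|Y|^2(\phi_t(x))\le |Y|^2(x)e^{at}$, then use the soliton identity $X\cdot f=2f-R_g$ together with bounded scalar curvature to show that every far-out point can be reached from a fixed compact set by flowing along $X$ for time $\lesssim \log d_g(p,y)$, and finally combine these with the quadratic growth of $f$. The only cosmetic differences are that the paper uses an annular compact set $f^{-1}([2A,4A])$ rather than a sublevel set, and phrases the flow-time estimate slightly differently.
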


\begin{proof}
Let $|\cdot|$ denote the norm with respect to $g$ and let $\operatorname{Ric}$ denote the Ricci curvature of $g$.
Since $|\operatorname{Ric}|$ is bounded so that the scalar curvature of $g$ is bounded, it follows from Lemma \ref{compactt} that the zero set of $X$ is contained in a compact subset of $M$. For $A>0$, let $K:=f^{-1}([2A,\,4A])$ and $N=f^{-1}((-\infty,\,3A])$. Since $f$ is proper and bounded below as a consequence of Theorem \ref{theo-basic-prop-shrink}(i), $K$ and $N$ are compact subsets of $M$. Choose $A$ sufficiently large so that all of the critical points of $f$ are contained in $f^{-1}((-\infty,\,A])$ and so that $A>\sup_{M}|R_{g}|$.
Let $\gamma_{x}(t)$ denote the integral curve of $X$ with $\gamma_{x}(0)=x\in M$. We begin with the following claim.
\begin{claim}\label{hotmail}
Let $y\in M\setminus N$. Then there exists $x\in K$ and $t_{0}>0$ such that $y=\gamma_{x}(t_{0})$.
\end{claim}

\noindent That is to say, every point of $M\setminus N$ lies on an integral curve of $X$ passing through $K$.

\begin{proof}[Proof of Claim \ref{hotmail}]
For $y\in M\setminus K$, we see from the soliton identity $|\nabla^g f|^2+R_g=2f$ that
\begin{eqnarray*}
\frac{d}{dt}f(\gamma_y(t))=|\nabla^g f|^2_g(\gamma_y(t))=2f(\gamma_y(t))-R_g(\gamma_y(t)).
\end{eqnarray*}
Using the upper bound on $|R_{g}|$, we deduce that
\begin{eqnarray*}
\left|\frac{d}{dt}f(\gamma_y(t))-2f(\gamma_y(t))\right|\leq2A.
\end{eqnarray*}
Integrating this differential inequality for $t<0$ then yields the inequalities
\begin{eqnarray}\label{fmll}
(f(y)+A)e^{2t}-A\leq f(\gamma_y(t))\leq(f(y)-A)e^{2t}+A\quad\textrm{for $t<0$.}
\end{eqnarray}
Set $t_{0}=-\frac{1}{2}\ln\left(\frac{3A}{f(y)+A}\right)>0$.
Then from \eqref{fmll} we see that
$$2A\leq f(\gamma_{y}(-t_{0}))\leq3A\left(\frac{f(y)-A}{f(y)+A}\right)+A=3A\left(1-\frac{2A}{f(y)+A}\right)+A\leq4A.$$
Thus, $y=\gamma_{x}(t_{0})$ where $x=\gamma_{y}(-t_{0})\in K$. This proves the claim.
\end{proof}
\noindent Next observe that
\begin{equation*}
\begin{split}
\mathcal{L}_{X}(|Y|^{2})&=(\mathcal{L}_{X}g)(Y,\,Y)=g(Y,\,Y)-\operatorname{Ric}(Y,\,Y)=|Y|^{2}-\operatorname{Ric}(Y,\,Y).
\end{split}
\end{equation*}
For $x\in M$ a point where $X\neq0$, let
$h(t):=|Y|^{2}(\gamma_x(t))$. Then we can rewrite the previous equation as
$$h'(t)=h(t)-\operatorname{Ric}(Y,\,Y)(\gamma_x(t))$$
so that
\begin{equation}\label{ode}
\frac{h'(t)}{h(t)}=1-\frac{\operatorname{Ric}(Y,\,Y)(\gamma_x(t))}{h(t)}.
\end{equation}
Analysing the error term here, we have that
$$\frac{\operatorname{Ric}(Y,\,Y)(\gamma_x(t))}{h(t)}=\frac{\operatorname{Ric}(Y,\,Y)}{|Y|^{2}}.$$
Since $|\operatorname{Ric}|$ is bounded by assumption, we then have that
$$\left|\frac{\operatorname{Ric}(Y,\,Y)}{|Y|^{2}}\right|\leq C$$
for a constant $C>0$, so that \eqref{ode} gives us the bound
$$\left|\frac{h'(t)}{h(t)}\right|\leq2a$$
for some $a>0$.
Solving this for $t>0$ yields
$$-2at\leq\ln(h(t))-\ln(h(0))\leq2at$$
so that in particular, $$h(t)\leq|Y|^{2}(x)e^{2at}\qquad\textrm{for all $t>0$}.$$
Hence,
\begin{equation}\label{salsa}
|Y|^{2}(\gamma_x(t))\leq |Y|^{2}(x)e^{2at}\quad\textrm{for all $t>0$}.
\end{equation}

Let $y\in M\setminus N$. Then by Claim \ref{hotmail}, there is an $x\in K$ and $t_{0}>0$ such that
$y=\gamma_{x}(t_{0})$. Applying the above inequality to this choice of $x$ and $t_{0}$, we deduce that
$$|Y|^{2}(y)\leq |Y|^{2}(x)e^{2at_{0}}.$$
Now, as in the proof of Claim \ref{hotmail}, we have that
\begin{eqnarray*}
\left|\frac{d}{dt}f(\gamma_x(t))-2f(\gamma_x(t))\right|\leq2A.
\end{eqnarray*}
Integrating this for $t>0$ yields the fact that
$$(f(x)-A)e^{2t}+A\leq f(\gamma_{x}(t))\leq (f(x)+A)e^{2t}-A\quad\textrm{for all $t>0$}.$$
Since $x\in K$ so that $f(x)\geq2A$, we see from the left-hand side of this inequality that
$f(\gamma_{x}(t))\geq A(1+e^{2t})$ so that
$$e^{2t}\leq\frac{f(\gamma_{x}(t))}{A}-1\quad\textrm{for all $t>0$}.$$
Plugging this into \eqref{salsa} and setting $t=t_{0}$ results in the bound
$$|Y|^{2}(y)\leq|Y|^{2}(x)e^{2at_{0}}\leq|Y|^{2}(x)\left(\frac{f(y)}{A}-1\right)^{a}\leq\left(\sup_{K}|Y|^{2}\right)\left(\frac{f(y)}{A}-1\right)^{a}.$$
Since $K$ is compact and $f$ grows quadratically with respect to the distance to a fixed point $p\in M$ by Theorem \ref{theo-basic-prop-shrink}(i),
we arrive at the estimate
\begin{eqnarray*}
|Y|(z)\leq c_1d_g(p,\,z)^{a}+c_2\quad\textrm{for all $z\in M$},
\end{eqnarray*}
for some positive constants $c_1, c_2>0$. This leads to the desired conclusion.
\end{proof}

\begin{remark}\label{jumper}
In the case that the Ricci curvature decays quadratically at infinity, the constant $a$ may be taken to be equal to $2$ in Proposition \ref{keywest}.
\end{remark}

We can also show that such vector fields are complete when the zero set of the soliton vector field is compact.
\begin{lemma}\label{completee}
Let $(M,\,g,\,X)$ be a complete shrinking gradient K\"ahler-Ricci soliton. Assume that
the zero set of $X$ is compact (which, by Lemma \ref{compactt}, is the case when the scalar curvature of $g$ is bounded).
Then every real holomorphic vector field $Y$ on $M$ with $[X,\,Y]=0$ is complete.
\end{lemma}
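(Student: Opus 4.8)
The plan is to combine the completeness of the soliton vector field $X$ with the fact that every backward $X$-orbit limits onto the compact zero set of $X$, and then to propagate a uniform short-time existence estimate for the flow of $Y$ from a neighbourhood of that zero set to all of $M$ using $[X,\,Y]=0$. Write $\phi_{t}$ for the flow of $X$ (complete, since $X$ is a soliton vector field) and $\psi_{s}$ for the local flow of $Y$. Since $X=\nabla^{g}f$ with $f$ proper (by the quadratic lower bound of Theorem \ref{theo-basic-prop-shrink}(i)) and bounded below, $f$ is non-increasing along $t\mapsto\phi_{-t}$ and its sublevel sets are compact; hence Proposition \ref{alix}, applied with $u=f$, shows that for every $x\in M$ the orbit $(\phi_{-t}(x))_{t\geq 0}$ accumulates in the critical set of $f$, which is precisely the zero set $M_{0}(X)$ of $X$. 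Fixing a compact neighbourhood $U$ of $M_{0}(X)$, it follows that for each $x\in M$ there is a time $T=T(x)>0$ with $\phi_{-T}(x)\in U$.

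Next I would extract a short-time existence window for $Y$ that is uniform over $U$: because the local flow $\psi$ is defined on an open neighbourhood of $\{0\}\times M$ and $U$ is compact, there exist $\epsilon>0$ and a compact set $U'\supseteq U$ such that $\psi_{s}(z)$ is defined and lies in $U'$ for all $z\in U$ and all $|s|\leq\epsilon$. Now fix an arbitrary $x\in M$, choose $T>0$ with $z:=\phi_{-T}(x)\in U$, and invoke $[X,\,Y]=0$: since $X$ is complete this gives $(\phi_{T})_{*}Y=Y$, so $\phi_{T}$ carries the integral curve $s\mapsto\psi_{s}(z)$ of $Y$ through $z$ onto the integral curve of $Y$ through $\phi_{T}(z)=x$. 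Hence $\psi_{s}(x)$ is defined for $|s|\leq\epsilon$ and lies in the compact set $\phi_{T}(U')$. As $\epsilon$ is independent of $x$, the flow of $Y$ is defined on $[-\epsilon,\epsilon]\times M$, and the group law upgrades this to existence for all $s\in\mathbb{R}$; thus $Y$ is complete.

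The step requiring the most care — and essentially the only place the hypotheses of the lemma are used — is the claim that every backward $X$-orbit returns to a fixed compact neighbourhood of $M_{0}(X)$; this rests on the properness and boundedness below of $f$ (Theorem \ref{theo-basic-prop-shrink}(i)), the compactness of $M_{0}(X)$, and Proposition \ref{alix}. The remaining manipulations — the uniform existence window over $U$ and the conjugation of integral curves of $Y$ by $\phi_{T}$ — are standard facts about flows of commuting vector fields, the only subtlety being to phrase the commutation statement so that it remains valid when $\psi$ is merely locally defined. Note that real holomorphicity of $Y$ plays no role here: only smoothness of $Y$ and $[X,\,Y]=0$ are needed.
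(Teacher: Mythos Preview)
Your proof is correct and follows essentially the same strategy as the paper's: both use Proposition \ref{alix} to flow any point backward along $X$ into a fixed compact neighbourhood of $M_{0}(X)$, extract a uniform $\epsilon$-window of existence for the $Y$-flow on that compact set, and then transport this window to all of $M$ via the commutation $[X,\,Y]=0$ and the completeness of $X$. Your write-up is slightly more careful in noting that Proposition \ref{alix} only gives subsequential accumulation (which still suffices to land in an open neighbourhood) and in making the conjugation argument explicit.
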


\begin{proof}
Let $Y$ be as in the statement of the lemma and let $K$ be any compact subset of $M$ containing
the zero set of $X$ in its interior. Then note the following.
\begin{itemize}
  \item Since $K$ is compact, there exists $\epsilon_{0}>0$ such that
the flow of $Y$ beginning at any point of $K$ exists on the open interval $(-\epsilon_{0},\,\epsilon_{0})$.
  \item By Proposition \ref{alix}, for any $p\in M$,
there exists $T(p)>0$ such that the image of $p$ under the forward flow of $-X$ for time $T$ will be contained in $K$.
\end{itemize}
Consequently, for any point $p\in M$, by flowing first along $-X$ into $K$ for time $T(p)$,
then flowing along $Y$, then flowing along $X$ for time $T$,
one sees from the fact that $[X,\,Y]=0$
that the flow of $Y$ beginning at any point of $M$ exists on the interval $(-\epsilon_{0},\,\epsilon_{0})$.
This observation suffices to prove the completeness of $Y$.
\end{proof}

\subsection{Basics of metric measure spaces}\label{metricmeasure}

We take the following from \cite{fut}; the notions introduced in this section will be used in Section \ref{matsushimaa}.

A smooth metric measure space is a Riemannian manifold endowed with a weighted volume.
\begin{definition}
A \emph{smooth metric measure space} is a triple $(M,\,g,\,e^{-f}dV_{g})$, where $(M,\,g)$ is a complete Riemannian manifold with Riemannian metric $g$,
$dV_{g}$ is the volume form associated to $g$, and $f:M\to\mathbb{R}$ is a smooth real-valued function.
\end{definition}
\noindent A shrinking gradient Ricci soliton $(M,\,g,\,X)$ with $X=\nabla^g f$ naturally defines a smooth metric measure space $(M,\,g,\,e^{-f}dV_{g})$.
On such a space, we define the weighted Laplacian $\Delta_{f}$ by
$$\Delta_{f}u:=\Delta u-g(\nabla^g f,\,\nabla u)$$
on smooth real-valued functions $u\in C^{\infty}(M,\,\mathbb{R})$. There is a natural $L^{2}$-inner product $\langle\cdot\,,\,\cdot\rangle_{L^{2}_{f}}$ on the space $L^{2}_{f}$ of square-integrable smooth real-valued functions on $M$
with respect to the measure $e^{-f}dV_g$ defined by $$\langle u,\,v\rangle_{L_{f}^{2}}:=\int_{M}uv\,e^{-f}dV_{g},\qquad u,\,v\in L_{f}^{2}.$$
As one can easily verify, the operator $\Delta_{f}$ is self-adjoint with respect to $\langle\cdot\,,\,\cdot\rangle_{L_{f}^{2}}$.

In the K\"ahler case, we have:
\begin{definition}
If $(M,\,g,\,e^{-f}dV_{g})$ is a smooth metric measure space and $(M,\,g)$ is K\"ahler, then we say that $(M,\,g,\,e^{-f}dV_{g})$ is a \emph{K\"ahler metric measure space}.
\end{definition}
\noindent A shrinking gradient K\"ahler-Ricci soliton naturally defines such a space.

Unlike the real case, on a K\"ahler metric measure space we have the weighted $\bar{\partial}$-Laplacian $\Delta_{f}$ defined on smooth complex-valued functions $u\in C^{\infty}(M,\,\mathbb{C})$ by
$$\Delta_{f}u:=\Delta_{\bar{\partial}}u-(\nabla^{1,\,0}u)f=g^{i\bar{\jmath}}\partial_{i\bar{\jmath}}u-g^{i\bar{\jmath}}(\partial_{i}f)(\partial_{\bar{\jmath}}u).$$
This may be a complex-valued function even if $u$ is real-valued.
We define a hermitian inner product on the space $C_0^{\infty}(M,\,\mathbb{C})$ by
$$\langle u,\,v\rangle_{L^{2}_f}:=\int_{M}u\bar{v}\,e^{-f}dV_{g},\qquad u,\,v\in C_0^{\infty}(M,\,\mathbb{C}).$$
Then $\Delta_{f}$ is symmetric with respect to this inner product. In fact, we have that
$$\int_{M}(\Delta_{f}u)\bar{v}\,e^{-f}dV_{g}=\int_{M}u\overline{\Delta_{f}v}\,e^{-f}dV_{g}=
-\int_{M}g(\overline{\partial} u,\overline{\partial} v)\,e^{-f}dV_{g}=-\langle\overline{\partial}u,\overline{\partial}v\rangle_{L_{f}^2}.$$
See \cite{fut} and the references therein for further details.

\section{Proof of Theorem \ref{main}}\label{proofofA}

We first consider Theorem \ref{main} in the expanding case.

\subsection{Construction of a map to the tangent cone}
By a result of Siepmann \cite[Theorem 4.3.1]{Siepmann}, a complete expanding gradient Ricci soliton $(M,\,g,\,X)$ with quadratic curvature decay with derivatives has a unique tangent cone along each end. We first prove a series of lemmas before providing a refinement of Siepmann's result in Theorem \ref{ac} by using the flow of the soliton vector field $X$ to construct a diffeomorphism between each end of the expanding Ricci soliton and its tangent cone $(C_{0},\,g_{0})$ along that end with respect to which $r\partial_{r}$ pushes forward to $2X$, $r$ here denoting the radial coordinate
of $g_{0}$, and with respect to which $g-g_{0}-\operatorname{Ric}(g_{0})=O(r^{-4})$ with derivatives.

Our set-up in this section is as follows.\\
\noindent\fbox{%
    \parbox{\textwidth}{%
$(M,\,g,\,X)$ is a complete expanding gradient Ricci soliton with soliton vector field $X=\nabla^{g}f$ for a smooth real-valued
function $f:M\to\mathbb{R}$ such that for some point $p\in M$ and all $k\in\mathbb{N}_{0}$,
\begin{equation*}
A_{k}(g):=\sup_{x\in M}|(\nabla^{g})^{k}\operatorname{Rm}(g)|_{g}(x)d_{g}(p,\,x)^{2+k}<\infty,
\end{equation*}
where $\operatorname{Rm}(g)$ denotes the curvature of $g$ and $d_{g}(p,\,x)$ denotes the distance between $p$ and $x$ with respect to $g$.}}
\\
The diffeomorphisms $(\varphi_t)_{t\,\in\,(0,\,1]}$ will be as in \eqref{flowbaby}. We begin with:
\begin{lemma}\label{lemma-def-q}
The one-parameter family of functions $(tf\circ\varphi_{t})_{t>0}$ converges to a non-negative
continuous real-valued function $q(x):=\lim_{t\to0^{+}}tf(\varphi_{t}(x))$ on $M$ as $t\to 0^{+}$.
\end{lemma}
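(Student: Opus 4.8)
The plan is to differentiate $t\mapsto tf(\varphi_{t}(x))$ along the soliton flow and to observe, using the first-order soliton identity, that this derivative is a \emph{uniformly bounded} function of $t$; the limit as $t\to 0^{+}$ is then controlled by an absolutely convergent improper integral. The crux is the exact cancellation in the identity below, which tames what would otherwise be the (roughly) $t^{-1}$ blow-up of $f(\varphi_{t}(x))$; everything else is soft.

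First I would record the first-order identity: evaluating \eqref{div} at $t=1$ (where $g(1)=g$, $f(1)=f$) gives
\[
R_{g}+|\nabla^{g}f|^{2}_{g}-f=C_{1}\qquad\text{on }M,
\]
that is, $|\nabla^{g}f|^{2}_{g}=f+C_{1}-R_{g}$. Since $A_{0}(g)<\infty$ forces $|\operatorname{Rm}(g)|$, and hence $R_{g}$, to be globally bounded, set $\kappa:=\sup_{M}|R_{g}|<\infty$. Using \eqref{flowbaby}, a direct chain-rule computation gives, for each fixed $x\in M$ and all $t\in(0,1]$,
\[
\frac{d}{dt}\bigl(tf(\varphi_{t}(x))\bigr)=f(\varphi_{t}(x))-|\nabla^{g}f|^{2}_{g}(\varphi_{t}(x))=R_{g}(\varphi_{t}(x))-C_{1}.
\]
Integrating from $t$ to $1$ and using $\varphi_{1}=\operatorname{id}$ yields
\[
tf(\varphi_{t}(x))=f(x)-\int_{t}^{1}\bigl(R_{g}(\varphi_{s}(x))-C_{1}\bigr)\,ds .
\]
The integrand is continuous on $(0,1]$ and bounded in absolute value by $\kappa+|C_{1}|$, uniformly in $s$ and $x$, so the improper integral $\int_{0}^{1}\bigl(R_{g}(\varphi_{s}(x))-C_{1}\bigr)\,ds$ converges absolutely and
\[
q(x):=\lim_{t\to0^{+}}tf(\varphi_{t}(x))=f(x)-\int_{0}^{1}\bigl(R_{g}(\varphi_{s}(x))-C_{1}\bigr)\,ds
\]
exists and is finite. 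Moreover $|tf(\varphi_{t}(x))-q(x)|\le(\kappa+|C_{1}|)\,t$, so the convergence is in fact locally uniform on $M$.

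It then remains to verify the two asserted properties of $q$. For non-negativity: by \eqref{boundd} the potential $f$ is bounded below, say $f\ge -B$ with $B\ge 0$, so $tf(\varphi_{t}(x))\ge -tB\to 0$ as $t\to 0^{+}$, whence $q\ge 0$. For continuity: the flow $(s,x)\mapsto\varphi_{s}(x)$ is jointly continuous on $(0,1]\times M$, so $(s,x)\mapsto R_{g}(\varphi_{s}(x))-C_{1}$ is continuous there and dominated by the constant $\kappa+|C_{1}|$; the dominated convergence theorem then shows $x\mapsto\int_{0}^{1}\bigl(R_{g}(\varphi_{s}(x))-C_{1}\bigr)\,ds$ is continuous, and hence so is $q$ (this also follows directly from the uniform estimate above).

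I do not anticipate a genuine obstacle. The only place the hypothesis is used is the global bound $\kappa<\infty$ on $R_{g}$; the rest is elementary once the derivative identity is available. One should of course know that $\varphi_{t}$ is defined for all $t\in(0,1]$, but this is part of the standing set-up (cf.\ \eqref{flowbaby}--\eqref{rfsoliton}), and in any case follows from Gr\"onwall's inequality applied to $\tfrac{d}{d\tau}f(\varphi_{e^{-\tau}}(x))=|\nabla^{g}f|^{2}_{g}(\varphi_{e^{-\tau}}(x))\le f(\varphi_{e^{-\tau}}(x))+\kappa+|C_{1}|$ together with the properness of $f$.
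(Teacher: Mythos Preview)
Your argument is correct and is essentially the same as the paper's: both compute $\frac{d}{dt}\bigl(tf(\varphi_t(x))\bigr)=f-|\nabla^g f|^2=R_g-C_1$ via the soliton identity, then use the global bound on $R_g$ to get uniform convergence to a continuous limit $q$. The only cosmetic differences are that the paper (implicitly normalizing $C_1=0$) phrases convergence as a Cauchy estimate rather than an improper integral, and obtains non-negativity from $tf(\varphi_t)=t|X|^2+tR_g\ge t\inf_M R_g$ instead of from the lower bound on $f$.
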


\begin{proof}
Since
$$\frac{\partial}{\partial t}f(\varphi_{t}(x))=-\frac{|\nabla^{g}f|^{2}_{g}(\varphi_{t}(x))}{t}$$
so that
$$\frac{\partial}{\partial t}(tf(\varphi_{t}(x)))=(f-|\nabla^{g}f|^{2}_{g})(\varphi_{t}(x))=R_{g}(\varphi_{t}(x))$$
by the soliton identities for expanding gradient Ricci solitons satisfying \eqref{exp-sol-equ-riem},
where $R_{g}$ denotes the scalar curvature of $g$, we see after integrating that
\begin{equation}
tf(\varphi_t(x))-sf(\varphi_s(x))=\int_s^t R_{g}(\varphi_{\tau}(x))\,d\tau,\quad 0<s\leq t.\label{var-pot-fct-resc}
\end{equation}
Since $R_g$ is bounded on $M$, it follows that for all $x\in M$,
\begin{equation}\label{var-pot-fct2}
|tf(\varphi_t(x))-sf(\varphi_s(x))|\leq C(t-s),\quad 0<s\leq t,
\end{equation}
for some positive constant $C$. Thus, $\{t(f\circ\varphi_{t})\}_{t\in(0,\,1]}$ is a Cauchy sequence in
$C^{0}(M)$ and hence converges uniformly as $t\to0^{+}$ to a continuous real-valued function
$q$ on $M$ as in the statement of the lemma.

To see that $q(x)\geq0$ for all $x\in M$, note from the soliton identities that
$$tf(\varphi_t(x))=t|X|^2(\varphi_t(x))+tR_g(\varphi_t(x))\geq tR_g(\varphi_t(x))\geq t\inf_{M} R_g$$
since $t\in(0,1]$ and $R_{g}$ is bounded from below. Letting $t\to0^{+}$ in this inequality yields the desired conclusion.
\end{proof}

For $a>0$, set
$$M_{a}:=\left\{x\in M\,|\,\liminf_{t\to 0^{+}}tf(\varphi_t(x))>a\right\}$$
and
$$M_{0}:=\left\{x\in M\,|\,\liminf_{t\to 0^{+}}tf(\varphi_t(x))>0\right\}.$$
Note that
\begin{itemize}
        \item $M_{a}$ and $M_{0}$ are preserved by $\varphi_{t}$ for all $t\in(0,\,1]$ and $a>0$,
  since $\varphi_{ts}=\varphi_t\circ\varphi_s$ for all positive times $s$ and $t$,
      \end{itemize}
so that for any $a\geq0$, $t(f\circ\varphi_{t})$ defines a family of smooth functions
$t(f\circ\varphi_{t}):M_{a}\to\mathbb{R}$ for $t\in(0,\,1]$.
\begin{lemma}\label{monty}
$t(f\circ\varphi_{t})$ converges to $q$ in $C^{\infty}_{\operatorname{loc}}(M_{0})$ as $t\to0^{+}$. In particular, $q$ is smooth on $M_{0}$.
\end{lemma}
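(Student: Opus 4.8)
The plan is to realize $u_{t}:=t\,(f\circ\varphi_{t})$ as a solution of an elliptic relation for the rescaled metrics $g(t):=t\,\varphi_{t}^{*}g$ and then to promote the $C^{0}$ convergence $u_{t}\to q$ from Lemma \ref{lemma-def-q} to $C^{\infty}_{\operatorname{loc}}$ convergence on $M_{0}$ by an elliptic bootstrap. Two facts underlie everything: $(M,g(t))_{t>0}$ is the self-similar Ricci flow generated by the soliton, so that $f(t):=\varphi_{t}^{*}f$ obeys \eqref{rfsoliton}; and, on $M_{0}=\{q>0\}$, the point $\varphi_{t}(x)$ runs to infinity at a controlled rate. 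Indeed, for $x\in M_{0}$ one has $f(\varphi_{t}(x))=u_{t}(x)/t\to+\infty$, so by Proposition \ref{pot-fct-est} and \eqref{boundd}, $d_{g}(p,\varphi_{t}(x))^{2}=4f(\varphi_{t}(x))\,(1+o(1))=\tfrac{4q(x)}{t}\,(1+o(1))$, uniformly for $x$ in any compact subset of $M_{0}$.

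First I would record the algebraic identities. Since $u_{t}=t\,f(t)$ and $g(t)$ is parallel for its own Levi-Civita connection, \eqref{rfsoliton} gives
\[
\operatorname{Hess}_{g(t)}u_{t}=t\,\Ric(g(t))+\tfrac12 g(t),\qquad (\nabla^{g(t)})^{j}\operatorname{Hess}_{g(t)}u_{t}=t\,(\nabla^{g(t)})^{j}\Ric(g(t))\quad(j\ge1),
\]
while $\nabla^{g(t)}u_{t}=t\,\nabla^{g(t)}f(t)$ together with the expanding soliton identity $|\nabla^{g}f|^{2}_{g}=f-R_{g}$ (used already in the proof of Lemma \ref{lemma-def-q}) gives $|\nabla^{g(t)}u_{t}|^{2}_{g(t)}=t\,|\nabla^{g}f|^{2}_{g}(\varphi_{t}(\cdot))=u_{t}-t\,(R_{g}\circ\varphi_{t})$. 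Thus every $g(t)$-covariant derivative of $u_{t}$ of positive order is expressed through $u_{t}$, $R_{g}\circ\varphi_{t}$, $g(t)$, and the $g(t)$-covariant derivatives of $\Ric(g(t))$.

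Next I would estimate these on a compact $K\subset M_{0}$. The Levi-Civita connection being scale-invariant, $(\nabla^{g(t)})^{j}\Ric(g(t))=\varphi_{t}^{*}\big((\nabla^{g})^{j}\Ric(g)\big)$, and measuring a $(0,j+2)$-tensor with $g(t)=t\,\varphi_{t}^{*}g$ costs a factor $t^{-(j+2)/2}$; combined with $|(\nabla^{g})^{j}\Ric(g)|_{g}\le c_{n}A_{j}(g)\,d_{g}(p,\cdot)^{-(2+j)}$ this gives, for $x\in K$,
\[
\big|\,t\,(\nabla^{g(t)})^{j}\Ric(g(t))\,\big|_{g(t)}(x)=t^{-j/2}\,\big|(\nabla^{g})^{j}\Ric(g)\big|_{g}(\varphi_{t}(x))\ \lesssim_{j}\ t\,q(x)^{-(2+j)/2},
\]
which tends to $0$ as $t\to0^{+}$, locally uniformly on $M_{0}$. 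Hence $\big|\operatorname{Hess}_{g(t)}u_{t}-\tfrac12 g(t)\big|_{g(t)}\to0$, and $|\nabla^{g(t)}u_{t}|_{g(t)}$ and $|(\nabla^{g(t)})^{j}u_{t}|_{g(t)}$ (all $j\ge2$) are bounded on $K$ uniformly in $t\in(0,1]$. The same scaling applied to $\Rm$ gives $|(\nabla^{g(t)})^{j}\Rm(g(t))|_{g(t)}(x)=t^{-1-j/2}|(\nabla^{g})^{j}\Rm(g)|_{g}(\varphi_{t}(x))\lesssim_{j}q(x)^{-(2+j)/2}$, so $g(t)$ has locally uniformly bounded geometry on $M_{0}$; as moreover $|\partial_{t}g(t)|_{g(t)}=2|\Ric(g(t))|_{g(t)}\lesssim_{K}t$, the metrics $g(t)$ converge in $C^{\infty}_{\operatorname{loc}}(M_{0})$ as $t\to0^{+}$. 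This last fact is essentially Siepmann's theorem \cite[Theorem 4.3.1]{Siepmann} that the soliton flows out of a cone, and can also be extracted from the two displayed estimates by the standard argument that a Ricci flow with locally uniformly bounded curvature and all derivatives and with $t$-derivative $O(t)$ is Cauchy in $C^{\infty}_{\operatorname{loc}}$.

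Finally I would run the bootstrap. Cover a compact $K\subset M_{0}$ by finitely many charts (inside a slightly larger compact of $M_{0}$) in which, for $t\in(0,t_{0}]$, the $g(t)$ are uniformly two-sided bounded with uniformly $C^{m}$-bounded coefficients for all $m$. Read in these charts, the identities above express the coordinate derivatives of $u_{t}$ of order $\le j$ in terms of the uniformly controlled coefficients and Christoffel symbols of $g(t)$, the uniformly bounded tensors $t(\nabla^{g(t)})^{j-2}\Ric(g(t))$, the bounded function $u_{t}$, and lower-order derivatives of $u_{t}$; induction on the order --- i.e.\ elliptic regularity for $\operatorname{Hess}_{g(t)}u_{t}=t\,\Ric(g(t))+\tfrac12 g(t)$ and its covariant derivatives --- upgrades the uniform $C^{0}$ bound on $u_{t}$ to a uniform $C^{k}(K)$ bound for each $k$. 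By Arzel\`a--Ascoli, every sequence $t_{i}\to0^{+}$ has a subsequence along which $u_{t_{i}}$ converges in $C^{k-1}_{\operatorname{loc}}(M_{0})$, and the limit must be $q$ since $u_{t}\to q$ already in $C^{0}$; hence $u_{t}\to q$ in $C^{\infty}_{\operatorname{loc}}(M_{0})$ and $q\in C^{\infty}(M_{0})$. The one non-formal ingredient, and the step I expect to be the main obstacle, is precisely the $C^{\infty}_{\operatorname{loc}}(M_{0})$ convergence of the rescaled metrics $g(t)$ themselves --- genuine convergence on the fixed manifold, not merely Cheeger--Gromov subconvergence modulo diffeomorphisms --- for which Siepmann's theorem (or the Ricci-flow Cauchy argument sketched above) is needed; everything downstream is the routine elliptic bootstrap just described.
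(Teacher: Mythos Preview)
Your proof is correct and rests on the same ingredients as the paper's: the quadratic curvature decay gives, via scaling, locally uniform control of the geometry of $g(t)$ on $M_{0}$, and the soliton identity ties the derivatives of $u_{t}=t\,\varphi_{t}^{*}f$ to curvature quantities which are therefore bounded. The organization differs in one respect worth noting. You work with $g(t)$-covariant derivatives of $u_{t}$ via $\operatorname{Hess}_{g(t)}u_{t}=t\,\Ric(g(t))+\tfrac12 g(t)$, and then pass to coordinate charts in which the $g(t)$ are uniformly controlled; this requires (or at least naturally invokes) the $C^{\infty}_{\operatorname{loc}}$ convergence of the $g(t)$ themselves, which you correctly identify as the main analytic input and which the paper establishes separately as Lemma~\ref{metrics}. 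The paper instead differentiates the integral identity $u_{t}(x)=f(x)-\int_{t}^{1}sR_{g(s)}(x)\,ds$ (already used in Lemma~\ref{lemma-def-q}) with respect to the \emph{fixed} connection $\nabla^{g}$, obtaining $(\nabla^{g})^{k}u_{t}=(\nabla^{g})^{k}f-\int_{t}^{1}s(\nabla^{g})^{k}R_{g(s)}\,ds$; the required bounds on $|(\nabla^{g})^{k}R_{g(s)}|_{g}$ over $M_{a}$ follow from the uniform bounds $|(\nabla^{g})^{k}(g(t))|_{g}\le C(k,a)$ and $|(\nabla^{g})^{k}\Rm(g(t))|_{g}\le C(k,a)$ (obtained from Siepmann's estimates by induction), so the $C^{k}$-bounds on $u_{t}$ come out without a chart-by-chart bootstrap. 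Your scaling computation $|t(\nabla^{g(t)})^{j}\Ric(g(t))|_{g(t)}\lesssim t\,q^{-(2+j)/2}$ is correct, and the Arzel\`a--Ascoli plus uniqueness-of-limit step is fine; the paper's route simply avoids the detour through convergence of $g(t)$ for this particular lemma.
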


\begin{proof}
Note that for each $a>0$, \cite[Lemma 4.3.3]{Siepmann} ensures that along the Ricci flow $g(t)$ defined by $g$,
the norm of the curvature tensor $\operatorname{Rm}_{g(t)}$ of $g(t)$
is bounded with respect to $g(t)$ when restricted to $M_{a}$. In particular, there exists a positive constant $C$ (depending on $a$) such that for all $t\in(0,1]$, $$\sup_{x\in M_{a}}|\Ric(g(t))|_{g(t)}(x)\leq C.$$ By definition of a Ricci flow, this implies that the metrics $(g(t))_{t\in(0,1]}$ are uniformly equivalent, i.e., there exists a positive constant $C$ (that may vary from line to line) such that
\begin{eqnarray}
C^{-1}g(x)\leq g(t)(x)\leq Cg(x), \quad t\in(0,1],\quad x\in M_{a}.\label{unif-equiv-metrics-end}
\end{eqnarray}
An induction argument (cf.~\cite[Lemma 4.3.6]{Siepmann}) then shows that for all $x\in M_{a}$, $t\in(0,\,1]$, and $k\geq0$,
$$|(\nabla^{g})^{k}(g(t))|_{g}(x)\leq C(k,\,a).$$
Similarly, one obtains that
\begin{equation}\label{qaz}
|(\nabla^{g})^{k}(\operatorname{Rm}_{g(t)})|_{g}(x)\leq C(k,\,a)
\end{equation}
for all $x\in M_{a}$, $t\in(0,\,1]$, and $k\geq0$. As a consequence,
\begin{eqnarray}
|(\nabla^{g})^{k}(tf(\varphi_{t}(x)))|_{g}\leq C(k,\,a)\label{unif-bd-pot-fct}
\end{eqnarray}
for all $x\in M_{a}$, $t\in(0,\,1]$, and $k\geq0$. Indeed, by (\ref{var-pot-fct-resc}) with $t=1$ and $s=t$,
\begin{eqnarray*}
f(x)-tf(\varphi_t(x))=\int_t^1R_g(\varphi_s(x))ds=\int_t^1sR_{g(s)}(x)ds,\quad t\in(0,1],\quad x\in M.
\end{eqnarray*}
In particular, by deriving $k$ times at a point $x\in M_a$, we see that
\begin{eqnarray*}
\left(\nabla^{g}\right)^k\left(t\varphi_t^*f\right)= (\nabla^{g})^kf-\int_t^1s\left(\nabla^{g}\right)^kR_{g(s)}ds,\quad t\in(0,1],
\end{eqnarray*}
which implies the desired inequality \eqref{unif-bd-pot-fct} after invoking (\ref{qaz}). As a result, $t(f\circ\varphi_{t})$ converges in $C^{\infty}_{\operatorname{loc}}(M_{0})$ as $t\to0^{+}$
so that $q$ is smooth on $M_{0}$, as claimed.
\end{proof}

Since $\varphi_{t}$ preserves $M_{a}$ for every $a>0$, we also have that for any $a\geq0$, the Ricci flow $g(t)$ determined by $g$,
namely $g(t):=t\varphi_{t}^{*}g$, defines a family of smooth metrics on $M_{a}$ for all $t\in(0,\,1]$.
This family converges in $C^{\infty}_{\operatorname{loc}}(M_{0})$ as $t\to0^{+}$ as well.
\begin{lemma}\label{metrics}
$g(t)$ converges to a Riemannian metric $\tilde{g}_{0}$ in $C^{\infty}_{\operatorname{loc}}(M_{0})$ as $t\to0^{+}$.
Moreover, $\tilde{g}_{0}=2\operatorname{Hess}_{\tilde{g}_{0}}q$.
\end{lemma}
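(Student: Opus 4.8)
The plan is to obtain honest convergence of $g(t)$ as $t\to 0^{+}$ from the a priori estimates already established, and then to pass to the limit in the rescaled soliton equation \eqref{rfsoliton}. I would first recall, from the proof of Lemma \ref{monty}, that on each $M_{a}$ with $a>0$ one has (via \cite[Lemma 4.3.3]{Siepmann} and the induction argument there) the uniform metric equivalence \eqref{unif-equiv-metrics-end}, the bounds $|(\nabla^{g})^{k}g(t)|_{g}\leq C(k,a)$ for all $k\geq 0$ and $t\in(0,1]$, and the uniform Ricci bound $|\operatorname{Ric}(g(t))|_{g(t)}\leq C(a)$ for $t\in(0,1]$. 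The derivative bounds make $\{g(t)\}_{t\in(0,1]}$ precompact in $C^{\infty}_{\operatorname{loc}}(M_{0})$ by Arzel\`a--Ascoli, and combined with \eqref{unif-equiv-metrics-end} the last bound upgrades to $|\operatorname{Ric}(g(t))|_{g}\leq C(a)$ on $M_{a}$.

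To identify the limit, I would integrate $\partial_{t}g(t)=-2\operatorname{Ric}(g(t))$ on $M_{a}$ to obtain $|g(t)(x)-g(s)(x)|_{g}\leq 2C(a)|t-s|$ for $0<s\leq t\leq 1$ and $x\in M_{a}$, so that $t\mapsto g(t)(x)$ is uniformly Lipschitz and converges pointwise on $M_{0}$ to some symmetric two-tensor $\tilde{g}_{0}$. Pointwise convergence together with the $C^{\infty}_{\operatorname{loc}}$-precompactness forces $g(t)\to\tilde{g}_{0}$ in $C^{\infty}_{\operatorname{loc}}(M_{0})$: any $C^{\infty}_{\operatorname{loc}}$-convergent subsequence converges pointwise, hence to $\tilde{g}_{0}$, so $\tilde{g}_{0}$ is smooth and is the unique possible limit. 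Positive-definiteness of $\tilde{g}_{0}$ is inherited from \eqref{unif-equiv-metrics-end}, so $\tilde{g}_{0}$ is a Riemannian metric.

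For the stated identity, I would rewrite \eqref{rfsoliton} as $\operatorname{Hess}_{g(t)}\!\big(t\,\varphi_{t}^{*}f\big)=t\operatorname{Ric}(g(t))+\tfrac12 g(t)$. By Lemma \ref{monty}, $t\,\varphi_{t}^{*}f\to q$ in $C^{\infty}_{\operatorname{loc}}(M_{0})$, and by the previous step $g(t)\to\tilde{g}_{0}$ there; since the Hessian depends continuously (in the $C^{\infty}_{\operatorname{loc}}$ topology, through the $1$-jet of the metric and the $2$-jet of the function) on the metric and the function, the left-hand side converges locally uniformly to $\operatorname{Hess}_{\tilde{g}_{0}}q$. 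Meanwhile $|t\operatorname{Ric}(g(t))|_{g}\leq C(a)\,t\to 0$ on $M_{a}$. Letting $t\to 0^{+}$ gives $\operatorname{Hess}_{\tilde{g}_{0}}q=\tfrac12\tilde{g}_{0}$, that is, $\tilde{g}_{0}=2\operatorname{Hess}_{\tilde{g}_{0}}q$.

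The step I expect to be most delicate is upgrading the Arzel\`a--Ascoli precompactness to genuine convergence as $t\to 0^{+}$, i.e.\ independence of the limit from the chosen sequence $t_{i}\to 0^{+}$; this is precisely what the Lipschitz-in-$t$ estimate coming from the Ricci-flow equation together with the uniform Ricci bound on $M_{a}$ supplies. Everything else is a routine passage to the limit.
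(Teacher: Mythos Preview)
Your proposal is correct and follows essentially the same route as the paper. The paper phrases the first step as ``$g(t)$ is a Cauchy sequence in $C^{k}(M_{a})$'' directly from the curvature bounds \eqref{qaz} and the Ricci flow equation, whereas you split this into $C^{\infty}_{\mathrm{loc}}$-precompactness plus a $C^{0}$ Lipschitz-in-$t$ estimate to pin down the limit; these are equivalent packagings of the same idea. For the Hessian identity, both you and the paper multiply \eqref{rfsoliton} by $t$ and pass to the limit using Lemma~\ref{monty} and the uniform Ricci bound.
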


\begin{proof}
From the definition of the Ricci flow, one deduces from the curvature bounds \eqref{qaz} that $g(t)$ is a Cauchy sequence in $C^{k}(M_{a})$ for
every $k\geq0$ and $a>0$, hence converges uniformly locally as $t\to0^{+}$ in $C^{k}(M_{0})$ for every $k\geq 0$ to a Riemannian metric $\tilde{g}_{0}$ on $M_{0}$. To see that $\tilde{g}_{0}=2\operatorname{Hess}_{\tilde{g}_{0}}q$, multiply \eqref{rfsoliton} across by $t$ and take the limit as $t\to0^{+}$,
recalling that $\lim_{t\to0^{+}}tf(t)=q$ in $C_{\operatorname{loc}}^{\infty}(M_{0})$ by Lemma \ref{monty}.
 \end{proof}

We have the following properties of $q$.
\begin{lemma}\label{lemma-q-proper}
$q$ is proper and bounded below.
\end{lemma}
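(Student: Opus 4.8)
The plan is to show that $q$ coincides with the soliton potential $f$ up to a uniformly bounded error, and then to transfer the known growth and lower bound of $f$ over to $q$. Note that the ``bounded below'' assertion is in principle already contained in Lemma \ref{lemma-def-q}, which gives $q \geq 0$; nonetheless the comparison with $f$ recovers it for free, so I would handle both statements at once.

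First I would establish the uniform estimate $\sup_M |f - q| < \infty$. Putting $t = 1$ in \eqref{var-pot-fct-resc} gives $f(x) - s f(\varphi_s(x)) = \int_s^1 R_g(\varphi_\tau(x))\,d\tau$ for $s \in (0,1]$, and the quadratic curvature decay hypothesis makes $R_g$ bounded on all of $M$; hence the right-hand side is bounded in absolute value by a constant $C$ independent of $x$ and of $s \in (0,1]$ (this is essentially the content of \eqref{var-pot-fct2}). Letting $s \to 0^+$ and recalling $q(x) = \lim_{s \to 0^+} s f(\varphi_s(x))$ from Lemma \ref{lemma-def-q}, we conclude $|f(x) - q(x)| \leq C$ for every $x \in M$.

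Next I would feed in the a priori growth of $f$. Quadratic decay of $\operatorname{Rm}(g)$ forces quadratic decay of $\Ric(g)$, so Proposition \ref{pot-fct-est} applies and, in the normalisation \eqref{exp-sol-equ-riem}, yields the two-sided bound \eqref{boundd}: $\tfrac14 d_g(p,x)^2 - c_1 d_g(p,x) - c_2 \leq f(x) \leq \tfrac14 d_g(p,x)^2 + c_1 d_g(p,x) + c_2$. Combining this with the previous step gives $q(x) \geq \tfrac14 d_g(p,x)^2 - c_1 d_g(p,x) - c_2 - C$, so $q$ is bounded below and $q(x) \to +\infty$ as $d_g(p,x) \to \infty$. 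Since $(M,g)$ is complete, closed metric balls about $p$ are compact by Hopf--Rinow, so each sublevel set $\{q \leq a\}$ is a closed subset of such a ball, hence compact; that is, $q$ is proper.

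I do not expect a genuine obstacle here: the argument is just the combination of the bounded-scalar-curvature estimate along the rescaling flow (already exploited in the proof of Lemma \ref{lemma-def-q}) with the quadratic lower bound for $f$ coming from Proposition \ref{pot-fct-est}. The only point deserving a moment's care is that the error bound $|f - q| \leq C$ is uniform over $M$, which is precisely what \eqref{var-pot-fct-resc} provides once the global boundedness of $R_g$ is in hand; the remainder is an appeal to \eqref{boundd} and to completeness.
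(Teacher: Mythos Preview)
Your proof is correct and follows essentially the same approach as the paper: both combine the scalar-curvature identity \eqref{var-pot-fct-resc} (via \eqref{var-pot-fct2}) with the quadratic growth \eqref{boundd} of $f$. The only minor difference is that the paper keeps $t\in(0,1]$ general and derives the two-sided bound \eqref{flingg} for $q$ in terms of $d_{g(t)}(p,x)$, which is needed later in the proof of Theorem~\ref{ac}; your argument is the clean $t=1$ specialization, which suffices for the lemma itself.
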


\begin{proof}
Lemma \ref{lemma-def-q} already implies that $q$ is non-negative. In particular, it is bounded from below. Now, one sees from the quadratic growth of the soliton potential $f$ given by \eqref{boundd}
that for $p$ in the critical set of the soliton potential $f$,
\begin{equation*}
\frac{d^2_{g(t)}(p,x)}{4}-c_1\sqrt{t}d_{g(t)}(p,x)-c_2t\leq tf(\varphi_t(x))\leq \frac{d^2_{g(t)}(p,x)}{4}+c_1\sqrt{t}d_{g(t)}(p,x)+c_2t,\quad t>0,\quad x\in M,
\end{equation*}
for some constants $c_{1},\,c_{2}>0$, where $d_{g(t)}$ denotes the distance with respect to $g(t)$. Using this inequality and taking the limit as $s\to0^{+}$ in \eqref{var-pot-fct2}, one finds that
\begin{equation}\label{flingg}
\frac{d^2_{g(t)}(p,x)}{4}-c_1\sqrt{t}d_{g(t)}(p,x)-c_2t\leq
q(x)\leq \frac{d^2_{g(t)}(p,x)}{4}+c_1\sqrt{t}d_{g(t)}(p,x)+c_2t,\quad t>0, \quad x\in M,
\end{equation}
for some constants $c_{1},\,c_{2}>0$ that may now vary from line to line. Thus, $q(x)\to+\infty$
as $x\to\infty$ and the result follows.
\end{proof}
Moreover, we have:
\begin{lemma}\label{fav}
On $M_{0}$,
\begin{enumerate}[label=\textnormal{(\roman{*})}, ref=(\roman{*})]
\item $|\nabla^{\tilde{g}_{0}}q|^{2}_{\tilde{g}_{0}}=q$ so that the integral curves of $\nabla^{\tilde{g}_{0}}(2\sqrt{q})$ on $M_{0}$ are geodesics;
\item $\nabla^{\tilde{g}_{0}}q=X$.
\end{enumerate}
In particular, $X$ is nowhere vanishing on $M_{0}$.
\end{lemma}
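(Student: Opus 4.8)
The plan is to prove (ii) first, deduce (i) from it, and then read off the geodesic assertion and the non-vanishing of $X$. The key observation for (ii) is that the diffeomorphisms $\varphi_{t}$ are merely a time-reparametrisation of the flow of $X$: since $\tfrac{\partial}{\partial t}\varphi_{t}=-\tfrac{1}{t}X\circ\varphi_{t}$ by \eqref{flowbaby}, we have $\varphi_{t}=\psi_{-\log t}$, where $\psi_{s}$ is the flow of $X$, and hence $X$ is $\varphi_{t}$-invariant, i.e.~$(d\varphi_{t})(X)=X\circ\varphi_{t}$, equivalently $\varphi_{t}^{*}X=X$. Combining this with the scaling rule $\nabla^{cg}u=c^{-1}\nabla^{g}u$ and the naturality of the gradient under pullback, $\nabla^{\varphi^{*}g}(\varphi^{*}u)=\varphi^{*}(\nabla^{g}u)$, applied to $g(t)=t\,\varphi_{t}^{*}g$ and to the function $t f(t)=\varphi_{t}^{*}(tf)$, gives
\[
\nabla^{g(t)}(t f(t))=\tfrac{1}{t}\,\nabla^{\varphi_{t}^{*}g}\bigl(\varphi_{t}^{*}(tf)\bigr)=\tfrac{1}{t}\,\varphi_{t}^{*}\bigl(\nabla^{g}(tf)\bigr)=\varphi_{t}^{*}(\nabla^{g}f)=\varphi_{t}^{*}X=X
\]
for every $t\in(0,1]$. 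By Lemmas \ref{monty} and \ref{metrics}, $t f(t)\to q$ and $g(t)\to\tilde{g}_{0}$ in $C^{\infty}_{\operatorname{loc}}(M_{0})$ as $t\to0^{+}$, so $\nabla^{g(t)}(t f(t))\to\nabla^{\tilde{g}_{0}}q$ locally uniformly on $M_{0}$; since the left-hand side is constantly equal to $X$, we conclude that $\nabla^{\tilde{g}_{0}}q=X$ on $M_{0}$, which is (ii).

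For (i), use (ii) to write $|\nabla^{\tilde{g}_{0}}q|^{2}_{\tilde{g}_{0}}=\tilde{g}_{0}(X,\nabla^{\tilde{g}_{0}}q)=X\cdot q$. Since $t f(t)\to q$ in $C^{\infty}_{\operatorname{loc}}(M_{0})$ and $X$ is a fixed smooth vector field, $X\cdot q=\lim_{t\to0^{+}}t\,\bigl(X\cdot(f\circ\varphi_{t})\bigr)$, and by $\varphi_{t}$-invariance of $X$ one has $X\cdot(f\circ\varphi_{t})=(X\cdot f)\circ\varphi_{t}=|\nabla^{g}f|^{2}_{g}\circ\varphi_{t}$. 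The expanding soliton identity $|\nabla^{g}f|^{2}_{g}=f-R_{g}+c$ for some $c\in\mathbb{R}$ (equivalently \eqref{div} at $t=1$, cf.~the proof of Lemma \ref{lemma-def-q}) together with the boundedness of $R_{g}$ then gives $t\,|\nabla^{g}f|^{2}_{g}\circ\varphi_{t}=t f(t)-t\,(R_{g}\circ\varphi_{t})+tc$, which tends to $q$ as $t\to0^{+}$. Hence $|\nabla^{\tilde{g}_{0}}q|^{2}_{\tilde{g}_{0}}=q$ on $M_{0}$. (Alternatively one can avoid (ii) here: multiply \eqref{div} for $(g(t),f(t))$ through by $t^{2}$ and use $t^{2}R_{g(t)}=t\,(R_{g}\circ\varphi_{t})\to0$ and $t^{2}|\nabla^{g(t)}f(t)|^{2}_{g(t)}=|\nabla^{g(t)}(t f(t))|^{2}_{g(t)}\to|\nabla^{\tilde{g}_{0}}q|^{2}_{\tilde{g}_{0}}$.)

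Finally, on $M_{0}$ we have $q>0$: by definition the $\liminf$ defining $M_{0}$ is positive there, and by Lemma \ref{monty} it equals $q$. Thus $v:=2\sqrt{q}$ is a well-defined smooth function on $M_{0}$, and by (i), $|\nabla^{\tilde{g}_{0}}v|^{2}_{\tilde{g}_{0}}=q^{-1}|\nabla^{\tilde{g}_{0}}q|^{2}_{\tilde{g}_{0}}=1$; differentiating this identity, the covariant derivative (with respect to $\tilde{g}_{0}$) of $\nabla^{\tilde{g}_{0}}v$ along itself equals $\tfrac{1}{2}\nabla^{\tilde{g}_{0}}|\nabla^{\tilde{g}_{0}}v|^{2}_{\tilde{g}_{0}}=0$, so the integral curves of $\nabla^{\tilde{g}_{0}}(2\sqrt{q})$ on $M_{0}$ are (unit-speed) geodesics. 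And since $|X|^{2}_{\tilde{g}_{0}}=|\nabla^{\tilde{g}_{0}}q|^{2}_{\tilde{g}_{0}}=q>0$ on $M_{0}$, the vector field $X$ is nowhere vanishing there. The argument is routine once one has the earlier lemmas in hand; the only points requiring care are purely bookkeeping, namely tracking how $\nabla$ transforms under the combined rescaling-and-pullback defining $g(t)$, and recognising that $\varphi_{t}$ is built from the flow of $X$ so that $\varphi_{t}^{*}X=X$. With these in place, the $C^{\infty}_{\operatorname{loc}}$-convergence supplied by Lemmas \ref{monty} and \ref{metrics} does the rest.
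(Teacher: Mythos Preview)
Your proof is correct and follows essentially the same approach as the paper: the paper proves (ii) by exactly your chain of equalities (writing $\varphi_t^*X=X$ because $\varphi_t$ is the time-reparametrised flow of $X$) and proves (i) by multiplying \eqref{div} by $t^2$ and letting $t\to0^+$, which is precisely your parenthetical alternative. The only cosmetic differences are that you reverse the order (proving (ii) first), give an additional derivation of (i) via (ii), and spell out the geodesic and non-vanishing claims that the paper leaves implicit.
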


\begin{proof}
To prove part (i), we multiply equation \eqref{div} across by $t^{2}$ and take the limit as $t\to0^{+}$.

As for part (ii), let $\varphi_{t}$ be as in \eqref{flowbaby}. Then on compact subsets of $M_{0}$, we have that
\begin{equation*}
\begin{split}
\nabla^{\tilde{g}_{0}}q&=\lim_{t\to0^{+}}\nabla^{g(t)}(tf(t))=\lim_{t\to0^{+}}\nabla^{t\varphi^{*}_{t}g}(t\varphi^{*}_{t}f)
=\lim_{t\to0^{+}}\frac{1}{t}\nabla^{\varphi^{*}_{t}g}(t\varphi^{*}_{t}f)
=\lim_{t\to0^{+}}\nabla^{\varphi^{*}_{t}g}(\varphi^{*}_{t}f)\\
&=\lim_{t\to0^{+}}\varphi^{*}_{t}(\nabla^{g}f)=\lim_{t\to0^{+}}\varphi_{t}^*X=\lim_{t\to0^{+}}X=X,
\end{split}
\end{equation*}
where the penultimate inequality follows from the fact that $\varphi_{t}$ is generated by the flow of $\frac{1}{t}X$ and $\mathcal{L}_{X}X=0$.
\end{proof}

The above observations then imply:
\begin{lemma}\label{ffinite}
$M$ has only finitely many ends.
\end{lemma}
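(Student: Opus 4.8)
The plan is to use the function $q$ from Lemma \ref{lemma-def-q} as an exhaustion near infinity, and to exploit the structure established in Lemmas \ref{monty}, \ref{metrics}, and \ref{fav}: on $M_{0}$ the function $\rho:=2\sqrt{q}$ is smooth with $\tilde{g}_{0}$-gradient of unit length, whose flow lines are geodesics. From this I would produce a diffeomorphism of a neighbourhood of infinity in $M$ with a product $S\times[0,\infty)$ for a compact hypersurface $S$, which forces the number of ends of $M$ to equal the (finite) number of connected components of $S$.

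First I would record the elementary consequences of the earlier lemmas: by Lemmas \ref{lemma-def-q}, \ref{monty}, and \ref{lemma-q-proper}, $q$ is continuous and non-negative on $M$, proper and bounded below, and smooth and strictly positive on $M_{0}=\{q>0\}$. Hence $M\setminus M_{0}=q^{-1}(0)$ is compact, and for every $a>0$ the sublevel set $\{\rho\le a\}=\{q\le a^{2}/4\}$ is compact while $\{q\ge a^{2}/4\}$ is a closed subset of $M$ contained in $M_{0}$. Next, on $M_{0}$ I would note from Lemma \ref{fav}(i) that $|\nabla^{\tilde{g}_{0}}\rho|^{2}_{\tilde{g}_{0}}=q^{-1}|\nabla^{\tilde{g}_{0}}q|^{2}_{\tilde{g}_{0}}=1$, so that every $a>0$ is a regular value of $\rho$ and $S_{a}:=\{\rho=a\}\subset\{\rho\le a\}$ is a compact smooth hypersurface, hence has only finitely many connected components.

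Let $\Phi_{s}$ denote the flow of $\nabla^{\tilde{g}_{0}}\rho$ on $M_{0}$, so that $\rho(\Phi_{s}(x))=\rho(x)+s$ along each flow line. Since $\rho$ is proper on $M$ and increases at unit rate along the flow, the forward flow of any point of $M_{0}$ stays in a compact sublevel set of $q$ over any bounded time interval, hence is defined for all $s\ge 0$; and the backward flow from a point $y$ with $\rho(y)>a$ stays in the compact set $q^{-1}([a^{2}/4,\rho(y)^{2}/4])\subset M_{0}$ for $s\in[0,\rho(y)-a]$. I would then check that, for each $a>0$, the map
$$\Psi_{a}\colon S_{a}\times[0,\infty)\longrightarrow\{\rho\ge a\},\qquad \Psi_{a}(x,s)=\Phi_{s}(x),$$
is a diffeomorphism, with smooth inverse $y\mapsto\bigl(\Phi_{-(\rho(y)-a)}(y),\,\rho(y)-a\bigr)$: injectivity and well-definedness follow from monotonicity of $\rho$ along flow lines, and surjectivity onto $\{\rho\ge a\}$ from the backward-flow confinement just described. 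Consequently $\{\rho>a\}=M\setminus\{\rho\le a\}$, the complement of a compact set, is diffeomorphic to $S_{a}\times(0,\infty)$ and therefore has exactly as many connected components as $S_{a}$; moreover the diffeomorphisms $\Phi_{s}$ show this number is independent of $a>0$. Taking the compact exhaustion $\{\rho\le n\}$, $n\in\mathbb{N}$, of $M$, I conclude that $M$ has only finitely many ends, their number being the number of connected components of $S_{a}$ for any $a>0$ (and $0$ if $M$ is compact).

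The hard part is not conceptual but consists in verifying carefully that the flow $\Phi_{s}$ is complete on $M_{0}$ for $s\ge 0$ and that $\Psi_{a}$ surjects onto $\{\rho\ge a\}$; both rely essentially on the properness of $q$ (Lemma \ref{lemma-q-proper}) together with the unit-speed identity $|\nabla^{\tilde{g}_{0}}\rho|_{\tilde{g}_{0}}=1$ of Lemma \ref{fav}(i), which between them confine flow lines to compact sublevel sets of $q$ and so prevent escape to infinity or into $M\setminus M_{0}$ in finite time.
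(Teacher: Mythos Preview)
Your proof is correct and follows essentially the same approach as the paper: use the proper, critical-point-free function $q$ (or equivalently $\rho=2\sqrt{q}$) on $M_{0}$, flow along its gradient to exhibit a neighbourhood of infinity as a product $S\times(0,\infty)$ with $S$ a compact level set, and conclude finiteness of ends from finiteness of the components of $S$. The only cosmetic difference is that the paper works directly with the Morse flow of $q$, whereas you pass to $\rho$ and exploit the unit-speed identity $|\nabla^{\tilde{g}_{0}}\rho|_{\tilde{g}_{0}}=1$; your version is in fact more explicit about the completeness and surjectivity of the flow map, which the paper leaves implicit.
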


\begin{proof}
For any $a>0$, $q$ is a smooth function on $M_{a}$ by Lemma \ref{monty}. Furthermore, by Lemma \ref{fav}, $q$ has no critical points in $M_{a}$.
Consequently, using the Morse flow $(\psi^{q}_{t})_{t\geq 0}$ associated to $q$, one sees that all the level sets of $q$ of the form $q^{-1}(\{b\})$ with $b\geq b_0$ for some $b_0\in\R$ large enough are diffeomorphic. Since $q$ is proper by Lemma \ref{lemma-def-q}, such level sets are compact and the map $\psi^q:(t,x)\in (0,+\infty)\times q^{-1}(\{b_0\})\mapsto \psi^q_{t}(x)\in q^{-1}((b_0,+\infty))$ is a diffeomorphism of a neighborhood of $M$ at infinity. Again, since $q$ is proper, the level set $q^{-1}(\{b_0\})$ is compact hence has a finite number of connected components. Thus, $M$ has a finite number of ends.
\end{proof}

Our final lemma is then:
\begin{lemma}\label{fckme}
There exists $A>0$ such that for all $c>A$,
the intersection of each end of $M$ with $q^{-1}(\{c\})$ is compact, connected, and non-empty.
\end{lemma}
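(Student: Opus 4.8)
The plan is to exploit the collar structure of a neighbourhood of infinity that was already produced in the proof of Lemma \ref{ffinite} from the Morse flow $(\psi^q_t)_{t\ge 0}$ of $q$. Recall that one may fix $b_0\in\mathbb{R}$ large enough that $q^{-1}((b_0,+\infty))\subseteq M_0$ — indeed $q^{-1}((a,+\infty))\subseteq M_a\subseteq M_0$ for every $a>0$ — that $q$ is smooth there with no critical point by Lemma \ref{fav}, that $S_0:=q^{-1}(\{b_0\})\neq\emptyset$ (since $q$ is proper on the non-compact $M$ it is unbounded above, and it is bounded below), and that the map $\psi^q\colon(0,+\infty)\times S_0\to q^{-1}((b_0,+\infty))$, $(t,x)\mapsto\psi^q_t(x)$, is a diffeomorphism onto this open neighbourhood of infinity carrying each slice $\{t\}\times S_0$ onto a level set $q^{-1}(\{c(t)\})$; conversely, every level set $q^{-1}(\{c\})$ with $c>b_0$ is the image $\psi^q_{t(c)}(S_0)$ for a suitable $t(c)>0$. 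Since $q$ is proper (Lemma \ref{lemma-def-q}), $S_0$ is a non-empty compact manifold; by Lemma \ref{ffinite} it has only finitely many connected components, say $S_0=\bigsqcup_{i=1}^{N}S_0^{(i)}$.

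Next I would match the ends of $M$ with $\pi_0(S_0)$. Because $q$ is proper and bounded below, the compact sublevel sets $q^{-1}((-\infty,b])$ exhaust $M$, so the ends of $M$ are the inverse limit over $b$ of $\pi_0\left(M\setminus q^{-1}((-\infty,b])\right)$. For every $b\ge b_0$ the diffeomorphism $\psi^q$ identifies $M\setminus q^{-1}((-\infty,b])=q^{-1}((b,+\infty))$ with the product of a half-line with $S_0$, whose connected components correspond canonically to $S_0^{(1)},\dots,S_0^{(N)}$ independently of $b$. Hence $M$ has exactly $N$ ends $\mathcal{E}_1,\dots,\mathcal{E}_N$, where $\mathcal{E}_i$ is represented, for each $b>b_0$, by the connected neighbourhood of infinity obtained as the component of $q^{-1}((b,+\infty))$ lying over $S_0^{(i)}$ under $\psi^q$.

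Finally I would set $A:=b_0$ and read off the statement. For $c>A$, the level set $q^{-1}(\{c\})$ is compact, being the preimage of a point under the proper map $q$, and $\psi^q_{t(c)}$ carries $S_0^{(i)}$ onto the part of $q^{-1}(\{c\})$ lying in the end $\mathcal{E}_i$; this part is therefore connected, being the continuous image of the connected set $S_0^{(i)}$, compact, being a closed subset of $q^{-1}(\{c\})$, and non-empty, since $S_0^{(i)}\neq\emptyset$. Moreover these $N$ pieces are pairwise disjoint and cover $q^{-1}(\{c\})$, since the $\psi^q_{t(c)}(S_0^{(i)})$ partition $\psi^q_{t(c)}(S_0)=q^{-1}(\{c\})$.

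The only point requiring a little care — and the main, rather mild, obstacle — is the connectedness claim: a priori, an end might meet a level set in several components, or two components of a level set might lie in the same end. Both are ruled out precisely because $\psi^q$ is a diffeomorphism of the \emph{entire} neighbourhood of infinity $q^{-1}((b_0,+\infty))$ that respects the level sets of $q$, so over that region the decomposition of $M$ into ends is literally the product decomposition $(b_0,+\infty)\times\{S_0^{(1)},\dots,S_0^{(N)}\}$, and ends correspond bijectively to connected components of the level sets. Everything else is a formal consequence of $q$ being proper, bounded below, and critical-point-free above the level $b_0$.
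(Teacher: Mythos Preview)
Your proposal is correct and follows essentially the same approach as the paper's proof. Both arguments rest on the product structure of a neighbourhood of infinity coming from the flow of $\nabla q$ (equivalently, the reparametrized flow of $X$): the paper simply notes that $q$ is strictly increasing along the flow lines of the nowhere-vanishing vector field $X$ on $M_0$ to conclude connectedness, while you make this explicit by invoking the diffeomorphism $\psi^q:(0,+\infty)\times S_0\to q^{-1}((b_0,+\infty))$ from the proof of Lemma~\ref{ffinite} and matching $\pi_0(S_0)$ with the ends of $M$.
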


\begin{proof}
By Lemma \ref{ffinite}, $M$ has only a finite number of ends. Thus, since $q$ is proper and bounded below
by Lemma \ref{lemma-q-proper}, there exists $A>0$ such that
all of the ends of $M$ are contained in $M\setminus q^{-1}((-\infty,\,A])=M_{A}$.
For any $c>A$, the intersection of $q^{-1}(\{c\})$ with each end of $M$ is then compact and non-empty and comprises one connected component only since, as a consequence of Lemma \ref{fav}, $q$ is strictly increasing along the flow lines of the (nowhere vanishing) vector field $X$ on $M_{0}$.
\end{proof}

Using the above lemmas, we can now construct our map to the tangent cone at infinity.
\begin{theorem}[Map to the tangent cone for expanding Ricci solitons]\label{ac}
Let $A>0$ be as in Lemma \ref{fckme}, let $\rho:M_{0}\to\mathbb{R}_{+}$ be defined by $\rho:=2\sqrt{q}$, and let $S:=\rho^{-1}(\{c\})$ for any $c>0$
with $\frac{c^{2}}{4}>A$ (so that the intersection of each end of $M$ with $S$ is compact, connected, and non-empty).
Then there exists a diffeomorphism $\iota:(c,\,\infty)\times S\to M_{\frac{c^{2}}{4}}$ such that $g_{0}:=\iota^{*}\tilde{g}_{0}=dr^{2}+r^{2}\frac{g_{S}}{c^{2}}$ and $d\iota(r\partial_{r})=2X$, where
$r$ is the coordinate on the $(c,\,\infty)$-factor and $g_{S}$ is the restriction of $\tilde{g}_{0}$ to $S$. Moreover, along $M_{\frac{c^{2}}{4}}$, we have that
\begin{equation}\label{northbeach}
|(\nabla^{g_{0}})^{k}(\iota^{*}g-g_{0}-2\operatorname{Ric}(g_{0}))|_{g_{0}}=O(r^{-4-k})\quad\textrm{for all $k\in\mathbb{N}_{0}$}.
\end{equation}
In particular, $(M,\,g)$ has a unique tangent cone along each end.
\end{theorem}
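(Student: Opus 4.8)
The plan is to use the $\tilde g_0$-gradient flow of the function $\rho:=2\sqrt q$ on $M_0$ to set up Fermi-type coordinates for $\tilde g_0$ off the hypersurface $S$, to recognise these coordinates as metric cone coordinates, and then to transplant the soliton metric $g$ into them via the soliton flow and read off its asymptotics from the (rescaled) quadratic curvature decay.

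First I would record the basic properties of $\rho$. By Lemma \ref{fav}(i), $|\nabla^{\tilde g_0}q|^2_{\tilde g_0}=q$ on $M_0$, so $|\nabla^{\tilde g_0}\rho|_{\tilde g_0}\equiv1$ there and hence $\nabla^{\tilde g_0}_{\nabla^{\tilde g_0}\rho}\nabla^{\tilde g_0}\rho=\tfrac12\nabla^{\tilde g_0}|\nabla^{\tilde g_0}\rho|^2_{\tilde g_0}=0$, i.e.\ the $\tilde g_0$-gradient flow lines of $\rho$ are unit-speed geodesics; moreover $\rho$ is proper (Lemma \ref{lemma-q-proper}) and, by Lemma \ref{fav} again, has no critical point in $M_0$. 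Fixing $c$ with $c^2/4>A$ and setting $S=\rho^{-1}(\{c\})$ (which meets each end in a compact connected nonempty set by Lemma \ref{fckme}), I would define $\iota(r,s)$ to be the time-$(r-c)$ flow of $\nabla^{\tilde g_0}\rho$ applied to $s\in S$. Along such a flow line $\rho$ increases at unit rate, so $\rho(\iota(r,s))=r$ and the line stays in the compact set $q^{-1}([c^2/4,R^2/4])$ for $r\le R$; hence the forward flow exists for all time. A standard flow argument then shows $\iota$ is a diffeomorphism from $(c,\infty)\times S$ onto $\{x\in M_0:\rho(x)>c\}$, which equals $M_{c^2/4}$ because on $M_0$ one has $\liminf_{t\to0^+}tf(\varphi_t(\cdot))=q$.

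Next I would compute $\iota^*\tilde g_0$. Since $\partial_r\mapsto\nabla^{\tilde g_0}\rho$ is the unit normal field of the level sets of $\rho$ and is geodesic, $\iota^*\tilde g_0=dr^2+h_r$ for a family of metrics $h_r$ on $S$; writing $q=r^2/4$ and using $\tilde g_0=2\operatorname{Hess}_{\tilde g_0}q$ (Lemma \ref{metrics}), the tangential part of this identity becomes $\tfrac r4\,\partial_r h_r=\tfrac12 h_r$, whose solution is $h_r=\tfrac{r^2}{c^2}g_S$ with $g_S:=\tilde g_0|_S$, so that $g_0:=\iota^*\tilde g_0=dr^2+\tfrac{r^2}{c^2}g_S$ is a metric cone. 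For the soliton vector field, Lemma \ref{fav}(ii) gives $\nabla^{\tilde g_0}\rho=\tfrac1{\sqrt q}\nabla^{\tilde g_0}q=\tfrac2\rho X$, so $d\iota(\partial_r)=\tfrac2\rho X$ and hence $d\iota(r\partial_r)=2X$; tracing the flow lines also identifies $\iota(r,s)=\varphi_{c^2/r^2}(s)$ for $s\in S$, which is the bridge to the soliton asymptotics.

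The heart of the matter is \eqref{northbeach}, and the main obstacle is a \emph{uniform-in-$t$} curvature estimate for the Ricci flow $g(t)=t\varphi_t^*g$: pulling the hypothesis $|(\nabla^g)^k\operatorname{Rm}(g)|_g\le A_k d_g(p,\cdot)^{-(2+k)}$ back along $\varphi_t$, using the scaling of curvature together with $tf(\varphi_t(x))\to q(x)$ (so $d_g(p,\varphi_t(x))^2\gtrsim q(x)/t$ for $t\in(0,1]$), one gets $|(\nabla^{g(t)})^k\operatorname{Rm}(g(t))|_{g(t)}(x)\le C_k\,q(x)^{-1-k/2}$ with $C_k$ independent of $t$. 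Integrating $\partial_t g(t)=-2\operatorname{Ric}(g(t))$ from $t=0^+$ (where $g(0^+)=\tilde g_0$) and bootstrapping with these bounds gives $|(\nabla^{\tilde g_0})^k(g(t)-\tilde g_0)|_{\tilde g_0}\le C_k\,t\,q^{-1-k/2}$ and then, estimating $\operatorname{Ric}(g(t))-\operatorname{Ric}(\tilde g_0)$ by two derivatives of $g(t)-\tilde g_0$, the refined expansion $g(t)=\tilde g_0-2t\operatorname{Ric}(\tilde g_0)+O(t^2q^{-2})$ in $C^\infty$ relative to $\tilde g_0$. Transplanting this to a point $\varphi_t(s)$ going to infinity — using that the cone is self-similar so $(\varphi_t)_*\tilde g_0=t\tilde g_0$, the scale-invariance and naturality of $\operatorname{Ric}$, and $q|_S\equiv c^2/4$ to push the remainder forward — and converting $t=c^2/r^2$ into powers of $r$ via $\iota(r,s)=\varphi_{c^2/r^2}(s)$, one arrives at \eqref{northbeach} (for the $dr^2$-component one also uses the soliton identity $|X|^2_g=f-R_g$ together with $f\circ\iota=r^2/4+O(r^{-2})$ and $R_g\circ\iota=O(r^{-2})$). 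Finally, \eqref{northbeach} and the diffeomorphism $\iota$ say precisely that $(M,g)$ is asymptotic to the metric cone $(C_0,g_0)$ along each end, so each end has $(C_0,g_0)$ as its unique tangent cone. The difficult part throughout is making the curvature estimates uniform in $t$ and pushing them through the bootstrap with the correct powers of $r$; the construction of $\iota$ and the identification of $g_0$ as a cone are comparatively soft.
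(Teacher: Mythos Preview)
Your proposal is correct and takes essentially the same approach as the paper: both construct $\iota$ via the $\tilde g_0$-gradient flow of $\rho$, read off the cone structure from $\tilde g_0=2\operatorname{Hess}_{\tilde g_0}q$, and obtain \eqref{northbeach} from uniform-in-$t$ curvature bounds on $g(t)$ combined with the self-similarity $\varphi_t^*\tilde g_0=t^{-1}\tilde g_0$. The paper packages the scaling step slightly differently---first proving the $t=1$ estimate $|g-\tilde g_0|_g\le C\rho^{-2}$ as a separate claim and then introducing an auxiliary cone-side flow $\hat\varphi_t$ with $\iota\circ\hat\varphi_t=\varphi_t\circ\iota$ and $t\hat\varphi_t^*g_0=g_0$ to recover the factor of $t$---but this is equivalent to your direct integration from $0$ to $t$, and your aside on the $dr^2$-component via the soliton identity is correct but unnecessary since the scaling argument already handles all components uniformly.
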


\begin{proof}
To prove the first part of this statement, we follow the proof of \cite[Theorem 1.7.2]{daii}.

We have that $\operatorname{Hess}_{\tilde{g}_{0}}(\rho^{2})=2\tilde{g}_{0}$ from Lemma \ref{metrics} and we know from Lemma \ref{fav}(i) that $|\nabla^{\tilde{g}_{0}}\rho^{2}|^2_{\tilde{g}_{0}}=4\rho^{2}$ is constant along the level sets of $\rho$ and that the integral curves of $\nabla^{\tilde{g}_{0}}\rho$ are geodesics.
Then we have that
$$\nabla^{\tilde{g}_{0}}\rho^{2}=2\rho\nabla^{\tilde{g}_{0}}\rho\quad\textrm{and}\quad\operatorname{Hess}_{\tilde{g}_{0}}
(\rho^{2})=2d\rho^{2}+2\rho\operatorname{Hess}_{\tilde{g}_{0}}(\rho)$$
so that $$2\tilde{g}_{0}=\operatorname{Hess}_{\tilde{g}_{0}}(\rho^{2})=2d\rho^{2}+2\rho\operatorname{Hess}_{\tilde{g}_{0}}(\rho).$$
Hence, $$\operatorname{Hess}_{\tilde{g}_{0}}(\rho)=\frac{\tilde{g}_{0}}{\rho}\quad\textrm{on the $\tilde{g}_{0}$-orthogonal complement of $\nabla^{\tilde{g}_{0}}\rho$.}$$
On the other hand, $\tilde{g}_{0}=d\rho^{2}+\tilde{g}_{\rho}$ with $\tilde{g}_{\rho}$ the restriction of $\tilde{g}_{0}$ to the level set of $\rho$, and
$$\mathcal{L}_{\nabla^{\tilde{g}_{0}}\rho}\tilde{g}_{\rho}=\mathcal{L}_{\nabla^{\tilde{g}_{0}}\rho}(\tilde{g}_{0}-d\rho^{2})=
2\operatorname{Hess}_{\tilde{g}_{0}}(\rho)-16\rho d\rho^{2}=\frac{2\tilde{g}_{\rho}}{\rho}+\left(\frac{2}{\rho}-16\rho\right)d\rho^{2},$$
so that $$\mathcal{L}_{\nabla^{\tilde{g}_{0}}\rho}\tilde{g}_{\rho}=\frac{2\tilde{g}_{\rho}}{\rho}\quad\textrm{on the $\tilde{g}_{0}$-orthogonal complement of $\nabla^{\tilde{g}_{0}}\rho$}.$$
Thus,
\begin{equation}\label{gogo}
\mathcal{L}_{\rho\nabla^{\tilde{g}_{0}}\rho}\tilde{g}_{\rho}=2\tilde{g}_{\rho}\quad\textrm{on the $\tilde{g}_{0}$-orthogonal complement of $\nabla^{\tilde{g}_{0}}\rho$}.
\end{equation}

Next define a map $\iota:(c,\,\infty)\times S\to M_{\frac{c^{2}}{4}}$ by
$$(r,\,x)\mapsto\Phi_{x}(r-c),$$
where $\Phi_{x}(\cdot)$ denotes the flow of $\nabla^{\tilde{g}_{0}}\rho$ with $\Phi_{x}(0)=x$.
By choice of $c$, this map is well-defined. Moreover, $d\iota(\partial_{r})=\nabla^{\tilde{g}_{0}}\rho$ by construction, and since
$\rho(\Phi_{x}(t))=t+c,$ we have that $\iota^{*}\rho=r$ so that $d\iota(r\partial_{r})=\rho\nabla^{\tilde{g}_{0}}\rho=2X$. In this new frame, we thus have that
$$\iota^{*}\tilde{g}_{0}=dr^{2}+\iota^{*}\tilde{g}_{\rho},$$ where we find from \eqref{gogo} that
\begin{equation*}
\mathcal{L}_{r\partial_{r}}\iota^{*}\tilde{g}_{\rho}=2\iota^{*}\tilde{g}_{\rho}\quad\textrm{on the $\iota^{*}\tilde{g}_{0}$-orthogonal complement of $\partial_{r}$}.
\end{equation*}
Hence, $\iota^{*}\tilde{g}_{\rho}=r^{2}\frac{g_{S}}{c^{2}}$ so that $\iota^{*}\tilde{g}_{0}=dr^{2}+r^{2}\frac{g_{S}}{c^{2}}$, as claimed.

As for the fact that \eqref{northbeach} holds true along $M_{\frac{c^{2}}{4}}$, we have from Young's inequality applied to \eqref{flingg} that
\begin{eqnarray}
C^{-1}d_{g(t)}(p,\,x)-C\sqrt{t}\leq \rho(x)\leq Cd_{g(t)}(p,\,x)+C\sqrt{t},\quad t\in(0,1],\quad x\in M_{0}.\label{eq-rk-rho-dist}
\end{eqnarray}
Using this, we can now prove an estimate less sharp than \eqref{northbeach}.
\begin{claim}\label{toughie}
For all $x\in M_{\frac{c^{2}}{4}}$ and $k\in\mathbb{N}_{0}$,
\begin{equation}
\begin{split}
|(\nabla^{g})^{k}(g-\tilde{g}_0)|_{g}(x)&\leq C_k\rho(x)^{-2-k}.\label{sharp-est-nabla-g}
\end{split}
\end{equation}
\end{claim}
\begin{proof}[Proof of Claim \ref{toughie}]
Let us prove the claim first for $k=0$. Since the curvature tensor of $g$ (and hence that of $g(s)$)
decays quadratically with derivatives, we have that for any $p$ in the critical set of $f$ and
for any $x\in M_{\frac{c^{2}}{4}}$,
\begin{equation*}
\begin{split}
|g-\tilde{g}_0|_{g}(x)&\leq\int_{0}^{1}|\partial_{s}g(s)|_{g}(x)\,ds\leq C\int_{0}^{1}|\Ric(g(s))|_{g}(x)\,ds\\
&\leq C\int_{0}^{1}d_{g(s)}(p,x)^{-2}\,ds\leq C\rho(x)^{-2},\\
\end{split}
\end{equation*}
where we have used \eqref{unif-equiv-metrics-end} and \eqref{eq-rk-rho-dist} after increasing $c$ if necessary.

As for the case $k=1$, we must work slightly harder. Recall that if $T$ is a tensor on $M$, then $\nabla^{g(t)}T=\nabla^gT+g(t)^{-1}\ast\nabla^g(g(t)-g)\ast T$ since at the level of Christoffel symbols, one has that
$$\Gamma(g(t))_{ij}^k=\Gamma(g)_{ij}^k+\frac{1}{2}g(t)^{km}\left(\nabla_i^g(g(t)-g)_{jm}+\nabla_j^g(g(t)-g)_{im}-\nabla_m^g(g(t)-g)_{ij}\right).$$
Thus, for all $x\in M_{\frac{c^{2}}{4}}$ and $t\in(0,1]$, we have that
\begin{equation*}
\begin{split}
\partial_t|\nabla^{g}(g(t)-g)|^2_{g}(x)&\geq -4|\nabla^g\Ric(g(t))|_g(x)|\nabla^g(g(t)-g)|_g(x)\\
&\geq -4\left(|\nabla^{g(t)}\Ric(g(t))|_g(x)+\left(\left|\left(\nabla^g-\nabla^{g(t)}\right)\Ric(g(t))\right|_g(x)\right)\right)|\nabla^g(g(t)-g)|_g(x)\\
&\geq -C\left(d_{g(t)}(p,x)^{-3}+\left|\left(\nabla^g-\nabla^{g(t)}\right)\Ric(g(t))\right|_g(x)\right)|\nabla^g(g(t)-g)|_g(x)\\
&\geq -C\left(\rho(x)^{-3}+|\nabla^g(g(t)-g)|_g(x)|\Ric(g(t))|_g(x)\right)|\nabla^g(g(t)-g)|_g(x)\\
&\geq -C\left(\rho(x)^{-3}+|\nabla^g(g(t)-g)|_g(x)\right)|\nabla^g(g(t)-g)|_g(x)\\
&\geq -C|\nabla^g(g(t)-g)|^2(x)-C\rho(x)^{-6},
\end{split}
\end{equation*}
where $C$ denotes a positive constant that may vary from line to line and where we used Young's inequality in the last line. Recalling that $|\nabla^{g}(g(t)-g)|^2_{g}=0$ when $t=1$, one can integrate the previous differential inequality between a time $t\in(0,\,1)$ and $t=1$ to obtain
\begin{eqnarray*}
|\nabla^{g}(g(t)-g)|^2_{g}(x)\leq C\rho(x)^{-6},\quad x\in M_{\frac{c^{2}}{4}},\quad t\in(0,1],
\end{eqnarray*}
for some positive constant $C$ uniform in time. This fact implies the desired estimate (\ref{sharp-est-nabla-g}) for $k=1$ by letting $t\to0^{+}$.

The cases $k\geq 2$ are proved by induction on $k$.
\end{proof}

It follows from Claim \ref{toughie} that
\begin{equation*}
|(\nabla^{\tilde{g}_{0}})^{k}(g-\tilde{g}_{0})|_{\tilde{g}_{0}}\leq C_{k}\rho^{-2-k}\quad\textrm{for all $k\in\mathbb{N}_{0}$},
\end{equation*}
so that after pulling back by $\iota$, we have that
\begin{equation*}
|(\nabla^{g_{0}})^{k}(\iota^{*}g-g_{0})|_{g_{0}}\leq C_{k}r^{-2-k}\quad\textrm{for all $k\in\mathbb{N}_{0}$}.
\end{equation*}
We now prove \eqref{northbeach}. To this end,
recall that $\varphi_{t}(x)$ satisfies
\begin{equation*}
\frac{\partial\varphi_{t}}{\partial t}(x)=-\frac{\nabla^g f(\varphi_{t}(x))}{t},\quad\varphi_{1}=\operatorname{id}.
\end{equation*}
Since $4\nabla^{g}f=\nabla^{g_{0}}\rho^{2}=2\rho\nabla^{g_{0}}\rho$ by Lemma \ref{fav}(ii), we have that
\begin{equation*}
\begin{split}
\frac{d}{dt}\rho(\varphi_{t}(x))&=d\rho|_{\varphi_{t}(x)}\left(\frac{\partial\varphi_{t}}{\partial t}(x)\right)
=d\rho|_{\varphi_{t}(x)}\left(-\frac{\nabla^g f(\varphi_{t}(x))}{t}\right)\\
&=-\frac{\rho(\varphi_{t}(x))}{2t}d\rho|_{\varphi_{t}(x)}(\nabla^{g_{0}}\rho|_{\varphi_{t}(x)})=-\frac{\rho(\varphi_{t}(x))}{2t}
\end{split}
\end{equation*}
so that
\begin{equation}\label{popp-0}
\rho(\varphi_{t}(x))=\frac{\rho(x)}{t^{\frac{1}{2}}}.
\end{equation}
Let $\hat{\varphi}_{t}(x)$ satisfy
\begin{equation*}
\frac{\partial\hat{\varphi}_{t}}{\partial t}(x)=-\frac{r}{2t}\frac{\partial}{\partial r}(\hat{\varphi}_{t}(x)),
\quad\hat{\varphi}_{1}=\operatorname{id}.
\end{equation*}
Then since $d\iota(r\partial_{r})=2X$, we
have that $\iota\circ\hat{\varphi}_{t}=\varphi_{t}\circ\iota$, and in light of \eqref{popp-0}, we see that $\hat{\varphi}_{t}^{*}r=\frac{r}{t^{\frac{1}{2}}}$ so that $t\hat{\varphi}_{t}^{*}g_{0}=g_{0}$. Recall that the Ricci flow $g(t)$ defined by $g$ is given by $g(t)=t\varphi^{*}_{t}g,\,t\in(0,\,1]$.
Together with the scaling properties of the norm induced on tensors by $g_0$ and the invariance of the Levi-Civita connection under rescalings, these observations imply that
\begin{eqnarray*}
|(\nabla^{\hat{\varphi}_{t}^*g_{0}})^{k}(\iota^{*}g(t)-g_{0})|_{\hat{\varphi}_{t}^*g_{0}}(x)=|(\nabla^{t^{-1}g_{0}})^{k}(\iota^{*}g(t)-g_{0})|_{t^{-1}g_{0}}(x)=t^{1+\frac{k}{2}}|(\nabla^{g_{0}})^{k}(\iota^{*}g(t)-g_{0})|_{g_{0}}(x),
\end{eqnarray*}
so that
\begin{equation*}
\begin{split}
|(\nabla^{g_{0}})^{k}(\iota^{*}g(t)-g_{0})|_{g_{0}}(x)
&=t^{-1-\frac{k}{2}}|(\nabla^{\hat{\varphi}_{t}^*g_{0}})^{k}(\iota^{*}g(t)-g_{0})|_{\hat{\varphi}_{t}^*g_{0}}(x)\\
&=t^{-1-\frac{k}{2}}\cdot t|(\nabla^{\hat{\varphi}_{t}^*g_{0}})^{k}(\hat{\varphi}^{*}_{t}\iota^{*}g-\hat{\varphi}_{t}^{*}g_{0})|_{\hat{\varphi}_{t}^*g_{0}}(x)\\
&=t^{-1-\frac{k}{2}}\cdot t|(\nabla^{g_{0}})^{k}(\iota^{*}g-g_{0})|_{g_{0}}(\hat{\varphi}_{t}(x))\\
&\leq C_{k}t\cdot t^{-1-\frac{k}{2}}\cdot(r(\hat{\varphi}_{t}(x)))^{-2-k}\\
&=C_{k}t\cdot t^{-1-\frac{k}{2}}\cdot t^{1+\frac{k}{2}}(r(x))^{-2-k}\quad\textrm{for all $k\in\mathbb{N}_{0}$},
\end{split}
\end{equation*}
i.e.,
\begin{equation*}
|(\nabla^{g_{0}})^{k}(\iota^{*}g(t)-g_{0})|_{g_{0}}(x)\leq C_{k}tr^{-2-k}\quad\textrm{for all $k\in\mathbb{N}_{0}$}.
\end{equation*}
In particular,
$$|(\nabla^{g_{0}})^{k}(\operatorname{Ric}(\iota^{*}g(t))-\operatorname{Ric}(g_{0}))|_{g_{0}}\leq C_{k}tr^{-4-k}\quad\textrm{for all $k\in\mathbb{N}_{0}$},$$
which is clear from the expression of the components of the Ricci curvature in local coordinates. Consequently, we have the improved estimate
\begin{equation*}
|(\nabla^{g_{0}})^{k}(\iota^{*}g-g_0-2\operatorname{Ric}(g_{0}))|_{g_{0}}(x)\leq C_{k}\int_{0}^{1}|(\nabla^{g_{0}})^{k}(\operatorname{Ric}(\iota^{*}g(s))-\operatorname{Ric}(g_{0}))|_{g_{0}}(x)\,ds
\leq C_{k}r(x)^{-4-k}.
\end{equation*}
This is precisely \eqref{northbeach}.
\end{proof}

Thus, an expanding gradient Ricci soliton $M$ with quadratic curvature decay with
derivatives has a unique tangent cone $C_{0}$ along each of its ends $V$. Moreover, there
is a diffeomorphism $$\iota:C_{0}\setminus K\to V,$$ where $K\subset C_{0}$ is a compact
subset containing the apex of $C_{0}$, induced by the flow of the vector field $\frac{2X}{\rho}$.
The statement of Theorem \ref{ac} is verbatim the same for expanding gradient K\"ahler-Ricci solitons except
that $d\iota(r\partial_{r})=X$ rather than $2X$
and $2\operatorname{Ric}(g_{0})$ is replaced by $\operatorname{Ric}(g_{0})$ in \eqref{northbeach}, accounting for the difference in normalisation between Ricci solitons and K\"ahler-Ricci solitons.

\subsection{Existence of a resolution map to the tangent cone}

In the case that $(M,\,g,\,X)$ is a complete expanding gradient K\"ahler-Ricci soliton
with quadratic curvature decay with derivatives, the soliton potential is proper \cite{Che-Der},
hence $M$ has only one end $V$ \cite{munteanu} with tangent cone $C_{0}$ along the end. (Also
note from \cite{munteanu} that any complete shrinking gradient K\"ahler-Ricci soliton has only one end without any curvature assumption on the metric.)
Along $V$, we have from Theorem \ref{ac} the diffeomorphism $\iota:C_{0}\setminus K\to V$ for $K\subset C_{0}$ a compact
subset containing the apex of $C_{0}$. As we will now see, the inverse of this map
actually extends to define a resolution $\pi:M\to C_{0}$ with respect to which $d\pi(X)=r\partial_{r}$.
We first show that $\iota$ is a biholomorphism with respect to
a complex structure on $C_{0}$ that makes the cone metric $g_{0}$ K\"ahler. As the next proposition demonstrates, the K\"ahlerity of
the soliton implies this fact.
\begin{prop}\label{biholo}
Let $(M,\,g,\,X)$ be a complete expanding gradient K\"ahler-Ricci soliton with complex structure $J$ such that for some point $p\in M$
 and all $k\in\mathbb{N}_{0}$,
\begin{equation*}
A_{k}(g):=\sup_{x\in M}|(\nabla^{g})^{k}\operatorname{Rm}(g)|_{g}(x)d_{g}(p,\,x)^{2+k}<\infty,
\end{equation*}
where $\operatorname{Rm}(g)$ denotes the curvature of $g$ and $d_{g}(p,\,x)$ denotes the distance between $p$ and $x$ with respect to $g$.
For the unique end $V$ of $M$, let $\tilde{g}_{0}=\lim_{t\to0^{+}}g(t)$ be the limit of the K\"ahler-Ricci flow
 $g(t)$ defined by $(M,\,g,\,X)$ and let $(C_{0},\,g_{0})$ be the unique tangent cone along $V$ with radial
  function $r$ and with $\iota:(C_{0}\setminus K,\,g_{0})\to(V,\,\tilde{g}_{0})$ for $K\subset C_{0}$
   compact containing the apex of $C_{0}$ the isometry of Theorem \ref{ac}. Then $(C_{0},\,g_{0})$ is a K\"ahler cone with respect to $\iota^{*}J$.
   In particular, $\iota:(C_{0}\setminus K,\,\iota^{*}J)\to(V,\,J)$ is a biholomorphism.
\end{prop}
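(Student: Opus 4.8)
The plan is to carry the complex structure $J$ along the rescaled pull-backs $g(t)=t\varphi_t^*g$, pass to the limit on $M_0$, and then use the dilation symmetry of the cone to propagate the resulting structure across the compact core of $C_0$.

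First, I would note that every $g(t)$ is K\"ahler with respect to the \emph{same} complex structure $J$. Since $X=\nabla^g f$ is real holomorphic, $\mathcal L_XJ=0$, so along \eqref{flowbaby} one has $\frac{d}{dt}(\varphi_t^*J)=\varphi_t^*(\mathcal L_{-t^{-1}X}J)=-t^{-1}\varphi_t^*(\mathcal L_XJ)=0$; as $\varphi_1=\mathrm{id}$, this gives $\varphi_t^*J=J$ for all $t\in(0,1]$. Hence $\varphi_t^*g$ is K\"ahler with respect to $J$, and the constant rescaling by $t$ affects neither $\nabla^{g(t)}J=0$ nor the closedness of the fundamental form $\omega(t):=g(t)(J\cdot,\cdot)$. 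By Lemma \ref{metrics} the metrics $g(t)$ converge to $\tilde g_0$ in $C^\infty_{\mathrm{loc}}(M_0)$, so their Christoffel symbols converge to those of $\tilde g_0$; letting $t\to0^+$ in $\nabla^{g(t)}J=0$, in $g(t)(J\cdot,J\cdot)=g(t)$, and in $d\omega(t)=0$ shows that $\tilde g_0$ is K\"ahler with respect to $J|_{M_0}$. Pulling back by the isometry $\iota$ of Theorem \ref{ac}, the cone metric $g_0$ is K\"ahler with respect to $J_0:=\iota^*J$ on $C_0\setminus K=(c,\infty)\times S$.

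Next I would extend $J_0$ across the core $\{r\le c\}$ and verify that the cone is K\"ahler. Because $\iota$ is biholomorphic onto $V$ by the very definition $J_0=\iota^*J$, and $d\iota(r\partial_r)=X$ is real holomorphic, the field $r\partial_r$ is real holomorphic with respect to $J_0$; equivalently $\mathcal L_{r\partial_r}J_0=0$ on $C_0\setminus K$, i.e.\ the dilations $\delta_\lambda\colon(r,x)\mapsto(\lambda r,x)$ preserve $J_0$ there. For $q=(r,x)$ with $r\le c$ I would choose $\lambda>c/r$ and set $J_0|_q:=\delta_\lambda^*\big(J_0|_{\delta_\lambda(q)}\big)$; the invariance on $C_0\setminus K$ makes this independent of $\lambda$ and smooth, so $J_0$ extends to an almost complex structure on all of $C_0=\mathbb R_{>0}\times S$ with $\delta_\lambda^*J_0=J_0$ (no apex lies in the manifold $C_0$, so nothing happens as $r\to0$). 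Now each of the tensors $\nabla^{g_0}J_0$, $g_0(J_0\cdot,J_0\cdot)-g_0$, and $d\big(g_0(J_0\cdot,\cdot)\big)$ transforms homogeneously under $\delta_\lambda$ — here one uses that $\delta_\lambda^*g_0=\lambda^2g_0$ has the same Levi-Civita connection as $g_0$ — and vanishes on $C_0\setminus K$; since $C_0=\bigcup_{\lambda>0}\delta_\lambda(C_0\setminus K)$, each vanishes identically. Thus $J_0$ is an integrable, $g_0$-compatible, $g_0$-parallel complex structure, $g_0=dr^2+r^2g_S/c^2$ is a cone metric, and $\mathcal L_{r\partial_r}\omega_0=2\omega_0$ together with $d\omega_0=0$ forces $\omega_0=\tfrac i2\partial\bar\partial r^2$; hence $(C_0,g_0,J_0)$ is a K\"ahler cone. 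The last assertion is then immediate, since $J_0=\iota^*J$ on $C_0\setminus K$ by construction.

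The step that needs genuine care is the passage to the limit yielding that $\tilde g_0$ is K\"ahler: it rests on the $C^\infty_{\mathrm{loc}}$ convergence of Lemma \ref{metrics}, which itself uses the quadratic-curvature-decay-with-derivatives hypothesis through the uniform bounds \eqref{qaz}. Everything after that is a soft symmetry argument, the only delicate point being that the dilation-equivariant extension of $J_0$ is well defined and smooth — which is routine.
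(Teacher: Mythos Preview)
Your argument is correct and follows essentially the same route as the paper: show each $g(t)$ is K\"ahler with respect to the fixed $J$ (via $\mathcal L_XJ=0$), pass to the $C^\infty_{\mathrm{loc}}$ limit to conclude $\nabla^{\tilde g_0}J=0$, pull back by $\iota$, and then extend $\iota^*J$ across the compact core using dilation invariance (equivalently, flowing along $-r\partial_r$). Your treatment is somewhat more explicit than the paper's---you spell out why $\varphi_t^*J=J$, and you carry out the extension and verification of the K\"ahler cone axioms in detail---but the underlying idea is identical.
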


\begin{proof}
Since $\lim_{t\to0^{+}}g(t)=\tilde{g}_{0}$ smoothly on compact subsets of $V$ and $g(t)$ is K\"ahler with respect to $J$, we have that on $V$,
$$\nabla^{\tilde{g}_{0}}J=\lim_{t\to0^{+}}\nabla^{g(t)}J=0,$$ so that $\tilde{g}_{0}$ is K\"ahler with respect to $J$.
 The metric $g_{0}$ is therefore K\"ahler with respect to $\iota^{*}J$ away from a compact subset of $C_{0}$.
  Recall that the radial vector field on a K\"ahler cone is holomorphic with respect to its complex structure.
   Thus, $r\partial_{r}$ is holomorphic on the subset of $C_{0}$ for which $\iota^{*}J$ is defined.
    Flowing along $-r\partial_{r}$ then extends $\iota^{*}J$ to a global complex structure on $C_{0}$ with respect to which $g_{0}$ is K\"ahler.
\end{proof}
\noindent In fact the converse of Proposition \ref{biholo} holds true for shrinking gradient K\"ahler-Ricci solitons; see \cite{kotshy} for details.

The previous proposition implies that $M$ is 1-convex. This property is what allows us to
extend the biholomorphism $\iota^{-1}$ to a resolution $\pi:M\to C_{0}$ that is equivariant with respect to the torus action on $C_{0}$
generated by the flow of $J_{0}r\partial_{r}$. The details are contained in the next theorem.
\begin{theorem}\label{hello}
Let $(M,\,g,\,X)$ be a complete expanding gradient K\"ahler-Ricci soliton with complex structure $J$ such that for some point $p\in M$ and all $k\in\mathbb{N}_{0}$,
\begin{equation*}
A_{k}(g):=\sup_{x\in M}|(\nabla^{g})^{k}\operatorname{Rm}(g)|_{g}(x)d_{g}(p,\,x)^{2+k}<\infty,
\end{equation*}
and let $(C_{0},\,g_{0})$ be its unique tangent cone with radial function $r$ and complex structure $J_{0}$. Then
there exists a holomorphic map $\pi:M\to C_{0}$ that is a resolution of $C_{0}$ with the property that
$d\pi(X)=r\partial_{r}$. Furthermore, the holomorphic isometric real torus action on $(C_{0},\,g_{0},\,J_{0})$ generated by $J_{0}r\partial_{r}$
extends to a holomorphic isometric torus action of $(M,\,g,\,J)$.
\end{theorem}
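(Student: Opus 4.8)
The plan is to extend the biholomorphism $\iota\colon(C_{0}\setminus K,J_{0})\to(V,J)$ of Proposition \ref{biholo} (defined on the unique end $V$ of $M$) across the compact set $M\setminus V$ by first realising $M$ as a modification of a Stein space and then identifying that Stein space with $C_{0}$. As already observed after Proposition \ref{biholo}, the K\"ahlerity of the soliton together with the quadratic curvature decay makes $M$ $1$-convex: a suitable normalisation of the soliton potential is a proper exhaustion by the quadratic growth estimate \eqref{boundd}, and the K\"ahler--Ricci soliton equation \eqref{krseqn} with $\lambda=-1$ shows it is strictly plurisubharmonic outside a compact set. Hence $M$ is holomorphically convex and, by Grauert's theorem \cite{Grau:62}, admits a Remmert reduction $p\colon M\to M'$ onto a normal Stein space $M'$ with finitely many isolated singularities, obtained by contracting the maximal compact analytic subset $E$ of $M$ and restricting to a biholomorphism $M\setminus E\to M'\setminus p(E)$. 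Since $M$ is $1$-convex, every automorphism of $(M,J)$ preserves $E$, so $X=\nabla^{g}f$ and $JX$ are tangent to $E$ (cf.\ Lemma \ref{auto}) and therefore descend to holomorphic vector fields $X',JX'$ on $M'$; moreover, as the orbits of the Killing field $JX$ lie in the compact level sets of $f$ (on which $df(JX)=g(X,JX)=0$), the closure $T:=\overline{\{\exp(tJX):t\in\R\}}\subset\mathrm{Isom}(M,g)$ is a compact torus acting on $M$ holomorphically and isometrically, which likewise descends to $M'$.

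Next I identify $M'$ with $C_{0}$. Because $V\cong C_{0}\setminus K$ is an open subset of the affine variety $C_{0}$ (Theorem \ref{t:affine}), it contains no positive-dimensional compact analytic subset, so $E\cap V$ is finite and $p$ restricts to a biholomorphism from the complement of a compact set in $M$ onto the complement of a compact set in $M'$; composing with $\iota^{-1}$ gives a biholomorphism $\beta\colon M'\setminus L\to C_{0}\setminus K''$, for suitable compact $L\subset M'$ and $K''\subset C_{0}$, with $\beta\circ p=\iota^{-1}$ where both are defined, hence intertwining the flows of $X'$ with $r\partial_{r}$ and of $JX'$ with $J_{0}r\partial_{r}$. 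Assume $n:=\dim_{\C}M\geq2$ (the case $n=1$ being trivial, as then $M=C_{0}=\C$). Writing $C_{0}\subset\C^{N}$ with coordinate functions $z_{1},\dots,z_{N}$, the holomorphic functions $z_{i}\circ\beta$ on $M'\setminus L$ extend across $L$ by the Hartogs phenomenon — here one uses that $M'$ is a normal Stein space with only isolated singularities and that $n\geq2$ — to holomorphic functions $\psi_{i}$ on $M'$, giving a holomorphic map $\Psi=(\psi_{1},\dots,\psi_{N})\colon M'\to\C^{N}$. Each defining equation of $C_{0}$ vanishes on $\Psi(M'\setminus L)=\beta(M'\setminus L)\subset C_{0}$, hence on all of $M'$ by the identity theorem, so $\Psi(M')\subseteq C_{0}$ and $\Psi$ agrees with $\beta$ off $L$. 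Thus $\Psi\colon M'\to C_{0}$ is proper (a biholomorphism near infinity), with fibres that are compact analytic subsets of the Stein space $M'$ and hence finite, so $\Psi$ is a finite morphism; being a biholomorphism off $L$ it is bimeromorphic; and a finite bimeromorphic morphism onto a normal complex space is a biholomorphism. Therefore $\Psi\colon M'\xrightarrow{\ \sim\ }C_{0}$.

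Setting $\pi:=\Psi\circ p\colon M\to C_{0}$ then finishes the argument. This is a proper holomorphic map from the smooth manifold $M$, a biholomorphism away from the compact analytic set $E$ (carried by $\pi$ to the apex of $C_{0}$), hence a resolution of $C_{0}$; by construction $\pi|_{V}=\iota^{-1}$, so $d\pi(X)=\Psi_{*}X'$ is a holomorphic vector field on the irreducible variety $C_{0}$ that agrees with $r\partial_{r}$ on the dense open set $C_{0}\setminus K''$ (using $d\iota(r\partial_{r})=X$ from Theorem \ref{ac}/Proposition \ref{biholo}) and therefore equals $r\partial_{r}$ everywhere. Finally, transporting the $T$-action on $M$ via $\pi^{-1}$ yields a holomorphic isometric action of $T$ on $(C_{0},g_{0},J_{0})$ whose restriction to $C_{0}\setminus K$ is, through $\iota$, the flow-closure of $J_{0}r\partial_{r}$; since a holomorphic torus action on the irreducible $C_{0}$ is determined by the action on a nonempty open set, this is exactly the torus action generated by $J_{0}r\partial_{r}$, which thus extends to the holomorphic isometric $T$-action on $(M,g,J)$.

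The main obstacle is the global identification $M'\cong C_{0}$ in the second paragraph: a priori one has only that $M'$ and $C_{0}$ agree outside compact sets, and upgrading this to a genuine biholomorphism requires extending the affine coordinate functions holomorphically across the (possibly fat) compact set $L\subset M'$. This is precisely where the structure of the Remmert reduction $M'$ as a \emph{normal} Stein space with only \emph{isolated} singularities, together with $\dim_{\C}M\geq2$, is indispensable (so that Hartogs extension applies), followed by the fact that a finite bimeromorphic morphism onto a normal space is an isomorphism; the rest — the descent of $X$, $JX$ and the torus to $M'$, and the identification $d\pi(X)=r\partial_{r}$ — is then routine bookkeeping with the identity theorem.
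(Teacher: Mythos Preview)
Your proof is correct and follows essentially the same route as the paper's: $1$-convexity, Remmert reduction $p\colon M\to M'$, a Hartogs-type extension to identify $M'\cong C_0$, analyticity to pin down $d\pi(X)=r\partial_r$, and the flow-closure of $JX$ for the torus. The only notable variation is in establishing $M'\cong C_0$: you extend the affine coordinates of $C_0$ across $L\subset M'$ and invoke that a finite bimeromorphic morphism onto a normal space is an isomorphism, whereas the paper runs Hartogs in \emph{both} directions (embedding $M'\hookrightarrow\mathbb{C}^P$ as well, and extending the reverse composition over $C_0$) to construct the inverse explicitly. One small point of order: your parenthetical claim that $E$ is carried by $\pi$ to the apex is not automatic --- a priori the Remmert reduction could contract components of $E$ to smooth points of $C_0$ --- and only follows once $d\pi(X)=r\partial_r$ is in hand (then $\pi(E)$ is fixed by the flow of $r\partial_r$, forcing $\pi(E)=\{o\}$); the paper makes exactly this argument in its Claim~\ref{nice}.
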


\begin{proof}
The proof of Theorem \ref{hello} comprises several steps. From Proposition \ref{biholo}, we know that along the unique end $V$ of $M$, there is a biholomorphism $\iota:C_{0}\setminus K\to V$ for $K\subset C_{0}$ compact containing the apex of $C_{0}$. Thus, by \cite[Lemma 2.15]{Conlon}, this in particular implies that $M$ is $1$-convex, hence holomorphically convex, so that there is a Remmert reduction $p:M\to M'$ of $M$. Recall that this is a proper holomorphic map $p:M\to M'$ from $M$ onto a normal Stein space $M'$ with finitely many isolated singularities obtained by contracting the maximal compact analytic subset of $M$. By construction, $M'$ is biholomorphic to $M$ outside compact sets, therefore we have a biholomorphism given by $F:=p\circ\iota:\{x\in C_{0}:r(x)>R\}\to M'\setminus K'$ for some compact subset $K'\subset M'$ and for some $R>0$. We claim that this biholomorphism extends globally.
\begin{claim}\label{beautiful}
The biholomorphism $F:\{x\in C_{0}:r(x)>R\}\to M'\setminus K'$ extends to a biholomorphism $f:C_{0}\to M'$.
\end{claim}

\begin{proof}[Proof of Claim \ref{beautiful}]
Since $M'$, as a Stein space with finitely many isolated singularities, admits an embedding $h:M'\to\mathbb{C}^{P}$ for some $P$ by \cite[Theorem 3.1]{SCV6},
we have a holomorphic function $$h\circ F:\{x\in C_{0}:r(x)>R\}\to\mathbb{C}^{P}.$$ Since $C_{0}$ is in particular an example of a Stein space, this holomorphic function extends to a unique holomorphic function $f:C_{0}\to\mathbb{C}^{P}$ by Hartog's theorem for Stein spaces \cite[Theorem 6.6]{Rossi2}.
The fact that $f(C_{0})\subseteq M'$ follows from Hartog's theorem.
To show that $f$ is in fact a biholomorphism, we construct an inverse map $f^{-1}:M'\to C_{0}$ as an extension of the map $$F^{-1}:M'\setminus K'\to\{x\in C_{0}:r(x)>R\}$$ by applying the previous argument beginning with the fact that $C_{0}$ is affine algebraic.
 \end{proof}

Thus, the Remmert reduction of $M$ is actually $C_{0}$, i.e., the composition $\pi:=f^{-1}\circ p:M\to C_{0}$ is a
proper holomorphic map contracting the maximal compact analytic subset $E$ of $M$ to obtain the cone $C_{0}$. Denote
the connected components of $E$ by $E_{1},\ldots,E_{k}$. Then $\pi$ contracts each $E_{i}$ to a point $p_{i}\in C_{0}$ and restricts to a
biholomorphism $\pi:M\setminus E\to C_{0}\setminus\{p_{1},\ldots,p_{k}\}$. We next show that $\pi$ defines a resolution of $C_{0}$ for which $d\pi(X)=r\partial_{r}$.
Note that at infinity, $\pi=(p\circ\iota)^{-1}\circ p=\iota^{-1}$.

\begin{claim}\label{nice}
The map $\pi:=f^{-1}\circ p:M\to C_{0}$ is a resolution of $C_{0}$ with respect to which
$d\pi(X)=r\partial_{r}$.
\end{claim}

\begin{proof}[Proof of Claim \ref{nice}]
Consider the biholomorphism $\pi:M\setminus E\to C_{0}\setminus\{p_{1},\ldots,p_{k}\}$.
This map allows us to lift the holomorphic vector field $r\partial_{r}$ to a holomorphic vector field $Y:=(d\pi)^{-1}(r\partial_{r})$ on $M\setminus E$. Since at infinity $\pi=\iota^{-1}$ and so identifies the vector field
$r\partial_{r}$ on $C_{0}$ with the vector field $X$ on $M$ outside compact subsets of each, $Y$ will agree with $X$ outside of a compact
subset of $M$. Thus, analyticity implies that $X=Y$ on $M\setminus E$. The next observation is that since the flow lines of $Y$ (and hence $X$)
foliate $M\setminus E$, the flow of $X$ must preserve $E$. Via $\pi$ therefore, the flow of $X$ induces
a flow on $C_{0}$ that fixes the points $p_{1},\ldots,p_{k}$, as above each $p_{i}$ the image of a connected component $E_{i}$ of $E\subset M$ under $\pi$.
The result of this induced flow on $C_{0}$ is a holomorphic vector field $\hat{X}$ that coincides with
 $r\partial_{r}$ on $C_{0}\setminus\{p_{1},\ldots,p_{k}\}$ and which is equal to zero at each $p_{i}$.
By analyticity again, $\hat{X}=r\partial_{r}$ so that $E$ comprises one connected component only which is
mapped to the apex of the cone by $\pi$. Thus, $\pi:M\to C_{0}$ is a resolution of the singularity of the cone
and the vector field $X$ on $M$ is an extension of $(d\pi)^{-1}(r\partial_{r})$ from $M\setminus E$ to $M$
so that $d\pi(X)=r\partial_{r}$, as claimed.
\end{proof}

The resolution $\pi:M\to C_{0}$ is clearly equivariant with respect to the flow of $J_{0}r\partial_{r}$ on $C_{0}$ and the flow of
$JX$ on $M$. We wish to show next that $\pi:M\to C_{0}$ is in fact equivariant with respect to the holomorphic isometric torus action on $C_{0}$ induced by
the flow of $J_{0}r\partial_{r}$ and that the lift of this torus action to $M$ acts isometrically on $g$.
This will conclude the proof of Theorem \ref{hello}.

\begin{claim}\label{nicer}
The holomorphic isometric torus action on $(C_{0},\,g_{0})$ generated by $J_{0}r\partial_{r}$ extends to a holomorphic isometric
action of $(M,\,g,\,J)$ so that in particular, $\pi:M\to C_{0}$ is equivariant with respect to this torus action.
\end{claim}

\begin{proof}[Proof of Claim \ref{nicer}]
Consider the isometry group of $(M,\,g)$ that fixes $E$ endowed with the topology induced by uniform convergence on compact
subsets of $M$. By the Arzel\`a-Ascoli theorem, this is a compact Lie group. Taking the closure of the flow of $JX$ in this group therefore
yields the holomorphic isometric action of a torus $T$ on $(M,\,J,\,g)$. Since the action of $T$
preserves $E$, this action pushes down via $\pi$ to a holomorphic action of $T$ on $C_{0}$ fixing the apex $o$ of $C_{0}$.
Now, by Theorem \ref{ac}, after noting again that $\pi=\iota^{-1}$ at infinity, we see that the soliton metric $g$ and the cone metric $g_{0}$ are asymptotic at infinity. Therefore these metrics are quasi-isometric on $C_{0}\setminus K$, where $K\subset C_{0}$ is any compact subset of $C_{0}$ containing the apex $o$ of $C_{0}$, so that uniform convergence on compact subsets of $C_{0}\setminus\{o\}$
measured with respect to $g$ and $g_{0}$ are equivalent. Recall that $d\pi(X)=r\partial_{r}$ so that the flow of $J_{0}r\partial_{r}$ is dense in $T$
and that the flow of $J_{0}r\partial_{r}$ is isometric with respect to $g_{0}$.
Consequently, every automorphism of $(C_{0},\,J_{0})$ induced by $T$ is obtained as a limit of automorphisms of $(C_{0}\setminus\{o\},\,g_{0},\,J_{0})$ with respect to uniform convergence on compact subsets measured using $g_{0}$. Since a uniform limit of isometries is itself an isometry, it follows that
$T$ acts isometrically with respect to $g_{0}$ on $C_{0}\setminus\{o\}$
so that the action of $T$ on $C_{0}$ preserves the slices of $C_{0}$ and defines a torus in the isometry group of the link of $C_{0}$
in which the flow of $J_{0}r\partial_{r}$ is dense. This final observation concludes the proof of the claim.
\end{proof}

\end{proof}

\subsection{Conclusion of the proof of Theorem \ref{main}}
We now conclude the proof of Theorem \ref{main} for complete expanding gradient K\"ahler-Ricci solitons. Conclusion (a) follows from \cite[Theorem 4.3.1]{Siepmann}, whereas the K\"ahlerity of $(C_{0},\,g_{0})$ as stated in conclusion (b) follows from Proposition \ref{biholo}.
The remainder of conclusion (b), apart from (b)(i), then follows from Theorem \ref{hello}. Conclusion (c) follows from Theorem \ref{ac} after noting that $\pi=\iota^{-1}$ at infinity as above.

As for conclusion (b)(i), the K\"ahler form $\omega$ of the expanding gradient K\"ahler-Ricci soliton satisfies the expanding soliton equation $\rho_{\omega}+i\partial\bar{\partial}f=-\omega$ on $M$, where $\rho_{\omega}$ is the Ricci form of $\omega$ and $f$ is the soliton potential. In $H^{2}(M)$, this equation yields $[-\rho_{\omega}]=[\omega]$. Since $i\rho_{\omega}$ is the curvature form $\Theta$ resulting from the hermitian metric on $K_{M}$ induced by $\omega$, we have that $[i\Theta]=[-\rho_{\omega}]=[\omega]$ so that \eqref{condition-main} is seen to hold true for the expanding soliton K\"ahler form $\omega$ and the curvature form $i\Theta$ it induces on $K_{M}$.

For a complete shrinking gradient Ricci soliton $(M,\,g,\,X)$ with soliton potential $f$, we define a K\"ahler-Ricci flow via $$g(t)=-t\varphi_{t}^{*}g,\quad t<0,$$ where $\varphi_{t}$ is a family of diffeomorphisms generated by the gradient vector field $-\frac{1}{t}X$ with $\varphi_{-1}=\operatorname{id}$, i.e.,
\begin{equation*}
\frac{\partial\varphi_{t}}{\partial t}(x)=-\frac{\nabla^g f(\varphi_{t}(x))}{t},\quad\varphi_{-1}=\operatorname{id}.
\end{equation*}
Then $\frac{\partial g}{\partial t}(t)=-2\operatorname{Ric}(g(t))$ for $t<0$ and $g(-1)=g$. Such a soliton with quadratic curvature decay has quadratic curvature decay with derivatives by Theorem \ref{theo-basic-prop-shrink}(iii), hence, as proved in
\cite[Sections 2.2--2.3]{wangl}, has a unique tangent cone at infinity.  These observations provide the starting point for the proof of Theorem \ref{main} for
complete shrinking gradient K\"ahler-Ricci solitons with quadratic curvature decay. The proof then follows verbatim the proof as in the expanding case.

\section{Classification results for expanding gradient K\"ahler-Ricci solitons with quadratic curvature decay with derivatives}\label{classie}

\subsection{Proof of Corollary \ref{unique}}
Let $(M,\,g,\,X)$ be a complete expanding gradient K\"ahler-Ricci soliton satisfying \eqref{hubba} with tangent cone
$(C_{0},\,g_{0})$ as in Corollary \ref{unique}. Let $\omega$ denote the K\"ahler form of $g$.

To see that $M$ is the canonical model of $C_{0}$,
note first that Theorem \ref{main} asserts that there is a K\"ahler resolution
$\pi:M\to C_{0}$ with exceptional set $E$ such that
\begin{equation}\label{condition1}
\int_{V}(i\Theta)^{k}\wedge\omega^{\dim_{\mathbb{C}}V-k}>0
\end{equation}
for all positive-dimensional irreducible analytic subvarieties $V\subset E$ of $\pi:M\to C_{0}$
and for all integers $k$ such that $1\leq k\leq \dim_{\C}V$, where $\Theta$ denotes the curvature form of the hermitian metric on $K_{M}$ induced by $\omega$.
In particular, \eqref{condition1} implies that
\begin{equation*}
\int_{V}(i\Theta)^{k}\wedge\omega^{\dim_{\mathbb{C}}V-k}>0
\end{equation*}
for all positive-dimensional irreducible \emph{algebraic} subvarieties $V\subset E$ and for all integers $k$ such that $1\leq k\leq \dim_{\C}V$. Setting $k=\dim_{\mathbb{C}}V$, we then see that
\begin{equation*}
\int_{V}(i\Theta)^{\dim_{\mathbb{C}}V}>0\quad\textrm{for every irreducible algebraic subvariety $V\subset E$ of positive dimension.}
\end{equation*}
But since $M$ is quasi-projective by Proposition \ref{quasi2}, this is the same as saying that
\begin{equation*}
(D^{\dim_{\mathbb{C}}V}\cdot V)>0\quad\textrm{for every irreducible algebraic subvariety $V\subset E$ of positive dimension,}
\end{equation*}
where $D$ is now a canonical divisor of $M$. Nakai's criterion for a mapping (Theorem \ref{nakai}) now tells us that $\pi$ is $K_{M}$-ample, so that by definition, $\pi:M\to C_{0}$ is the canonical model of $C_{0}$. Hence $C_{0}$ has a smooth canonical model, namely $M$.

Conversely, suppose that $(C_{0},\,g_{0})$ is a K\"ahler cone with radial function $r$ and with
a smooth canonical model $\pi:M\to C_{0}$. We begin by explaining that \cite[Theorem A]{con-der} holds true without hypothesis (b) of that theorem.
This hypothesis was required in the proof of \cite[Proposition 3.2]{con-der} to show that $\mathcal{L}_{X}\omega=i\partial\bar{\partial}\theta_{X}$,
where $\omega$ is the K\"ahler form of \cite[Proposition 3.1]{con-der}, $X$ is the lift of the radial vector field on the cone,
and $\theta_{X}$ is a smooth real-valued function. The following claim asserts that
this in fact always holds true.

\begin{claim}\label{brill}
Let $(C_{0},\,g_{0})$ be a K\"ahler cone with complex structure $J_{0}$ and radial function $r$ and let
$\pi:M\to C_{0}$ be an equivariant resolution with respect to the real torus action on $C_{0}$ generated by $J_{0}r\partial_{r}$.
Let $X$ be the unique holomorphic vector field on $M$ with $d\pi(X)=r\partial_{r}$
and let $\omega$ be the K\"ahler form of \cite[Proposition 3.1]{con-der}. Then $\mathcal{L}_{X}\omega=i\partial\bar{\partial}\theta_{X}$
for a smooth real-valued function $\theta_{X}:M\to\mathbb{R}$.
\end{claim}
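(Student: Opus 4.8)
The plan is to exhibit $\theta_X$ as a constant multiple of a global primitive of the closed $1$-form $\iota_{JX}\omega$; almost all of the work lies in showing that this $1$-form is exact. First I would record the formal facts. Since $X$ is real holomorphic and $\omega$ is a closed real $(1,1)$-form, $\mathcal{L}_X\omega$ is again a closed real $(1,1)$-form, and Cartan's formula gives $\mathcal{L}_X\omega = d(\iota_X\omega)$, so it is in fact $d$-exact. The K\"ahler form $\omega$ of \cite[Proposition 3.1]{con-der} may be assumed invariant under the torus $T$ generated by $JX$ (the construction there is carried out equivariantly, and in any case averaging $\omega$ over $T$ keeps it K\"ahler and does not affect its role in \cite{con-der}). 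With this normalization $\mathcal{L}_{JX}\omega = 0$, so $\sigma := \iota_{JX}\omega$ is a closed real $1$-form; and the identity $\omega(\cdot\,,\cdot) = g(J\cdot\,,\cdot)$ gives $\iota_X\omega = -J\sigma$ at the level of $1$-forms. Hence, locally where $\sigma = dh$, the $dd^{c}$-identity yields $\mathcal{L}_X\omega = d(\iota_X\omega) = -d(J\,dh) = -2\,i\partial\bar{\partial}h$. Therefore, once I know that $\sigma$ is \emph{globally} exact, say $\sigma = dh$ with $h \in C^\infty(M,\mathbb{R})$, I get $\mathcal{L}_X\omega = i\partial\bar{\partial}(-2h)$ and $\theta_X := -2h$ does the job. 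Everything thus reduces to the vanishing of $[\sigma] \in H^1_{\mathrm{dR}}(M;\mathbb{R})$.

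For that I would combine the cone structure at infinity with the topology of a resolution. Outside a compact set $\omega$ agrees with $\pi^*\omega_0$ and $JX$ is $\pi$-related to $\xi_0 := J_0 r\partial_r$; since on the cone $\iota_{\xi_0}\omega_0 = g_0(J_0\xi_0,\cdot) = -g_0(r\partial_r,\cdot) = -\tfrac{1}{2}\,d(r^2)$, we get $\sigma = -\tfrac{1}{2}\,d(\pi^*r^2)$ near infinity (the same conclusion persists if $\omega$ is merely asymptotic to $\pi^*\omega_0$, absorbing the lower order terms into an exact $1$-form using $\mathcal{L}_{JX}\omega = 0$). So $\sigma$ is exact on the end of $M$, and since that end deformation retracts onto the link $S$ of $C_0$ via $\pi$, the class $[\sigma]$ lies in the kernel of the restriction map $H^1(M;\mathbb{R}) \to H^1(S;\mathbb{R})$. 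This kernel is zero: $M$ deformation retracts onto a neighbourhood of the exceptional set $E = \pi^{-1}(o)$, which in turn retracts onto $E$, so $H^1(M;\mathbb{R}) \cong H^1(E;\mathbb{R})$; meanwhile $M\setminus E \cong C_0\setminus\{o\}$ retracts onto $S$, and because $E$ has real codimension $\geq 2$ the inclusion $M\setminus E \hookrightarrow M$ is surjective on $\pi_1$, so $\pi_1(S)\to\pi_1(E)$ is surjective and hence $H^1(E;\mathbb{R})\to H^1(S;\mathbb{R})$ is injective. (Equivalently, a Mayer--Vietoris argument for the splitting of $M$ into a tubular neighbourhood of $E$ and the end reduces the claim to $H^1_E(M;\mathbb{R}) = 0$, which holds since $E$ is a complex-analytic subset of codimension $\geq 1$.) Thus $[\sigma] = 0$, so $\sigma = dh$ globally and $\theta_X = -2h$ is the desired smooth real-valued function.

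The hard part is this last topological step. On the non-compact $M$ a $d$-exact real $(1,1)$-form need not be $i\partial\bar{\partial}$-exact: for resolutions of the cones over positive-genus curves of Theorem \ref{classs}(ii) one has $H^1(M,\mathcal{O}_M)\neq 0$, so the naive attempt to write $\iota_{X^{1,0}}\omega = \bar{\partial}(\text{function})$ fails outright, and one genuinely has to use both that $\sigma$ is exact near infinity \emph{and} the particular homotopy type of a resolution of an isolated cone singularity (connectedness of $E$, and $\pi_1$-surjectivity coming from deleting a codimension $\geq 2$ subvariety). A secondary point to pin down along the way is the precise asymptotic statement in \cite[Proposition 3.1]{con-der} and the $T$-invariance of the metric it produces.
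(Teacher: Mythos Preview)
Your approach is genuinely different from the paper's, and while the overall strategy is sound, it hinges on an asymptotic property of $\omega$ that you flag but do not verify.

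The paper does not use topology or asymptotics at all. It exploits the explicit description of $\omega$ from \cite[Proposition 3.1]{con-der}: by construction $\omega = i\widetilde{\Theta_h} + i\partial\bar\partial u$, where $\widetilde{\Theta_h}$ is the torus-average of the curvature of a hermitian metric $h$ on $K_M$ and $u$ is smooth. For the $i\partial\bar\partial u$ summand the potential is immediate. For the curvature summand the paper writes down an explicit global function
\[
v \;=\; X^{1,0}\cdot\log\bigl(\|\Omega\|_h^2\bigr)\;-\;\frac{\mathcal{L}_{X^{1,0}}\Omega}{\Omega}
\]
(with $\Omega$ a local holomorphic volume form), checks that $v$ is independent of the choice of $\Omega$ and hence globally defined, and verifies $i\Theta_h\lrcorner X^{1,0} = -i\bar\partial v$. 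The underlying reason this works is that $X$ has a canonical lift to the total space of $K_M$, so the curvature of \emph{any} hermitian metric on $K_M$ automatically admits a potential for $X^{1,0}$. This is purely a local-to-global construction; no asymptotics and no $H^1$-vanishing enter.

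Your route, by contrast, reduces everything to $[\iota_{JX}\omega] = 0$ in $H^1(M;\mathbb{R})$. Your argument for injectivity of $H^1(M;\mathbb{R})\to H^1(S;\mathbb{R})$ is fine (and the local-cohomology formulation you mention is the cleanest way to see it). But to get $[\sigma]|_S = 0$ you invoke ``$\omega$ agrees with $\pi^*\omega_0$ outside a compact set.'' That is not part of the hypothesis and is not what the decomposition $\omega = i\widetilde{\Theta_h} + i\partial\bar\partial u$ gives on its face: indeed $[\omega]$ represents $c_1(K_M)$, which need not restrict to zero on $M\setminus E$. Your parenthetical fallback (``merely asymptotic, absorbing lower order terms'') does not close the gap either: if $|\omega - \pi^*\omega_0|_{g_0} = O(r^{-\epsilon})$ then a naive bound on $\int_\gamma \iota_{JX}\omega$ over a cycle $\gamma$ in $\{r=R\}$ is $O(R^{2-\epsilon})$, since $|JX|_{g_0}\sim r$ and $\operatorname{length}_{g_0}(\gamma)\sim R$; so one would need $\epsilon > 2$, which is not a given. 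What you call a ``secondary point to pin down'' is therefore the crux of your argument. If \cite[Proposition 3.1]{con-der} in fact produces $\omega = \pi^*\omega_0$ off a compact set (plausible, given how such background metrics are typically built by cutoff), your proof goes through and gives a valid alternative; but the paper's direct construction via the canonical lift to $K_M$ avoids this dependence entirely and explains \emph{why} the claim holds without any appeal to the behaviour of $\omega$ at infinity.
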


\begin{proof}
Denote the complex structure of $M$ by $J$ and let $X^{1,\,0}=\frac{1}{2}(X-iJX)$.
Then since $\mathcal{L}_{JX}\omega=0$ by construction, we have that
\begin{equation}\label{why}
\frac{1}{2}\mathcal{L}_{X}\omega=\frac{1}{2}d(\omega\lrcorner X)=d(\omega\lrcorner X^{1,\,0}).
\end{equation}
Now by construction, $\omega$ takes the form
$\omega=i\widetilde{\Theta_{h}}+i\partial\bar{\partial}u,$
where $u:M\to\mathbb{R}$ is a smooth real-valued function
and $\widetilde{\Theta_{h}}$ is the average over the action of the torus on $M$ of
the curvature form $\Theta_{h}$ of a hermitian metric $h$ on $K_{M}$. Thus,
\begin{equation}\label{whynot}
\omega\lrcorner X^{1,\,0}=i\widetilde{\Theta_{h}}\lrcorner X^{1,\,0}+i\bar{\partial}(X^{1,\,0}\cdot u).
\end{equation}

Studying the term $i\widetilde{\Theta_{h}}\lrcorner X^{1,\,0}$,
let $n=\dim_{\mathbb{C}}M$, let $\Omega$ be a local holomorphic volume form on $M$, i.e.,
a nowhere vanishing locally defined holomorphic $(n,\,0)$-form (defined in some local holomorphic coordinate chart for example),
and set $$v:=X^{1,\,0}\cdot\log\left(\|\Omega\|_{h}^{2}\right)-\frac{\mathcal{L}_{X^{1,\,0}}\Omega}{\Omega},$$
where $\|\cdot\|_{h}$ denotes the norm with respect to $h$. We claim that $v$ is independent of the choice of $\Omega$ and hence is globally defined. Indeed, any other local holomorphic volume form takes the form $f\Omega$ for some holomorphic function $f$. Then
\begin{equation*}
\begin{split}
X^{1,\,0}\cdot\log\left(\|f\Omega\|_{h}^{2}\right)-\frac{\mathcal{L}_{X^{1,\,0}}(f\Omega)}{f\Omega}
&=X^{1,\,0}\cdot\log|f|^{2}+X^{1,\,0}\cdot\log\left(\|\Omega\|_{h}^{2}\right)-\left(\frac{(X^{1,\,0}\cdot f)\Omega+f\mathcal{L}_{X^{1,\,0}}\Omega}{f\Omega}\right)\\
&=X^{1,\,0}\cdot\log\left(\|\Omega\|_{h}^{2}\right)-\frac{\mathcal{L}_{X^{1,\,0}}\Omega}{\Omega}+\underbrace{
X^{1,\,0}\cdot\log|f|^{2}-\frac{X^{1,\,0}\cdot f}{f}}_{=\,0}\\
&=X^{1,\,0}\cdot\log\left(\|\Omega\|_{h}^{2}\right)-\frac{\mathcal{L}_{X^{1,\,0}}\Omega}{\Omega},
\end{split}
\end{equation*}
as required. Next observe that
\begin{equation*}
i\Theta_{h}\lrcorner X^{1,\,0}=-i\bar{\partial}\left(X^{1,\,0}\cdot\log\left(\|\Omega\|^{2}_{h}\right)\right)
=-i\bar{\partial}\left(X^{1,\,0}\cdot\log\left(\|\Omega\|_{h}^{2}\right)-\frac{\mathcal{L}_{X^{1,\,0}}\Omega}{\Omega}\right)=-i\bar{\partial}v,
\end{equation*}
since $\frac{\mathcal{L}_{X^{1,\,0}}\Omega}{\Omega}$ is a holomorphic function.
Averaging this equation over the action of $T$ then yields
the fact that $i\widetilde{\Theta_{h}}\lrcorner X^{1,\,0}=i\bar{\partial}\tilde{v}$
for a smooth function $\tilde{v}$ on $M$. Plugging this into
\eqref{why}, we thus see from \eqref{whynot} that
$$\mathcal{L}_{X}\omega=2d(i\bar{\partial}(\tilde{v}+X^{1,\,0}\cdot u))
=i\partial\bar{\partial}(2(\tilde{v}+X^{1,\,0}\cdot u)).$$
Hence $\mathcal{L}_{X}\omega=i\partial\bar{\partial}\theta_{X}$,
where $\theta_{X}:=2\operatorname{Re}(\tilde{v}+X^{1,\,0}\cdot u)$,
because $\mathcal{L}_{X}\omega$ is a real $(1,\,1)$-form and
$i\partial\bar{\partial}$ is a real operator.
\end{proof}

\begin{remark}
The existence of the function $v$ satisfying $i\Theta_{h}\lrcorner X^{1,\,0}=-i\bar{\partial}v$
is essentially due to the fact that $X$ has a canonical lift to the total space of $K_{M}$ and
$\Theta_{h}$ is the curvature form of a hermitian metric on $K_{M}$.
\end{remark}

Returning now to our smooth canonical model $\pi:M\to C_{0}$ of $C_{0}$, we
will verify the hypotheses of \cite[Theorem A]{con-der} (apart from the redundant hypothesis (b) of this theorem)
for this resolution to show that $M$ admits a complete expanding gradient K\"ahler-Ricci soliton $g$
with the desired asymptotics. By Lemma \ref{liftt}, the radial vector field $r\partial_{r}$ on $C_{0}$
lifts to a holomorphic vector field $X$ on $M$ with $d\pi(X)=r\partial_{r}$, and by Lemma \ref{quasi}, $M$ is quasi-projective, hence K\"ahler. Moreover,
there exists a K\"ahler form $\sigma$ on $M$ and a hermitian metric on $K_{M}$ with curvature form $\Theta$ such that
\begin{equation}\label{condition}
\int_{V}(i\Theta)^{k}\wedge\sigma^{\dim_{\mathbb{C}}V-k}>0
\end{equation}
for all positive-dimensional irreducible analytic subvarieties $V$ contained in the exceptional set $E$ of $\pi:M\to C_{0}$ and
 for all integers $k$ such that $1\leq k\leq \dim_{\C}V$. Indeed, proceeding as in \cite{DP}, let $\sigma$ be the curvature form of a
  very ample line bundle $L$ on the projective variety which contains $M$ as an open subset and let $\Theta$ be the curvature
   form of the hermitian metric induced on $K_{M}$ by $\sigma$. Then observe that for any analytic subvariety $V\subset E$ of dimension $k$,
\begin{equation}\label{lover}
\int_V (i\Theta)^{k}\wedge\sigma^{\dim_{\mathbb{C}}V-k}=\int_{V\cap H_1\cap \dots\cap H_{\dim_{\mathbb{C}}V-k}}(i\Theta)^{k}
\end{equation}
for generic members $H_1,\dots,H_{\dim_{\mathbb{C}}V-k}$ of the linear system $|L|$, so that $V\cap H_1\cap \dots\cap H_{\dim V-k}\subset E$ is an irreducible subvariety of dimension $k$. Since $E$ is projective (as $M$ is quasi-projective), this intersection is a projective algebraic variety by Chow's theorem. The
 right-hand side of \eqref{lover} may therefore be written as $D^{k}\cdot(V\cap H_1\cap \dots\cap H_{\dim V-k})$,
  where $D$ is a canonical divisor of $M$. By definition of the canonical model, the map $\pi$ is $K_{M}$-ample,
  which by Nakai's criterion for a mapping (cf.~Theorem \ref{nakai}) implies that this intersection is strictly positive.
Thus, we have that \eqref{condition} holds true for the
K\"ahler form $\sigma$ and the curvature form $\Theta$ that it induces on $K_{M}$.
The hypotheses required for the application of \cite[Theorem A]{con-der} are therefore satisfied and
so $M$ admits a complete expanding gradient K\"ahler-Ricci soliton $(M,\,g,\,X)$ with
\begin{equation*}
|(\nabla^{g_{0}})^k(\pi_{*}g-g_{0}-\Ric(g_{0}))|_{g_0 } \leq C_{k}r^{-4-k}\quad\textrm{for all $k\in\mathbb{N}_{0}$}
\end{equation*}
as required.

As for the uniqueness of $(M,\,g,\,X)$,
let $(M_{i},\,g_{i},\,X_{i}),\,i=1,\,2,$ be two complete expanding gradient K\"ahler-Ricci solitons satisfying
\eqref{hubba} with tangent cone $(C_{0},\,g_{0})$. As initially proved, both $M_{1}$ and $M_{2}$ are equal to the unique (smooth) canonical model $M$ of $C_{0}$. Moreover, Theorem \ref{main} asserts that there exists a resolution map
$\pi_{i}:M\to C_{0}$ for $i=1,\,2,$ with $d\pi_{i}(X_{i})=r\partial_{r}$ such that
\begin{equation}\label{honolulu}
|(\nabla^{g_{0}})^k((\pi_{i})_{*}g_{i}-g_{0}-\operatorname{Ric}(g_{0}))|_{g_0} \leq C_{k}r^{-4-k}\quad\textrm{for all $k\in\mathbb{N}_{0}$.}
\end{equation}
The composition $H:=\pi_{2}\circ\pi_{1}^{-1}:C_{0}\to C_{0}$ induces an automorphism of $C_{0}$ fixing the
vertex. As in the proof of Lemma \ref{liftt}, uniqueness of the canonical model
implies that there exists a unique biholomorphism $F:M\to M$ such that
$\pi_{1}\circ F=H\circ \pi_{1}$. Unravelling the definition of
$H$, this yields the fact that $\pi_{1}\circ F=\pi_{2}$.
Consequently, $d\pi_{2}((dF)^{-1}(X_{1}))=d\pi_{1}(X_{1})=r\partial_{r}$
so that $(dF)^{-1}(X_{1})=X_{2}$. Furthermore, in light of \eqref{honolulu}, we have that
\begin{equation}\label{honolulu2}
|(\nabla^{g_{0}})^k((\pi_{2})_{*}(F^{*}g_{1})-g_{0}-\operatorname{Ric}(g_{0}))|_{g_0} \leq C_{k}r^{-4-k}\quad\textrm{for all $k\in\mathbb{N}_{0}$.}
\end{equation}
Thus, $(M,\,F^{*}g_{1},\,X_{2})$ and $(M,\,g_{2},\,X_{2})$
are two expanding gradient K\"ahler-Ricci solitons with the same soliton vector field
which from [\eqref{honolulu},\,$i=2$] and \eqref{honolulu2} in addition satisfy
$|F^{*}g_{1}-g_{2}|=O(r^{-4})$. The uniqueness theorem
\cite[Theorem C(ii)]{con-der} therefore applies
(where, in studying the proof of \cite[Theorem C(ii)]{con-der},
one sees that finite fundamental group is not actually required)
and asserts that $F^{*}g_{1}=g_{2}$. Thus, $(M,\,g,\,X)$ is
unique up to pullback by biholomorphisms of $M$, as claimed.

As for the remainder of Corollary \ref{unique}, item (a) is now clear
and item (b) follows from Theorem \ref{main}.

\subsection{Proof of Corollary \ref{unique2d}}
Corollary \ref{unique2d} follows from Corollary \ref{unique} once we identify
the two-dimensional K\"ahler cones that admit smooth canonical models
as those stated in Corollary \ref{unique2d}(I)-(III)
and realise their respective smooth canonical models as those stated in Corollary \ref{unique2d}(b)(i)-(iii).

To this end, let $C_{0}$ be a two-dimensional K\"ahler cone with a
smooth canonical model $M$. By adjunction, $M$ cannot contain any $(-1)$ or $(-2)$-curves.
In particular, by Theorem \ref{minn}, $M$ coincides with the minimal model of $C_{0}$. Using this information,
we can identify $C_{0}$ and $M$ as follows.

Since $C_{0}$ is a two-dimensional K\"ahler cone, it must be prescribed as in Theorem \ref{classs}.
We henceforth work on a case-by-case basis. If $C_{0}$ is as in Theorem \ref{classs}(i), then $\Gamma$ must be as prescribed in Corollary \ref{unique2d}(I) since $M$ cannot contain any $(-2)$-curves; indeed, see \cite[Figure 2.1 and Theorem 4.1]{Jeff} for details. In this case, $M$ will be the minimal model of $C_{0}$ as stated in Corollary \ref{unique2d}(b)(i). Otherwise, $C_{0}$ may be as in Theorem \ref{classs}(ii) which is precisely the statement of
Corollary \ref{unique2d}(II). In this case, the minimal model $M$ is given as in the statement of Corollary \ref{unique2d}(b)(ii).
Finally, $C_{0}$ may be as in Theorem \ref{classs}(iii). Those cones of Theorem \ref{classs}(iii) that admit a smooth canonical model have been identified in Proposition \ref{sauce} which yields the statement of Corollary \ref{unique2d}(III). For these cones, the minimal resolution is the minimal good resolution which identifies $M$ as in the statement of Corollary \ref{unique2d}(b)(iii).

\section{A volume minimising principle for complete shrinking gradient K\"ahler-Ricci solitons}\label{futake}

We now focus our attention solely on shrinking gradient K\"ahler-Ricci solitons for the remainder of the article. The set-up of this section is as follows. Let $(M,\,g,\,X)$ be a complete shrinking gradient K\"ahler-Ricci soliton of complex dimension $n$ with complex structure $J$, K\"ahler form $\omega$, and with soliton vector field $X=\nabla^{g}f$ for a smooth real-valued function $f:M\to\mathbb{R}$. We assume that a real torus $T$ with Lie algebra $\mathfrak{t}$ acts holomorphically, effectively, and isometrically on $(M,\,g,\,J)$. Then $\mathfrak{t}$ can be identified with real holomorphic Killing vector fields on $M$. We furthermore assume that $JX\in\mathfrak{t}$.

The goal of this section is to prove the uniqueness of the soliton vector field $JX$ in $\mathfrak{t}$ by characterising $JX$ as the unique critical point of a soon-to-be-defined weighted volume functional.

\subsection{A Matsushima-type theorem}\label{matsushimaa}
Let $\mathfrak{aut}^{X}(M)$ denote the Lie algebra of real holomorphic vector fields on $M$ that commute with $X$ and hence $JX$, and let $\mathfrak{g}^{X}$ denote the Lie algebra of real holomorphic $g$-Killing vector fields on $M$ that commute with $X$ and hence $JX$.  Clearly $\mathfrak{g}^{X}$ is a Lie subalgebra of $\mathfrak{aut}^{X}(M)$. In order to prove the uniqueness of the soliton vector field $X$, we need to show that the connected component of the identity of the Lie group of holomorphic isometries of $(M,\,g,\,J)$ commuting with the flow of $X$ is maximal compact in the connected component of the identity of
the Lie group of automorphisms of $(M,\,J)$ commuting with the flow of $X$. This fact will follow from the next theorem, an analogue of Matsushima's theorem \cite{matsushima} for shrinking gradient K\"ahler-Ricci solitons stating that the Lie algebra $\mathfrak{aut}^{X}(M)$ is reducible, after we prove that
the aforementioned groups are indeed Lie groups.

\begin{theorem}[A Matsushima theorem for shrinking K\"ahler-Ricci solitons]\label{matty}
Let $(M,\,g,\,X)$ be a complete shrinking gradient K\"ahler-Ricci soliton with complex structure $J$
endowed with the holomorphic, effective, isometric action of a real torus $T$ with Lie algebra $\mathfrak{t}$ with $JX\in\mathfrak{t}$. If $|\operatorname{Ric}(g)|_{g}$ is bounded, then we have that $$\mathfrak{aut}^{X}(M)=\mathfrak{g}^{X}\oplus J\mathfrak{g}^{X}.$$

\end{theorem}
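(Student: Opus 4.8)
The plan is to transcribe the proof of Matsushima's theorem for K\"ahler--Einstein Fano manifolds, as presented in \cite[Proof of Theorem 5.1]{kobayashi1}, into the weighted (Bakry--\'Emery) setting attached to the smooth metric measure space $(M,\,g,\,e^{-f}dV_{g})$, the point being that for a shrinker the weighted Ricci tensor is $\operatorname{Ric}(g)+\operatorname{Hess}_{g}(f)=g>0$, which plays the role of the Einstein condition. The two facts to establish are: \emph{(i)} (holomorphy potentials) every $Y\in\mathfrak{aut}^{X}(M)$ can be written as $Y^{1,0}=\nabla^{1,0}_{g}u_{Y}$ for a complex-valued, $JX$-invariant function $u_{Y}\in L^{2}_{f}$, unique up to an additive constant which we normalise away, solving the linearised soliton equation $\Delta_{f}u_{Y}+u_{Y}=0$ with $\Delta_{f}$ the weighted $\bar\partial$-Laplacian of Section~\ref{metricmeasure}, and conversely every $L^{2}_{f}$-solution of this equation arises in this way; and \emph{(ii)} (splitting) if $u=\varphi+i\psi$ with $\varphi,\psi$ real solves $\Delta_{f}u+u=0$, then $\nabla^{1,0}_{g}\varphi$ and $\nabla^{1,0}_{g}\psi$ are \emph{separately} holomorphic. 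Granting (i)--(ii), writing $u_{Y}=\varphi+i\psi$ gives $Y=Y^{1,0}+\overline{Y^{1,0}}=\nabla^{g}\varphi+J\nabla^{g}\psi$; by (ii) the real gradients $\nabla^{g}\varphi$ and $\nabla^{g}\psi$ are real holomorphic, and a short computation with $\omega$ shows that for a real holomorphic gradient field $W$ the field $JW$ is $g$-Killing. Hence $J\nabla^{g}\psi\in\mathfrak{g}^{X}$, while $\nabla^{g}\varphi=J(-J\nabla^{g}\varphi)$ with $-J\nabla^{g}\varphi\in\mathfrak{g}^{X}$, so $Y\in\mathfrak{g}^{X}+J\mathfrak{g}^{X}$; both summands commute with $X$ because $\varphi,\psi$ are $JX$-invariant and real holomorphic. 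The sum is direct since a holomorphy potential that is simultaneously purely imaginary and real up to constants is constant, so $\mathfrak{g}^{X}\cap J\mathfrak{g}^{X}=0$.

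\emph{Integrability.} This is where the bounded Ricci hypothesis is used. By Proposition~\ref{keywest}, every $Y\in\mathfrak{aut}^{X}(M)$ satisfies $|Y|^{2}_{g}=O(d_{g}(p,\,\cdot)^{a})$ for some $a>0$, and differentiating the relation $[X,\,Y]=0$ together with Shi-type interior estimates shows that the covariant derivatives of $Y$ also grow at most polynomially. Since $f$ grows quadratically by Theorem~\ref{theo-basic-prop-shrink}(i), the weight $e^{-f}$ decays like a Gaussian, so $Y$ and all tensors built from it lie in $L^{2}_{f}$ and every integration by parts against $e^{-f}dV_{g}$ is legitimate, the boundary terms over large sub-level sets of $f$ tending to zero. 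This is exactly what legitimises the non-compact weighted Hodge theory in the next two steps.

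\emph{Step (i).} Given $Y\in\mathfrak{aut}^{X}(M)$, the $(0,1)$-form $\alpha:=\iota_{Y^{1,0}}\omega$ is $\bar\partial$-closed because $Y$ is real holomorphic. One solves $\bar\partial u_{Y}=\alpha$ in $L^{2}_{f}$ via the weighted Hodge decomposition, the only obstruction being a weighted $\bar\partial$-harmonic $(0,1)$-form, which must vanish by a weighted Bochner--Kodaira argument in which the curvature term is $\operatorname{Ric}(g)+\operatorname{Hess}_{g}(f)=g>0$; this also gives uniqueness of $u_{Y}$ up to a constant, and averaging over the closure of the flow of $JX$ in $T$ makes $u_{Y}$ $JX$-invariant. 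Feeding $u_{Y}$ back into the soliton equation $\rho_{\omega}+i\partial\bar\partial f=\omega$ and contracting yields $\Delta_{f}u_{Y}+u_{Y}=\mathrm{const}$, which we normalise to $0$ since $\Delta_{f}$ annihilates constants. For the converse, given $u\in L^{2}_{f}$ with $\Delta_{f}u+u=0$, the weighted Weitzenb\"ock formula $\bar\partial^{*_{f}}\bar\partial\,\nabla^{1,0}_{g}u=\nabla^{1,0}_{g}(\Delta_{f}u)+\operatorname{Ric}_{f}(\nabla^{1,0}_{g}u)=\nabla^{1,0}_{g}(\Delta_{f}u+u)=0$ forces $\bar\partial\,\nabla^{1,0}_{g}u=0$, so $\nabla^{1,0}_{g}u$ is holomorphic and commutes with $X$ once $u$ is chosen $JX$-invariant.

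\emph{Step (ii) is the main obstacle.} What must be extracted is the precise weighted analogue of the Matsushima--Lichnerowicz identity: writing $u=\varphi+i\psi$ and using that $\Delta_{f}$ applied to a \emph{real} function is in general complex-valued, one separates $\Delta_{f}u+u=0$ into its Riemannian-Laplacian and first-order parts and shows that $\varphi$ and $\psi$ each satisfy the real second-order equation characterising the first weighted eigenspace, equivalently that the $(0,2)$-part of $\nabla^{2}_{g}\varphi$ (resp.\ $\nabla^{2}_{g}\psi$) vanishes, which is the holomorphy of $\nabla^{1,0}_{g}\varphi$ (resp.\ $\nabla^{1,0}_{g}\psi$). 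Here the shrinking identity $\operatorname{Ric}(g)+\operatorname{Hess}_{g}(f)=g$ enters decisively: it is exactly the coincidence of the curvature term with the eigenvalue $1$ that makes the cross terms cancel in the Bochner integral, just as $\operatorname{Ric}=g$ does on a K\"ahler--Einstein Fano manifold. I expect the bookkeeping here --- tracking the weight $e^{-f}$ through every integration by parts and keeping the two normalisations ($\Delta_{\omega}$ versus $\Delta_{g}$, cf.\ Remark~\ref{rk-sol-id}) straight --- to be the only genuinely delicate point; everything else is a weighted, non-compact transcription of the classical argument, made rigorous by the integrability established above.
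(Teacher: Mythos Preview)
Your overall strategy is right and matches the paper's: find a complex potential $u_{Y}$ for $Y^{1,0}$ via weighted $L^{2}$-methods (the paper uses H\"ormander directly rather than weighted Hodge theory, but this is cosmetic), normalise it to satisfy $\Delta_{\omega}u_{Y}+u_{Y}-Y^{1,0}\cdot f=0$, show that $\nabla^{1,0}\bar{u}_{Y}$ is also holomorphic, and split into real and imaginary parts. The integrability discussion is correct and is exactly how the bounded Ricci hypothesis enters. However, two steps in your proposal have genuine gaps, not just bookkeeping.

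\textbf{Step (ii).} You propose to separate $\Delta_{f}u+u=0$ into equations for the real and imaginary parts. This does not work directly: the drift term $g^{i\bar{\jmath}}(\partial_{i}f)(\partial_{\bar{\jmath}}u)=Y^{1,0}\cdot f$ is neither real nor determined by the real and imaginary parts of $u$ separately, so the equation does not decouple. What the paper actually does is show that $\bar{\partial}(\Delta_{\omega}\bar{u}_{Y}+\bar{u}_{Y}-Y^{1,0}\cdot f)=0$ and then run the Bochner integration by parts directly against this to force $\nabla^{0,2}\bar{u}_{Y}=0$. The key missing ingredient is a Poisson-bracket lemma: if $\nabla^{1,0}u$ and $\nabla^{1,0}v$ are holomorphic and commute, then $(\nabla^{1,0}v-\nabla^{0,1}v)(u)$ is holomorphic. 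Applied with $u=f$ and $v=u_{Y}$ (using $[X^{1,0},Y^{1,0}]=0$), this gives $Y^{0,1}\cdot f=Y^{1,0}\cdot f+\phi$ with $\phi$ holomorphic, which is exactly what converts the conjugate equation $\Delta_{\omega}\bar{u}_{Y}+\bar{u}_{Y}-Y^{0,1}\cdot f=0$ into something whose $\bar{\partial}$ vanishes. Without this lemma your Bochner argument cannot start.

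\textbf{Directness.} Your argument (``a holomorphy potential that is simultaneously purely real and purely imaginary up to constants is constant'') is the compact argument and fails here: on a non-compact manifold there can be non-constant pluriharmonic functions. Concretely, $Z\in\mathfrak{g}^{X}\cap J\mathfrak{g}^{X}$ means $W:=-JZ$ has $\nabla W$ both symmetric (since $W$ holomorphic and $JW$ Killing) and skew (since $W$ Killing), hence $W$ is parallel; on a non-compact manifold this does not force $W=0$. The paper closes this gap with a splitting theorem for shrinking gradient Ricci solitons: a non-trivial parallel field forces $(M,g)$ to split off a line, and then the soliton vector field in that direction must be $t\partial_{t}$, contradicting $[W,X]=0$.
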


\noindent We expect this theorem to hold true without the assumption of bounded Ricci curvature.

The proof of Theorem \ref{matty} consists of several steps. Beginning with any real holomorphic vector field $Y\in\mathfrak{aut}^{X}(M)$,
H\"ormander's $L^{2}$-estimates allow for a complex-valued potential, that is, a smooth complex-valued function $u_{Y}$ such that $Y^{1,\,0}=\nabla^{1,\,0}u_{Y}$,
where $Y^{1,\,0}$ is the $(1,\,0)$-part of $Y$. Thanks to the defining equation of a shrinking gradient K\"ahler-Ricci soliton, we can
then modify $u_{Y}$ by a holomorphic function if necessary so that $\Delta_{\omega}u_{Y}+u_{Y}-Y^{1,\,0}\cdot f=0$, $f$ here the soliton potential
with $X=\nabla^{g}f$. Using the fact that
$[X^{1,\,0},\,Y^{1,\,0}]=0$, where $X^{1,\,0}$ is the $(1,\,0)$-part of the soliton vector field $X$,
we then show that $\bar{\partial}(\Delta_{\omega}\bar{u}_{Y}+\bar{u}_{Y}-Y^{1,\,0}\cdot f)=0$. We next apply a Bochner formula followed by an
integration by parts argument to deduce that $\nabla^{0,\,2}\bar{u}_{Y}=0$ so that $\nabla^{1,\,0}\bar{u}_{Y}$ is a holomorphic vector field. The bound on the norm of the Ricci curvature is required to control the boundary term in the integration by parts argument.
The gradient of the real and imaginary parts of $u_{Y}$ will therefore be real holomorphic vector fields so that, once one applies the complex structure
to these vector fields, they become real holomorphic and Killing. From this, the stated decomposition follows. To conclude that the
sum is direct, we make use of a splitting theorem for shrinking gradient Ricci solitons.

\begin{proof}[Proof of Theorem \ref{matty}]
Let $Y\in\mathfrak{aut}^{X}(M)$. Then $Y$ defines a real holomorphic vector field on $M$ with $[X,\,Y]=0$.
Take the $(1,\,0)$-part $Y^{1,\,0}$ of $Y$, i.e., let $Y^{1,\,0}=\frac{1}{2}(Y-iJY)$.
 Then $\bar{\partial}Y^{1,\,0}=0$ so that $\omega\lrcorner Y^{1,\,0}$ is a $\bar{\partial}$-closed $(0,\,1)$-form, where $\omega$ denotes the K\"ahler form of $g$. We first claim that $\omega\lrcorner Y^{1,\,0}$ admits a smooth complex potential.
\begin{claim}\label{baa}
There exists a smooth complex-valued function $u_{Y}$ on $M$ such that
$-i\omega\lrcorner Y^{1,\,0}=\bar{\partial}u_{Y}$, or equivalently, such that $Y^{1,\,0}=\nabla^{1,\,0}u_{Y}$.
\end{claim}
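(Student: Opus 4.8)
The plan is to solve the $\bar\partial$-equation $\bar\partial u_{Y} = -i\,\omega\lrcorner Y^{1,\,0}$ using the weighted $L^{2}$-theory adapted to the shrinking soliton. Set $\alpha := -i\,\omega\lrcorner Y^{1,\,0}$; by the discussion preceding the claim this is a smooth $\bar\partial$-closed $(0,\,1)$-form, and, via the musical isomorphism induced by the non-degenerate form $\omega$, the equation $\bar\partial u_{Y} = \alpha$ is equivalent to $Y^{1,\,0} = \nabla^{1,\,0} u_{Y}$; so it suffices to produce \emph{any} smooth solution $u_{Y}$. The natural ambient Hilbert space is $L^{2}_{f} := L^{2}(M,\, e^{-f}\,dV_{g})$, where $f$ is the soliton potential with $X = \nabla^{g} f$, together with the operator $\bar\partial$ and its $L^{2}_{f}$-adjoint $\bar\partial^{*}_{f}$.

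First I would check that $\alpha \in L^{2}_{f}$. Since $|\alpha|_{g} = |Y^{1,\,0}|_{g} \le |Y|_{g}$ and the Ricci curvature of $g$ is bounded, Proposition \ref{keywest} gives $|Y|_{g}^{2}(x) = O(d_{g}(p,\,x)^{a})$ for some $a > 0$; combining this with the lower bound $f(x) \ge \tfrac14(d_{g}(p,\,x) - c_{1})^{2}$ from Theorem \ref{theo-basic-prop-shrink}(i) and the polynomial volume growth of Theorem \ref{theo-basic-prop-shrink}(ii), the Gaussian weight $e^{-f}$ dominates the polynomial growth of $|\alpha|_{g}^{2}$, so $\int_{M} |\alpha|_{g}^{2}\,e^{-f}\,dV_{g} < \infty$. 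This is where the bounded Ricci hypothesis in Theorem \ref{matty} enters: it is precisely what makes this integrability available through Proposition \ref{keywest}.

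Next I would invoke the weighted Bochner--Kodaira--Nakano identity on the smooth metric measure space $(M,\,g,\,e^{-f}dV_{g})$: on $(0,\,1)$-forms the relevant curvature term is the Bakry--\'Emery--Ricci tensor $\operatorname{Ric}(g) + \operatorname{Hess}_{g} f$, which equals $g$ by the shrinking soliton equation \eqref{krseqn} together with the real holomorphicity of $X$ (which forces the $(2,\,0)+(0,\,2)$ part of $\operatorname{Hess}_{g} f$ to vanish). This yields an estimate $\|\bar\partial\beta\|_{L^{2}_{f}}^{2} + \|\bar\partial^{*}_{f}\beta\|_{L^{2}_{f}}^{2} \ge c\,\|\beta\|_{L^{2}_{f}}^{2}$ for some $c > 0$ and all $(0,\,1)$-forms $\beta$ in the domain, hence a spectral gap for the weighted $\bar\partial$-Laplacian; this is exactly the regime in which H\"ormander's $L^{2}$-method applies on a complete manifold and produces a solution $u_{Y} \in L^{2}_{f}$ of $\bar\partial u_{Y} = \alpha$ (equivalently, the vanishing of the weighted $(0,\,1)$-Dolbeault cohomology class that $\alpha$ would otherwise represent). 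Interior elliptic regularity for $\bar\partial$ then upgrades $u_{Y}$ to a smooth function, and unwinding the identification $\bar\partial u_{Y} = -i\,\omega\lrcorner Y^{1,\,0}$ gives $Y^{1,\,0} = \nabla^{1,\,0} u_{Y}$, as claimed.

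The main obstacle is the careful implementation of the weighted $\bar\partial$-solvability on the non-compact manifold $M$: one must justify that compactly supported forms are dense in the graph norm of $\bar\partial \oplus \bar\partial^{*}_{f}$ (so that the Bochner--Kodaira estimate for smooth compactly supported forms passes to the full domain) and then deduce the weighted Hodge/H\"ormander solvability from the spectral gap $\operatorname{Ric}_{f} = g > 0$. Once that gap is in hand this is standard, so the only genuinely new verification specific to this setting is the $L^{2}_{f}$-integrability of $\alpha$, which is exactly where Proposition \ref{keywest}, and hence the boundedness of the Ricci curvature, is used.
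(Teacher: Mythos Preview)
Your proof is correct and is essentially the same as the paper's: both solve $\bar\partial u_{Y}=-i\,\omega\lrcorner Y^{1,0}$ via H\"ormander's $L^{2}$-estimates with weight $e^{-f}$, using Proposition~\ref{keywest} together with Theorem~\ref{theo-basic-prop-shrink} for the $L^{2}_{f}$-integrability of the datum, and the soliton equation for the requisite positivity. The only difference is packaging: the paper phrases this as H\"ormander's theorem for $-K_{M}$-valued $(n,1)$-forms with the hermitian metric $e^{-f}h$ of curvature $\omega>0$ (citing Demailly), which, after the standard identification of $(-K_{M})$-valued $(n,1)$-forms with ordinary $(0,1)$-forms, is exactly your weighted Bochner--Kodaira argument with $\operatorname{Ric}_{f}=g$.
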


\noindent Note that $u_{Y}$ is unique up to the addition of a holomorphic function.

\begin{proof}[Proof of Claim \ref{baa}]
Let $h$ denote the metric on $-K_{M}$ induced by $\omega$. Then the curvature of the metric $e^{-f}h$ on $-K_{M}$
is precisely $\omega$ by virtue of the defining equation of a shrinking gradient K\"ahler-Ricci soliton.
Treat $\omega\lrcorner Y^{1,\,0}$ as a $-K_{M}$-valued $(n,\,1)$-form. Then since the norm of $\operatorname{Ric}(g)$ is
bounded so that $|Y^{1,\,0}|_{g}$ grows at most polynomially by Proposition \ref{keywest}, we see from the growth on $f$ dictated by
Theorem \ref{theo-basic-prop-shrink}(i) that the $L^{2}$-norm of $\omega\lrcorner Y^{1,\,0}$ measured with respect to $e^{-f}h$ is finite.
An application of H\"ormander's $L^{2}$-estimates \cite[Theorem 6.1, p.376]{demmy} now yields the desired conclusion.
\end{proof}

Next, contracting [\eqref{krseqn}, $\lambda=1$] with $Y^{1,\,0}$ and using the Bochner formula, we see that
$$-i\bar{\partial}\Delta_{\omega}u_{Y}+i\bar{\partial}(Y^{1,\,0}\cdot f)=i\bar{\partial}u_{Y}$$
so that $$\bar{\partial}(\Delta_{\omega}u_{Y}+u_{Y}-Y^{1,\,0}\cdot f)=0.$$
By adding a holomorphic function to $u_{Y}$ if necessary, we may assume that
$$\Delta_{\omega}u_{Y}+u_{Y}-Y^{1,\,0}\cdot f=0.$$
Furthermore, by averaging $u_{Y}$ over the action of $T$, we may assume that $\mathcal{L}_{JX}u_{Y}=0$. These two operations normalise $u_{Y}$. Taking the complex conjugate of this equation gives us
\begin{equation}\label{start3}
\Delta_{\omega}\bar{u}_{Y}+\bar{u}_{Y}-Y^{0,\,1}\cdot f=0,
\end{equation}
since $f$ is real-valued. At this point, we need to make use of another claim, namely the following.

\begin{claim}\label{claim-hol-diff-part}
Let $u$ and $v$ be smooth complex-valued functions and suppose that $U=\nabla^{1,\,0}u$ and $V=\nabla^{1,\,0}v$ are complex-valued holomorphic vector fields on a K\"ahler manifold such that $[U,\,V]=0$, the Lie bracket here extended by complex-linearity. Then $(\nabla^{1,\,0}v-\nabla^{0,\,1}v)(u)$ is holomorphic.
\end{claim}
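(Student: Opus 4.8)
\textbf{Plan of proof for Claim \ref{claim-hol-diff-part}.}
The plan is to compute $[U,\,V]$ in local holomorphic coordinates and extract the vanishing of a suitable mixed derivative, then recognize that vanishing as exactly the statement that $(\nabla^{1,\,0}v - \nabla^{0,\,1}v)(u)$ is annihilated by $\bar\partial$. First I would fix local holomorphic coordinates $(z^1,\dots,z^n)$ on the K\"ahler manifold, write the metric as $g_{i\bar\jmath}$, and record $U = \nabla^{1,0}u = g^{i\bar\jmath}(\partial_{\bar\jmath}u)\,\partial_{z^i}$ and similarly $V = g^{i\bar\jmath}(\partial_{\bar\jmath}v)\,\partial_{z^i}$. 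Since $U$ and $V$ are holomorphic vector fields, their coefficients $U^i := g^{i\bar\jmath}\partial_{\bar\jmath}u$ and $V^i := g^{i\bar\jmath}\partial_{\bar\jmath}v$ are holomorphic functions. The bracket of two holomorphic vector fields is again holomorphic, and the hypothesis $[U,\,V]=0$ reads, componentwise, $\sum_k\bigl(U^k\partial_{z^k}V^i - V^k\partial_{z^k}U^i\bigr)=0$ for each $i$.

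The key step is to relate this bracket identity to the function $w := (\nabla^{1,0}v - \nabla^{0,1}v)(u)$. Expanding, $\nabla^{1,0}v$ applied to $u$ is $V^k\partial_{z^k}u$, while $\nabla^{0,1}v$ applied to $u$ is $\overline{V^k}$-type terms; more precisely, writing everything out, $w = V^k\partial_{z^k}u - g^{i\bar\jmath}(\partial_i v)\,\partial_{\bar\jmath}u$. To show $w$ is holomorphic I would apply $\partial_{\bar\ell}$ and show the result vanishes. Differentiating $V^k\partial_{z^k}u$ in $\bar z^\ell$: since $V^k$ is holomorphic, $\partial_{\bar\ell}(V^k\partial_{z^k}u) = V^k\partial_{z^k}\partial_{\bar\ell}u$. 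On the other hand, the natural identity to exploit is that $\partial_{\bar\ell}u$ is (up to index-lowering by the metric) the $\bar\ell$-component of the $(0,1)$-form dual to $U^{1,0}$, and one can re-express $V^k\partial_{z^k}\partial_{\bar\ell}u$ using the covariant derivative and the K\"ahler condition $\partial_i g_{k\bar\jmath} = \partial_k g_{i\bar\jmath}$; here the symmetry of the Christoffel symbols of a K\"ahler metric is what makes things work. The terms that do not obviously cancel reassemble, after using the K\"ahler identity and the holomorphicity of $U^k$ and $V^k$, into precisely the $i$-th component of $[U,\,V]$ contracted appropriately, hence vanish by hypothesis. So $\partial_{\bar\ell}w = 0$ for all $\ell$, i.e., $w$ is holomorphic.

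The main obstacle I anticipate is the bookkeeping in the second step: correctly expressing $(\nabla^{1,0}v - \nabla^{0,1}v)(u)$ in coordinates (taking care that $\nabla^{0,1}v$ means the $(0,1)$-part of the gradient, whose components involve $g^{i\bar\jmath}\partial_i v$), then carrying out the $\bar\partial$-differentiation and invoking the K\"ahler identity $\partial_k g_{i\bar\jmath} = \partial_i g_{k\bar\jmath}$ at the right moment to convert an uncooperative term into the Lie-bracket expression. A clean way to organize this is to observe that for a K\"ahler metric the $(1,0)$-gradient of a function, viewed as a vector field, satisfies $\nabla_{\bar\ell}(\nabla^{1,0}u)^k = g^{k\bar\jmath}\nabla_{\bar\ell}\partial_{\bar\jmath}u$, and to use that $\nabla^{1,0}u$ being holomorphic forces certain covariant derivatives to be symmetric; this reduces the computation to a short manipulation rather than a long one. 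Once $\partial_{\bar\ell}w=0$ is established, there is nothing more to do, since a smooth function killed by every $\partial_{\bar\ell}$ is holomorphic.
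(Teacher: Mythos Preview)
Your proposal is correct and follows essentially the same route as the paper. The only difference is a simplification you might appreciate: rather than tracking Christoffel symbols and invoking the K\"ahler identity $\partial_k g_{i\bar\jmath}=\partial_i g_{k\bar\jmath}$ in general coordinates, the paper works at an arbitrary point in holomorphic normal coordinates (so $g_{i\bar\jmath}=\delta_{ij}$ and $\partial g_{i\bar\jmath}=\bar\partial g_{i\bar\jmath}=0$ there), which reduces your $\partial_{\bar\ell}w$ computation to the two-line observation that holomorphicity of $U,V$ gives $u_{\bar\imath\bar\jmath}=v_{\bar\imath\bar\jmath}=0$ and the bracket condition gives $u_{\bar\imath}v_{\bar k i}=v_{\bar\imath}u_{\bar k i}$, whence $\bar\partial(u_{\bar\imath}v_i - u_i v_{\bar\imath})=0$.
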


\begin{proof}[Proof of Claim \ref{claim-hol-diff-part}]
Let $g$ denote the background K\"ahler metric. Then in local coordinates we have that
\begin{equation}\label{start}
g^{i\bar{\jmath}}u_{\bar{\jmath}}\partial_{i}(g^{k\bar{l}}v_{\bar{l}})\partial_{k}=g^{k\bar{l}}v_{\bar{l}}\partial_{k}(g^{i\bar{\jmath}}u_{\bar{\jmath}})\partial_{i}.
\end{equation}
Choose holomorphic normal coordinates at a point such that $g_{i\bar{\jmath}}=\delta_{ij}$ and $\partial g_{i\bar{\jmath}}=\bar{\partial}g_{i\bar{\jmath}}=0$. Henceforth working at this point, we have from \eqref{start} that for all $k$,
\begin{equation}\label{start2}
u_{\bar{\imath}}v_{\bar{k}i}=v_{\bar{\imath}}u_{\bar{k}i}.
\end{equation}
The fact that $\bar{\partial}\nabla^{1,\,0}u=\bar{\partial}\nabla^{1,\,0}v=0$ yields that for all $i,\,j$, $$u_{\bar{\imath}\bar{\jmath}}=v_{\bar{\imath}\bar{\jmath}}=0.$$
Hence we find from \eqref{start2} that $$\bar{\partial}(u_{\bar{\imath}}v_{i}-u_{i}v_{\bar{\imath}})=0,$$
that is, $g^{i\bar{\jmath}}(u_{i}v_{\bar{\jmath}}-u_{\bar{\jmath}}v_{i})$ is a holomorphic function. Next only observe that $g^{i\bar{\jmath}}u_{i}v_{\bar{\jmath}}=(\nabla^{1,\,0}v)(u)$ and $g^{i\bar{\jmath}}v_{i}u_{\bar{\jmath}}=(\nabla^{0,\,1}v)(u)$ to arrive at the fact that $(\nabla^{1,\,0}v-\nabla^{0,\,1}v)(u)$ is holomorphic.
\end{proof}

\begin{remark}
The above claim can be rephrased by saying that $[U,\,V]=-\{u,\,v\}_{\bar{k}}\partial_{k}$ where $\{\cdot\,,\,\cdot\}$ is the Poisson bracket,
so that $\{u,\,v\}$ is a holomorphic function if $[U,\,V]=0$.
\end{remark}

By assumption, $X=\nabla^g f$ is real holomorphic so that $X^{1,\,0}=\frac{1}{2}(\nabla^g f-iJ\nabla^g f)$ is holomorphic. Using the fact that $[X,\,Y]=0$ and that
$X$ and $Y$ are real holomorphic, one may verify that $[X^{1,\,0},\,Y^{1,\,0}]=0$. Therefore applying the above claim with $u=f$ and $v=u_{Y}$ yields the fact that $$(\nabla^{0,\,1}u_{Y})f=(\nabla^{1,\,0}u_{Y})f+\phi$$
for some holomorphic function $\phi$, i.e., $$Y^{0,\,1}\cdot f=Y^{1,\,0}\cdot f+\phi.$$
Plugging this into \eqref{start3}, we arrive at the fact that $$\bar{\partial}(\Delta_{\omega}\bar{u}_{Y}+\bar{u}_{Y}-Y^{1,\,0}\cdot f)=0.$$

For brevity, we henceforth denote $u_{Y}$ by $u$. Before continuing, we need to establish some estimates on the potential $u$ together with its derivatives. We will divide these estimates up into three claims.

\begin{claim}\label{claim-growth-u}
There exists a positive constant $A$ such that $u_Y(x)=\textit{O}\left(d_g(p,\,x)^{A}\right)$ as $d_g(p,\,x)$ tends to $+\infty$.
\end{claim}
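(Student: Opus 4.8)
The plan is to estimate $u_Y$ by integrating it along the flow lines of the soliton vector field $X=\nabla^gf$, in the spirit of the proof of Proposition \ref{keywest}. The key preliminary observation is that, although the normalisation $Y^{1,\,0}=\nabla^{1,\,0}u_Y$ identifies $Y^{1,\,0}$ metrically with the $(0,\,1)$-form $\bar{\partial}u_Y$ (so that Proposition \ref{keywest} controls $|\bar{\partial}u_Y|_g$ but \emph{not} the full differential $du_Y$), the directional derivative $\mathcal{L}_Xu_Y$ can still be controlled purely in terms of $|Y|_g$ and $|\nabla^gf|_g$. Indeed, decomposing $du_Y=\partial u_Y+\bar{\partial}u_Y$ and $X=X^{1,\,0}+X^{0,\,1}$ gives $\mathcal{L}_Xu_Y=\partial u_Y(X^{1,\,0})+\bar{\partial}u_Y(X^{0,\,1})$, while the second normalising condition $\mathcal{L}_{JX}u_Y=0$ reads $i\,\partial u_Y(X^{1,\,0})-i\,\bar{\partial}u_Y(X^{0,\,1})=0$; combining the two yields
$$\mathcal{L}_Xu_Y=2\,\bar{\partial}u_Y(X^{0,\,1}).$$
Since $\bar{\partial}u_Y=-i\,\omega\lrcorner Y^{1,\,0}$ by Claim \ref{baa}, it follows that $|\mathcal{L}_Xu_Y|\le C\,|Y|_g\,|\nabla^gf|_g$. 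Because $|\operatorname{Ric}(g)|_g$ is bounded, Proposition \ref{keywest} gives $|Y|_g^2(x)=O\!\left(d_g(p,\,x)^{a}\right)$ for some $a>0$, and the soliton identity $|\nabla^gf|^2+R_g-2f=2n$ of Lemma \ref{solitonid} together with $R_g\ge0$ and the quadratic growth of $f$ from Theorem \ref{theo-basic-prop-shrink}(i) gives $|\nabla^gf|_g^2(x)=O\!\left(d_g(p,\,x)^2\right)$. Hence $|\mathcal{L}_Xu_Y|(x)=O\!\left(d_g(p,\,x)^{\frac a2+1}\right)$.

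Next I would transport this bound along the flow, exactly as in the proof of Claim \ref{hotmail}. Fix $\Lambda>0$ large enough that all critical points of $f$ lie in $f^{-1}((-\infty,\,\Lambda])$ and $\Lambda>\sup_M|R_g|$, and set $K:=f^{-1}([2\Lambda,\,4\Lambda])$, which is compact by Theorem \ref{theo-basic-prop-shrink}(i). By Claim \ref{hotmail}, every point $y$ outside the compact set $f^{-1}((-\infty,\,3\Lambda])$ can be written as $y=\gamma_{x_0}(t_0)$ with $x_0\in K$ and $t_0>0$, where $\gamma_{x_0}$ is the integral curve of $X$ with $\gamma_{x_0}(0)=x_0$. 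By the same computation as in the proof of Proposition \ref{keywest} (see \eqref{fmll}), $(f(x_0)-\Lambda)e^{2t}+\Lambda\le f(\gamma_{x_0}(t))\le(f(x_0)+\Lambda)e^{2t}-\Lambda$ for $t>0$; since $f(x_0)\in[2\Lambda,\,4\Lambda]$, this forces $f(\gamma_{x_0}(t))$ to be comparable to $e^{2t}$, and then the quadratic growth of $f$ forces $d_g(p,\,\gamma_{x_0}(t))$ to be comparable to $e^{t}$. Combining this with the estimate on $|\mathcal{L}_Xu_Y|$ above, $|\mathcal{L}_Xu_Y|(\gamma_{x_0}(t))\le C\,e^{(\frac a2+1)t}$, and therefore
$$|u_Y(y)-u_Y(x_0)|=\left|\int_0^{t_0}(\mathcal{L}_Xu_Y)(\gamma_{x_0}(t))\,dt\right|\le C\,e^{(\frac a2+1)t_0}\le C\,d_g(p,\,y)^{\frac a2+1}.$$
Since $u_Y$ is smooth, hence bounded on the compact sets $K$ and $f^{-1}((-\infty,\,3\Lambda])$, this proves the claim with $A=\tfrac a2+1$.

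The only real subtlety is the one flagged in the first paragraph: the growth control furnished by Proposition \ref{keywest} is on $|Y|_g$, which under the convention $Y^{1,\,0}=\nabla^{1,\,0}u_Y$ only governs $\bar{\partial}u_Y$ and not $\partial u_Y$, so it is essential to use the normalisation $\mathcal{L}_{JX}u_Y=0$ in order to re-express $\mathcal{L}_Xu_Y$ in terms of $\bar{\partial}u_Y$ alone. Once the identity $\mathcal{L}_Xu_Y=2\,\bar{\partial}u_Y(X^{0,\,1})$ is established, the remainder of the argument is bookkeeping that parallels the proof of Proposition \ref{keywest} almost verbatim.
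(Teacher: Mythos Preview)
Your argument is correct and is essentially identical to the paper's own proof: both use the normalisation $\mathcal{L}_{JX}u_Y=0$ to rewrite $X\cdot u_Y=2\,\bar{\partial}u_Y(X)$ (equivalently $2\,\bar{\partial}u_Y(X^{0,1})$), invoke Proposition \ref{keywest} to bound $|\bar{\partial}u_Y|$ polynomially, and then integrate along the flow lines of $X$ using the quadratic growth of $f$. Your write-up is more explicit about the flow-line bookkeeping via Claim \ref{hotmail}, but the underlying idea is the same.
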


\begin{proof}[Proof of Claim \ref{claim-growth-u}]
By Proposition \ref{keywest}, we know that $Y^{1,\,0}$ grows polynomially, i.e., $|Y^{1,\,0}|(x)=O(d_{g}(p,\,x)^{a})$ for some $a>0$, where $d_{g}(p,\,\cdot)$ denotes the distance with respect to $g$ to a fixed point $p\in M$, so that
$|\bar{\partial}u_Y|(x)=O(d_{g}(p,\,x)^{a})$. Then $$\bar{\partial}u_Y(X)=\frac{1}{2}(du_Y(X)+i\underbrace{du_Y(JX)}_{=\,0})=\frac{1}{2}X\cdot u_Y.$$
Thus,
\begin{equation}
\begin{split}
|X\cdot u_Y|=2|\bar{\partial}u_Y(X)|=O(d_{g}(p,\,x)^{a+1}).\label{growth-est-rad-der-u}
\end{split}
\end{equation}
Let $\gamma_{x}(t)$ be an integral curve of $X$ with $\gamma_{x}(0)=x\in M$. Then
\begin{equation*}
\begin{split}
u_Y(\gamma_{x}(t))&=u_Y(\gamma_{x}(0))+\int_{0}^{t}\frac{d}{ds}u_Y(\gamma_{x}(s))\,ds\\
&=u_Y(\gamma_{x}(0))+\int_{0}^{t}(X\cdot u_Y)(\gamma_{x}(s))\,ds\\
&=A+O(e^{(a+1)t})
\end{split}
\end{equation*}
so that $$|u_Y(x)|=O(d_{g}(p,\,x)^{a+1}).$$

\end{proof}

The next claim concerns the weighted $L^2$-integrability of the total gradient of $u$.

\begin{claim}\label{claim-grad-int}
The gradient $\nabla u_Y$ of $u_Y$ belongs to $L^2(e^{-f}\omega^n)$.
\end{claim}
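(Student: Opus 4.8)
The plan is to split $|\nabla u_Y|^2_g = |\nabla^{1,0}u_Y|^2_g + |\nabla^{0,1}u_Y|^2_g$ and handle the two summands separately. Since $Y^{1,0}=\nabla^{1,0}u_Y$ by Claim \ref{baa}, we have $|\nabla^{1,0}u_Y|_g=|Y^{1,0}|_g$, which grows at most polynomially by Proposition \ref{keywest}; likewise $|u_Y|$ grows at most polynomially by Claim \ref{claim-growth-u}. Combined with the lower bound $f(x)\geq\tfrac14(d_g(p,x)-c_1)^2$ from Theorem \ref{theo-basic-prop-shrink}(i) and the polynomial volume growth from Theorem \ref{theo-basic-prop-shrink}(ii), the Gaussian weight $e^{-f}$ beats any polynomial, so both $\int_M|\nabla^{1,0}u_Y|^2 e^{-f}\,\omega^n=\int_M|Y^{1,0}|^2 e^{-f}\,\omega^n$ and $\int_M|u_Y|^2 e^{-f}\,\omega^n$ are finite. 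It remains to bound $\int_M|\nabla^{0,1}u_Y|^2 e^{-f}\,\omega^n$.

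For this I would use \eqref{start3}, rewritten as $\Delta_{\omega}\bar u_Y = Y^{0,1}\cdot f-\bar u_Y$, together with the observations that $|Y^{0,1}\cdot f|\leq|Y^{1,0}|_g\,|\nabla^g f|_g$ has polynomial growth (Proposition \ref{keywest} and $|\nabla^g f|^2=2f-R_g+2n\leq 2f+2n$ from Lemma \ref{solitonid} and Remark \ref{rk-sol-id}), and that $|X^{1,0}\cdot u_Y|=\tfrac12|X\cdot u_Y|$ has polynomial growth by \eqref{growth-est-rad-der-u} and the normalization $\mathcal{L}_{JX}u_Y=0$. Fix $\psi\in C^\infty([0,\infty))$ with $\psi\equiv 1$ on $[0,1]$, $\operatorname{supp}\psi\subset[0,4)$, $\psi$ non-increasing, and set $\chi_R:=\psi(f/R)$, which is compactly supported since $f$ is proper and bounded below. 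Integrating by parts in $\int_M|\nabla^{0,1}u_Y|^2\chi_R^2\,e^{-f}\,\omega^n$, moving one holomorphic derivative off $\partial u_Y$, produces two ``bulk'' terms, $-\int_M u_Y(\Delta_{\omega}\bar u_Y)\chi_R^2 e^{-f}\,\omega^n$ and $\int_M u_Y\,\overline{X^{1,0}\cdot u_Y}\,\chi_R^2 e^{-f}\,\omega^n$ (the second arising from differentiating the weight $e^{-f}$), both uniformly bounded in $R$ by the first paragraph since their integrands are polynomial-times-Gaussian, plus one term carrying a derivative of $\chi_R$. Bounding the last by $\int_M 2|u_Y|\,\chi_R|\nabla\chi_R|\,|\nabla^{0,1}u_Y|\,e^{-f}\,\omega^n$ and applying Young's inequality, I would split it into $\tfrac14\int_M\chi_R^2|\nabla^{0,1}u_Y|^2 e^{-f}\,\omega^n$, absorbed into the left-hand side, plus $4\int_M|u_Y|^2|\nabla\chi_R|^2 e^{-f}\,\omega^n\leq \tfrac{C}{R}\int_{\{R\leq f\leq 4R\}}|u_Y|^2 e^{-f}\,\omega^n$, where I used $|\nabla\chi_R|^2\leq\tfrac{C}{R^2}|\nabla f|^2\leq\tfrac{C}{R^2}(2f+2n)\leq\tfrac{C}{R}$ on $\operatorname{supp}\nabla\chi_R$; this last quantity tends to $0$ as $R\to\infty$ because $\int_M|u_Y|^2 e^{-f}\,\omega^n<\infty$. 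Hence $\tfrac34\int_M|\nabla^{0,1}u_Y|^2\chi_R^2 e^{-f}\,\omega^n$ is bounded uniformly in $R$, and letting $R\to\infty$ (with $\chi_R^2\nearrow 1$, by monotone convergence) gives $\int_M|\nabla^{0,1}u_Y|^2 e^{-f}\,\omega^n<\infty$, which finishes the proof.

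The main obstacle is exactly this last term: a naive integration by parts yields $\int|\nabla^{0,1}u_Y|^2\chi_R^2 e^{-f}\le C_0+(\text{small})\cdot\|\nabla^{0,1}u_Y\|_{L^2_f}$, which is circular since $\|\nabla^{0,1}u_Y\|_{L^2_f}$ is precisely what we are trying to control. The resolution is the standard absorption trick, splitting the cross term via Young's inequality so that a fixed fraction of $\int\chi_R^2|\nabla^{0,1}u_Y|^2 e^{-f}$ returns to the left-hand side, combined with the key structural point that the complementary term involves only $|u_Y|^2$ (already known to be $e^{-f}$-integrable) multiplied by $|\nabla\chi_R|^2=O(1/R)$ supported on a receding annulus. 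Beyond this, the only thing to be careful about is that every bulk term genuinely is a product of polynomially-growing quantities against $e^{-f}$, and this is where Proposition \ref{keywest}, Claim \ref{claim-growth-u}, \eqref{growth-est-rad-der-u}, the normalization $\mathcal{L}_{JX}u_Y=0$, and Theorem \ref{theo-basic-prop-shrink}(i)--(ii) are all invoked.
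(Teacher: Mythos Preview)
Your argument is correct. The underlying mechanism---integration by parts against a cutoff, with the polynomial growth of all bulk quantities killed by the Gaussian weight $e^{-f}$---is the same as in the paper, but the decomposition you choose is different, and worth noting.

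The paper does not split $\nabla u_Y$ into its $(1,0)$ and $(0,1)$ parts. Instead it observes that $u_Y$, $\Delta_\omega u_Y = Y^{1,0}\cdot f - u_Y$, and $X\cdot u_Y$ all grow at most polynomially (using exactly the ingredients you cite), hence the drift Laplacian $\Delta_{\omega,X}u_Y := \Delta_\omega u_Y - X\cdot u_Y$ lies in $L^2(e^{-f}\omega^n)$. It then proves the clean general lemma: for a real-valued $v$ with $v,\ \Delta_{\omega,X}v \in L^2(e^{-f}\omega^n)$ one has $\nabla v\in L^2(e^{-f}\omega^n)$, via the identity $\Delta_{\omega,X}(v^2) = 2|\nabla v|^2 + 2v\,\Delta_{\omega,X}v$, a distance-based cutoff $\phi_R$, and the same Young-inequality absorption. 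This is applied to $\Re(u_Y)$ and $\Im(u_Y)$ separately.

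Your route exploits the additional structure that $\nabla^{1,0}u_Y = Y^{1,0}$ is \emph{already} known to have polynomial norm, so that half of the gradient is integrable for free and only $|\nabla^{0,1}u_Y|^2$ requires the integration-by-parts argument. This is a genuine simplification for the problem at hand. The paper's approach, on the other hand, isolates a reusable ``$L^2_f$ elliptic estimate'' which is logically independent of the soliton structure of $Y$. Your choice of cutoff $\chi_R = \psi(f/R)$ rather than a distance cutoff is also a nice touch: it makes the bound $|\nabla\chi_R|^2 \leq C/R$ on $\{R\leq f\leq 4R\}$ an immediate consequence of the soliton identity $|\nabla f|^2 = 2f - R_g + \text{const.}$, whereas the paper works with $|\nabla\phi_R|\leq c/R$ directly.
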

\begin{proof}[Proof of Claim \ref{claim-grad-int}]
Since $\Delta_{\omega}u_Y=Y^{1,\,0}\cdot f-u_Y$, the estimate established in Claim \ref{claim-growth-u} together with the polynomial growth of $X$ and $Y$ at infinity show that $\Delta_{\omega}u_Y$ is growing at most polynomially at infinity as well. The same is true for the drift term $X\cdot u_Y$ by (\ref{growth-est-rad-der-u}). Therefore the drift Laplacian $\Delta_{\omega}u_Y-X\cdot u_Y$ is growing at most polynomially at infinity ensuring its weighted $L^2$-integrability, i.e., $\Delta_{\omega,\,X}u_Y:=\Delta_{\omega}u_Y-X\cdot u_Y\in L^2(e^{-f}\omega^n)$. This implies in turn that $\nabla \Re(u_Y)$ and $\nabla\operatorname{Im}(u_Y)$ belong to $L^2(e^{-f}\omega^n)$. Indeed, by the previous arguments, it suffices to show that if a smooth real-valued function $v:M\to\mathbb{R}$ satisfies $v\in L^2(e^{-f}\omega^n)$ and $\Delta_{\omega,\,X}v\in L^2(e^{-f}\omega^n)$, then $\nabla v\in L^2(e^{-f}\omega^n)$.

To this end, let $R$ be a positive real number and let $\phi_R:M\to[0,1]$ be a cut-off function with compact support in the geodesic ball
$B_g(p,\,2R)$ such that $\phi_R=1$ on $B_g(p,\,R)$ and $|\nabla\phi_R|_g\leq\frac{c}{R}$. Then since $\left(\Delta_{\omega,\,X}\right)v^2=2|\nabla v|^2+2\left<\Delta_{\omega,\,X}v,v\right>$, integrating by parts leads to the inequality
\begin{eqnarray*}
2\int_M|\nabla v|^2\phi^2_Re^{-f}\omega^n&=&\int_M\Delta_{\omega,\,X}v^2\phi_R^2 e^{-f}\omega^n-2\int_M\left<\Delta_{\omega,\,X}v,v\right>\phi^2_Re^{-f}\omega^n\\
&=&-\int_Mg\left(\nabla v^2,\nabla\phi_R^2\right) e^{-f}\omega^n-2\int_M\left<\Delta_{\omega,\,X}v,v\right>\phi^2_Re^{-f}\omega^n\\
&\leq& \int_M|\nabla v|^2\phi^2_Re^{-f}\omega^n+\frac{c}{R^2}\int_Mv^2e^{-f}\omega^n+\int_M\left(\left|\Delta_{\omega,\,X}v\right|^2+|v|^2\right)e^{-f}\omega^n,
\end{eqnarray*}
which yields
\begin{eqnarray*}
\int_M|\nabla v|^2\phi^2_Re^{-f}\omega^n&\leq&\frac{c}{R^2}\int_Mv^2e^{-f}\omega^n+\int_M\left(\left|\Delta_{\omega,\,X}v\right|^2+|v|^2\right)e^{-f}\omega^n.
\end{eqnarray*}
One then obtains the expected result by letting $R$ tend to $+\infty$.
\end{proof}

Finally, it suffices to show that some components of the Hessian of $\bar{u}_Y$ are in $L^2(e^{-f}\omega^n)$.

\begin{claim}\label{growth-hessian-bar-u}
The $(0,\,2)$-part $\nabla^{0,2}\bar{u}_{Y}$ of the Hessian of $\bar{u}_{Y}$ vanishes identically on $M$.
\end{claim}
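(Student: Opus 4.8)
The plan is to prove that $\nabla^{0,2}\bar{u}_{Y}$ vanishes by first establishing the weighted $L^{2}$‑integrability of the relevant second derivatives of $\bar{u}_{Y}$ and then running a Matsushima‑type Bochner argument in the weighted setting: one integrates an appropriate complex (Bochner--Kodaira type) identity against the Gaussian‑type measure $e^{-f}\omega^{n}$ to force $\int_{M}|\nabla^{0,2}\bar{u}_{Y}|^{2}_{g}\,e^{-f}\omega^{n}=0$. The two structural inputs are the relation $\bar{\partial}\bigl(\Delta_{\omega}\bar{u}_{Y}+\bar{u}_{Y}-Y^{1,0}\cdot f\bigr)=0$ derived above, together with the normalised equation $\Delta_{\omega}u_{Y}+u_{Y}-Y^{1,0}\cdot f=0$, and the soliton equation [\eqref{krseqn}, $\lambda=1$], which makes the Bakry--\'Emery--Ricci tensor $\Ric(g)+\operatorname{Hess}_{g}(f)$ equal to $g$; the bounded‑Ricci hypothesis enters both through Proposition \ref{keywest} (growth of $Y^{1,0}$, hence of $u_{Y}$ and its derivatives) and in absorbing the curvature terms that appear upon integration by parts.

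The first step is the $L^{2}$‑control of the Hessian. From $\Delta_{\omega}u_{Y}=Y^{1,0}\cdot f-u_{Y}$ together with Claims \ref{claim-growth-u} and \ref{claim-grad-int} and the polynomial growth of $X$ and $Y$, one gets $\Delta_{\omega,\,X}\bar{u}_{Y}\in L^{2}(e^{-f}\omega^{n})$ and $\nabla\bar{u}_{Y}\in L^{2}(e^{-f}\omega^{n})$. Repeating the integration‑by‑parts scheme of the proof of Claim \ref{claim-grad-int} one order higher — inserting the cut‑off functions $\phi_{R}$ into a Bochner identity for $|\nabla\bar{u}_{Y}|^{2}$ rather than for $\bar{u}_{Y}^{2}$, and bounding the resulting term $\int_{M}\Ric(g)(\nabla\bar{u}_{Y},\nabla\bar{u}_{Y})\phi_{R}^{2}e^{-f}\omega^{n}$ by $C\int_{M}|\nabla\bar{u}_{Y}|^{2}\phi_{R}^{2}e^{-f}\omega^{n}$ using $\sup_{M}|\Ric(g)|_{g}<\infty$ — yields a bound on $\int_{M}|\nabla^{2}\bar{u}_{Y}|^{2}\phi_{R}^{2}e^{-f}\omega^{n}$ uniform in $R$, hence $\nabla^{2}\bar{u}_{Y}\in L^{2}(e^{-f}\omega^{n})$ and in particular $\nabla^{0,2}\bar{u}_{Y}\in L^{2}(e^{-f}\omega^{n})$. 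The mechanism throughout is that $f$ grows quadratically in the distance by Theorem \ref{theo-basic-prop-shrink}(i), so $e^{-f}$ beats every polynomial and all weighted integrals converge absolutely, while the cut‑off error terms carry a factor $R^{-2}$ or $|\nabla\phi_{R}|_{g}^{2}$ and tend to $0$ as $R\to\infty$.

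With these integrability facts the Bochner argument proceeds as in the compact K\"ahler--Einstein case, but weighted. Writing $\theta:=\bar{\partial}\bar{u}_{Y}$, the tensor $\nabla^{0,2}\bar{u}_{Y}$ is the covariant derivative of the $\bar{\partial}$‑closed $(0,1)$‑form $\theta$ in antiholomorphic directions. Pairing $\nabla^{0,2}\bar{u}_{Y}$ with itself, integrating over $M$ against $e^{-f}\omega^{n}$, and integrating by parts once to move a derivative off $\nabla^{0,2}\bar{u}_{Y}$, one commutes covariant derivatives (the Ricci identity produces a $\Ric(g)$‑contraction), replaces the third‑order term using $\bar{\partial}\bigl(\Delta_{\omega}\bar{u}_{Y}+\bar{u}_{Y}-Y^{1,0}\cdot f\bigr)=0$, and combines the $\Ric(g)$‑contraction with the $\operatorname{Hess}_{g}(f)$‑terms coming from differentiating the weight $e^{-f}$ into a multiple of $g$ via $\Ric(g)+\operatorname{Hess}_{g}(f)=g$. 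After this bookkeeping every term other than $\int_{M}|\nabla^{0,2}\bar{u}_{Y}|^{2}_{g}\,e^{-f}\omega^{n}$ either cancels or acquires a favourable sign, so that $\int_{M}|\nabla^{0,2}\bar{u}_{Y}|^{2}_{g}\,e^{-f}\omega^{n}\le 0$ and hence $\nabla^{0,2}\bar{u}_{Y}\equiv 0$. The boundary terms at each integration by parts are handled exactly as in the previous step, through the $\phi_{R}$ cut‑offs and the $L^{2}$‑bounds just established.

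The step I expect to be the main obstacle is this weighted $L^{2}$‑control of the Hessian of $\bar{u}_{Y}$: unlike the gradient, it is not handed to us directly by the PDE, and the second‑order integration by parts produces a curvature term that must be absorbed — which is precisely where boundedness of the Ricci curvature is used, and what the remark preceding the claim refers to when it says ``it suffices to show that some components of the Hessian of $\bar{u}_{Y}$ are in $L^{2}(e^{-f}\omega^{n})$''. Once that estimate is in place, the Bochner computation is a routine, if lengthy, weighted analogue of the classical Matsushima identity, the only additional care being the standard cut‑off bookkeeping that legitimises integration by parts on the non‑compact manifold $M$.
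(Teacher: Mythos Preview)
Your approach is correct in spirit and follows the same Matsushima--Bochner strategy as the paper, but you take an unnecessary detour and slightly misidentify where the bounded Ricci hypothesis enters. The paper does \emph{not} establish $\nabla^{2}\bar{u}_{Y}\in L^{2}(e^{-f}\omega^{n})$ as a separate preliminary step; instead it starts directly from the vanishing of $\langle\bar{\partial}\bar{u},\,\bar{\partial}(\Delta_{\omega}\bar{u}+\bar{u}-Y^{1,0}\cdot f)\,\phi_{R}^{2}\rangle_{L^{2}_{f}}$, expands this pairing (using the Ricci identity and the soliton equation, which makes the Ricci term cancel \emph{exactly} rather than being ``absorbed''), and arrives at $\int_{M}|\nabla^{0,2}\bar{u}|^{2}\phi_{R}^{2}e^{-f}\omega^{n}$ equal to a cut-off error term involving $\nabla\phi_{R}\cdot\bar{\partial}|\bar{\partial}\bar{u}|^{2}$. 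The Hessian factor in this error term is absorbed back into the left-hand side by Young's inequality, leaving only $\int_{M}|\bar{\partial}\phi_{R}|^{2}|\bar{\partial}\bar{u}|^{2}e^{-f}\omega^{n}\leq cR^{-2}$, which uses only the gradient bound from Claim~\ref{claim-grad-int}. Thus the paper obtains integrability and vanishing of $\nabla^{0,2}\bar{u}$ simultaneously in a single limit $R\to\infty$. Your two-step version would also work, but the ``main obstacle'' you single out---a priori $L^{2}$ control of the Hessian---is in fact bypassed; and in the Bochner computation itself the Ricci curvature is eliminated by the soliton identity $\rho_{i\bar{\jmath}}+f_{i\bar{\jmath}}=g_{i\bar{\jmath}}$ rather than merely bounded, so boundedness of $\operatorname{Ric}(g)$ is used only upstream (Proposition~\ref{keywest} and Claims~\ref{claim-growth-u}--\ref{claim-grad-int}).
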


\begin{proof}[Proof of Claim \ref{growth-hessian-bar-u}]
For clarity, we suppress the dependence of the potential $u_Y$ on the vector field $Y$ in what follows.

Let $R>0$ and let $\phi_R$ be a cut-off function as in the proof of Claim \ref{claim-grad-int}.
Reminiscent of \cite[equation (2.7)]{tianzhu1}, we then find, in normal holomorphic coordinates at a point where the Ricci form $\rho_{\omega}$ of $\omega$ has components $\rho_{i\bar{\jmath}}$, that
\begin{equation*}
\begin{split}
0&=\langle \bar{\partial}\bar{u}\,,\bar{\partial}(\Delta_{\omega}\bar{u}+\bar{u}-Y^{1,\,0}\cdot f)\,\phi_R^2\rangle_{L_f^2}
=\int_{M}\overline{(\Delta_{\omega}\bar{u}+\bar{u}-Y^{1,\,0}\cdot f)_{\bar{\imath}}}\bar{u}_{\bar{\imath}}\,\phi_R^2\,e^{-f}\omega^{n}\\
&=\int_{M}(\Delta_{\omega} u+u-f_{\bar{k}}u_{k})_{i}\bar{u}_{\bar{\imath}}\,\phi_R^2\,e^{-f}\omega^{n}\\
&=\int_{M}(u_{k\bar{k}i}+u_{i}-f_{i\bar{k}}u_{k}-f_{\bar{k}}u_{ik})\bar{u}_{\bar{\imath}}\,\phi_R^2\,e^{-f}\omega^{n}\\
&=\int_{M}(u_{ki\bar{k}}-\rho_{i\bar{s}}u_{s}+u_{i}-f_{i\bar{k}}u_{k}-f_{\bar{k}}u_{ik})\bar{u}_{\bar{\imath}}\,\phi_R^2\,e^{-f}\omega^{n}\quad\textrm{since $u_{ji\bar{\jmath}}=u_{j\bar{\jmath}i}+\rho_{i\bar{s}}u_{s}$,}\\
&=\int_{M}(u_{ik\bar{k}}+\underbrace{(-\rho_{i\bar{s}}u_{s}+u_{i}-f_{i\bar{k}}u_{k})}_{=\,0}-f_{\bar{k}}u_{ik})\bar{u}_{\bar{\imath}}\,\phi_R^2\,e^{-f}\omega^{n}\\
&=\int_{M}(u_{ik\bar{k}}-f_{\bar{k}}u_{ik})\bar{u}_{\bar{\imath}}\,\phi_R^2\,e^{-f}\omega^{n}=\int_{M}\overline{(\Delta_{\omega,\,X}\bar{u}_{\bar{\imath}})}\bar{u}_{\bar{\imath}}\,\phi_R^2\,e^{-f}\omega^{n}\\
&=-\int_{M}\bar{u}_{\bar{\imath}\bar{\jmath}}u_{ij}\,\phi_R^2\,e^{-f}\omega^{n}-
\frac{1}{2}\int_{M}\langle\bar{\partial}\phi^2_R,\bar{\partial}|\bar{\partial}\bar{u}|^2\rangle \,e^{-f}\omega^n.
\end{split}
\end{equation*}
Therefore, by Young's inequality,
\begin{equation*}
\begin{split}
\int_{M}|\nabla^{0,\,2}\bar{u}|_{\omega}^{2}\,\phi^2_R\,e^{-f}\omega^{n}&\leq2\int_M\left(|\bar{\partial}\phi_R||\bar{\partial}\bar{u}|\right)\cdot\left(\phi_R|\nabla^{0,2}\bar{u}|_{\omega}\right)\,e^{-f}\omega^n\\
&\leq 2\int_M|\bar{\partial}\phi_R|^2|\bar{\partial}\bar{u}|^2+\frac{1}{2}\int_M|\nabla^{0,2}\bar{u}|_{\omega}^2\phi_R^2\,e^{-f}\omega^n.
\end{split}
\end{equation*}
By Claim \ref{claim-grad-int}, the previous inequality leads to
\begin{equation*}
\begin{split}
\int_{M}|\nabla^{0,\,2}\bar{u}|_{\omega}^{2}\,\phi^2_R\,e^{-f}\omega^{n}&\leq \frac{c}{R^2}
\int_{M}|\bar{\partial}\bar{u}|^2e^{-f}\omega^n\leq \frac{c'}{R^2}
\end{split}
\end{equation*}
for some positive constants $c$, $c'$ independent of $R$. As $R$ tends to $+\infty$, this not only proves the weighted $L^2$-integrability of $\nabla^{0,2}\bar{u}$, but also the fact that
\begin{equation*}
\int_{M}|\nabla^{0,\,2}\bar{u}|_{\omega}^{2}\,e^{-f}\omega^{n}=0,
\end{equation*}
as desired.
\end{proof}

\noindent Consequently, $\nabla^{0,\,2}\bar{u}_{Y}=0$, from which it follows that $\nabla^{1,\,0}\bar{u}_{Y}$ is a holomorphic vector field.

Thus, $\nabla^{1,\,0}u_{Y}$ and $\nabla^{1,\,0}\bar{u}_{Y}$ are holomorphic vector fields. Write $u_{Y}=v_{Y}+iw_{Y}$, where $v_{Y}$ and $w_{Y}$ are smooth real-valued functions on $M$. Then we deduce that $\nabla^{1,\,0}v_{Y}=\frac{1}{2}(\nabla v_{Y}-iJ\nabla v_{Y})$ and $\nabla^{1,\,0}w_{Y}
=\frac{1}{2}(\nabla w_{Y}-iJ\nabla w_{Y})$ are holomorphic. In particular, $\nabla v_{Y}$ and $\nabla w_{Y}$ are real holomorphic vector fields on $M$ so that
 by
\cite[Lemma 2.3.8]{fut2},
$J\nabla v_{Y}$ and $J\nabla w_{Y}$ are real holomorphic $g$-Killing vector fields. Therefore we have the decomposition
$$\frac{1}{2}(Y-iJY)=Y^{1,\,0}=\nabla^{1,\,0}u_{Y}=\nabla^{1,\,0}(v_{Y}+iw_{Y})=\frac{1}{2}\left(\nabla v_{Y}+J\nabla w_{Y}\right)-\frac{i}{2}\left(J\nabla v_{Y}-\nabla w_{Y}\right)$$ so that
\begin{equation}\label{nicee}
Y=\nabla v_{Y}+J\nabla w_{Y}=J\nabla w_{Y}+J(-J\nabla v_{Y}).
\end{equation}
Moreover, since $\mathcal{L}_{JX}u_{Y}=0$, we have that $\mathcal{L}_{JX}v_{Y}=\mathcal{L}_{JX}w_{Y}=0$ so that $[JX,\,\nabla v_{Y}]=[JX,\,\nabla w_{Y}]=0$
and consequently $[X,\,J\nabla v_{Y}]=[X,\,J\nabla w_{Y}]=0$. Hence $J\nabla v_{Y}$ and $J\nabla w_{Y}$ lie in $\mathfrak{g}^{X}$ leaving \eqref{nicee} as the
desired decomposition.

To show that this decomposition is direct, suppose that $Z\in\mathfrak{g}^{X}\cap J\mathfrak{g}^{X}$. Then $Z=JW$, where $W$ and $JW$ are real holomorphic and Killing. Since $W$ is holomorphic and $JW$ is Killing, $\nabla W$ is symmetric. Since $W$ is Killing, $\nabla W$ is skew-symmetric. Thus, $W$ is parallel.
If $W$ is non-trivial, then by \cite[Corollary 3.2]{fangg}, $(M,\,g)$ splits off a line with $W$ the generator of this line.
In particular, we may write $M=N\times\mathbb{R}$ for a manifold $N$ with $g=g_{N}\oplus dt^{2}$ and $W=\partial_{t}$, here $t$ the coordinate on
the $\mathbb{R}$-direction and $g_{N}$ a shrinking Ricci soliton on $N$. Now the soliton vector field $X$ must split as a direct sum with the summand in the
$\mathbb{R}$-direction necessarily $t\partial_{t}$. Since $[W,\,X]=0$ as $Z\in\mathfrak{g}^{X}$, this yields a contradiction so that $W=0$. Hence the stated decomposition of $\mathfrak{aut}^{X}(M)$ is direct.
 \end{proof}

Since $M$ is non-compact, we need to verify that the various automorphism groups in question are indeed Lie groups. This is necessary for the applications
of Theorem \ref{matty} that we have in mind. We begin with:
\begin{prop}\label{southbeach}
Let $(M,\,g,\,X)$ be a complete shrinking gradient K\"ahler-Ricci soliton with bounded Ricci curvature.
Then there exists a unique connected Lie group $\operatorname{Aut}_{0}^{X}(M)$ (endowed with the compact-open topology) of diffeomorphisms
acting effectively on $M$ with Lie algebra $\mathfrak{aut}^{X}(M)$.
\end{prop}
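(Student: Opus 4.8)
The plan is to show that $\mathfrak{aut}^{X}(M)$ is a \emph{finite-dimensional} Lie algebra consisting of \emph{complete} vector fields, and then to integrate it by Palais' theorem, passing to an effective quotient at the very end. The hard part will be the finite-dimensionality: since bounded Ricci curvature alone does not give an asymptotically conical structure, I would not argue directly but instead route through the Matsushima-type decomposition of Theorem~\ref{matty}.

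First I would produce the torus needed to invoke Theorem~\ref{matty}. Because $JX$ is Killing and $JX\cdot f=df(JX)=g(X,JX)=0$, the flow of $JX$ preserves the level sets of $f$, which are compact since $f$ is proper by Theorem~\ref{theo-basic-prop-shrink}(i); as an isometry of the connected manifold $M$ is determined by its $1$-jet at a fixed point $p$, the closure $T$ of $\{\exp(tJX)\}_{t\in\mathbb{R}}$ inside $\operatorname{Isom}(M,g)$ embeds into a compact set (values in $f^{-1}(f(p))$, differentials at $p$ in a compact set), hence is a compact connected abelian group, i.e.\ a torus; it acts holomorphically (limits of biholomorphisms, uniform on compacta, are biholomorphic), isometrically and effectively on $(M,g,J)$ with $JX\in\mathfrak{t}$. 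Since $|\Ric(g)|_{g}$ is bounded, Theorem~\ref{matty} gives $\mathfrak{aut}^{X}(M)=\mathfrak{g}^{X}\oplus J\mathfrak{g}^{X}$. By the Myers--Steenrod theorem $\operatorname{Isom}(M,g)$ is a Lie group; the subgroup of its elements that are $J$-holomorphic and commute with the flow of $X$ is closed and has Lie algebra $\mathfrak{g}^{X}$ (a real holomorphic Killing field $Y$ with $[Y,X]=0$ also satisfies $[Y,JX]=J[Y,X]=0$), so $\mathfrak{g}^{X}$, and hence $\mathfrak{aut}^{X}(M)$, is finite-dimensional.

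Next I would integrate. Bounded Ricci curvature forces bounded scalar curvature, so by Lemma~\ref{compactt} the zero set of $X$ is compact, and Lemma~\ref{completee} then shows every element of $\mathfrak{aut}^{X}(M)$ is complete. Thus $\mathfrak{aut}^{X}(M)$ is a finite-dimensional Lie algebra of complete real holomorphic vector fields, and Palais' integrability theorem provides a connected, simply connected Lie group $G$ with Lie algebra $\mathfrak{aut}^{X}(M)$ acting smoothly on $M$ with these vector fields as infinitesimal generators; the generated transformations are biholomorphisms of $(M,J)$ commuting with the flow of $X$. The kernel $N$ of $G\to\operatorname{Diff}(M)$ has trivial Lie algebra, hence is discrete, so $\operatorname{Aut}_{0}^{X}(M):=G/N$ is a connected Lie group acting effectively on $M$ with Lie algebra $\mathfrak{aut}^{X}(M)$, and the induced group topology on it as a group of transformations of $M$ is the compact-open one (cf.~\cite{kobayashi1}).

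Finally, for uniqueness, if $G'$ is another connected Lie group acting effectively on $M$ whose infinitesimal generators comprise $\mathfrak{aut}^{X}(M)$, then its universal cover acts on $M$ with the same infinitesimal generators, so by the uniqueness part of Palais' theorem it is isomorphic to $G$ compatibly with the two actions; quotienting by the kernel of the action, forced on both sides by effectiveness, identifies $G'$ equivariantly with $\operatorname{Aut}_{0}^{X}(M)$. The one genuinely non-formal ingredient in all of this is the finite-dimensionality of $\mathfrak{aut}^{X}(M)$, which is why the argument is organised around Theorem~\ref{matty} and the Lie-group structure of the isometry group rather than a direct estimate; the small technical point that makes this route available is the compactness of the closure of the flow of $JX$.
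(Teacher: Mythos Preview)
Your argument is correct and takes a genuinely different route from the paper. The paper proves finite-dimensionality of $\mathfrak{aut}^{X}(M)$ directly by an analytic estimate: it equips $\mathfrak{aut}^{X}(M)$ with the weighted norm $\|Y\|^{2}_{L^{2}_{f}}=\int_{M}|Y|^{2}_{g}e^{-f}\omega^{n}$ and shows the unit ball is compact, using Nash--Moser iteration to pass from $L^{2}$ to $C^{0}$ on balls, then the growth bound of Proposition~\ref{keywest} to control the tails uniformly. You instead bootstrap off Theorem~\ref{matty}: having manufactured the torus $T$ as the closure of the flow of $JX$ in $\operatorname{Isom}(M,g)$, the decomposition $\mathfrak{aut}^{X}(M)=\mathfrak{g}^{X}\oplus J\mathfrak{g}^{X}$ reduces the problem to the finite-dimensionality of $\mathfrak{g}^{X}$, which is immediate from Myers--Steenrod. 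There is no circularity, since the proof of Theorem~\ref{matty} does not use Proposition~\ref{southbeach}. Your route is more structural and avoids the weighted-$L^{2}$ compactness argument entirely; the paper's route is more self-contained for this particular statement and does not presuppose the full Matsushima machinery. One minor point: the paper is careful that Palais' theorem a priori only gives the Lie structure in the ``modified'' compact-open topology, and invokes \cite[Theorem 5.14]{palais2} together with closedness of $\operatorname{Aut}^{X}_{0}(M)$ in the compact-open topology to upgrade; your reference to \cite{kobayashi1} for this step is a bit loose, so you may want to be more precise there.
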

\noindent $\operatorname{Aut}_{0}^{X}(M)$ is of course the connected component of the identity of the holomorphic automorphisms of $M$ that commute with the
flow of $X$.

The fact that there is a unique Lie group $\operatorname{Aut}_{0}^{X}(M)$ with the stated properties
follows from Palais' Integrability Theorem \cite{palais} (see also \cite[Theorem 3.1, p.13]{kobayashi1}),
once we establish the completeness and finite-dimensionality of $\mathfrak{aut}^{X}(M)$. However,
this theorem only asserts that $\operatorname{Aut}_{0}^{X}(M)$ is a Lie group with respect to the ``modified'' compact-open topology. In order to
see that it is a Lie group with respect to the compact-open topology, we must appeal to \cite[Theorem 5.14]{palais2}, using the fact that
$\operatorname{Aut}_{0}^{X}(M)$ is closed with respect to the compact-open topology. Now, the completeness of the vector fields in $\mathfrak{aut}^{X}(M)$ is clear from Lemma \ref{completee}. As for their finite-dimensionality, we have the following.

\begin{prop}
Let $(M,\,g,\,X)$ be a complete shrinking gradient K\"ahler-Ricci soliton with bounded Ricci curvature
and let $\mathfrak{aut}^{X}(M)$ denote the space of all real holomorphic vector fields $Y$ on $M$ with $[X,\,Y]=0$. Then $\mathfrak{aut}^{X}(M)$ is finite-dimensional.
\end{prop}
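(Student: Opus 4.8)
The plan is to bootstrap off the Matsushima-type decomposition already proved in Theorem \ref{matty}, for which the hypothesis of bounded Ricci curvature is exactly the one in force here. That theorem gives a splitting of \emph{real} vector spaces $\mathfrak{aut}^{X}(M)=\mathfrak{g}^{X}\oplus J\mathfrak{g}^{X}$, and since $J$ is a (real-)linear automorphism of the space of vector fields, $\dim_{\mathbb{R}}\mathfrak{aut}^{X}(M)=2\dim_{\mathbb{R}}\mathfrak{g}^{X}$. Thus it suffices to bound $\dim_{\mathbb{R}}\mathfrak{g}^{X}$.

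For this I would observe that every element of $\mathfrak{g}^{X}$ is in particular a Killing vector field of the complete Riemannian manifold $(M,g)$, and is complete: either by the classical fact that Killing fields on a complete manifold are automatically complete, or, directly in our setting, by Lemma \ref{completee}, since bounded Ricci curvature makes the scalar curvature bounded and hence (Lemma \ref{compactt}) the zero set of $X$ compact. Consequently $\mathfrak{g}^{X}$ is a Lie subalgebra of the Lie algebra of the full isometry group $\operatorname{Isom}(M,g)$, which by the Myers--Steenrod theorem is a finite-dimensional Lie group; quantitatively $\dim_{\mathbb{R}}\mathfrak{g}^{X}\le\dim\operatorname{Isom}(M,g)\le n(2n+1)$ with $n=\dim_{\mathbb{C}}M$. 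Combined with the previous paragraph this yields $\dim_{\mathbb{R}}\mathfrak{aut}^{X}(M)<\infty$, which, together with the completeness of the vector fields in $\mathfrak{aut}^{X}(M)$ coming from Lemma \ref{completee}, is precisely what is needed to invoke Palais' integrability theorem in the proof of Proposition \ref{southbeach}.

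I do not expect a serious obstacle here: the entire analytic content---H\"ormander's $L^{2}$-estimate producing the complex potential $u_{Y}$, the polynomial growth of $|Y|_g$ from Proposition \ref{keywest}, and the vanishing $\nabla^{0,2}\bar{u}_{Y}=0$---has already been discharged in the proof of Theorem \ref{matty}, and that proof nowhere uses the present proposition, so there is no circularity; the only point requiring care is that the splitting in Theorem \ref{matty} is a splitting of the underlying real vector space, which is exactly the statement being exploited. As a cross-check one can also argue directly: the normalized potentials $u_{Y}$ of the proof of Theorem \ref{matty} satisfy $\Delta_{f}u_{Y}+u_{Y}=0$ and, by Proposition \ref{keywest} and the growth estimate in that proof, grow polynomially with an exponent that is \emph{uniform in $Y$}; since $Y\mapsto u_{Y}$ is linear and injective, $\mathfrak{aut}^{X}(M)$ embeds into a fixed polynomial-growth eigenspace of the drift Laplacian, which is finite-dimensional on a shrinking soliton---equivalently, by Theorem \ref{theo-basic-prop-shrink} one has $u_{Y}\in L^{2}(e^{-f}dV_{g})$ and the drift Laplacian of a complete shrinking gradient Ricci soliton has discrete spectrum. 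This second route needs an extra spectral input not recorded in the paper, so I would present the first argument.
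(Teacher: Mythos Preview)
Your argument is correct and takes a genuinely different route from the paper's. The paper gives a direct analytic proof: it equips $\mathfrak{aut}^{X}(M)$ with the weighted norm $\|Y\|_{L^2_f}^2=\int_M|Y|_g^2\,e^{-f}\omega^n$ and shows the unit ball is sequentially compact. A Nash--Moser iteration turns the $L^2_f$ bound into a local $C^0$ bound, elliptic estimates upgrade this to local $C^k$ bounds, and the uniform polynomial growth estimate from Proposition~\ref{keywest} (with exponent independent of $Y$) controls the tail so that subsequential $C^\infty_{\mathrm{loc}}$ convergence is actually strong in the $L^2_f$ norm.

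Your approach instead invokes Theorem~\ref{matty} to write $\mathfrak{aut}^X(M)=\mathfrak{g}^X\oplus J\mathfrak{g}^X$ and then appeals to Myers--Steenrod for $\mathfrak{g}^X$. This is shorter and more conceptual, and you are right that there is no circularity: the proof of Theorem~\ref{matty} uses only Proposition~\ref{keywest}, H\"ormander's $L^2$-estimates, and the basic growth estimates of Theorem~\ref{theo-basic-prop-shrink}, none of which rely on the present proposition. One point you glossed over: Theorem~\ref{matty} as stated also assumes the existence of a real torus $T$ acting holomorphically and isometrically with $JX\in\mathfrak{t}$, not merely bounded Ricci curvature. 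This is easily supplied---since $JX$ is Killing and $JX\cdot f=0$, its flow preserves the compact level sets of $f$, so by Arzel\`a--Ascoli the closure of this flow in the isometry group is a compact connected abelian Lie group, i.e.\ a torus---but it should be said explicitly. What the paper's approach buys is self-containment (it does not lean on the full Matsushima decomposition) and a template that could in principle be adapted to weaker hypotheses; what yours buys is brevity once Theorem~\ref{matty} is in hand.
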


\begin{proof}
We provide an analytic proof of this fact.
Letting $|\cdot|$ denote the norm with respect to $g$ and $f$ the soliton potential, we have a natural norm $\|\cdot\|_{L^2_f}^2$ on $\mathfrak{aut}^{X}(M)$ defined by
$$\|Y\|_{L^2_f}^2:=\int_M|Y|^2e^{-f}\omega^n.$$
It suffices to show that the unit ball is compact with respect to this norm. To this end, suppose that we have a sequence $(Y_i)_{i\,\geq\,0}$ with $\|Y_i\|_{L^2_f}=1$. Then by elliptic estimates, we get uniform $C^k$ bounds on $|Y_i|$ over a fixed ball $B_g(p,\,R)\subset M$ once a $C^0$-estimate is established. Now, by a Nash-Moser iteration applied to the norm of $Y_i$, one obtains the following estimate:
\begin{equation*}
\sup_{B_g\left(p,\,\frac{R}{2}\right)}|Y_i|\leq C(n,R)\|Y_i\|_{L^2(B_g(p,\,R))}.
\end{equation*}
Since $\|Y_i\|_{L^2_f}\leq1$, then a fortiori:
\begin{equation*}
\sup_{B_g\left(p,\,\frac{R}{2}\right)}|Y_i|\leq C(n,R)e^{cR^2}\|Y_i\|_{L^2_f(B_g(p,\,R))}\leq C'(n,R).
\end{equation*}
Finally, according to (the proof) of Proposition \ref{keywest}, there is some large radius $R_0>0$ such that
\begin{equation}
|Y_i|(x)\leq C\left(n,\,\sup_M|\Ric(g)|,\,\sup_{B_g(p,\,R_0)}|Y_i|\right)\cdot\left(d_g(p,x)+1\right)^a,\quad x\in M,\label{unif-bound-vec-fiel}
\end{equation}
for some uniform positive constant $a$, $d_{g}(p,\,x)$ here denoting the distance between $p$ and $x$ with respect to $g$. Since $\sup_{B_g(p,\,R_0)}|Y_i|\leq C(n,R_0)$, passing to a subsequence if necessary, we may assume that $(Y_i)_{i\,\geq\,0}$ converges to some $Y_\infty$ on the whole of $M$ in the $C^{\infty}_{loc}(M)$ topology. The question is whether this convergence is strong in the above norm. Thanks to (\ref{unif-bound-vec-fiel}), given $\varepsilon>0$, there exists some positive radius $R$ such that for all indices $i\geq 0$,
 \begin{equation*}
 \|Y_i\|_{L^2_f(M\setminus B_g(p,\,R))}\leq \varepsilon,
  \end{equation*}
since the soliton potential grows quadratically by Theorem \ref{theo-basic-prop-shrink}(i)
and the volume growth of geodesic balls is at most polynomial by Theorem \ref{theo-basic-prop-shrink}(ii). This shows that if $R$ is chosen sufficiently large, then the remainder of the norm outside $B_{g}(p,\,R)$ is uniformly small, hence we do indeed have strong convergence.
\end{proof}

\begin{remark}
Munteanu-Wang \cite[Theorem 1.4]{Mun-Wan-Hol-Fcts} proved that the space of polynomial growth holomorphic functions of a fixed degree on
a shrinking gradient K\"ahler-Ricci soliton is finite-\linebreak dimensional without assuming a Ricci curvature bound. We therefore expect that the above proposition holds true in more generality. We also expect that the ring of holomorphic functions of polynomial growth on $M$ is finitely generated and that $M$ is algebraic, at least under a Ricci bound assumption.
\end{remark}

Recall that $\mathfrak{g}^{X}$ denotes the Lie algebra comprising real holomorphic $g$-Killing vector fields that commute with $X$ and hence $JX$. We next consider the existence of a Lie group with Lie algebra $\mathfrak{g}^{X}$.
\begin{prop}\label{southbeach2}
Let $(M,\,g,\,X)$ be a complete shrinking gradient K\"ahler-Ricci soliton with bounded Ricci curvature.
Then there exists a unique connected Lie group $G_{0}^{X}$ (endowed with the compact-open topology) of diffeomorphisms
acting effectively on $M$ with Lie algebra $\mathfrak{g}^{X}$.
\end{prop}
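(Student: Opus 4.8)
The plan is to realise $G_{0}^{X}$ as the identity component of a closed subgroup of the full isometry group of $(M,g)$ and to identify its Lie algebra with $\mathfrak{g}^{X}$. By the Myers--Steenrod theorem, $\operatorname{Isom}(M,g)$ is a Lie group acting smoothly and effectively on $M$, whose Lie topology coincides with the compact-open topology, and convergence in this topology entails $C^{\infty}_{\operatorname{loc}}$-convergence of maps. Recalling that $X$ is complete by \cite{Zhang-Com-Ricci-Sol}, let $\{\phi_{s}\}$ denote the one-parameter group it generates, and consider
\[
H:=\{\sigma\in\operatorname{Isom}(M,g):\sigma_{*}J=J\ \text{and}\ \sigma_{*}X=X\},
\]
i.e.\ the holomorphic isometries commuting with the flow of $X$ (the condition $\sigma_{*}X=X$ being equivalent to $\sigma\circ\phi_{s}=\phi_{s}\circ\sigma$ for all $s$). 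The first step is to check that $H$ is closed in the compact-open topology: the two constraints $\sigma_{*}J=J$ and $\sigma\circ\phi_{s}=\phi_{s}\circ\sigma$ (for each fixed $s$) are each preserved under $C^{\infty}_{\operatorname{loc}}$-limits of isometries, so $H$ is an intersection of closed sets. By Cartan's closed-subgroup theorem, $H$ is then an embedded Lie subgroup of $\operatorname{Isom}(M,g)$, and I would set $G_{0}^{X}$ to be its identity component, equipped with the subspace (compact-open) topology; it acts effectively on $M$ because $\operatorname{Isom}(M,g)$ does.

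The second step is to identify $\operatorname{Lie}(H)$ with $\mathfrak{g}^{X}$. If $\{\sigma_{t}\}$ is a one-parameter subgroup of $H$ with infinitesimal generator $Y$, then $Y$ is a Killing field, and differentiating $\sigma_{t*}J=J$ and $\sigma_{t*}X=X$ at $t=0$ yields $\mathcal{L}_{Y}J=0$ and $[Y,X]=0$, i.e.\ $Y\in\mathfrak{g}^{X}$. Conversely, any $Y\in\mathfrak{g}^{X}$ is a complete Killing field (Killing fields on complete manifolds are complete; cf.\ also Lemma \ref{completee}), hence generates a one-parameter subgroup $\{\sigma_{t}\}\subset\operatorname{Isom}(M,g)$; since $\mathcal{L}_{Y}J=0$ each $\sigma_{t}$ is holomorphic, and since $[Y,X]=0$ the flows of $Y$ and $X$ commute, so $\sigma_{t}\in H$. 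Thus $\operatorname{Lie}(H)=\mathfrak{g}^{X}$, which is finite-dimensional since it is a subalgebra of $\mathfrak{aut}^{X}(M)$, whose finite-dimensionality was established above; consequently $G_{0}^{X}$ is a connected Lie group with Lie algebra $\mathfrak{g}^{X}$.

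For uniqueness I would argue exactly as in the proof of Proposition \ref{southbeach}: a finite-dimensional Lie algebra of complete vector fields determines the associated connected group acting effectively on $M$ up to isomorphism by Palais' Integrability Theorem \cite{palais}, and one passes to the compact-open topology via \cite[Theorem 5.14]{palais2} using that $G_{0}^{X}$ is closed in the compact-open topology, as shown in the first step. The step I expect to require the most care is the verification that $\mathfrak{g}^{X}$ is \emph{exactly} $\operatorname{Lie}(H)$ and not a proper subalgebra; this amounts to the ``infinitesimal closedness'' of the holomorphicity and $X$-commuting conditions, which is precisely why it is convenient to encode $H$ through the tensorial constraints $\sigma_{*}J=J$ and $\sigma_{*}X=X$ rather than directly as the group of holomorphic isometries commuting with the flow of $X$.
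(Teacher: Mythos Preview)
Your argument is correct, but it takes a somewhat different route from the paper's. The paper proceeds more directly: it observes that $\mathfrak{g}^{X}$ is finite-dimensional (as a subalgebra of the Lie algebra of $g$-Killing fields on a complete Riemannian manifold) and that its elements are complete (by Lemma~\ref{completee}), and then invokes Palais' Integrability Theorem \cite{palais} to obtain both existence and uniqueness of $G_{0}^{X}$ in one stroke; the passage to the compact-open topology is handled at the end via \cite[Theorem 5.14]{palais2}, using closedness of $G_{0}^{X}$.

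Your approach is instead top-down: you realise $G_{0}^{X}$ as the identity component of a closed subgroup $H\subset\operatorname{Isom}(M,g)$ via Myers--Steenrod and Cartan's closed-subgroup theorem, and then separately identify $\operatorname{Lie}(H)=\mathfrak{g}^{X}$. This is perfectly valid and has the virtue of making the group $G_{0}^{X}$ explicit from the outset as the holomorphic isometries commuting with the flow of $X$; the paper only notes this identification after the fact. The cost is that you end up invoking Palais anyway for uniqueness, so the paper's route is shorter. Your remark that finite-dimensionality follows from $\mathfrak{g}^{X}\subset\mathfrak{aut}^{X}(M)$ is fine, though the paper uses the more immediate fact that Killing fields form a finite-dimensional algebra, which does not require the bounded Ricci hypothesis or the earlier proposition.
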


\begin{proof}
Since $\mathfrak{g}^{X}$ is a Lie subalgebra of the Lie algebra of $g$-Killing vector fields on $M$, $\mathfrak{g}^{X}$
is a finite-dimensional Lie algebra. Furthermore, vector fields
induced by $\mathfrak{g}^{X}$ on $M$ are complete by Lemma \ref{completee}.
By Palais' Integrability Theorem \cite{palais} therefore, there exists a unique connected Lie group
$G^{X}_{0}$ of diffeomorphisms acting effectively on $M$ with Lie algebra $\mathfrak{g}^{X}$.
$G^{X}_{0}$ is precisely the connected component of the identity of the Lie group of holomorphic isometries on $M$ that commute with the
flow of $X$. Since $G^{X}_{0}$ is closed with respect to the compact-open topology,
\cite[Theorem 5.14]{palais2} guarantees that $G^{X}_{0}$ is a Lie group with respect to this topology, as stated.
\end{proof}

We next prove that $G^{X}_{0}$ is a compact Lie subgroup of $\operatorname{Aut}_{0}^{X}(M)$.
\begin{lemma}\label{tannn}
Let $(M,\,g,\,X)$ be a complete shrinking gradient K\"ahler-Ricci soliton with complex structure $J$
and with soliton vector field $X=\nabla^{g}f$ for a smooth real-valued function $f:M\to\mathbb{R}$.
Then elements of $\mathfrak{g}^{X}$ are tangent to the level sets of $f$. Moreover, if $g$ has bounded Ricci curvature,
then $G^{X}_{0}$ is a compact Lie subgroup of $\operatorname{Aut}_{0}^{X}(M)$ (with respect to the compact-open topology).
\end{lemma}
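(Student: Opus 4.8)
The plan is to establish the two assertions separately. For the tangency statement, I would take $Y\in\mathfrak{g}^{X}$, so that $Y$ is $g$-Killing with $[X,\,Y]=0$, and show that $Y(f)$ is constant. Since $X=\nabla^{g}f$ we have $df=g(X,\,\cdot)$, so Cartan's formula gives $d(Y(f))=d(\iota_{Y}df)=\mathcal{L}_{Y}(df)=\mathcal{L}_{Y}\bigl(g(X,\,\cdot)\bigr)$; expanding this Lie derivative and using $\mathcal{L}_{Y}g=0$ together with $[Y,\,X]=0$ yields $\mathcal{L}_{Y}\bigl(g(X,\,\cdot)\bigr)=(\mathcal{L}_{Y}g)(X,\,\cdot)+g([Y,\,X],\,\cdot)=0$. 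Hence $d(Y(f))=0$, so $Y(f)$ is constant on the connected manifold $M$. As $f$ is proper and bounded below by Theorem \ref{theo-basic-prop-shrink}(i), it attains a minimum at a point $p_{0}$ where $X(p_{0})=\nabla^{g}f(p_{0})=0$, so $Y(f)(p_{0})=g(X,\,Y)(p_{0})=0$; therefore $Y(f)\equiv 0$, i.e.\ $Y$ is tangent to every level set of $f$.

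For the compactness statement, I would assume $|\operatorname{Ric}(g)|_{g}$ is bounded, so that $\operatorname{Aut}_{0}^{X}(M)$ and $G_{0}^{X}$ are Lie groups with the compact-open topology by Propositions \ref{southbeach} and \ref{southbeach2}. First I would note that since $G_{0}^{X}$ is connected it is generated by the one-parameter subgroups $\{\exp(tY)\}_{t\in\mathbb{R}}$, $Y\in\mathfrak{g}^{X}$, each of which is the flow of $Y$ on $M$ and hence, by the tangency statement, preserves the level sets of $f$; thus every element of $G_{0}^{X}$ preserves each level set of $f$, and for any $p\in M$ the orbit $G_{0}^{X}\cdot p$ lies in the set $f^{-1}(\{f(p)\})$, which is compact because $f$ is proper. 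Next I would observe that $G_{0}^{X}$ is a closed subgroup of $\operatorname{Isom}(M,\,g)$: the group $G^{X}$ of holomorphic isometries of $(M,\,g,\,J)$ commuting with the flow of $X$ is closed in $\operatorname{Isom}(M,\,g)$, since a compact-open limit of such maps is again holomorphic and still commutes with the flow of $X$, and $G_{0}^{X}$ is its identity component, hence also closed in $\operatorname{Isom}(M,\,g)$. Because $\operatorname{Isom}(M,\,g)$ acts properly on the complete manifold $M$, a closed subgroup admitting a relatively compact orbit is compact (cf.~\cite{kobayashi1}); this gives compactness of $G_{0}^{X}$.

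It then remains to see that $G_{0}^{X}$ sits inside $\operatorname{Aut}_{0}^{X}(M)$ as a Lie subgroup: holomorphic isometries commuting with the flow of $X$ are in particular holomorphic automorphisms commuting with the flow of $X$, and $G_{0}^{X}$, being connected, lands in the identity component $\operatorname{Aut}_{0}^{X}(M)$; the inclusion $G_{0}^{X}\hookrightarrow\operatorname{Aut}_{0}^{X}(M)$ is continuous for the compact-open topologies, and a continuous injective homomorphism from a compact group into a Hausdorff topological group is a closed embedding, so $G_{0}^{X}$ is a compact subgroup of $\operatorname{Aut}_{0}^{X}(M)$ which, by Cartan's closed subgroup theorem, is an embedded Lie subgroup. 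I expect the only genuinely delicate point to be the topological bookkeeping — reconciling the compact-open topology on these transformation groups with the Lie group structures produced in Propositions \ref{southbeach} and \ref{southbeach2}, and verifying the closedness claims — whereas the geometric input, namely the identity $d(Y(f))=0$ and the confinement of $G_{0}^{X}$-orbits to compact level sets, is essentially immediate.
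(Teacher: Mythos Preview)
Your argument is correct. For the second assertion the approach is essentially the paper's: both confine $G_{0}^{X}$-orbits to the compact level sets of $f$ and then invoke Arzel\`a--Ascoli (equivalently, properness of the isometric action) to get compactness, followed by the routine closed-subgroup bookkeeping.

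For the tangency statement your route is genuinely different and more elementary. The paper derives $\mathcal{L}_{Y}f=0$ by first applying $\mathcal{L}_{Y}$ to the soliton equation to get $i\partial\bar\partial(\mathcal{L}_{Y}f)=0$, then arguing that $X\cdot(\mathcal{L}_{Y}f)$ is a real-valued holomorphic function, hence a constant forced to vanish at a minimum of $f$, and finally differentiating the soliton identity $|\nabla f|^{2}+R_{g}=2f$ to conclude $2\mathcal{L}_{Y}f=X\cdot(\mathcal{L}_{Y}f)=0$. Your computation bypasses all soliton-specific input: from $\mathcal{L}_{Y}g=0$ and $[Y,X]=0$ alone you get $d(Y(f))=\mathcal{L}_{Y}(g(X,\cdot))=0$, so $Y(f)$ is constant, and the existence of a minimum of $f$ kills the constant. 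This uses neither the K\"ahler structure nor the soliton equation (beyond the properness of $f$), so it actually proves the stronger statement that any Killing field commuting with a gradient field $X=\nabla^{g}f$ with proper $f$ is tangent to the level sets of $f$. The paper's longer argument buys nothing extra here; yours is simply cleaner.
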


\begin{proof}
Let $Y\in\mathfrak{g}^{X}$. For the first part of the lemma, we will show that $\mathcal{L}_{Y}f=0$ so that the flow of $Y$ preserves the level sets of $f$,
thereby forcing $Y$ to be tangent to the level sets of $f$.

Applying $\mathcal{L}_{Y}$ to the shrinking K\"ahler-Ricci soliton equation, we find that $i\partial\bar{\partial}(\mathcal{L}_{Y}f)=0$.
Notice that since $[JX,\,Y]=0$, we have that $\mathcal{L}_{JX}(\mathcal{L}_{Y}f)=\mathcal{L}_{Y}(\mathcal{L}_{JX}f)=0$. The function
$X\cdot(\mathcal{L}_{Y}f)$ is therefore holomorphic. It is also real-valued, hence must be equal to a constant, $X\cdot(\mathcal{L}_{Y}f)=c_{0}$ say.
Since $X=\nabla^g f$ and $f$ has a minimum, we deduce that in fact $c_{0}=0$ so that $X\cdot(\mathcal{L}_{Y}f)=0$.

Next, deriving with respect to the Killing vector field $Y$ the soliton identity from Lemma \ref{solitonid}, namely
$$\mathcal{L}_{X}f+R_g=X\cdot f+R_g=|\nabla^g f|^2+R_g=2f,$$
making use of the fact that $X$ and $Y$ commute, we obtain
\begin{equation*}
\begin{split}
2\mathcal{L}_{Y}f&=\mathcal{L}_{Y}\left(\mathcal{L}_{X} f\right)+\underbrace{\mathcal{L}_{Y}R_g}_{=\,0}=\mathcal{L}_{X}\left(\mathcal{L}_{Y}f\right)=X\cdot(\mathcal{L}_{Y}f),
\end{split}
\end{equation*}
where we have just seen that this last term vanishes. Hence $\mathcal{L}_{Y}f=0$, as desired.

As for the second part of the lemma, note that under the assumption of bounded Ricci curvature of $g$, $G^{X}_{0}$ and $\operatorname{Aut}_{0}^{X}(M)$ are both Lie groups endowed with the compact-open topology by Propositions \ref{southbeach} and \ref{southbeach2} respectively. In addition,
$G^{X}_{0}$ is a subgroup of $\operatorname{Aut}_{0}^{X}(M)$ since $\mathfrak{g}^{X}$ is a Lie subalgebra of $\mathfrak{aut}^{X}(M)$.
Compactness of $G^{X}_{0}$ with respect to the compact-open topology follows from the Arzel\`a-Ascoli theorem because the level sets of $f$ are compact by properness of $f$ and, as we have just seen, are preserved by $G_{0}^{X}$. Being compact, $G^{X}_{0}$ is then a closed subgroup of $\operatorname{Aut}_{0}^{X}(M)$, hence is a compact Lie subgroup of $\operatorname{Aut}_{0}^{X}(M)$, with everything being relative to the compact-open topology.
\end{proof}

Finally, we can now deduce from Theorem \ref{matty} that $G^{X}_{0}$ is a \emph{maximal} compact Lie subgroup of $\operatorname{Aut}_{0}^{X}(M)$.
\begin{corollary}\label{maxx}
Let $(M,\,g,\,X)$ be a complete shrinking gradient K\"ahler-Ricci soliton with bounded Ricci curvature.
Then $G^{X}_{0}$ is a maximal compact Lie subgroup of $\operatorname{Aut}_{0}^{X}(M)$.
\end{corollary}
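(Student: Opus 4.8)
The plan is to combine the Matsushima-type decomposition $\mathfrak{aut}^{X}(M)=\mathfrak{g}^{X}\oplus J\mathfrak{g}^{X}$ (Theorem \ref{matty}) with the fact, established in Proposition \ref{southbeach2} and Lemma \ref{tannn}, that $G^{X}_{0}$ is a compact Lie subgroup of the Lie group $\operatorname{Aut}_{0}^{X}(M)$, and then invoke the standard structure theory of Lie groups. First I would note that $\operatorname{Aut}_{0}^{X}(M)$ is a connected Lie group (Proposition \ref{southbeach}) with Lie algebra $\mathfrak{aut}^{X}(M)$, that $G^{X}_{0}$ is a connected compact subgroup with Lie algebra $\mathfrak{g}^{X}$, and that, by Theorem \ref{matty}, $\mathfrak{aut}^{X}(M)=\mathfrak{g}^{X}\oplus J\mathfrak{g}^{X}$ as a vector space with $\mathfrak{g}^{X}$ a Lie subalgebra. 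The key algebraic observation is that this decomposition realises $\mathfrak{aut}^{X}(M)$ as the complexification of $\mathfrak{g}^{X}$: since the elements of $\mathfrak{g}^{X}$ are real holomorphic vector fields, multiplication by $J$ commutes with the Lie bracket in the sense that $[\mathfrak{g}^{X},J\mathfrak{g}^{X}]\subseteq J\mathfrak{g}^{X}$ and $[J\mathfrak{g}^{X},J\mathfrak{g}^{X}]\subseteq \mathfrak{g}^{X}$ (this is a consequence of the integrability of $J$ together with the fact that $\mathcal{L}_{JY}J=0$ for $Y$ real holomorphic, so that the complexified bracket behaves as in $\mathfrak{g}^{X}\otimes_{\mathbb R}\mathbb C$). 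Hence $\mathfrak{aut}^{X}(M)\cong\mathfrak{g}^{X}\otimes_{\mathbb R}\mathbb C$ with $\mathfrak{g}^{X}$ a compact real form.

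Next I would appeal to the classical theorem that a connected Lie group whose Lie algebra is the complexification of the Lie algebra of a compact group has that compact subgroup as a maximal compact subgroup. More precisely: let $K$ be any compact subgroup of $\operatorname{Aut}_{0}^{X}(M)$ containing $G^{X}_{0}$; after conjugating we may assume $K$ is connected (it suffices to bound the dimension, and the connected component of the identity of $K$ already contains $G^{X}_{0}$). Its Lie algebra $\mathfrak{k}$ satisfies $\mathfrak{g}^{X}\subseteq\mathfrak{k}\subseteq\mathfrak{g}^{X}\oplus J\mathfrak{g}^{X}$. Because $K$ is compact, $\mathfrak{k}$ carries an $\operatorname{ad}$-invariant inner product, so $\mathfrak{k}$ is a reductive Lie algebra with no non-compact simple factors and no factor isomorphic to a non-compact form; but any subalgebra of $\mathfrak{g}^{X}\oplus J\mathfrak{g}^{X}=\mathfrak{g}^{X}_{\mathbb C}$ strictly containing the compact real form $\mathfrak{g}^{X}$ must contain a non-compact element (an element $JY$ with $Y\in\mathfrak{g}^{X}$, $Y\neq 0$, together with $[\,\cdot\,,\,\cdot\,]$ generates a non-compact subalgebra, e.g.\ a copy of $\mathfrak{sl}(2,\mathbb R)$ or of $\mathbb R$ acting with a real eigenvalue), contradicting the existence of the invariant inner product. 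Therefore $\mathfrak{k}=\mathfrak{g}^{X}$, and since $K$ is connected this forces $K=G^{X}_{0}$. Thus $G^{X}_{0}$ is maximal compact.

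The main obstacle I anticipate is the bookkeeping needed to make the algebraic step clean: one must verify carefully that $\mathfrak{g}^{X}\oplus J\mathfrak{g}^{X}$ really is the complexification of $\mathfrak{g}^{X}$ as a \emph{Lie} algebra (not merely as a vector space), i.e.\ that the bracket relations $[\mathfrak{g}^{X},J\mathfrak{g}^{X}]\subseteq J\mathfrak{g}^{X}$ and $[J\mathfrak{g}^{X},J\mathfrak{g}^{X}]\subseteq\mathfrak{g}^{X}$ hold; this uses that the elements involved are real holomorphic so that $J$ intertwines the brackets appropriately, and one should double-check there is no subtlety coming from the non-compactness of $M$ (there is not, since these are purely pointwise/algebraic identities on vector fields). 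Once that is in hand, the conclusion is the standard Cartan-type fact that a connected complex(ified) Lie group retracts onto the maximal compact subgroup given by a compact real form, and no compact subgroup can be larger. As in \cite[p.~322]{tianzhu2} in the compact setting, this is precisely the input needed later, via Iwasawa's theorem, to pin down the soliton metric up to $GL(n,\mathbb C)$.
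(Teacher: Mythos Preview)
Your overall strategy is the same as the paper's---use the decomposition $\mathfrak{aut}^{X}(M)=\mathfrak{g}^{X}\oplus J\mathfrak{g}^{X}$ from Theorem \ref{matty} and argue that no compact subgroup of $\operatorname{Aut}_{0}^{X}(M)$ can strictly contain $G^{X}_{0}$---and your verification that this decomposition realises $\mathfrak{aut}^{X}(M)$ as the complexification of $\mathfrak{g}^{X}$ is correct. The gap is in the final step: you claim that any Lie subalgebra $\mathfrak{k}$ with $\mathfrak{g}^{X}\subsetneq\mathfrak{k}\subseteq\mathfrak{g}^{X}_{\mathbb C}$ cannot admit an $\operatorname{ad}$-invariant inner product. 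This is false precisely in the toral case. If $\mathfrak{g}^{X}=\mathbb{R}$, then $\mathfrak{g}^{X}_{\mathbb C}=\mathbb{R}^{2}$ is abelian and every inner product on it is $\operatorname{ad}$-invariant; more generally, whenever $JY$ is central the enlarged algebra is still of compact type in the abstract sense. So the obstruction to compactness of $K$ is not visible at the level of the abstract Lie algebra $\mathfrak{k}$; it depends on how $\mathfrak{k}$ sits inside $\operatorname{Aut}_{0}^{X}(M)$, i.e.\ on whether the one-parameter subgroup generated by $JY$ is relatively compact. Your argument does handle the semisimple directions (there one genuinely produces $\mathfrak{sl}(2,\mathbb R)$), but the abelian directions---which are exactly the ones relevant for the torus arguments later in the paper---require something more.

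The paper supplies the missing geometric ingredient. Given a nonzero $Z\in\mathfrak{k}\cap J\mathfrak{g}^{X}$, one has $JZ\in\mathfrak{g}^{X}$ Killing, so (using $H^{1}(M)=0$) $Z=\nabla^{g}u$ for a smooth real function $u$. If $Z$ were contained in the Lie algebra of a compact group $K$, the closure of its flow in $K$ would be a torus $T^{k}$. But $u$ is strictly increasing along nontrivial integral curves of its own gradient, so no orbit of $Z$ can be periodic (ruling out $k=1$), and no orbit can be dense in a torus of dimension $\geq 2$ (since along a dense orbit $u$ would have to return arbitrarily close to its initial value, contradicting strict monotonicity). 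This gradient-flow argument, not an abstract Lie-algebra fact, is what forces $\mathfrak{k}=\mathfrak{g}^{X}$.
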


\begin{proof}
First note that $G^{X}_{0}$ is a compact Lie subgroup of $\operatorname{Aut}_{0}^{X}(M)$ by Lemma \ref{tannn}.
Now suppose that $G^{X}_{0}$ is not maximal in $\operatorname{Aut}_{0}^{X}(M)$. Then there exists a compact Lie subgroup $K$
of $\operatorname{Aut}_{0}^{X}(M)$ strictly containing $G_{0}$. In particular, the real dimension of
$K$ must be strictly greater than $G^{X}_{0}$. On the Lie algebra level, since the decomposition of Theorem \ref{matty} is direct,
there exists a non-zero real holomorphic vector $Z$ in the Lie algebra of $K$
not contained in $\mathfrak{g}^{X}$ yet contained in $J\mathfrak{g}^{X}$.
Since $K$ is compact, the closure of the flow of $Z$ in $K$ will define a real torus $T^{k}$ of real dimension $k$
in $K$ in which the flow of $Z$ is dense. We complete the proof as in the conclusion of the proof of \cite[Theorem 3.5.1]{paul}.

Consider the vector field $Z$. This is a real holomorphic vector field with $JZ$ Killing.
Since a shrinking soliton has finite fundamental group \cite{wyliee}, we have that $H^{1}(M)=0$.
Hence $JZ$ admits a Hamiltonian potential $u:M\to\mathbb{R}$ so that $Z=\nabla^{g}u$.
If the real dimension $k$ of $T$ is equal to one, then the orbits of $Z$ are all closed, but a gradient flow has
no non-trivial closed integral curves since $\frac{d}{dt}u(\gamma_{x}(t))=|\nabla^{g}u|^{2}\geq0$,
where $\gamma_{x}(t)$ denotes the integral curve of $Z$ with $\gamma_{x}(0)=x\in M$.
Hence $k$ is strictly greater than one. But this is impossible as well. Indeed,
let $x$ be any point of $M$ where $Z(x)\neq 0$. Then $u(\gamma_{x}(t))$ is an increasing function
of $t$ so that $u(\gamma_{x}(t))-u(x)>c$ for some constant $c>0$ say, for all $t>1$. On the other
hand, since the flow of $Z$ is dense in $T$, $\gamma_{x}(t)$ intersects any
neighbourhood of $x$ in $M$ for some $t>1$. This yields another contradiction. Thus, $G^{X}_{0}$ is maximal
in $\operatorname{Aut}_{0}^{X}(M)$, as claimed.
\end{proof}

\newpage
\subsection{The weighted volume functional}\label{weighted2}

Recall that $(M,\,g,\,X)$ is a complete shrinking gradient K\"ahler-Ricci soliton of complex dimension $n$ with complex structure $J$, K\"ahler form $\omega$, and soliton vector field $X=\nabla^g f$ for $f:M\to\mathbb{R}$ smooth, and that by assumption, we have a real torus $T$ with Lie algebra $\mathfrak{t}$ acting holomorphically, effectively, and isometrically on $(M,\,g,\,J)$ with $JX\in\mathfrak{t}$.

In order to make sense of the weighted volume functional of a shrinking gradient K\"ahler-Ricci soliton, we need to define a moment map for the action of $T$ on $M$. This comes down to showing that every element of $\mathfrak{t}$ admits a real Hamiltonian potential, as demonstrated in the next proposition.
Such a potential exists essentially because $T$ acts
by isometries and $H^{1}(M)=0$. However, a Hamiltonian potential is only defined up to a constant. Therefore a normalisation
is required to determine the potential uniquely. We normalise so that the potential lies in the kernel of a certain linear operator, precisely
the condition required to show that $JX$ is the unique critical point of the weighted volume functional.
\begin{prop}\label{tangent}
In the above situation, let $Y\in\mathfrak{t}$ so that $Y$ defines a real holomorphic $g$-Killing vector field on $M$ with $[X,\,Y]=0$.
Then there exists a unique smooth real-valued function \linebreak $u_{Y}:M\to\mathbb{R}$ with $\mathcal{L}_{JX}u_{Y}=0$
such that $\Delta_{\omega}u_{Y}+u_{Y}+\frac{1}{2}(JY)\cdot f=0$ and $du_{Y}=-\omega\lrcorner Y$.
\end{prop}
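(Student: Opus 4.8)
The plan is to first produce \emph{some} Hamiltonian potential for the Killing field $Y$, then fix the constant ambiguity by an elliptic argument. Since $JY$ is a real holomorphic $g$-Killing vector field commuting with $X$ (recall $Y\in\mathfrak{t}$ and $JX\in\mathfrak{t}$ so $[Y,JX]=0$, and $J$-invariance of the bracket gives $[JY,X]=0$), the one-form $\omega\lrcorner Y$ is closed: indeed $d(\omega\lrcorner Y)=\mathcal{L}_Y\omega=0$ because $Y$ is Killing and holomorphic, hence preserves $\omega$. Since a shrinking gradient K\"ahler-Ricci soliton has finite fundamental group \cite{wyliee}, $H^1(M)=0$, so there exists a smooth real-valued $h$ with $dh=-\omega\lrcorner Y$, i.e.\ $Y=\nabla^g h$ after applying $J$ appropriately (here $\omega\lrcorner Y = -dh$ means $g(JY,\cdot)=-dh$, equivalently $-J Y=\nabla^g h$, so $JY = -\nabla^g h$). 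This $h$ is unique up to an additive constant. By averaging $h$ over the torus $T$, which acts isometrically and preserves $\omega$ and contains the flow of $JX$, we may assume $\mathcal{L}_{JX}h=0$; this averaging does not destroy the property $du=-\omega\lrcorner Y$ since $Y$ is $T$-invariant.

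Next I would identify the correct normalisation. Composing the shrinking soliton equation $\rho_\omega+i\partial\bar\partial f=\omega$ contracted with $Y^{1,0}$ (or rather applying $\mathcal{L}_Y$ and using the standard Bochner-type identity for Killing potentials on K\"ahler manifolds, as in the proof of Theorem \ref{matty}) shows that $\Delta_\omega h + h + \tfrac12 (JY)\cdot f$ is a function annihilated by $\bar\partial$ — hence, being real-valued up to the imaginary holomorphic ambiguity, and using $\mathcal{L}_{JX}$-invariance together with the fact that $X\cdot(\text{holomorphic function})$ forces it to be constant exactly as in Lemma \ref{tannn} — equals a constant $c$. Concretely: set $P(h):=\Delta_\omega h + h + \tfrac12(JY)\cdot f$; one checks $\bar\partial P(h)=0$ using the soliton equation and $[X,JY]=0$, then $P(h)$ real and $X\cdot P(h)$ holomorphic and real, hence constant, and since $f$ attains a minimum where $X=\nabla^g f$ vanishes one deduces $X\cdot P(h)=0$; combining with the soliton identity $\mathcal{L}_X f + R_g = 2f$ differentiated along $Y$ (cf.\ the computation in Lemma \ref{tannn}) forces $P(h)$ itself to be the constant $c$. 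Then replace $h$ by $u_Y:=h-c$: since $\Delta_\omega$ and the derivative terms kill the constant only partially — precisely, $\Delta_\omega(h-c)+(h-c)+\tfrac12(JY)\cdot f = c - c = 0$ — we get $P(u_Y)=0$, and still $du_Y=dh=-\omega\lrcorner Y$ and $\mathcal{L}_{JX}u_Y=0$.

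For uniqueness, suppose $u_Y$ and $u_Y'$ both satisfy all three conditions. Their difference $w$ satisfies $dw=0$, so $w$ is a constant, and $\Delta_\omega w + w + 0 = 0$ forces $w=0$. Hence $u_Y$ is unique.

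The main obstacle I expect is the identity $P(h)=\text{const}$ and then $P(h)=c$: one must carefully carry out the Bochner/commutation computation establishing $\bar\partial P(h)=0$ — this is where the soliton equation $\rho_\omega + i\partial\bar\partial f = \omega$ and the bracket relation $[X, JY]=0$ enter — and then run the argument (mimicking Lemma \ref{tannn}) that a real-valued function on $M$ whose $X$-derivative is a real holomorphic, hence constant, function must have vanishing $X$-derivative because $f$ has a critical point; only after that does the normalisation $u_Y = h - c$ make $P(u_Y)$ vanish identically. Everything else — closedness of $\omega\lrcorner Y$, existence of a primitive via $H^1(M)=0$, torus-averaging, and the trivial uniqueness step — is routine.
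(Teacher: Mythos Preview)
Your overall strategy is exactly the paper's: produce a Hamiltonian via $H^1(M)=0$, average over $T$, use the soliton equation plus Bochner to see that $P(h):=\Delta_\omega h+h+\tfrac12(JY)\cdot f$ is constant, then subtract that constant; uniqueness is immediate since the potential is determined up to a constant and the normalisation kills it.

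Where you overcomplicate things is the step ``$P(h)$ is constant.'' The Bochner contraction of the soliton equation with $Z^{1,0}$ (where $Z=-JY=\nabla^g h$) gives
\[
\bar\partial\bigl(\Delta_\omega h+h-Z^{1,0}\cdot f\bigr)=0,
\]
and $Z^{1,0}\cdot f=-\tfrac12(JY)\cdot f-\tfrac{i}{2}\,Y\cdot f$, so this is $\bar\partial\bigl(P(h)+\tfrac{i}{2}Y\cdot f\bigr)=0$, not $\bar\partial P(h)=0$ directly. The missing observation, which the paper makes in one line, is that the averaging already forces $Y\cdot f=0$: from $\mathcal L_{JX}h=0$ one has
\[
0=dh(JX)=-\omega(Y,JX)=-g(JY,JX)=-g(Y,X)=-Y\cdot f.
\]
With $Y\cdot f=0$ in hand, $Z^{1,0}\cdot f$ is real, so $P(h)$ is a \emph{real-valued} holomorphic function, hence constant immediately. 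There is no need to study $X\cdot P(h)$, invoke critical points of $f$, or import the computation from Lemma~\ref{tannn}; that detour is redundant once you notice the line above.
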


\begin{proof}
Let $Z:=-JY$. Then $Z$ is real holomorphic and $JZ$ is $g$-Killing. Since a shrinking soliton has finite fundamental group \cite{wyliee},
we have that $H^{1}(M)=0$. This implies that there exists a smooth real-valued function $u_{Y}:M\to\mathbb{R}$
 such that $Z=\nabla^{g}u_{Y}$. Then $du_{Y}\circ J=-\omega\lrcorner Z$. Let $Z^{1,\,0}=\frac{1}{2}(Z-iJZ)$. Then we have that
\begin{equation*}
\begin{split}
\omega\lrcorner Z^{1,\,0}&=\frac{1}{2}\omega\lrcorner Z-\frac{i}{2}\omega\lrcorner JZ\\
&=-\frac{1}{2}du_{Y}\circ J+\frac{i}{2}du_{Y}\\
&=\frac{i}{2}\left(du_{Y}+idu_{Y}\circ J\right)\\
&=i\bar{\partial}u_{Y},
\end{split}
\end{equation*}
i.e., $\bar{\partial}(-iu_{Y})=-\omega\lrcorner Z^{1,\,0}$.
By averaging $u_{Y}$ over the action of $T$, we may assume that
$\mathcal{L}_{JX}u_{Y}=0$. Using the Bochner formula, contracting [\eqref{krseqn}, $\lambda=1$] with $Z^{1,\,0}$ then results in
$$-i\bar{\partial}\Delta_{\omega}u_{Y}+i\bar{\partial}(Z^{1,\,0}\cdot f)=i\bar{\partial}u_{Y}.$$
In other words,
$$\bar{\partial}(\Delta_{\omega}u_{Y}+u_{Y}-Z^{1,\,0}\cdot f)=0.$$
Now, the fact that $\mathcal{L}_{JX}u_{Y}=0$ implies that
$$0=du_{Y}(JX)=g(\nabla^g u_{Y},\,JX)=g(Z,\,JX)=-g(JZ,\,X)=-df(JZ)=-(JZ)\cdot f.$$
In particular, $\Delta_{\omega}u_{Y}+u_{Y}-Z^{1,\,0}\cdot f$ is a real-valued holomorphic function, hence is
equal to a constant. By subtracting this constant from $u_{Y}$ and plugging in the definition of $Z$, we arrive at our desired normalisation of $u_{Y}$:
$$\Delta_{\omega}u_{Y}+u_{Y}+\frac{1}{2}(JY)\cdot f=0.$$
Since $u_{Y}$ is defined up to a constant, this condition determines $u_{Y}$ uniquely.
\end{proof}

With this proposition, we can now define our moment map for the action of $T$ on $M$.
\begin{definition}\label{moments}
Let $\langle\cdot\,,\,\cdot\rangle$ denote the natural pairing between $\mathfrak{t}$ and $\mathfrak{t}^{*}$.
Then we define the \emph{moment map $\mu:M\to\mathfrak{t}^{*}$ for the action of $T$ on $M$} by
$$u_{Y}(x)=\langle\mu(x),\,Y\rangle,\quad x\in M,\qquad\textrm{for all $Y\in\mathfrak{t}$},$$
where $u_{Y}$ is such that $\nabla^g u_{Y}=-JY$, $\mathcal{L}_{JX}u_{Y}=0$, and $\Delta_{\omega}u_{Y}+u_{Y}+\frac{1}{2}(JY)\cdot f=0.$
\end{definition}

We next define the weighted volume functional
for complete shrinking gradient K\"ahler-Ricci solitons.
\begin{definition}[{Weighted volume functional, cf.~\cite[equation (2.3)]{tianzhu2}}]\label{modfut}
Let $(M,\,g,\,X)$ be a complete shrinking gradient K\"ahler-Ricci soliton of complex dimension $n$ with complex structure $J$, K\"ahler form $\omega$, and with soliton vector field $X=\nabla^{g}f$ for a smooth real-valued function $f:M\to\mathbb{R}$, endowed with the holomorphic, effective, isometric action of a real torus $T$ with Lie algebra $\mathfrak{t}$ and with a compact fixed point set.
Let $\mu$ denote the moment map of the action as prescribed in Definition \ref{moments}
and assume that $JX\in\mathfrak{t}$. Let $Y\in\mathfrak{t}$ and let $u_{Y}:=\langle\mu,\,Y\rangle$ be the
Hamiltonian potential of $Y$ so that
$\mathcal{L}_{JX}u_{Y}=0$ and $\Delta_{\omega}u_{Y}+u_{Y}+\frac{1}{2}(JY)\cdot f=0$. Finally, let
$$\Lambda:=\{Y\in\mathfrak{t}:\textrm{$u_{Y}$ is proper and bounded below}\}\subseteq\mathfrak{t}.$$
Then the \emph{weighted volume functional} $F$ is defined by
\begin{equation*}\label{overweight}
F:\Lambda\longrightarrow\mathbb{R}_{>0},\qquad F(Y)=\int_{M}e^{-\langle\mu,\,Y\rangle}\omega^{n}=\int_{M}e^{-u_{Y}}\omega^{n}.
\end{equation*}
\end{definition}

The set $\Lambda$ is an open cone in $\mathfrak{t}$ which is determined by the image of $M$ under $\mu$;
see Proposition \ref{properr} for details. Since $f$ grows quadratically at infinity by Theorem \ref{theo-basic-prop-shrink}(i),
we know that it is in addition proper. Hence $JX$, which by assumption lies in $\mathfrak{t}$, lies in $\Lambda$ so that $\Lambda$ is non-empty.
Thus, by the Duistermaat-Heckman theorem (Theorem \ref{dhthm}), $F$ is seen to be well-defined.

We next list some elementary properties of $F$,
in particular the desired property that characterises $JX$ as the unique critical point of $F$. It is here where our normalisation of the Hamiltonian potentials comes into play.
\begin{lemma}[Volume minimising principle]\label{critical}
Let $(M,\,g,\,X)$ be a complete shrinking gradient K\"ahler-Ricci soliton of complex dimension $n$ with K\"ahler form $\omega$ and with soliton vector field $X=\nabla^{g}f$ for a smooth real-valued function $f:M\to\mathbb{R}$, endowed with the holomorphic, effective, isometric action of
a real torus $T$ with Lie algebra $\mathfrak{t}$. Assume that $JX\in\mathfrak{t}$ and that the Ricci curvature of
$g$ is bounded. Then:
\begin{enumerate}[label=\textnormal{(\roman{*})}, ref=(\roman{*})]
\item $F$ is strictly convex on $\Lambda$.
\item $JX$ is the unique critical point of $F$ in $\Lambda$.
\end{enumerate}
\end{lemma}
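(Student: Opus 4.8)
The plan is to establish both statements by direct computation of the first and second variations of $F$ along straight lines in $\mathfrak{t}$, using the key fact that the Hamiltonian potentials $u_Y=\langle\mu,Y\rangle$ depend \emph{linearly} on $Y\in\mathfrak{t}$, which is immediate from Definition \ref{moments}. Fix $Y_0\in\Lambda$ and $Z\in\mathfrak{t}$, and consider $\phi(s):=F(Y_0+sZ)=\int_M e^{-u_{Y_0}-su_Z}\omega^n$ for $s$ in a neighbourhood of $0$ (small enough that $Y_0+sZ$ still lies in the open cone $\Lambda$). Differentiating under the integral sign — justified because $u_{Y_0}$ is proper and bounded below so that $e^{-u_{Y_0}}$ decays fast enough to dominate polynomial factors, together with the polynomial volume growth from Theorem \ref{theo-basic-prop-shrink}(ii) — gives
\begin{equation*}
\phi'(s)=-\int_M u_Z\,e^{-u_{Y_0}-su_Z}\omega^n,\qquad \phi''(s)=\int_M u_Z^2\,e^{-u_{Y_0}-su_Z}\omega^n\geq 0,
\end{equation*}
and $\phi''(s)=0$ forces $u_Z\equiv 0$ on $M$, hence $Z=0$ since the torus action is effective (a nonzero $Z\in\mathfrak t$ is a nonzero Killing field, so $du_Z=-\omega\lrcorner Z\not\equiv 0$). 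This proves (i).

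For (ii), I first compute the derivative of $F$ at $JX$. By Proposition \ref{tangent} applied with $Y=JX$, the normalised potential $u_{JX}$ satisfies $\Delta_\omega u_{JX}+u_{JX}+\tfrac12(J(JX))\cdot f=0$, i.e. $\Delta_\omega u_{JX}+u_{JX}-\tfrac12 X\cdot f=0$; on the other hand the soliton identity of Lemma \ref{solitonid} (in the normalisation of Remark \ref{rk-sol-id}) reads $\Delta_\omega f-\tfrac12 X\cdot f=-f$, so $u_{JX}=f$ is the (unique) normalised Hamiltonian potential of $JX$, using that $\mathcal L_{JX}f=0$. Therefore the directional derivative of $F$ at $JX$ in the direction $Z\in\mathfrak t$ is
\begin{equation*}
\left.\frac{d}{ds}\right|_{s=0}F(JX+sZ)=-\int_M u_Z\,e^{-f}\omega^n.
\end{equation*}
The plan is to show this vanishes for every $Z\in\mathfrak t$. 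This is exactly the statement that $f$ is a critical point of the weighted volume, and it follows from the soliton/moment-map structure: applying the divergence theorem (more precisely integrating the weighted Laplacian $\Delta_{\omega,X}$ against $1$ with weight $e^{-f}\omega^n$, legitimate because $u_Z$ has polynomial growth by Proposition \ref{keywest} while $e^{-f}$ decays super-exponentially), and using $\Delta_\omega u_Z+u_Z+\tfrac12(JZ)\cdot f=0$ together with $(JZ)\cdot f=-g(J\nabla u_Z, \nabla f)=\omega(\nabla u_Z,\nabla f)$ — which integrates to zero against the invariant measure $e^{-f}\omega^n$ since $JZ$ is Killing and preserves $f$ — one gets $\int_M u_Z e^{-f}\omega^n=-\int_M (\Delta_\omega u_Z +\tfrac12(JZ)\cdot f)e^{-f}\omega^n=0$. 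Hence $JX$ is a critical point.

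Finally, uniqueness of the critical point is immediate from strict convexity (i): a strictly convex function on the open convex cone $\Lambda$ has at most one critical point. The main obstacle I anticipate is the analytic justification of differentiating $F$ under the integral sign and of the integration-by-parts identities on the noncomplete, noncompact manifold $M$ — one must control the boundary terms over geodesic spheres and confirm the requisite integrability, which is where the properness and lower boundedness built into the definition of $\Lambda$, the quadratic growth of $f$ (Theorem \ref{theo-basic-prop-shrink}(i)), the polynomial volume growth (Theorem \ref{theo-basic-prop-shrink}(ii)), and the polynomial growth of $u_Z$ (Proposition \ref{keywest}, using the bounded Ricci hypothesis) all enter; a standard cutoff argument with functions $\phi_R$ as in the proof of Theorem \ref{matty} handles the boundary terms. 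Everything else is a short linear-algebra-plus-calculus argument.
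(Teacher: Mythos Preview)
Your overall strategy matches the paper's: linearity of $Y\mapsto u_Y$ gives convexity, and the normalisation of Proposition~\ref{tangent} forces $d_{JX}F=0$. Two points deserve correction.

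For (i), the paper avoids differentiating under the integral altogether: it uses pointwise strict convexity of $x\mapsto e^{-x}$ to get $F(tY_1+(1-t)Y_2)<tF(Y_1)+(1-t)F(Y_2)$ directly. Your second-derivative route is fine in spirit, but the justification you give (``$u_{Y_0}$ is proper and bounded below so $e^{-u_{Y_0}}$ decays fast enough'') is not sufficient: properness alone gives no rate, so you cannot conclude that $\int_M u_Z^2 e^{-u_{Y_0}}\omega^n<\infty$ without something like Proposition~\ref{prop-growth-pot}. The paper's argument sidesteps this.

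For (ii), your identification $u_{JX}=f$ and the derivative formula are correct, but the vanishing argument contains an error. You write ``$(JZ)\cdot f=-g(J\nabla u_Z,\nabla f)=\omega(\nabla u_Z,\nabla f)$, which integrates to zero \ldots\ since $JZ$ is Killing and preserves $f$''. This is wrong on two counts: $JZ$ is \emph{not} Killing ($Z$ is), and since $\nabla u_Z=-JZ$ one has $(JZ)\cdot f=-g(\nabla u_Z,\nabla f)=-X\cdot u_Z$, not $\omega(\nabla u_Z,\nabla f)$. The correct computation is
\[
\Delta_\omega u_Z+\tfrac12(JZ)\cdot f=\tfrac12\Delta_g u_Z-\tfrac12 X\cdot u_Z=\tfrac12(\Delta_g-\nabla^g f\cdot)u_Z,
\]
the drift Laplacian of $u_Z$, whose integral against $e^{-f}\omega^n$ vanishes after the cutoff argument (using $\nabla u_Z\in L^2(e^{-f}\omega^n)$). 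This is exactly the paper's proof; the two terms cancel after one integration by parts rather than each vanishing separately.
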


\begin{remark}\label{king}
Note that the boundedness of the Ricci curvature of $g$ guarantees here that $F$ is well-defined on $\Lambda$.
Indeed, this is clear from the Duistermaat-Heckman theorem (Theorem \ref{dhthm}) after noting that
the zero set of $X$, which contains the fixed point set of $T$ as a closed subset,
is compact by Lemma \ref{compactt}.
\end{remark}

\begin{proof}[Proof of Proposition \ref{critical}]
\begin{enumerate}[label=\textnormal{(\roman{*})}, ref=(\roman{*})]
\item Let $Y_1,\,Y_2\in\Lambda$. Then the line segment
$tY_1+(1-t)Y_2,\,t\in[0,\,1]$, is contained in $\Lambda$ because $\Lambda$ is convex, as one sees from its definition.
Moreover, by the linearity of the moment map, we have that $$u_{tY_1+(1-t)Y_2}=tu_{Y_1}+(1-t)u_{Y_2}\quad\textrm{for all $t\in[0,\,1]$}.$$ Thus, since the function $x\in\R\mapsto e^{-x}\in\R$ is strictly convex, we find that
\begin{equation*}
F(t\cdot Y_1+(1-t)\cdot Y_2)<t\cdot F(Y_1)+(1-t)\cdot F(Y_2)
\end{equation*}
for all $t\in(0,1)$, unless $Y_1=Y_2$.
\item As a strictly convex function on the convex set $\Lambda$, $F$ has at most one critical point. The claim is that this critical point is obtained at $JX$. Indeed, let $Y\in\mathfrak{t}$ and let $u_{Y}$ denote the Hamiltonian potential of $Y$ normalised so that
        $\Delta_{\omega}u_{Y}+u_{Y}+\frac{1}{2}(JY)\cdot f=0$.
Recall that $-J(JX)=\nabla^g f$ so that $$d_{JX}F(Y)=-\int_{M}u_{Y}e^{-f}\omega^{n}.$$
Let $R$ be a positive real number and let $\phi_R:M\rightarrow[0,1]$ be a cut-off function with compact support in the geodesic ball $B_g(p,\,2R)$ such that $\phi_R=1$ on $B_g(p,\,R)$ and $|\nabla\phi_R|_g\leq\frac{c}{R}$ for some $c>0$. Then using integration by parts,
we have that
\begin{equation*}
\begin{split}
\left|\int_{M}u_{Y}\phi_{R}^{2}\,e^{-f}\omega^{n}\right|&=\left|\int_{M}(\Delta_{\omega}u_{Y}+\frac{1}{2}(JY)\cdot f)\phi_{R}^{2}\,e^{-f}\omega^{n}\right|\\
&=\left|\frac{1}{2}\int_{M}(\Delta_{g}-\nabla^g f\cdot)u_{Y}\phi_{R}^{2}\,e^{-f}\omega^{n}\right|\\
&=\frac{1}{2}\left|\int_{M}g(\nabla u_{Y},\,\nabla(\phi_{R}^{2}))e^{-f}\omega^{n}\right|\\
&\leq\frac{1}{2}\left(\int_{M}|\nabla u_{Y}|^{2}e^{-f}\omega^{n}\right)^\frac{1}{2}\left(|\nabla(\phi_{R}^{2})|^{2}e^{-f}\omega^{n}\right)^{\frac{1}{2}}\\
&=\left(\int_{M}|\nabla u_{Y}|^{2}e^{-f}\omega^{n}\right)^\frac{1}{2}\left(|\nabla\phi_{R}|^{2}|\phi_{R}|^{2}e^{-f}\omega^{n}\right)^{\frac{1}{2}}\\
&\leq\frac{c^{2}}{R^{2}}\left(\int_{M}|\nabla u_{Y}|^{2}e^{-f}\omega^{n}\right)^\frac{1}{2}\left(\int_{M}e^{-f}\omega^{n}\right)^{\frac{1}{2}},\\
\end{split}
\end{equation*}
where the fact that $\nabla u_{Y}\in L^{2}(e^{-f}\omega^{n})$ follows as in the proof of Claim \ref{claim-grad-int}.
Letting $R\to+\infty$, we see that $d_{JX}F(Y)=0$, as required.
\end{enumerate}
\end{proof}

The main tool we use to compute the weighted volume functional is the Duistermaat-Heckman theorem.
The statement of this theorem and a discussion have been relegated
to Appendix \ref{heckman}. It expresses the weighted volume functional in terms of data determined by the induced action on $M$ of the element $Y\in\Lambda$. In particular, this data is independent of the metric $\omega$. Consequently, $F$ is independent of the particular shrinking gradient
K\"ahler-Ricci soliton. It is this observation that will allow us to ascertain the uniqueness of
the soliton vector field $X$ under certain assumptions. This is the content of the next subsection.

\subsection{A general uniqueness theorem}\label{generalunique}
As an application of Corollary \ref{maxx}, we prove the uniqueness statement of Theorem \ref{uunique} for the soliton vector field $X$ of a shrinking gradient K\"ahler-Ricci soliton, the precise statement of which we now recall below.
\begin{theorem}[Theorem \ref{uunique}]\label{uniquee}
Let $M$ be a non-compact complex manifold with complex structure $J$ endowed with the effective holomorphic action of a real torus $T$. Denote by $\mathfrak{t}$ the Lie algebra of $T$. Then there exists at most one element $\xi\in\mathfrak{t}$
that admits a complete shrinking gradient K\"ahler-Ricci soliton $(M,\,g,\,X)$ with bounded Ricci curvature
with $X=\nabla^g f=-J\xi$ for a smooth real-valued function $f$ on $M$.
\end{theorem}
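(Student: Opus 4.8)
The plan is to carry over Tian--Zhu's characterisation of the soliton vector field as the unique critical point of a weighted volume functional to the present non-compact setting, using Corollary \ref{maxx} to replace each soliton by a $T$-invariant one and the Duistermaat--Heckman theorem to make the functional independent of the soliton. So suppose $\xi_1,\xi_2\in\mathfrak t$ each admit such a structure, say $(M,g_i,X_i)$ with $g_i$ a complete shrinking gradient K\"ahler-Ricci soliton of bounded Ricci curvature and $X_i=\nabla^{g_i}f_i=-J\xi_i$ for $i=1,2$; the goal is to prove $\xi_1=\xi_2$. First I would note that $T$ commutes with the flow of each $X_i$: every element of $\mathfrak t$ is real holomorphic, hence commutes with $J$, and $\mathfrak t$ is abelian, so on $\mathfrak t$ one has $[\,\cdot\,,X_i]=[\,\cdot\,,-J\xi_i]=-J[\,\cdot\,,\xi_i]=0$; thus $\mathfrak t\subseteq\mathfrak{aut}^{X_i}(M)$ and, since $T$ acts effectively, $T$ is a compact connected subgroup of $\operatorname{Aut}^{X_i}_0(M)$.

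Because $g_i$ has bounded Ricci curvature, $\operatorname{Aut}^{X_i}_0(M)$ is a connected, finite-dimensional Lie group and $G^{X_i}_0$ is a maximal compact subgroup of it by Corollary \ref{maxx}. By Iwasawa's theorem \cite{iwasawa} (every compact subgroup of such a group lies in a maximal compact one, and these are all conjugate), there is $\sigma_i\in\operatorname{Aut}^{X_i}_0(M)$ with $\sigma_iT\sigma_i^{-1}\subseteq G^{X_i}_0$, i.e.\ $T$ acts isometrically on $\sigma_i^{*}g_i$. Since $\sigma_i$ commutes with the flow of $X_i$ we have $(\sigma_i)_{*}X_i=X_i$, so $(M,\sigma_i^{*}g_i,X_i)$ is again a complete shrinking gradient K\"ahler-Ricci soliton with bounded Ricci curvature and soliton vector field $X_i$ (with soliton potential $\sigma_i^{*}f_i$), now $T$-invariant and still with $JX_i=\xi_i\in\mathfrak t$. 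Replacing $g_i$ by $\sigma_i^{*}g_i$, I may therefore assume that both $g_1$ and $g_2$ are $T$-invariant. Finally, the fixed point set of $T$ is contained in the zero set of $X_i$, which is compact by Lemma \ref{compactt}, and hence is compact.

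Now each $g_i$ determines a weighted volume functional $F_i$ as in Definition \ref{modfut}, defined on an open cone $\Lambda_i\subseteq\mathfrak t$; by Lemma \ref{critical} (applicable thanks to the bounded Ricci hypothesis, cf.\ Remark \ref{king}), $F_i$ is strictly convex on $\Lambda_i$ and $\xi_i=JX_i$ is its unique critical point, so in particular $\xi_i\in\Lambda_i$. On the other hand, the Duistermaat--Heckman theorem (Theorem \ref{dhthm}) expresses $F_i(Y)$, and likewise cuts out the cone $\Lambda_i$, purely in terms of the induced holomorphic action of $T$ near its fixed point set together with the normalisation of the moment map fixed in Definition \ref{moments} --- data that is the same for $g_1$ and $g_2$. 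Hence $\Lambda_1=\Lambda_2=:\Lambda$ and $F_1\equiv F_2=:F$ on $\Lambda$. As $F$ is strictly convex on the convex set $\Lambda$ it has at most one critical point, but $\xi_1$ and $\xi_2$ both lie in $\Lambda$ and are critical points of $F$; therefore $\xi_1=\xi_2$, which is the claim.

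The main obstacle is the Duistermaat--Heckman step, namely the assertion that $F$ is genuinely independent of the soliton. This has two delicate points: one must check that the localisation formula of Appendix \ref{heckman} indeed applies, so that $F_i$ is given by the algebraic fixed-point expression with the same domain, and that the normalisation of the Hamiltonian potentials in Definition \ref{moments} --- which a priori refers to the soliton potential $f_i$ --- is the one singled out by that formula, so the two functionals and their cones of definition really coincide; and, since we only assume bounded rather than quadratically decaying Ricci curvature, one must separately ensure that $F_i$ is defined at all, which is the content of Appendix \ref{f} and the potential-growth estimates of Proposition \ref{prop-growth-pot}. The Matsushima-plus-Iwasawa reduction of the second paragraph is the other essential ingredient, but it is comparatively routine once Corollary \ref{maxx} is available.
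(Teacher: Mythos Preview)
Your overall architecture is the paper's: use Corollary \ref{maxx} and Iwasawa to make both solitons $T$-invariant, then compare the weighted volume functionals via Duistermaat--Heckman and invoke the unique critical point from Lemma \ref{critical}. The reduction in your second paragraph is fine and matches the paper.

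The gap is in your third paragraph. You assert that Duistermaat--Heckman ``likewise cuts out the cone $\Lambda_i$'' from purely holomorphic data and hence $\Lambda_1=\Lambda_2$. That is not what Theorem \ref{dhthm} says: the localisation formula gives the \emph{value} of the integral, but the domain $\Lambda_i=\operatorname{int}(\mathcal{C}(\mu_i(M))')$ is determined by the image of the moment map $\mu_i$, which depends on $\omega_i$. The paper never claims $\Lambda_1=\Lambda_2$ (indeed, Question 6 in Section \ref{conclusion} leaves open whether $\Lambda$ is metric-independent in this generality, and Theorem \ref{fme} only settles it in the asymptotically conical case). A priori you do not even know that $\xi_1\in\Lambda_2$: that would require the Hamiltonian $u_1$ of $\xi_1$ with respect to $\omega_2$ to be \emph{proper} as well as bounded below, and properness is not available.

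The paper works around this as follows. It shows, by integrating along flow lines of $X_1$ and using Proposition \ref{alix} together with compactness of the zero set of $X_1$, that $u_1$ is bounded below (no properness claimed). Then along the segment $Y_t=t\xi_1+(1-t)\xi_2$ the Hamiltonian with respect to $\omega_2$ is $h_t=tu_1+(1-t)f_2$, which for $t\in[0,1)$ is proper (because $f_2$ is) and bounded below; hence $Y_t\in\Lambda_2$ for $t\in[0,1)$, and by symmetry $Y_t\in\Lambda_1$ for $t\in(0,1]$. Now $F_1=F_2$ on the nonempty convex set $\Lambda_1\cap\Lambda_2$ by Duistermaat--Heckman, so $F(t):=F_1(Y_t)=F_2(Y_t)$ is well-defined and strictly convex on $(0,1)$, continuous on $[0,1]$, and one checks $F(0)\le F(1)$ and $F(1)\le F(0)$ since $\xi_i$ is the minimiser of $F_i$; strict convexity then forces $\xi_1=\xi_2$. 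This segment-plus-limit argument is the missing ingredient in your proposal; once you insert it, the proof is the paper's.
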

The outline of the proof of this theorem is as follows. Suppose that $M$ admitted two soliton vector fields $X_{1}$ and $X_{2}$. Then
the maximal tori in the Lie groups $\operatorname{Aut}_{0}^{X_{i}}(M),\,i=1,\,2,$ will be conjugate to $T$ by Iwasawa's theorem \cite{iwasawa}.
After choosing an appropriate gauge, $JX_{1}$ and $JX_{2}$ will then be contained in the Lie algebra
$\mathfrak{t}$ of $T$ and both vector fields will be critical points of their respective weighted volume functional. But since the weighted volume functional
is independent of the shrinking K\"ahler-Ricci soliton by the Duistermaat-Heckman theorem, both weighted volume functionals must coincide
so that $JX_{1}=JX_{2}$ by uniqueness of the critical point.

\begin{proof}[Proof of Theorem \ref{uniquee}]
Suppose that $M$ admitted two complete shrinking gradient K\"ahler-Ricci solitons $(M,\,g_{i},\,X_{i})$ with bounded Ricci curvature with $X_{i}=\nabla^{g_{i}} f_{i}$ for $f_{i}:M\to\mathbb{R}$ smooth such that $X_{i}=-J\xi_{i}$ for $\xi_{i}\in\mathfrak{t}$ for $i=1,\,2$. Let $G^{X_{i}}_{0}$ denote the connected component of the identity of the group of holomorphic isometries of $(M,\,g_{i},\,J)$ that commute with the flow of $X_{i}$. Corollary \ref{maxx} then asserts that $G^{X_{i}}_{0}$ is a maximal compact Lie subgroup of the Lie group $\operatorname{Aut}^{X_{i}}_{0}(M)$, the connected component of the identity of the group of automorphisms of $(M,\,J)$ that commute with the flow of $X_{i}$. Denote by $T_{i}$ the maximal real torus in $G^{X_{i}}_{0}$. Then $T_{i}$ is maximal in $\operatorname{Aut}^{X_{i}}_{0}(M)$. For each $v\in\mathfrak{t}$, we have that $[v,\,\xi_{i}]=0$ so that $[v,\,X_{i}]=0$. Hence each element of $T$ commutes with the flow of $X_{i}$ and so $T$ itself is a Lie subgroup of $\operatorname{Aut}^{X_{i}}_{0}(M)$. Without loss of generality, we may assume that $T$ is maximal in $\operatorname{Aut}^{X_{i}}_{0}(M)$. Then by Iwasawa's
theorem \cite{iwasawa},
there exists an element $\alpha_{i}\in\operatorname{Aut}^{X_{i}}_{0}(M)$ such that
$\alpha_{i}T_{i}\alpha_{i}^{-1}=T$. Since $\alpha_{i}$ commutes with the flow of $X_{i}$, necessarily $d\alpha^{-1}_{i}(X_{i})=X_{i}$. Moreover, $\alpha_{i}^{*}g_{i}$ is invariant under $T$. Thus, $(M,\,\tilde{g}_{i},\,\tilde{X}_{i})$ with $\tilde{g}_{i}=\alpha_{i}^{*}g_{i}$ and $\tilde{X}_{i}=d\alpha_{i}^{-1}(X_{i})$ is a $T$-invariant shrinking gradient K\"ahler-Ricci soliton with soliton vector field $\tilde{X}_{i}=X_{i}=-J\xi_{i}$ as before. Hence, by considering this pullback, we may assume that each $(M,\,g_{i},\,X_{i})$ is invariant under $T$.

Now, by assumption we have that $\xi_{1},\,\xi_{2}\in\mathfrak{t}$. Since the corresponding Hamiltonian potentials are the
soliton potentials which themselves are proper and bounded below, we have that $\xi_{i}\in\Lambda_{i}\subset\mathfrak{t}$, $\Lambda_{i}$ here
denoting the open cone of elements of $\mathfrak{t}$ admitting Hamiltonian potentials
with respect to the K\"ahler form $\omega_{i}$ of $g_{i}$ that are proper and bounded below.
We wish to show that $\xi_{1},\,\xi_{2}\in\Lambda_{1}\cap\Lambda_{2}\neq\emptyset$. The result will then follow from an application of the Duistermaat-Heckman theorem. So let $u_{1}$ denote the Hamiltonian potential of $\xi_{1}=JX_{1}$ with respect to $g_{2}$, that is, $\nabla^{g_{2}}u_{1}=X_{1}$, and for $x\in M$, let $\gamma_{1}(t)$ denote the integral curve of $X_{1}$ through $x$ at $t=0$. Then we have that
\begin{equation*}
\begin{split}
u_{1}(\gamma_{1}(t))&=u_{1}(\gamma_{1}(0))+\int_{0}^{t}du_{1}(\dot{\gamma}_{1}(s))\,ds\\
&=u_{1}(x)+\int_{0}^{t}g_{2}(X_{1},\,X_{1})(\gamma_{1}(s))\,ds\\
&=u_{1}(x)+\int_{0}^{t}|X_{1}|^{2}_{g_{2}}(\gamma_{1}(s))\,ds.\\
\end{split}
\end{equation*}
Since $g_{1}$ has bounded Ricci curvature so that the zero set of $X_{1}$ is compact by Lemma \ref{compactt}, and since
each forward orbit of the negative gradient flow of $f_{1}$ converges to a point in the zero set of $X_{1}$ by Proposition \ref{alix}, it is clear that every point of $M$ lies on an integral curve of $X_{1}$ passing through a fixed compact set. Thus, we see that $u_{1}$ is bounded from below. For $t\in[0,\,1]$, consider the vector field $Y_{t}:=t\xi_{1}+(1-t)\xi_{2}$. The Hamiltonian
potential of $Y_{0}$ with respect to $g_{2}$ is $f_{2}$ whereas that of $Y_{1}$ is $u_{1}$. By linearity
of the moment map, the Hamiltonian potential of $Y_{t}$ with respect to $g_{2}$ is $h_{t}:=tu_{1}+(1-t)f_{2}$. Since
$u_{1}$ is bounded from below and $f_{2}$ is proper, $h_{t}$ is proper and bounded below for $t\in[0,\,1)$ so that $Y_{t}\in\Lambda_{2}$
for $t\in[0,\,1)$. In a similar manner, one can show that $Y_{t}\in\Lambda_{1}$
for $t\in(0,\,1]$. The upshot is that $Y_{t}\in\Lambda_{1}\cap\Lambda_{2}\neq\emptyset$ for $t\in(0,\,1)$ with $\xi_{1},\,\xi_{2}\in\overline{\Lambda_{1}\cap\Lambda_{2}}$.

Define a real-valued function $F$ on $[0,\,1]$ as follows: $F(t):=F_1(Y_t)$ if $t\in[0,1)$ and $F(t):=F_2(Y_t)$ if $t\in(0,1]$,
where $F_{i}$ is the weighted volume functional with respect to $\omega_{i}$.
Then $F$ is well-defined as both $F_{1}$ and $F_{2}$ are well-defined because of the Ricci curvature bound (see Remark \ref{king}) and
by the Duistermatt-Heckman formula (Theorem \ref{dhthm}) they are equal on $\Lambda_{1}\cap\Lambda_{2}$, both being equal to a quantity that depends only on the zeroes of the vector fields in $\Lambda_{1}\cap \Lambda_{2}$. Moreover, $F$ is convex and continuous on $[0,1]$ and strictly convex on $(0,1)$. Finally, observe that
\begin{eqnarray*}
F(0)=F_1(\xi_1)\leq\min_{[0,1)}F_1\leq F_1(Y_t)=F_2(Y_t)
\end{eqnarray*}
for every $t\in(0,1)$. By letting $t$ tend to $1$, one sees that $F(0)\leq F(1)$. By symmetry, one also sees that $F(1)\leq F(0)$ which implies that $F(1)=F(0)=\min_{[0,\,1]} F.$ Since $F$ is convex, $F$ must be constant on $[0,\,1]$ which contradicts the fact that $F$ is strictly convex on $(0,1)$ unless $(Y_t)_{t\in(0,1)}$ is reduced to a single point, i.e., unless $\xi_1=\xi_2$. This concludes the proof.\\
\end{proof}

\subsection{Shrinking gradient K\"ahler-Ricci solitons on $\mathbb{C}^{n}$ and $\mathcal{O}(-k)\to\mathbb{P}^{n-1}$ for $0<k<n$}\label{pclass}

Using Theorem \ref{uunique}, we are now able to classify shrinking gradient K\"ahler-Ricci solitons with bounded Ricci curvature on $\mathbb{C}^{n}$ and on the total space of the line bundle $\mathcal{O}(-k)\to\mathbb{P}^{n-1}$ for $0<k<n$ and in doing so, prove items (1) and (2) of Theorem \ref{classify}.
\begin{theorem}[Items (1) and (2) of Theorem \ref{classify}]
Let $(M,\,g,\,X)$ be a complete shrinking gradient K\"ahler-Ricci soliton with bounded Ricci curvature.

{\rm (1)} If $M=\mathbb{C}^{n}$, then up to pullback by an element of $GL(n,\,\mathbb{C})$, $(M,\,g,\,X)$ is the flat Gaussian shrinking soliton.

{\rm (2)} If $M$ is the total space of the line bundle $\mathcal{O}(-k)\to\mathbb{P}^{n-1}$ for $0<k<n$, then up to pullback by an element of $GL(n,\,\mathbb{C})$, $(M,\,g,\,X)$ is the unique $U(n)$-invariant shrinking gradient K\"ahler-Ricci soliton constructed by
Feldman-Ilmanen-Knopf \cite{FIK} on this complex manifold.
\end{theorem}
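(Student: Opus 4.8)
\emph{Proof proposal.} The plan is to first identify the soliton vector field $X$ up to a linear change of coordinates, and then to promote this to the metric via a maximality argument combined with the Feldman--Ilmanen--Knopf uniqueness theorem. Write $J$, $\omega$ for the complex structure and K\"ahler form of $g$, and normalise $f$ by $X=\nabla^{g}f$ as in Remark \ref{rk-sol-id}. Since $g$ has bounded Ricci curvature, its scalar curvature is bounded, so by Lemma \ref{compactt} the zero set $M_{0}(X)$ of $X$ is compact. Both $\mathbb{C}^{n}$ and $\mathcal{O}(-k)\to\mathbb{P}^{n-1}$ are $1$-convex, the former with empty maximal compact analytic subset, the latter with maximal compact analytic subset $E$ the zero section $\mathbb{P}^{n-1}$. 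Hence Proposition \ref{sexxy} applies and gives that $M_{0}(X)$ is a single point $p$ when $M=\mathbb{C}^{n}$, and $M_{0}(X)\subseteq E=\mathbb{P}^{n-1}$ when $M=\mathcal{O}(-k)\to\mathbb{P}^{n-1}$; in particular $X\not\equiv 0$. By Proposition \ref{southbeach} the group $\operatorname{Aut}_{0}^{X}(M)$ is a finite-dimensional Lie group, and by Corollary \ref{maxx} the group $G_{0}^{X}$ of holomorphic $g$-isometries commuting with the flow of $X$ is a maximal compact subgroup of it. Fix a maximal torus $T$ of $G_{0}^{X}$, with Lie algebra $\mathfrak{t}$. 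Since $JX$ is real holomorphic and $g$-Killing by \cite[Lemma 2.3.8]{fut2} and commutes with every element of $\mathfrak{g}^{X}$, it is central in $\mathfrak{g}^{X}$; therefore $JX\in\mathfrak{t}$. The fixed point set of $T$ lies in $M_{0}(X)$, hence is compact, so the weighted volume functional $F\colon\Lambda\to\mathbb{R}_{>0}$ of Definition \ref{modfut} is well defined (Remark \ref{king}), and by Lemma \ref{critical} the vector $JX$ is its unique critical point in $\Lambda$.

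The heart of the argument is to compute $JX$ explicitly. First I would put $X^{1,0}$ in normal form near its zeros by \cite{bryant}: on $\mathbb{C}^{n}$, where $X^{1,0}$ has the single zero $p$, one checks, using that every negative gradient trajectory of $f$ converges to $p$ by Proposition \ref{alix} so that $p$ is a source with linearisation eigenvalues $a_{1},\dots,a_{n}>0$, that the local linearisation extends to a global biholomorphism carrying $X^{1,0}$ to $\sum_{j=1}^{n}a_{j}z_{j}\partial_{z_{j}}$; on $\mathcal{O}(-k)$, one conjugates the maximal torus $T$ into the standard torus $T^{n}$ of $\operatorname{Aut}_{0}(M)$. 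In both cases $JX$ then lies in the Lie algebra $\mathfrak{t}^{n}$ of the standard torus $T^{n}\subset U(n)$ acting on $M$. The Duistermaat--Heckman localisation formula (Theorem \ref{dhthm}) now expresses $F$ on $\Lambda$ purely in terms of the weights of the $T^{n}$-action at the (isolated) fixed points of $M$; carrying this out (Examples \ref{one} and \ref{three}) and imposing $d_{JX}F=0$ identifies $JX$ uniquely, recovering the polynomials of \cite[equation (36)]{FIK}. Concretely, on $\mathbb{C}^{n}$ this forces $a_{1}=\dots=a_{n}$, so that $X$ is the standard radial field $r\partial_{r}$, and on $\mathcal{O}(-k)$ it identifies $X$ with the Feldman--Ilmanen--Knopf soliton field. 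Equivalently, once $JX\in\mathfrak{t}^{n}$ one may invoke Theorem \ref{uunique}: both $JX$ and the model soliton field of \cite{FIK} lie in $\mathfrak{t}^{n}$ and each admits a complete shrinking gradient K\"ahler-Ricci soliton with bounded Ricci curvature, so they coincide. After a pullback by the relevant element of $GL(n,\mathbb{C})$ we may thus assume $X$ is the standard, respectively Feldman--Ilmanen--Knopf, field.

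With $X$ so normalised, $\operatorname{Aut}_{0}^{X}(M)$ is identified explicitly: on $\mathbb{C}^{n}$ an automorphism commuting with the flow of $X=r\partial_{r}$ is homogeneous of degree one, hence linear, so $\operatorname{Aut}_{0}^{X}(\mathbb{C}^{n})=GL(n,\mathbb{C})$, and likewise $\operatorname{Aut}_{0}^{X}(\mathcal{O}(-k)\to\mathbb{P}^{n-1})$ is, up to a finite central subgroup, $GL(n,\mathbb{C})$; in either case $\mathfrak{aut}^{X}(M)\cong\mathfrak{gl}(n,\mathbb{C})$ as a real Lie algebra. By Theorem \ref{matty}, $\mathfrak{aut}^{X}(M)=\mathfrak{g}^{X}\oplus J\mathfrak{g}^{X}$, so $\mathfrak{g}^{X}$ is a maximal compact subalgebra of a Lie algebra isomorphic to $\mathfrak{gl}(n,\mathbb{C})$; by Iwasawa's theorem \cite{iwasawa} the group $G_{0}^{X}$ is therefore conjugate, by an element $A\in\operatorname{Aut}_{0}^{X}(M)$ which we may take in $GL(n,\mathbb{C})$, to the standard $U(n)$. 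Since $A$ commutes with the flow of $X$, the pullback $(M,A^{*}g,X)$ is again a complete shrinking gradient K\"ahler-Ricci soliton with bounded Ricci curvature, now invariant under the standard action of $U(n)$. Finally, by the uniqueness theorem of Feldman--Ilmanen--Knopf \cite[Proposition 9.3]{FIK}, the only complete $U(n)$-invariant shrinking gradient K\"ahler-Ricci soliton on $\mathbb{C}^{n}$ is the flat Gaussian shrinking soliton, and the only one on $\mathcal{O}(-k)\to\mathbb{P}^{n-1}$ with $0<k<n$ is the one constructed in \cite{FIK}; this yields (1) and (2).

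I expect the main obstacle to be the second step: globalising the linearisation of $X^{1,0}$ on $\mathbb{C}^{n}$ and arranging the torus together with its fixed-point data on $\mathcal{O}(-k)$ so that the Duistermaat--Heckman formula, equivalently Theorem \ref{uunique}, applies, and then checking that the resulting critical-point equation on the open cone $\Lambda$ pins $JX$ down to the model field. A secondary point needing care is the legitimacy of the $GL(n,\mathbb{C})$-gauge fixing used in the second and third steps, which rests on the finite-dimensionality and reductive structure of $\operatorname{Aut}_{0}^{X}(M)$ established earlier from the bounded Ricci curvature hypothesis; once $X$ and $G_{0}^{X}$ are in standard position, the remainder follows formally from Corollary \ref{maxx}, Iwasawa's theorem, and \cite[Proposition 9.3]{FIK}.
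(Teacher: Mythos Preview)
Your proposal is correct and follows the same architecture as the paper: locate the zero set of $X$ via Proposition \ref{sexxy}, arrange that $JX$ lies in the Lie algebra of the standard diagonal torus, invoke Theorem \ref{uunique} to identify $X$ with the model soliton field, then use Corollary \ref{maxx} and Iwasawa's theorem to conjugate $G_{0}^{X}$ to $U(n)$ by an element of $GL(n,\mathbb{C})$, and finish with \cite[Proposition 9.3]{FIK}.

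The one substantive difference is in how you put $JX$ into the diagonal torus. You linearize the vector field $X^{1,0}$ at its zero via \cite{bryant} and then globalize using the contracting flow of $-X$; the paper instead linearizes the \emph{torus action} directly, using \cite[\S 3.1]{vanC4} to choose global holomorphic coordinates on $\mathbb{C}^{n}$ in which $T$ acts by diagonal matrices. For case (2) the paper's route is cleaner: it contracts the zero section to pass to $\mathbb{C}^{n}/\mathbb{Z}_{k}$, lifts the $T$-action to $\mathbb{C}^{n}$, linearizes there, and then descends the resulting coordinates back to $M$; this avoids having to identify $\operatorname{Aut}_{0}(\mathcal{O}(-k))$ and its maximal tori directly, which your sketch leaves vague. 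A minor point: the paper does not actually solve $d_{JX}F=0$ inside the proof but simply appeals to Theorem \ref{uunique}, relegating the explicit Duistermaat--Heckman computations to the appendix (Examples \ref{one} and \ref{three}); your ``equivalently'' clause is exactly what the paper does.
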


\begin{proof}
Let $f$ denote the soliton potential of $X$ so that $f:M\to\mathbb{R}$ is a smooth real-valued function with $X=\nabla^{g}f$, and let $M$ be as in item (1) or (2) of the theorem. We make no distinction as of yet. Denote the complex structure of $M$ by $J$ and let $G^{X}_{0}$ denote the connected component of the identity of the holomorphic isometries of $(M,\,J,\,g)$ that
commute with the flow of $X$. Since $g$ has bounded Ricci curvature, $G^{X}_{0}$ is a compact Lie group
by Lemma \ref{tannn}, hence the closure of the flow of $JX$ in $G^{X}_{0}$ yields the holomorphic isometric action of a real torus $T$ on $(M,\,J,\,g)$ with Lie algebra $\mathfrak{t}$ containing $JX$. Since $M$ is $1$-convex by \cite[Lemma 2.15]{Conlon} and the Ricci curvature of $g$ is bounded, Proposition \ref{sexxy} tells us that the zero set of $X$, and correspondingly the fixed point set of $T$, comprises a single point in item (1) and is contained in the zero section of the line bundle in item (2). Furthermore, Proposition \ref{alix} implies that each forward orbit of the
negative gradient flow of $f$ converges to a point in this fixed point set.
By contracting the zero section of the line bundle in item (2), we see that the action of $T$ on $M$
induces an action of $T$ on $\mathbb{C}^{n}/\mathbb{Z}_{k}$ for $k=1,\ldots,n-1$ as appropriate with fixed point set the apex,
and that this action further lifts to an action of $T$ on $\mathbb{C}^{n}$ with an isolated fixed point. The lift of $X$
to $\mathbb{C}^{n}$ then defines a holomorphic vector field on $\mathbb{C}^{n}$ with $J_{0}X\in\mathfrak{t}$, $J_{0}$ denoting the
standard complex structure on $\mathbb{C}^{n}$, and with each forward orbit of $-X$ converging to this isolated fixed point.

By \cite[Section 3.1]{vanC4}, we may choose global holomorphic coordinates $(z_{1},\ldots,z_{n})$ on $\mathbb{C}^{n}$ with respect to which the action of $T$ on $\mathbb{C}^{n}$ is linear, that is, lies in $GL(n,\,\mathbb{C})$.
These coordinates then descend to coordinates on $\mathbb{C}^{n}/\mathbb{Z}_{k}$,
then lift to coordinates on $M$ with respect to which the
action of $T$ on $M$ lies in $GL(n,\,\mathbb{C})$.
Without loss of generality, we may assume that
$T$ is maximal in $GL(n,\,\mathbb{C})$. Then we still have that $JX\in\mathfrak{t}$.
Since any two maximal tori in $GL(n,\,\mathbb{C})$ are conjugate by Iwasawa's theorem \cite{iwasawa}, there exists $\alpha\in GL(n,\,\mathbb{C})$ such that $\alpha T\alpha^{-1}$ is equal to $\{\operatorname{diag}(e^{i\eta_{1}},\ldots,e^{i\eta_{n}}):\eta_{i}\in\mathbb{R}\}$. By considering $\alpha^{*}\omega$, we can therefore assume that $JX$ lies in the Lie algebra $\mathfrak{t}$ of a torus of the form
$T=\{\operatorname{diag}(e^{i\eta_{1}},\ldots,e^{i\eta_{n}}):\eta_{i}\in\mathbb{R}\}$ acting on $M$. We will then have induced
coordinates $(\eta_{1},\ldots,\eta_{n})$ on $\mathfrak{t}$, where $(1,\,0,\ldots,0)\in\mathfrak{t}$ will generate the vector field
$\operatorname{Im}(z_{1}\partial_{z_{1}})$ on $M$, etc.

Since the fixed point set of $T$ is compact, we can now apply Theorem \ref{uunique} which tells us that there is at most one element
of $\mathfrak{t}$ that admits a complete shrinking gradient K\"ahler-Ricci soliton with bounded Ricci curvature.
On $\mathbb{C}^{n}$, we have the flat Gaussian shrinking soliton and
Feldman-Ilmanen-Knopf \cite{FIK} have constructed a complete shrinking gradient K\"ahler-Ricci soliton with bounded Ricci curvature on $M$ for $M$ as in item (2) of the theorem. In all cases, the soliton vector field $Y$ of these solitons satisfies $JY\in\mathfrak{t}$ and each is proportional to $(1,\ldots,1)$
in our coordinates on $\mathfrak{t}$. Therefore we deduce that $JX=\lambda(1,\ldots,1)$ for some $\lambda>0$ so that on $M$,
$\frac{1}{2}(X-iJX)=\lambda z_{i}\partial_{z_{i}}$. The automorphism group of $(M,\,J)$ commuting with the flow of
this vector field is precisely the Lie group $GL(n,\,\mathbb{C})$. Thus, Corollary \ref{maxx} asserts that $G^{X}_{0}$ is maximal compact in $GL(n,\,\mathbb{C})$ and so, by Iwasawa's theorem \cite{iwasawa} again, there exists $\beta\in GL(n,\,\mathbb{C})$ such that $\beta G^{X}_{0}\beta^{-1}=U(n)$. The $(1,\,1)$-form $\beta^{*}\omega$ will then be $U(n)$-invariant and by \cite[Proposition 9.3]{FIK}, the only such complete shrinking gradient K\"ahler-Ricci soliton on $M$ is the flat Gaussian shrinking soliton if $M=\mathbb{C}^{n}$ and that constructed by Feldman-Ilmanen-Knopf if $M$ is as in item (2) of the theorem.
\end{proof}

\newpage
\section{The underlying manifold of a two-dimensional shrinking gradient K\"ahler-Ricci soliton}\label{underlying}

Item (3) of Theorem \ref{classify} will result from items (1) and (2) of Theorem \ref{classify} once we establish the following theorem.
\begin{theorem}\label{blue}
Let $(M,\,g,\,X)$ be a two-dimensional complete shrinking gradient K\"ahler-Ricci soliton whose scalar curvature decays to zero at infinity.
Then $C_{0}$ is biholomorphic to $\mathbb{C}^{2}$ and $M$ is biholomorphic to either $\mathbb{C}^{2}$ or $\mathbb{C}^{2}$ blown up at one point.
\end{theorem}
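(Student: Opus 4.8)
The plan is to run the soliton through the general structure theorem, identify the tangent cone, and then read off the biholomorphism type of $M$ from the exceptional curves of the resulting resolution. First I would reduce to the asymptotically conical picture: since $\dim_{\mathbb{R}}M=4$ and the scalar curvature of $g$ decays to zero at infinity, the Munteanu--Wang-type results recalled in the introduction give that $g$ has quadratic curvature decay, hence by Theorem~\ref{theo-basic-prop-shrink}(iii) quadratic curvature decay with derivatives, so Theorem~\ref{main} applies. Thus $(M,g)$ has a unique tangent cone $(C_0,g_0)$, a K\"ahler cone of complex dimension two, and there is a K\"ahler resolution $\pi\colon M\to C_0$ which is an isomorphism over $C_0\setminus\{o\}$, with exceptional set $E=\pi^{-1}(o)$ and $d\pi(X)=r\partial_r$. (If $g$ is flat one has $M\cong C_0\cong\mathbb{C}^2$ directly.) I expect this first reduction --- the appeal to Theorem~\ref{main} together with the external input that $R_g\to 0$ forces quadratic curvature decay in real dimension four --- to be the one genuinely delicate point; everything that follows is soft surface geometry.

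Second I would record the single positivity fact needed: \emph{every compact irreducible curve $V\subset M$ satisfies $K_M\cdot V<0$}. Writing the shrinking equation as $\rho_\omega+i\partial\bar\partial f=\omega$, restricting to a compact curve $V$ and integrating, the exact form $i\partial\bar\partial f$ integrates to zero over $V$ by Stokes' theorem, so $\int_V\rho_\omega=\int_V\omega>0$; since $[\rho_\omega]$ is a positive multiple of $c_1(M)=-[K_M]$, this gives $K_M\cdot V<0$. This is the precise content of ``$M$ can contain only $(-1)$-curves'': for a smooth rational curve $V\subset E$, adjunction gives $V^2=-2-K_M\cdot V>-2$, while $V^2<0$ because the intersection form on $E$ is negative definite (Grauert), so $V^2=-1$.

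Third I would identify $C_0$. By Theorem~\ref{classs} it is $\mathbb{C}^2/\Gamma$, a blowdown $L^\times$ over a curve of genus $>0$, or $L^\times/G$. The resolution $\pi$ factors through the minimal resolution of $C_0$ (Theorem~\ref{minn}), so $M$ contains the strict transform of every exceptional curve of that minimal resolution, with self-intersection no larger. In the two genus-$>0$ cases the minimal resolution contains a smooth curve $F$ of genus $\geq 1$ with $F^2<0$; its strict transform $\widetilde F\subset E$ is a smooth compact curve of genus $\geq 1$ with $\widetilde F^{\,2}<0$, so $K_M\cdot\widetilde F=2p_a(\widetilde F)-2-\widetilde F^{\,2}\geq-\widetilde F^{\,2}>0$, contradicting the second step; and if $\Gamma\neq 1$, the minimal resolution of $\mathbb{C}^2/\Gamma$ contains a smooth rational curve of self-intersection $\leq-2$, whose strict transform $V\subset E$ has $K_M\cdot V=-2-V^2\geq 0$, again a contradiction. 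Hence $C_0$ is smooth, i.e.\ $C_0\cong\mathbb{C}^2$. (Alternatively, as in the introduction, the genus-$>0$ cases can be excluded by noting that $R_{g_0}\geq 0$ forces the link $S$ to have transverse scalar curvature $R^T=R_{g_S}+2\geq 8>0$ by Lemma~\ref{scalar} and Corollary~\ref{morescalar}, and then invoking Belgun's classification of three-dimensional Sasaki manifolds to conclude $C_0\cong\mathbb{C}^2/\Gamma$.)

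Finally I would pin down $M$. Now $\pi\colon M\to\mathbb{C}^2$ is a composition of blowups at points lying over $0$, and $E$ is a connected configuration of smooth rational curves, each a $(-1)$-curve by the second step. But performing two or more such blowups over a single point of a smooth surface always produces a curve of self-intersection $\leq-2$ in the exceptional set: the first blowup gives a $(-1)$-curve $E_1$, the second is necessarily centred on $E_1$ and turns its strict transform into a $(-2)$-curve (whose further strict transforms only decrease it), and this curve lies in $E$. Hence $M$ is obtained from $\mathbb{C}^2$ by at most one blowup, so $M\cong\mathbb{C}^2$ or $M\cong\mathrm{Bl}_0\mathbb{C}^2$; combined with $C_0\cong\mathbb{C}^2$, this is precisely the statement of Theorem~\ref{blue}.
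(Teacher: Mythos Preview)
Your proof is correct and shares the paper's overall architecture---reduce to Theorem~\ref{main} via the Munteanu--Wang curvature-decay result, extract the $(-1)$-curve constraint from the shrinking equation, identify $C_0$, then read off $M$---but your identification of $C_0$ follows a genuinely different route. The paper first invokes Theorem~\ref{pengg} (the Chow--Lu--Yang lower bound) to obtain \emph{strict} positivity $R_{g_0}>0$ of the cone scalar curvature (Corollary~\ref{rpositive}), translates this via Lemma~\ref{scalar} and Corollary~\ref{morescalar} into $R^T>8$ on the Sasaki link, and then appeals to Belgun's classification to force $C_0\cong\mathbb{C}^2/\Gamma$. Your primary argument instead starts from the full classification Theorem~\ref{classs} and eliminates cases (ii) and (iii) purely by adjunction: the genus-$\geq 1$ central curve in the minimal resolution pulls back (via the factorisation $M\to M_{\min}\to C_0$) to a compact curve $\widetilde F\subset M$ with $K_M\cdot\widetilde F\geq -\widetilde F^{\,2}>0$, contradicting $K_M\cdot V<0$. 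This avoids both the Chow--Lu--Yang analytic input and the Sasaki-geometric step through Belgun, at the cost of leaning on the classification Theorem~\ref{classs} (itself a nontrivial external input). Your treatment of the $\Gamma\neq 1$ case and the final blowup count is essentially the paper's argument, phrased through the minimal-resolution factorisation rather than through iterated blowdown. One small correction to your parenthetical alternative: to invoke Belgun as the paper does one needs the strict inequality $R_{g_0}>0$ from Theorem~\ref{pengg}, not merely $R_{g_0}\geq 0$; the non-flat hypothesis is what supplies this.
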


The key observation in proving this theorem is that the scalar curvature of the asymptotic cone is strictly positive if the shrinking
soliton is not flat. Since we are working in complex dimension two, this allows us to identify the tangent cone at infinity
as a quotient singularity using a classification theorem of Belgun \cite[Theorem 8]{belgun} for three real dimensional Sasaki manifolds. The fact that
$M$ is a resolution of $C_{0}$ by Theorem \ref{main}, combined with the fact that the exceptional set of this resolution must
contain only $(-1)$-curves as imposed by the shrinking K\"ahler-Ricci soliton equation, then allows us to identify $M$ and $C_{0}$.

\subsection{Properties of shrinking Ricci solitons}

We begin by noting some important features of shrinking Ricci solitons that we require in this section. We have the following condition on the scalar curvature of a shrinking gradient Ricci soliton.
\begin{theorem}[{\cite{peng}}]\label{pengg}
Let $(M,\,g,\,X)$ be a complete non-compact non-flat shrinking gradient Ricci soliton with scalar curvature $R_{g}$. Then for any given point $o\in M$, there exists a constant $C>0$ such that $R_{g}(x)d_{g}(x,\,o)^{2}>C^{-1}$ wherever $d_{g}(x,\,o)>C$, where $d_{g}$ denotes the distance function with respect to $g$.
\end{theorem}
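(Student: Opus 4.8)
The plan is to prove Theorem \ref{pengg} by contradiction, following the strategy of Chen's lower bound for the scalar curvature combined with the self-improving nature of the soliton identities. First I would recall from Chen's work that the scalar curvature $R_g$ of any complete shrinking gradient Ricci soliton is nonnegative, and moreover satisfies $\Delta_f R_g = R_g - 2|\Ric(g)|^2 \le R_g$ where $\Delta_f = \Delta - \langle \nabla f, \nabla\cdot\rangle$ is the drift Laplacian. Using the normalization $|\nabla^g f|^2 + R_g - f = \mathrm{const}$ (the real analogue of the third identity in Lemma \ref{solitonid}, after rescaling to the Riemannian normalization), one has the standard estimate $\tfrac14(d_g(x,o)-c)^2 \le f(x) \le \tfrac14(d_g(x,o)+c)^2$ from Theorem \ref{theo-basic-prop-shrink}(i). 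So it suffices to show that there is a constant $C>0$ with $R_g(x) f(x) \ge C^{-1}$ for $f(x)$ large, since $f$ is comparable to $d_g(x,o)^2/4$.

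The core of the argument: suppose for contradiction that no such uniform bound exists, so there is a sequence $x_i \to \infty$ with $R_g(x_i) f(x_i) \to 0$. First I would establish that $fR_g$ is, up to lower-order terms, subsolution-like along the flow of $\nabla f$. Concretely, using $\nabla^g R_g = 2\Ric(g)(\nabla^g f)$ (the second soliton identity) and the evolution equation for $R_g$, one computes $\Delta_f(fR_g)$ and controls it; the key point due to Chen--Pan--Peng type arguments is that if $R_g$ decays faster than quadratically somewhere far out, then integrating the differential inequality for $R_g$ along the integral curves of $\nabla^g f$ (which, by Proposition \ref{alix} applied to $f$, flow inward to the compact critical set in backward time, and flow outward to infinity in forward time) forces $R_g$ to vanish on an open set. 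By the strong maximum principle applied to $\Delta_f R_g \le R_g$ together with $R_g \ge 0$, vanishing of $R_g$ on an open set propagates: one gets $R_g \equiv 0$, hence $\Ric(g) \equiv 0$ from $|\Ric(g)|^2 = \tfrac12(R_g - \Delta_f R_g) = 0$, so $g$ is Ricci-flat; combined with the shrinking soliton equation $\Ric(g) + \mathrm{Hess}\,f = \tfrac12 g$ this gives $\mathrm{Hess}\,f = \tfrac12 g$, which forces $(M,g)$ to be flat Euclidean space (Gaussian soliton). This contradicts the non-flat hypothesis.

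The main obstacle I expect is making rigorous the propagation-of-smallness step: quantitatively ruling out the possibility that $R_g$ is positive everywhere but dips to values much smaller than $C^{-1}f^{-1}$ along a sequence going to infinity, without collapsing all the way to $R_g \equiv 0$. The right tool is a localized parabolic-type estimate: rescale the soliton near $x_i$ so that the rescaled scalar curvature at the basepoint is of order $1$ (using $R_g(x_i)^{-1/2}$ as the scale), and analyze the behavior. Alternatively — and this is likely the cleanest route, matching the cited reference — one runs the associated shrinking Ricci flow $g(t)$, $t<0$, and uses that $R_{g(t)}$ satisfies $\partial_t R_{g(t)} = \Delta R_{g(t)} + 2|\Ric(g(t))|^2 \ge \Delta R_{g(t)} + \tfrac1n R_{g(t)}^2$, so that by the maximum principle $R_{g(t)}$ cannot become too small near a point where it was comparable to the reciprocal of the elapsed time, unless it was identically zero. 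Pulling this back to the soliton via the self-similarity $g(t) = |t|\varphi_t^* g$ translates an elapsed-time lower bound into precisely the claimed spatial lower bound $R_g(x) \ge C^{-1} d_g(x,o)^{-2}$. I would cite \cite{peng} for the detailed execution of this maximum-principle-along-the-flow argument, since the statement is quoted directly from there, and present here only the reduction and the structural outline above.

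\begin{proof}[Proof of Theorem \ref{pengg}]
See \cite{peng}; the argument proceeds by analyzing the scalar curvature along the associated shrinking Ricci flow, using the reaction term $2|\Ric(g)|^2 \ge \tfrac{1}{n}R_g^2$ in its evolution equation together with the self-similarity of the flow and the maximum principle, as outlined above.
\end{proof}
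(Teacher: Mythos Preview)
The paper does not prove this theorem at all: it is stated with a citation to \cite{peng} and used as a black box. Your proposal likewise defers to \cite{peng} for the actual proof, so in that sense you match the paper exactly; the structural outline you provide (Chen's nonnegativity, the drift-Laplacian inequality for $R_g$, the strong maximum principle forcing flatness if $R_g$ vanishes somewhere, and the self-similar rescaling translating a temporal lower bound into a spatial $d_g^{-2}$ lower bound) is additional expository content beyond what the paper offers.
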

This yields the following condition on the scalar curvature of an asymptotic cone of a shrinking Ricci soliton.
\begin{corollary}\label{rpositive}
Let $(M,\,g,\,X)$ be a complete non-compact non-flat shrinking gradient Ricci soliton with tangent cone $(C_{0},\,g_{0})$ along an end. Then the scalar curvature $R_{g_{0}}$ of the cone metric $g_{0}$ is strictly positive.
\end{corollary}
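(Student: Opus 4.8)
The plan is to combine Theorem \ref{pengg} with the asymptotic conical structure of the soliton and Lemma \ref{scalar} relating the scalar curvature of a cone to that of its link. By the hypothesis that $(M,\,g,\,X)$ is non-flat, Theorem \ref{pengg} applies: fixing a base point $o\in M$, there is $C>0$ with $R_{g}(x)\,d_{g}(o,\,x)^{2}>C^{-1}$ whenever $d_{g}(o,\,x)>C$. The first step is to pass this lower bound to the tangent cone. Recall that the tangent cone $(C_{0},\,g_{0})$ along the end is obtained as a pointed Gromov-Hausdorff (indeed smooth Cheeger-Gromov on the regular part, by the quadratic curvature decay with derivatives guaranteed by Theorem \ref{theo-basic-prop-shrink}(iii) together with the asymptotically conical structure) limit of the rescalings $(M,\,\lambda_{j}^{-2}g,\,o)$ as $\lambda_{j}\to\infty$. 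Under the rescaling $g\mapsto\lambda^{-2}g$, scalar curvature scales as $R\mapsto\lambda^{2}R$ and distance as $d\mapsto\lambda^{-1}d$, so the scale-invariant quantity $R_{g}(x)d_{g}(o,\,x)^{2}$ is preserved. Hence on the limit cone, at any point $y\in C_{0}$ at cone-distance $\rho(y)=s$ from the apex, one obtains $R_{g_{0}}(y)\,s^{2}\geq C^{-1}>0$, i.e.\ $R_{g_{0}}(y)\geq C^{-1}\rho(y)^{-2}$ everywhere on $C_{0}$.

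The second step is to upgrade ``$\geq C^{-1}\rho^{-2}$'' to ``strictly positive'', which on a cone is automatic from the homogeneity: by Lemma \ref{scalar}, if $S$ is the link of $C_{0}$ with $\dim_{\R}S=m=2n-1$, then $R_{g_{0}}=\frac{1}{r^{2}}(R_{g_{S}}-m(m-1))$, so $R_{g_{S}}-m(m-1)\geq C^{-1}>0$ pointwise on $S$; consequently $R_{g_{0}}=r^{-2}(R_{g_{S}}-m(m-1))>0$ at every point of $C_{0}\setminus\{o\}$. This gives the claimed strict positivity of $R_{g_{0}}$.

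The main technical point to be careful about — the only real obstacle — is the justification that scalar curvature converges under the limit taken to form the tangent cone, so that the scale-invariant inequality genuinely descends. This is where one invokes that the soliton is asymptotically conical with smooth link: under our standing setup (Theorem \ref{main}, or the shrinking analogue of Siepmann's theorem cited in Section \ref{proofofA}, valid here since $A_{0}(g)<\infty$ follows from scalar curvature decay plus the shrinker structure — more precisely, one uses that a shrinking gradient Ricci soliton with scalar curvature decaying to zero has quadratic curvature decay by \cite{munty} and hence quadratic curvature decay with derivatives by Theorem \ref{theo-basic-prop-shrink}(iii)), the rescaled metrics converge in $C^{\infty}_{\mathrm{loc}}$ on $C_{0}\setminus\{o\}$ to $g_{0}$, and in particular $R_{\lambda_{j}^{-2}g}\to R_{g_{0}}$ locally uniformly there. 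Combined with the scale-invariance of $R\,d^{2}$ noted above and Theorem \ref{pengg} applied on $M$, this yields $R_{g_{0}}\,\rho^{2}\geq C^{-1}$ on the cone, completing the argument. One should also record that $C_{0}$ is non-flat here (otherwise $R_{g_{0}}\equiv0$ would contradict the bound), consistent with the non-flatness of $(M,\,g)$; this needs no separate proof since the inequality itself forces it.
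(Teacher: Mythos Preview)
Your proof is correct and follows essentially the same approach as the paper: both use the scale-invariance of $R_{g}\,d_{g}^{2}$ together with Theorem \ref{pengg} to pass the quadratic lower bound on scalar curvature through the rescaling limit defining the tangent cone, obtaining $R_{g_{0}}(y)\geq C^{-1}\rho(y)^{-2}>0$. Your second step invoking Lemma \ref{scalar} is unnecessary, since the inequality $R_{g_{0}}\geq C^{-1}\rho^{-2}$ already gives strict positivity directly; the paper simply stops there.
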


\begin{proof}
The tangent cone at infinity is obtained as a Gromov-Hausdorff limit of a pointed sequence $(M,\,g_{k},\,o):=(M,\,\lambda_{k}^{-2}g,\,o)$ for $o\in M$ fixed, where $\lambda_{k}\to\infty$ as $k\to\infty$. By our asymptotic assumption, the tangent cone is unique and this process recovers the asymptotic cone $(C_{0},\,g_{0})$. Indeed, an arbitrary point $p\in C_0$ with $r(p)=r_{0}>0$ is associated with a sequence $p_{k}\to p$, where $d_{g}(o, p_{k}) = \lambda_{k}r_{0}\to\infty$ as $k\to\infty$, here $r$ denoting the radial coordinate of $g_{0}$ and $d_{g}$ denoting the distance measured with respect to $g$. In particular, we see that
$$R_{g_{0}}(p)=\lim_{k\,\to\,\infty}R_{\lambda_{k}^{-2}g}(p_{k})=\lim_{k\,\to\,\infty}\lambda_{k}^{2}R_{g}(p_{k}),$$ where $R_{\lambda_{k}^{-2}g}$ denotes the scalar curvature of the rescaled metric $\lambda_{k}^{-2}g$. Using the lower bound of Theorem \ref{pengg}, we then have that
$$R_{g_{0}}(p)=\lim_{k\,\to\,\infty}\lambda_{k}^{2}R_{g}(p_{k})>
\lim_{k\,\to\,\infty}\frac{\lambda_{k}^{2}C^{-1}}{d_{g}(o,\,p_{k})^{2}}=
\lim_{k\,\to\,\infty}\frac{\lambda_{k}^{2}C^{-1}}{(\lambda_{k}r_{0})^{2}}=\frac{1}{Cr_{0}^{2}}>0$$
for some positive constant $C$. Since $p$ is arbitrary, it follows that $R_{g_{0}}>0$ away from the apex of $C_{0}$, as claimed.
\end{proof}

\subsection{Proof of Theorem \ref{blue}}

Let $(M,\,g,\,X)$ be a complete non-compact shrinking gradient K\"ahler-Ricci soliton of complex dimension $n+1$ with quadratic curvature decay
and with tangent cone along its end the K\"ahler cone $(C_{0},\,g_{0})$ given by Theorem \ref{main}. Let $r$ denote the radial function of the cone. Then the link of the cone $\{r=1\}$, which we denote by $(S,\,g_{S})$, is a Sasaki manifold of real dimension $2n+1$ foliated by the orbits of the flow of $\xi$, the restriction of the Reeb vector field of the cone to its link.

We know from \cite[Theorem 3]{pigola} that if the scalar curvature $R_{g}$ of $g$ is zero at a point, then $(M,\,g)$ is isometric to Euclidean space.
So we henceforth assume that $R_{g}\neq0$ everywhere so that $(M,\,g)$ is non-flat. Then Corollary \ref{rpositive} tells us that the scalar curvature of the cone $R_{g_{0}}$ is positive. Next we see from Lemma \ref{scalar} that $R_{g_{S}}>2n(2n+1)$ and so it follows from Corollary \ref{morescalar} that
\begin{equation*}
R^{T}>2n(2n+1)+2n=4n(n+1).
\end{equation*}

\subsubsection*{Identification of $C_{0}$} In our case, $(M,\,g,\,X)$ is of complex dimension $2$
and the scalar curvature of $g$ decays to zero at infinity. By \cite{wang22}, the scalar curvature decay implies that the norm of
the curvature tensor of $g$ decays quadratically. Thus, the above applies with $n=1$ and we have the lower bound $R^{T}>8$.
From the classification of 3-dimensional Sasaki manifolds by Belgun \cite[Theorem 8]{belgun}, it then follows that $C_{0}$ is biholomorphic to $\C^{2}/\Gamma$ with
$\Gamma$ a finite subgroup of $U(2)$ acting freely on $\mathbb{C}^{2}\setminus\{0\}$. We next wish to show that $\Gamma=\{\operatorname{id}\}$.

Recall from Theorem \ref{main} that there is a resolution $\pi:M\to C_{0}$ of the singularity of $C_{0}$ with $d\pi(X)=r\partial_{r}$.
 Since $C_{0}$ is biholomorphic to $\mathbb{C}^{2}/\Gamma$ for $\Gamma\subset U(2)$ a finite subgroup acting freely on $\mathbb{C}^{2}\setminus\{0\}$,
it is in particular a rational singularity. It is well-known that the exceptional set of a resolution of such a singularity contains a string of $\mathbb{P}^{1}$'s \cite[Lemma 1.3]{Bri}. Since
$g$ is a shrinking K\"ahler-Ricci soliton, each of these $\mathbb{P}^{1}$'s must have self-intersection $(-1)$ by adjunction.
Moreover, since $C_{0}$ is obtained from $M$ by blowing down all of these $(-1)$-curves, $C_{0}$ must in fact be smooth at the apex so that $\Gamma=\{\operatorname{id}\}$ and $C_{0}$ is biholomorphic to $\mathbb{C}^{2}$.

\subsubsection*{Identification of $M$} It follows that $M$ is then an iterated blowup of $\mathbb{C}^{2}$ at the origin containing only $(-1)$-curves. The only iterated blowups of $\mathbb{C}^{2}$ at the origin containing complex curves of this type are $\mathbb{C}^{2}$ and $\mathbb{C}^{2}$ blown up at one point, since any further iterated blowup would introduce at least one $\mathbb{P}^{1}$ with self-intersection $(-k)$ for some $k\geq2$. The conclusion is then that $M$ must be biholomorphic to either $\mathbb{C}^{2}$ or $\mathbb{C}^{2}$ blown up at one point.

\section{Concluding remarks}\label{conclusion}

We conclude with a discussion of future directions of research emanating from the results within this paper.

\subsection{The conjectural picture}

The results on shrinking gradient K\"ahler-Ricci solitons presented here allow us to speculate on possible deeper connections between such metrics and algebraic geometry. In the compact case, Berman-Witt-Nystrom \cite{wit} gave an algebraic formula for the weighted volume functional and its derivative.
We generalize this result to the non-compact case under suitable assumptions, making use of the results of Wu \cite{Wuu}. We begin with the definition of
an anti-canonically polarised K\"ahler manifold, the underlying complex manifold of a shrinking K\"ahler-Ricci soliton.

\begin{definition}\label{jaws}
An \emph{anti-canonically polarised K\"ahler manifold} is a K\"ahler manifold $M$ admitting a K\"ahler form
$\omega$ together with a hermitian metric on $-K_{M}$ with curvature form $\Theta$ such that
\begin{equation*}
\int_{V}(i\Theta)^{k}\wedge\omega^{\dim_{\mathbb{C}}V-k}>0
\end{equation*}
for all positive-dimensional irreducible compact analytic subvarieties $V$ of $M$ and for all integers $k$ such that $1\leq k\leq \dim_{\C}V$.
\end{definition}

By \cite[Theorem 4.2]{DP}, a compact anti-canonically polarised K\"ahler manifold is a Fano manifold. Moreover, any shrinking K\"ahler-Ricci soliton naturally lives on an anti-canonically polarised K\"ahler manifold.

Under certain criteria, we can write an algebraic formula for the weighted volume functional.
\begin{prop}\label{algebraic2}
Let $(M,\,\omega)$ be a (possibly non-compact) K\"ahler manifold of complex dimension $n$ with K\"ahler form $\omega$ on which there is a Hamiltonian action of a real torus $T$ with moment map $\mu:M\to\mathfrak{t}^{*}$, where $\mathfrak{t}$ is the Lie algebra of $T$ and $\mathfrak{t}^{*}$
its dual. Assume that the fixed point set of $T$ is compact and that
\begin{enumerate}
  \item $H^{p}(M,\,\mathcal{O}(-kK_{M}))=0$ for all $p>0$ and for all $k$ sufficiently large; and that
  \item $\omega$ is the curvature form of a hermitian metric on $-K_{M}$.
\end{enumerate}
If there exists an element $\zeta_{0}\in\mathfrak{t}$ such that the component of the moment map $u_{\zeta_{0}}=\langle\mu,\,\zeta_{0}\rangle$ is proper and bounded below, then
\begin{equation}\label{algebraic}
\int_{M}e^{-\langle\mu,\,\zeta\rangle}\frac{\omega^{n}}{n!}=\lim_{k\to\infty}\frac{1}{k^{n}}\operatorname{char}H^{0}(M,\,\mathcal{O}(-kK_{M}))\left(\frac{\zeta}
{k}\right)
\end{equation}
for all $\zeta$ in an open cone $\Lambda\subset\mathfrak{t}$.
\end{prop}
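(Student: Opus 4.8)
The plan is to obtain \eqref{algebraic} as a semiclassical Bergman-kernel asymptotic, deducing it from the non-compact results of Wu \cite{Wuu} in the same way Berman--Witt-Nystr\"om \cite{wit} argue in the compact case, with the non-compactness controlled by the properness of $u_{\zeta_{0}}$. Set $L:=-K_{M}$ and equip it, using hypothesis (ii), with the hermitian metric whose curvature is $\omega$; then $h^{k}$ is a metric of curvature $k\omega$ on $L^{k}=\mathcal{O}(-kK_{M})$, and the $T$-action on $M$ lifts canonically to $L^{k}$. Because the fixed point set of $T$ is compact and $u_{\zeta_{0}}=\langle\mu,\zeta_{0}\rangle$ is proper and bounded below, the relevant space $H^{0}(M,\mathcal{O}(-kK_{M}))$ of \eqref{algebraic} decomposes into $T$-weight spaces $H^{0}(M,L^{k})=\widehat{\bigoplus}_{\alpha}H^{0}(M,L^{k})_{\alpha}$, each of which is finite-dimensional, with the set of occurring weights $\alpha$ confined to a translate of a fixed cone dual to a neighbourhood of $\zeta_{0}$ in $\mathfrak{t}$; this confinement is exactly what Wu's $L^{2}$-estimates provide, and it is what makes the character
\begin{equation*}
\operatorname{char}H^{0}(M,\mathcal{O}(-kK_{M}))(\zeta/k)=\sum_{\alpha}\bigl(\dim H^{0}(M,L^{k})_{\alpha}\bigr)\,e^{\langle\alpha,\zeta\rangle/k}
\end{equation*}
an honestly convergent sum, and a smooth function, for $\zeta$ in the open convex cone $\Lambda:=\{\zeta\in\mathfrak{t}:\langle\mu,\zeta\rangle\textrm{ is proper and bounded below}\}$, which contains $\zeta_{0}$ by hypothesis. (The left-hand side of \eqref{algebraic} is, up to the factor $n!$, the value at $\zeta$ of the weighted volume functional of Definition \ref{modfut}, which the paper also evaluates via Theorem \ref{dhthm}; \eqref{algebraic} therefore records an algebraic formula for the same quantity.)

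The core step is the convergence of the \emph{scaled spectral measures}. Put $\nu_{k}:=k^{-n}\sum_{\alpha}\bigl(\dim H^{0}(M,L^{k})_{\alpha}\bigr)\,\delta_{\alpha/k}$ on $\mathfrak{t}^{*}$, where the lift of the $T$-action to $L$ is normalised so that the weights occurring in $H^{0}(M,\mathcal{O}(-kK_{M}))$ cluster near $-k\,\mu(M)$. I would show that $\nu_{k}$ converges weakly to the Duistermaat--Heckman measure $(-\mu)_{*}\bigl(\omega^{n}/n!\bigr)$ of the $T$-action on $(M,\omega)$, and moreover with a uniform exponential tail bound in the cone direction. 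For the interior statement one uses hypothesis (i) to replace $\operatorname{char}H^{0}(M,L^{k})$ by the equivariant holomorphic Euler characteristic (so that higher cohomology does not contribute) together with Wu's on-diagonal expansion $k^{-n}B_{k}(x)\to1$ for the Bergman kernel $B_{k}=\sum_{j}|s_{j}|^{2}_{h^{k}}$, localised in weights: the weight-$\alpha$ piece of $B_{k}\,\omega^{n}/n!$ concentrates, as $k\to\infty$ with $\alpha/k\to-y$, near $\mu^{-1}(y)$, and summing the pieces over a small neighbourhood of $\mu^{-1}(y)$ recovers $k^{n}$ times the local density of the Duistermaat--Heckman measure at $y$ --- the familiar ``quantisation commutes with reduction'' asymptotic, valid here away from the compact fixed locus by Wu's results. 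For the tail bound, note that for $\zeta$ in any fixed compact subset $K\subset\Lambda$ there are $\delta>0$ and $C>0$ with $e^{\langle y,\zeta\rangle}\leq C\,e^{\delta\langle y,\zeta_{0}\rangle}$ on $\operatorname{supp}\nu_{k}$ for all $k$, because the weights lie in the cone above; since $\langle\mu,\zeta_{0}\rangle$ is proper and bounded below we have $\int_{M}e^{-\delta\langle\mu,\zeta_{0}\rangle}\omega^{n}<\infty$ and $\sup_{k}\nu_{k}\bigl(e^{\delta\langle\cdot,\zeta_{0}\rangle}\bigr)<\infty$, which is the uniform integrability needed to integrate the non-compactly supported exponential $e^{\langle\cdot,\zeta\rangle}$ against $\nu_{k}$.

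With these two ingredients the proof closes: for $\zeta\in\Lambda$,
\begin{equation*}
k^{-n}\operatorname{char}H^{0}(M,\mathcal{O}(-kK_{M}))(\zeta/k)=\int_{\mathfrak{t}^{*}}e^{\langle y,\zeta\rangle}\,d\nu_{k}(y)\ \longrightarrow\ \int_{\mathfrak{t}^{*}}e^{\langle y,\zeta\rangle}\,d\bigl((-\mu)_{*}(\omega^{n}/n!)\bigr)(y)=\int_{M}e^{-\langle\mu,\zeta\rangle}\frac{\omega^{n}}{n!},
\end{equation*}
the limit by dominated convergence against the uniform exponential majorant, and the last equality being the definition of the pushforward measure. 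The main obstacle is not the formal bookkeeping but verifying that Wu's non-compact Bergman-kernel asymptotics genuinely apply in our setting --- in particular the on-diagonal expansion and the weight-localisation estimate, with enough uniformity near infinity --- and extracting from the properness and boundedness below of $u_{\zeta_{0}}$ both the confinement of the occurring weights to a cone and the attendant uniform tail estimates for $\nu_{k}$. Once those non-compact inputs are established, the remainder of the argument is identical to the compact case treated in \cite{wit}.
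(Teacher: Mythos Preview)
Your approach is a genuinely different route from the paper's. The paper never touches Bergman kernels or spectral measures: instead it localizes \emph{both} sides of \eqref{algebraic} to the fixed-point set $M^{T}$ and matches the contributions componentwise. Concretely, hypothesis~(i) together with Wu's equivariant holomorphic Lefschetz formula (\cite[equation~(3.41)]{Wuu}) expresses $\operatorname{char}H^{0}(M,\mathcal{O}(-kK_{M}))$ as a finite sum over the connected components $M^{T}_{\alpha}$ of integrals involving $\operatorname{ch}^{T}(-kK_{M}|_{M^{T}_{\alpha}})$, normal-bundle terms, and $\operatorname{td}(M^{T}_{\alpha})$. The paper then evaluates each such term at $\zeta/k$, expands carefully in powers of $k$, and checks by hand that as $k\to\infty$ the contribution of $M^{T}_{\alpha}$ converges to
\[
\int_{M^{T}_{\alpha}} e^{c_{1}(-K_{M}|_{M^{T}_{\alpha}})}\,e^{\sum_{i}\lambda_{\alpha,i}(\zeta)}\prod_{i}\frac{1}{\lambda_{\alpha,i}(\zeta)\bigl(1+c_{1}(L_{\alpha,i})/\lambda_{\alpha,i}(\zeta)\bigr)},
\]
which is exactly the fixed-point contribution in the Duistermaat--Heckman formula (Theorem~\ref{dhthm}), applicable because of hypothesis~(ii). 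So the paper's proof is entirely algebraic once Wu's index formula and the DH theorem are in hand; no asymptotic analysis on $M$ itself is needed, and compactness of $M^{T}$ makes the limit $k\to\infty$ a finite sum of elementary limits.

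Your spectral-measure argument is conceptually appealing and closer in spirit to \cite{wit}, but it trades one black box for several harder ones. First, a caution: in this paper \cite{Wuu} is invoked for the equivariant Lefschetz fixed-point formula and the well-definedness of the character, \emph{not} for on-diagonal Bergman kernel asymptotics in the non-compact setting; you should check that the asymptotics you need are actually established there (or elsewhere) under only hypotheses (i)--(ii) and the properness of $u_{\zeta_{0}}$. Second, the uniform tail bound $\sup_{k}\nu_{k}(e^{\delta\langle\cdot,\zeta_{0}\rangle})<\infty$ is the real crux, and your justification---that the occurring weights lie in a cone dual to a neighbourhood of $\zeta_{0}$---needs a quantitative version with control of the multiplicities $\dim H^{0}(M,L^{k})_{\alpha}$ far out in that cone; this is not obviously a consequence of Wu's $L^{2}$-estimates alone. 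The paper's localization approach sidesteps both issues entirely, since on the compact fixed locus everything is finite and explicit. If you can supply those two analytic inputs your argument would go through and would have the advantage of identifying the limit directly as the DH measure rather than via a second localization, but as written the proposal defers precisely the non-compact estimates that constitute the substance of the proof.
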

In this situation, the character $\operatorname{char}H^{0}(M,\,\mathcal{O}(-kK_{M}))$ is well-defined by \cite{Wuu}.
Moreover, by \cite[Theorem 4.5]{Ohsawa}, the vanishing condition (i) holds true for any $1$-convex anti-canonically polarised K\"ahler
manifold and condition (ii) holds true for any shrinking gradient K\"ahler-Ricci soliton. In particular, if $(M,\,\omega,\,X)$ is a complete shrinking gradient K\"ahler-Ricci soliton with Ricci curvature decaying to zero at infinity, endowed with the holomorphic, effective, isometric action of a real torus $T$ with Lie algebra $\mathfrak{t}$ containing $JX$, then the above theorem applies. The volume minimising principle (Lemma \ref{critical}) then tells us that for such a soliton, the unique minimum of the weighted volume functional is obtained at $JX$.

Before we present the proof of Proposition \ref{algebraic}, it is necessary to introduce some notation. Our notation will
mostly follow \cite{Wuu}. We denote by $M^{T}$ the fixed-point set of $T$ in $M$. By assumption, this is compact. If non-empty, it is a
complex submanifold of $M$. Let $F$ be the set of connected components of $M^{T}$. Then
$M^{T}=\bigcup_{\alpha\in F}M^{T}_{\alpha}$, where $M^{T}_{\alpha}$ is the component labelled by $\alpha\in F$.
Let $n_{\alpha}=\dim_{\mathbb{C}}M_{\alpha}^{T}$ and let $N_{\alpha}\to M^{T}_{\alpha}$ be the holomorphic normal
bundle of $M_{\alpha}^{T}$ in $M$. $T$ acts on $N_{\alpha}$ preserving the base $M_{\alpha}^{T}$ pointwise. The weights
of the isotropy representation on the normal fiber remain constant within any connected component. Let $\ell$
be the integral lattice in the Lie algebra $\mathfrak{t}$ of $T$, let $\ell^{*}\subset\mathfrak{t}^{*}$ denote the dual
lattice, and let $\lambda_{\alpha,\,i}\in\ell^{*}\setminus\{0\}$ ($1\leq i\leq n-n_{\alpha}$) be the isotropy weights
on $N_{\alpha}$. The hyperplanes $(\lambda_{\alpha,\,i})^{\perp}\subset\mathfrak{t}$ cut $\mathfrak{t}$ into open polyhedral
cones called \emph{action chambers} \cite{wu}. Choose an action chamber $C$. We define $\nu_{\alpha}^{C}$ as the number of weights
$\lambda_{\alpha,\,i}\in C^{*}$, where $C^{*}$ is the dual cone in $\mathfrak{t}^{*}$ defined by $C^{*}=\{\xi\in\mathfrak{t}^{*}:\langle\xi,\,C\rangle>0\}$.
Let $N_{\alpha}^{C}$ be the direct sum of the sub-bundles corresponding to the weights $\lambda_{\alpha,\,i}\in C^{*}$.
Then $N_{\alpha}=N_{\alpha}^{C}\oplus N_{\alpha}^{-C}$. $\nu_{\alpha}^{C}$ is the rank of the holomorphic vector bundle
$N_{\alpha}^{C}$; that of $N_{\alpha}^{-C}$ is $\nu_{\alpha}^{-C}=n-n_{\alpha}-\nu_{\alpha}^{C}$.

\begin{proof}[Proof of Proposition \ref{algebraic2}]
For $k\in\mathbb{N}$ sufficiently large, the vanishing assumption (i) together with \cite[equation (3.41)]{Wuu} implies that
$$\operatorname{char}H^{0}(M,\,\mathcal{O}(-kK_{M}))=\sum_{\alpha\in F}(-1)^{n-n_{\alpha}-\nu_{\alpha}^{C}}
\int_{M^{T}_{\alpha}}\operatorname{ch}^{T}\left(\frac{-kK_{M}|_{M^{T}_{\alpha}}\otimes\det(N_{\alpha}^{-C})}{\det(1-(N_{\alpha}^{C})^{*})\otimes\det(1-N_{\alpha}^{-C})}
\right)\operatorname{td}(M^{T}_{\alpha}),$$
where, if $R$ is a finite-dimensional representation of $T$,
$$\frac{1}{\det(1-R)}:=\oplus_{m=0}^{\infty}\operatorname{Sym}^{m}(R),$$
and where $\operatorname{ch}^{T}$ denotes the equivariant Chern character.
For a fixed $\alpha\in F$, we therefore have that
\begin{equation*}
\begin{split}
&\operatorname{ch}^{T}\left(\frac{-kK_{M}|_{M_{\alpha}^{T}}\otimes\det(N_{\alpha}^{-C})}{\det(1-(N_{\alpha}^{C})^{*})\otimes\det(1-N_{\alpha}^{-C})}\right)
\operatorname{td}(M_{\alpha}^{T})\\
&=\operatorname{ch}^{T}(-kK_{M}|_{M_{\alpha}^{T}})\operatorname{ch}^{T}\left(\frac{1}{\det(1-(N_{\alpha}^{C})^{*})}\right)
\operatorname{ch}^{T}\left(\frac{1}{\det(1-N_{\alpha}^{-C})}\right)\operatorname{ch}^{T}(\det(N_{\alpha}^{-C}))
\operatorname{td}(M_{\alpha}^{T}).\\
\end{split}
\end{equation*}
Now, $$\operatorname{td}(M_{\alpha}^{T})=1+\frac{1}{2}c_{1}(-K_{M_{\alpha}^{T}})+...$$
Analysing the term $\operatorname{ch}^{T}(-kK_{M}|_{M_{\alpha}^{T}})$,
we have by adjunction that
$$K_{M}|_{M_{\alpha}^{T}}=K_{M_{\alpha}^{T}}-\det(N_{\alpha})$$
so that
$$-kK_{M}|_{M_{\alpha}^{T}}=(-kK_{M_{\alpha}^{T}})+k\det(N_{\alpha}).$$
Now $M_{\alpha}^{T}$ is fixed under the action of $T$ and so the action of $T$ on $-kK_{M_{\alpha}^{T}}$ is trivial.
The torus $T$ therefore acts on $-kK_{M}|_{M_{\alpha}^{T}}$ as multiplication by $e^{k\sum_{i=1}^{n-n_{\alpha}}\lambda_{\alpha,\,i}}$, where
recall that $n_{\alpha}$ is the dimension of $M_{\alpha}^{T}$. Thus, we have that
$$c_{1}^{T}(-kK_{M}|_{M_{\alpha}^{T}})=kc_{1}(-K_{M}|_{M_{\alpha}^{T}})+k\sum_{i=1}^{n-n_{\alpha}}\lambda_{\alpha,\,i},$$
where $c_{1}^{T}$ is the equivariant first Chern class, so that
$$\operatorname{ch}^{T}(-kK_{M}|_{M_{\alpha}^{T}})\left(\frac{\zeta}{k}\right)=e^{c_{1}^{T}(-kK_{M}|_{M_{\alpha}^{T}})\left(\frac{\zeta}{k}\right)}
=e^{kc_{1}(-K_{M}|_{M_{\alpha}^{T}})}e^{\sum_{i=1}^{n-n_{\alpha}}\lambda_{\alpha,\,i}(\zeta)}.$$

Next analysing the second term, we may write $N_{\alpha}^{C}=\oplus_{\{i\,:\,\lambda_{\alpha,\,i}\,\in\, C^{*}\}}L_{\alpha,\,i}$,
where $L_{\alpha,\,i}$ is the line subbundle of $N_{\alpha}$ with isotropy weight $\lambda_{\alpha,\,i}$.
Then we have that
$$\frac{1}{\det(1-(N_{\alpha}^{C})^{*})}=\oplus_{m=0}^{\infty}\operatorname{Sym}^{m}((N_{\alpha}^{C})^{*})=\otimes_{\{i\,:\,\lambda_{\alpha,\,i}\,\in\, C^{*}\}}\frac{1}{\det(1-L^{*}_{\alpha,\,i})}$$
so that
\begin{equation*}
\begin{split}
\operatorname{ch}^{T}\left(\frac{1}{\det(1-(N_{\alpha}^{C})^{*})}\right)&=\operatorname{ch}^{T}\left(\otimes_{\{i\,:\,\lambda_{\alpha,\,i}\,\in\, C^{*}\}}\frac{1}{\det(1-L^{*}_{\alpha,\,i})}\right)=\prod_{{\{i\,:\,\lambda_{\alpha,\,i}\,\in\, C^{*}\}}}\operatorname{ch}^{T}
\left(\frac{1}{\det(1-L^{*}_{\alpha,\,i})}\right).\\
\end{split}
\end{equation*}
Now observe that for each $i$,
\begin{equation*}
\begin{split}
\operatorname{ch}^{T}\left(\frac{1}{\det(1-L^{*}_{\alpha,\,i})}\right)&=\operatorname{ch}^{T}(\oplus_{m=0}^{\infty}\operatorname{Sym}^{m}(L^{*}_{\alpha,\,i}))=\operatorname{ch}^{T}(\oplus_{m=0}^{\infty}(L^{*}_{\alpha,\,i})^{m})\\
&=\sum_{m=0}^{\infty}(\operatorname{ch}^{T}(L^{*}_{\alpha,\,i}))^{m}=\sum_{m=0}^{\infty}\left(e^{-\lambda_{\alpha,\,i}+c_{1}(L^{*}_{\alpha,\,i})}\right)^{m}\\
&=\frac{1}{1-e^{-\lambda_{\alpha,\,i}+c_{1}(L^{*}_{\alpha,\,i})}}.
\end{split}
\end{equation*}
Hence
$$\operatorname{ch}^{T}\left(\frac{1}{\det(1-(N_{\alpha}^{C})^{*})}\right)=\prod_{\{i\,:\,\lambda_{\alpha,\,i}\,\in\, C^{*}\}}
\frac{1}{1-e^{-\lambda_{\alpha,\,i}+c_{1}(L^{*}_{\alpha,\,i})}}.$$
Consequently,
\begin{equation*}
\begin{split}
&\operatorname{ch}^{T}\left(\frac{1}{\det(1-(N_{\alpha}^{C})^{*})}\right)\left(\frac{\zeta}{k}\right)=
\prod_{\{i\,:\,\lambda_{\alpha,\,i}\,\in\, C^{*}\}}\frac{1}{1-e^{-\frac{1}{k}\lambda_{\alpha,\,i}(\zeta)-c_{1}(L_{\alpha,\,i})}}\\
&=\prod_{\{i\,:\,\lambda_{\alpha,\,i}\,\in\, C^{*}\}}\frac{1}{1-(1+(-\frac{1}{k}\lambda_{\alpha,\,i}(\zeta)-c_{1}(L_{\alpha,\,i}))+\sum_{l=2}^{\infty}\frac{1}{l!}(-\frac{1}{k}
\lambda_{\alpha,\,i}(\zeta)-c_{1}(L_{\alpha,\,i}))^{l})}\\
&=\prod_{\{i\,:\,\lambda_{\alpha,\,i}\,\in\, C^{*}\}}\frac{1}{\frac{1}{k}\lambda_{\alpha,\,i}(\zeta)+c_{1}(L_{\alpha,\,i})-\sum_{l=2}^{\infty}
\frac{1}{l!}(-\frac{1}{k}\lambda_{\alpha,\,i}(\zeta)-c_{1}(L_{\alpha,\,i}))^{l}}\\
&=\prod_{\{i\,:\,\lambda_{\alpha,\,i}\,\in\, C^{*}\}}\frac{k}{\lambda_{\alpha,\,i}(\zeta)+kc_{1}(L_{\alpha,\,i})-k\sum_{l=2}^{\infty}
\frac{1}{l!}(-\frac{1}{k}\lambda_{\alpha,\,i}(\zeta)-c_{1}(L_{\alpha,\,i}))^{l}}\\
&=\prod_{\{i\,:\,\lambda_{\alpha,\,i}\,\in\,C^{*}\}}\frac{k}
{\lambda_{\alpha,\,i}(\zeta)\left(1+\frac{kc_{1}(L_{\alpha,\,i})}{\lambda_{\alpha,\,i}(\zeta)}+\frac{k}{\lambda_{\alpha,\,i}
(\zeta)}\sum_{l=2}^{\infty}\frac{(-1)^{l+1}}{l!}(\frac{1}{k}\lambda_{\alpha,\,i}(\zeta)+
c_{1}(L_{\alpha,\,i}))^{l}\right)}.\\
\end{split}
\end{equation*}
Now,
$$\left(\frac{1}{k}\lambda_{\alpha,\,i}(\zeta)+c_{1}(L_{\alpha,\,i})\right)^{l}=c_{1}(L_{\alpha,\,i})^{l}+\frac{l}{k}c_{1}(L_{\alpha,\,i})^{l-1}
\lambda_{\alpha,\,i}(\zeta)+O(k^{-2})$$
so that
$$k\left(\frac{1}{k}\lambda_{\alpha,\,i}(\zeta)+c_{1}(L_{\alpha,\,i})\right)^{l}=
kc_{1}(L_{\alpha,\,i})^{l}+lc_{1}(L_{\alpha,\,i})^{l-1}\lambda_{\alpha,\,i}(\zeta)+O(k^{-1}).$$
Since $l\geq2$, we have that
$$\frac{k}{\lambda_{\alpha,\,i}(\zeta)}\sum_{l=2}^{\infty}\frac{(-1)^{l+1}}{l!}\left(\frac{1}{k}\lambda_{\alpha,\,i}(\zeta)+
c_{1}(L_{\alpha,\,i})\right)^{l}=O(k)c_{1}(L_{\alpha,\,i})^{2}P_{1}+c_{1}(L_{\alpha,\,i})P_{2}+O(k^{-1}),$$
where $P_{1}$ and $P_{2}$ are polynomials in $c_{1}(L_{\alpha,\,i})$. Therefore, we see that
\begin{equation*}
\begin{split}
&\operatorname{ch}^{T}\left(\frac{1}{\det(1-(N_{\alpha}^{C})^{*})}\right)\left(\frac{\zeta}{k}\right)\\
&=\prod_{\{i\,:\,\lambda_{\alpha,\,i}\,\in\, C^{*}\}}\frac{k}{\lambda_{\alpha,\,i}(\zeta)\left(1+\frac{kc_{1}(L)}{\lambda_{\alpha,\,i}(\zeta)}\right)+
O(k)c_{1}(L_{\alpha,\,i})^{2}P_{1}+c_{1}(L_{\alpha,\,i})P_{2}+O(k^{-1})}\\
&=k^{\nu_{\alpha}^{C}}\prod_{\{i\,:\,\lambda_{\alpha,\,i}\,\in\, C^{*}\}}\frac{1}{\lambda_{\alpha,\,i}(\zeta)\left(1+\frac{kc_{1}(L)}{\lambda_{\alpha,\,i}(\zeta)}\right)+
O(k)c_{1}(L_{\alpha,\,i})^{2}P_{1}+c_{1}(L_{\alpha,\,i})P_{2}+O(k^{-1})}.
\end{split}
\end{equation*}
A similar argument also shows that
\begin{equation*}
\begin{split}
&\operatorname{ch}^{T}\left(\frac{1}{\det(1-N_{\alpha}^{-C})}\right)\left(\frac{\zeta}{k}\right)\\
&=(-k)^{n-n_{\alpha}-\nu_{\alpha}^{C}}\prod_{\{i\,:\,\lambda_{\alpha,\,i}\,\in\, -C^{*}\}}\frac{1}{\lambda_{\alpha,\,i}(\zeta)\left(1+\frac{kc_{1}(L_{\alpha,\,i})}{\lambda_{\alpha,\,i}(\zeta)}\right)+
O(k)c_{1}(L_{\alpha,\,i})^{2}Q_{1}+c_{1}(L_{\alpha,\,i})Q_{2}+O(k^{-1})}
\end{split}
\end{equation*}
for polynomials $Q_{1}$ and $Q_{2}$ in $c_{1}(L_{\alpha,\,i})$.

Finally,
\begin{equation*}
\begin{split}
\operatorname{ch}^{T}(\det(N_{\alpha}^{-C}))&=e^{c_{1}^{T}(\det(N_{\alpha}^{-C}))}\\
&=e^{c_{1}(\det(N_{\alpha}^{-C}))+\sum_{\{i\,:\,\lambda_{\alpha,\,i}\,\in\,-C^{*}\}}\lambda_{\alpha,\,i}},
\end{split}
\end{equation*}
so that
$$\operatorname{ch}^{T}(\det(N_{\alpha}^{-C}))\left(\frac{\zeta}{k}\right)=
e^{c_{1}(N_{\alpha}^{-C})+\frac{1}{k}\sum_{\{i\,:\,\lambda_{\alpha,\,i}\,\in\,-C^{*}\}}\lambda_{\alpha,\,i}(\zeta)}.$$

Putting all of the above observations together, we arrive at
\begin{equation*}
\begin{split}
&\frac{1}{k^{n}}\operatorname{char}H^{0}(M,\,\mathcal{O}(-kK_{M}))\left(\frac{\zeta}{k}\right)\\
&=\sum_{\alpha\in F}\frac{(-1)^{n-n_{\alpha}-\nu^{C}_{\alpha}}}{k^{n}}\int_{M_{\alpha}^{T}}\operatorname{ch}^{T}\left(\frac{-kK_{M}|_{M_{\alpha}^{T}}\otimes\det(N_{\alpha}^{-C})}
{\det(1-(N_{\alpha}^{C})^{*})\otimes\det(1-N_{\alpha}^{-C})}\right)
\operatorname{td}(M_{\alpha}^{T})\left(\frac{\zeta}{k}\right)\\
&=\sum_{\alpha\in F}\frac{1}{k^{n_{\alpha}}}
\int_{M_{\alpha}^{T}}e^{kc_{1}(-K_{M}|_{M_{\alpha}^{T}})}e^{\sum_{i=1}^{n-n_{\alpha}}\lambda_{\alpha,\,i}(\zeta)}
e^{c_{1}(N_{\alpha}^{-C})+\frac{1}{k}\sum_{\{i:\lambda_{\alpha,\,i}\in-C^{*}\}}\lambda_{\alpha,\,i}(\zeta)}\\
&\prod_{\{i\,:\,\lambda_{\alpha,\,i}\,\in\, C^{*}\}}\frac{1}{\lambda_{\alpha,\,i}(\zeta)\left(1+\frac{kc_{1}(L_{\alpha,\,i})}{\lambda_{\alpha,\,i}(\zeta)}\right)+
O(k)c_{1}(L_{\alpha,\,i})^{2}P_{1}+c_{1}(L_{\alpha,\,i})P_{2}+O(k^{-1})}\\
&\prod_{\{i\,:\,\lambda_{\alpha,\,i}\,\in\, -C^{*}\}}
\frac{1}{\lambda_{\alpha,\,i}(\zeta)\left(1+\frac{kc_{1}(L_{\alpha,\,i})}{\lambda_{\alpha,\,i}(\zeta)}\right)+
O(k)c_{1}(L_{\alpha,\,i})^{2}Q_{1}+c_{1}(L_{\alpha,\,i})Q_{2}+O(k^{-1})}\operatorname{td}(M_{\alpha}^{T})\\
&=\sum_{\alpha\in F}\frac{1}{k^{n_{\alpha}}}
\int_{M_{\alpha}^{T}}e^{kc_{1}(-K_{M}|_{M_{\alpha}^{T}})}e^{\sum_{i=1}^{n-n_{\alpha}}\lambda_{\alpha,\,i}(\zeta)}
e^{c_{1}(N_{\alpha}^{-C})+\frac{1}{k}\sum_{\{i:\lambda_{\alpha,\,i}\in-C^{*}\}}\lambda_{\alpha,\,i}(\zeta)}
\prod_{i=1}^{n-n_{\alpha}}\frac{1}{\lambda_{\alpha,\,i}(\zeta)}\\
&\prod_{\{i\,:\,\lambda_{\alpha,\,i}\,\in\, C^{*}\}}\frac{1}{1+\frac{kc_{1}(L_{\alpha,\,i})}{\lambda_{\alpha,\,i}(\zeta)}+O(k)c_{1}(L_{\alpha,\,i})^{2}P_{1}+c_{1}(L_{\alpha,\,i})P_{2}+O(k^{-1})}\\
&\prod_{\{i\,:\,\lambda_{\alpha,\,i}\,\in\, -C^{*}\}}\frac{1}{1+\frac{kc_{1}(L_{\alpha,\,i})}{\lambda_{\alpha,\,i}(\zeta)}+
O(k)c_{1}(L_{\alpha,\,i})^{2}Q_{1}+c_{1}(L_{\alpha,\,i})Q_{2}+O(k^{-1})}\left(1+\frac{1}{2}c_{1}(-K_{M^{T}_{\alpha}})+\ldots\right)\\
&\longrightarrow_{k\to\infty}\sum_{\alpha\in F}\int_{M^{T}_{\alpha}}
e^{c_{1}(-K_{M}|_{M_{\alpha}^{T}})}e^{\sum_{i=1}^{n-n_{\alpha}}\lambda_{\alpha,\,i}(\zeta)}
\prod_{i=1}^{n-n_{\alpha}}\frac{1}{\lambda_{\alpha,\,i}(\zeta)\left(1+\frac{c_{1}(L_{\alpha,\,i})}{\lambda_{\alpha,\,i}(\zeta)}\right)},
\end{split}
\end{equation*}
where, in taking the limit, we use the fact that any integrand involving terms not of the form $k^{j}\sigma_{j}$ for $\sigma_{j}$ a real $(j,\,j)$-form
vanishes. The result now follows from an application of Theorem \ref{dhthm} making use of assumption (ii) of the proposition.
\end{proof}

Given this proposition, it is tempting to define a notion of K-stability that characterises algebraically the existence of a shrinking gradient K\"ahler-Ricci soliton
on a complete anti-canonically polarised K\"ahler manifold $M$ endowed with a complete holomorphic vector field
following the strategy as implemented in the Fano case. For this purpose, we make the following definition.

\begin{definition}\label{positivee}
Let $M$ be a quasi-projective manifold endowed with the effective holomorphic action of a real torus $T$
whose fixed point set is compact. Denote by $\mathfrak{t}$ the Lie algebra of $T$, let
$\mathcal{O}_{M}(M)$ denote the global algebraic sections of the structure sheaf of $M$, and write $$\mathcal{O}_{M}(M)=\bigoplus_{\alpha\,\in\,\mathfrak{t}^{*}}\mathcal{H}_{\alpha}$$
for the weight decomposition under the action of $T$. Then we say that a vector field $Y\in\mathfrak{t}$ on $M$ is \emph{positive} if
$\alpha(Y)>0$ for all $\alpha\in\mathfrak{t}^{*}$ such that $\mathcal{H}_{\alpha}\neq\emptyset$ and $\alpha\neq0$.
\end{definition}

\begin{remark}
If $\pi:M\to C_{0}$ is a quasi-projective equivariant resolution of a K\"ahler cone $(C_{0},\,g_{0})$
with respect to the holomorphic isometric torus action on $(C_{0},\,g_{0})$
generated by the flow of the Reeb vector field of $g_{0}$, and $g$ is a K\"ahler metric on $M$ that is asymptotic to $g_{0}$ and
with respect to which the induced torus action on $(M,\,g)$ is isometric and Hamiltonian, then in the terminology just introduced,
the weighted volume functional $F$ for $(M,\,g)$ is defined on the open cone of positive vector fields in the Lie algebra of the torus
if this open cone is non-empty. This fact follows from Theorem \ref{dhthm} after noting Theorem \ref{fme}.
\end{remark}

Roughly speaking, one considers equivariant degenerations (or test configurations) of the pair $(M,\,X)$,
where $M$ is a quasi-projective anti-canonically polarised K\"ahler manifold with complex structure $J$ endowed with the holomorphic effective action of a real torus $T$ whose fixed point set is compact, and where $X$ is a vector field on $M$ with $JX$ a positive vector field lying in the Lie algebra of $T$. Then one defines a Futaki invariant in the usual manner as the derivative of the algebraic realisation of the weighted volume functional which is given by the right-hand side of \eqref{algebraic}. Of course, one must verify that this formula is well-defined in general. One subsequently defines $(M,\,X)$ as above to be K-stable if and only if the Futaki invariant is non-negative on all test configurations and positive if and only if the test configuration is non-trivial. This then allows one to make the following conjecture generalising the Yau-Tian-Donaldson conjecture for Fano manifolds.
\begin{conj}
Let $M$ be a quasi-projective anti-canonically polarised K\"ahler manifold endowed with the holomorphic effective action of a real torus $T$ whose fixed point
set is compact. Denote by $\mathfrak{t}$ the Lie algebra of $T$ and let $X$ be a vector field on $M$ such that $JX\in\mathfrak{t}$ is a positive vector field. Then $M$ admits a complete shrinking gradient K\"ahler-Ricci soliton with soliton vector field $X$ if and only if $(M,\,X)$ is K-stable.
\end{conj}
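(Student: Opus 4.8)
The plan is to mimic, in the non-compact shrinking setting, the strategy behind the Yau--Tian--Donaldson theorem for Fano manifolds, with the weighted volume functional $F$ of Definition \ref{modfut} and its algebraic incarnation (the right-hand side of \eqref{algebraic}) playing simultaneously the roles of the Ding functional and of the Donaldson--Futaki weight. The first task is to pin down the notion of test configuration: an equivariant degeneration of $(M,\,X)$ should be a $T\times\mathbb{C}^{*}$-equivariant flat family over $\mathbb{C}$ whose general fibre is $(M,\,X)$ and whose central fibre $(M_{0},\,X_{0})$ is a normal quasi-projective variety again carrying, in the Lie algebra of the enlarged torus $\widetilde{T}$, a \emph{positive} vector field in the sense of Definition \ref{positivee}. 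Here one uses the expected finite generation of the ring of polynomial-growth holomorphic functions on $M$ (cf.~the Remark following the finite-dimensionality proposition) to see that such degenerations form a workable class, and one must check that the algebraic formula \eqref{algebraic} for $F$ extends continuously from the generic fibre to $M_{0}$ — this is the key compatibility needed to even state the Futaki invariant.

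For the easy direction ($\Rightarrow$): given a complete shrinking gradient K\"ahler-Ricci soliton $(M,\,g,\,X)$ with the prescribed data, Lemma \ref{critical} says that $F$ is strictly convex on the open cone $\Lambda\subset\mathfrak{t}$ and attains its unique critical point, hence its global minimum, at $JX$; by the Duistermaat--Heckman theorem (Theorem \ref{dhthm}) together with Proposition \ref{algebraic2}, $F$ equals the purely algebraic expression \eqref{algebraic}, so that expression is minimized at $JX$ as well. For a test configuration, the extra $\mathbb{C}^{*}$-action enlarges $T$ to $\widetilde{T}$, and one defines the Futaki invariant of $(M,\,X)$ as the derivative at $JX$, in the new $\mathbb{C}^{*}$-direction, of the algebraic volume functional attached to the central fibre. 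Continuity of \eqref{algebraic} under degeneration identifies this derivative with a directional derivative of $F$ itself; convexity of $F$ and minimality at $JX$ then force it to be non-negative, with equality exactly when the degeneration orbit in $\Lambda$ collapses to $JX$, i.e.~when the test configuration is a product. This gives K-polystability.

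The hard direction ($\Leftarrow$) is where essentially all of the difficulty resides. The natural route is a continuity method (or a normalized K\"ahler-Ricci flow) on the space of $T$-invariant K\"ahler metrics on $M$ with prescribed conical asymptotics — one first reduces to $M$ being an equivariant resolution of a K\"ahler cone, since by a structure result of the type of Theorem \ref{main} any complete shrinker with curvature decay must live there — and one rephrases existence as the properness (coercivity) of a Ding-type functional, with K-stability supplying this properness. Making this work over a non-compact base requires, in addition: (i) a priori control of the metrics at infinity, pinning down the asymptotic cone and preventing the escape of volume (so that $JX$ stays in the open cone $\Lambda$ for the background model, which at the start is guaranteed by Theorem \ref{fme}); (ii) a non-compact partial $C^{0}$-estimate, identifying Gromov--Hausdorff limits of the continuity path with central fibres of test configurations, i.e.~an analogue of the Donaldson--Sun theory in this setting; and (iii) matching the soliton vector field of the limiting metric with $X$ via the uniqueness theorem (Theorem \ref{uunique}) and the maximality statement Corollary \ref{maxx}. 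I expect step (ii) — a non-compact partial $C^{0}$-estimate, together with the algebraicity of \emph{all} limit spaces arising along the path — to be the main obstacle: it is already the crux in the Fano case, and here it is compounded by the need for precise analysis at infinity and by the current lack of a proven algebraic structure theory (finite generation, algebraicity of $M$) for general shrinking gradient K\"ahler-Ricci solitons.
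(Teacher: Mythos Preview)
The statement you are attempting to prove is labelled \emph{Conjecture} in the paper, and the paper offers no proof whatsoever: immediately after stating it, the authors write ``We expect that the well-developed machinery in the study of K\"ahler-Einstein metrics may be suitably adapted to study this conjecture. We leave this for future work.'' There is therefore no ``paper's own proof'' to compare your proposal against.

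Your write-up is not a proof but an outline of a possible strategy, and to your credit you essentially say so: you identify the hard direction ($\Leftarrow$) as ``where essentially all of the difficulty resides'' and flag the non-compact partial $C^{0}$-estimate and the algebraicity of limit spaces as the main obstacle. That is an honest assessment, and it aligns with the paper's own view that this is genuinely open. Even the ``easy'' direction ($\Rightarrow$) in your sketch is not rigorous as written: you need the algebraic volume formula \eqref{algebraic} to make sense on the (possibly singular) central fibre and to vary continuously across the degeneration, and neither of these is established in the paper (the authors themselves note ``one must verify that this formula is well-defined in general''). Moreover, several ingredients you invoke as if available---finite generation of the ring of polynomial-growth holomorphic functions on $M$, algebraicity of $M$, a workable class of test configurations in the non-compact setting---are listed by the paper among its open problems (items 1 and 6 in Section~\ref{conclusion}). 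So what you have is a plausible roadmap, consistent with the paper's speculation, but with every substantive step still to be carried out.
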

Thus, in light of this conjecture, one may view an anti-canonically polarised K\"ahler manifold as a
``non-compact Fano manifold''. (In a similar manner, one may also define a non-compact manifold of general type, etc.)
We expect that the well-developed machinery in the study of K\"ahler-Einstein metrics may be suitably adapted
to study this conjecture. We leave this for future work.

\subsection{Open problems}

There are also various other interesting open problems that we raise here.

\begin{enumerate}[label=\arabic{enumi}.]
\item Is a complete expanding or shrinking gradient K\"ahler-Ricci soliton necessarily algebraic (or quasi-projective)?
In particular, is the canonical ring of an expanding gradient K\"ahler-Ricci soliton finitely generated? Is the anti-canonical
ring of a shrinking gradient K\"ahler-Ricci soliton finitely generated?
What we can say here is that if the curvature tensor of a shrinking gradient K\"ahler-Ricci soliton decays quadratically, or if that of an expanding gradient K\"ahler-Ricci soliton decays quadratically with derivatives, then the soliton lives on a resolution of a K\"ahler cone by Theorem \ref{main}, hence is quasi-projective by Proposition \ref{quasi2}.
\item Is there at most one complete shrinking K\"ahler-Ricci soliton for a given holomorphic vector field
 on an anti-canonically polarised K\"ahler manifold up to automorphisms of the complex structure commuting with the flow of the vector field? More speculatively, is a complete shrinking K\"ahler-Ricci soliton on such a manifold unique up to automorphisms of the complex structure? A non-compact K\"ahler manifold may admit many non-isometric complete expanding gradient K\"ahler-Ricci solitons even for a fixed holomorphic soliton vector field, as demonstrated by \cite[Theorem A]{con-der}.
\item What are the constraints on a K\"ahler cone to appear as the
tangent cone of a complete shrinking gradient K\"ahler-Ricci soliton
with quadratic curvature decay? Is the underlying complex manifold of the shrinking soliton
then determined uniquely by its tangent cone? By Theorem \ref{main}, we know that the shrinking soliton must live on a resolution of its tangent cone
that is moreover an anti-canonically polarised K\"ahler manifold. For complete expanding gradient K\"ahler-Ricci solitons with quadratic curvature decay with derivatives, we know from Corollary \ref{unique}
that a K\"ahler cone appears as the tangent cone if and only if the K\"ahler cone has a smooth canonical model
(on which the soliton lives).

\item Related to the previous question, modulo automorphisms of the complex structure,
how many shrinking gradient K\"ahler-Ricci solitons with quadratic curvature decay have a given affine cone appearing as the underlying
complex space of the tangent cone? For $\mathbb{C}^{2}$, we have shown in Theorem \ref{classify} that the answer is two;
$\mathbb{C}^{2}$ only appears as the underlying complex space of the tangent cone
of the flat Gaussian shrinking soliton on $\mathbb{C}^{2}$ and of the
$U(2)$-invariant shrinking gradient K\"ahler-Ricci soliton of
Feldman-Ilmanen-Knopf on $\mathbb{C}^{2}$ blown up at a point \cite{FIK}. In general, we expect the answer to be finitely many for any given affine cone. By Corollary \ref{unique}, the answer to this question
for complete expanding gradient K\"ahler-Ricci solitons with quadratic curvature decay with derivatives is infinitely many for any given affine cone
admitting a smooth canonical model.
\item In Corollary \ref{unique}, we have seen that when the canonical model of a K\"ahler cone is smooth,
it admits a complete expanding gradient K\"ahler-Ricci soliton. Is this also true when the canonical model is singular?\footnote{We thank John Lott for raising this question.}
 \item Let $M$ be a complete quasi-projective K\"ahler manifold endowed with the holomorphic Hamiltonian action of a real torus $T$
 with Lie algebra $\mathfrak{t}$ whose fixed point set is compact. Are the elements of $\mathfrak{t}$ admitting Hamiltonian potentials that are proper and bounded below precisely those elements in $\mathfrak{t}$ that are positive in the sense of Definition \ref{positivee}? Equivalently, does the open cone of positive vector fields in $\mathfrak{t}$ coincide with the open cone $\operatorname{int}(\mathcal{C}(\mu(M))')\subset\mathfrak{t}$ of Proposition \ref{properr}?
 These questions have an affirmative answer in the setting of asymptotically conical K\"ahler manifolds; see Theorem \ref{fme} for a precise statement.
\item Does Theorem \ref{uunique} still hold true without the assumption of bounded Ricci curvature?
\item Given a complete shrinking gradient K\"ahler-Ricci soliton $(M,\,g,\,X)$, is the
zero set of $X$ always compact? As demonstrated in Lemma \ref{compactt},
this is the case if $g$ has bounded scalar curvature.
\item Let $M$ be a complete K\"ahler manifold endowed with the holomorphic Hamiltonian
action of a real torus $T$ with Lie algebra $\mathfrak{t}$ whose fixed point set is compact.
By the Duistermaat-Heckman theorem (Theorem \ref{dhthm}), the weighted volume functional $F$ is defined on the
open cone $\Lambda$ of elements of $\mathfrak{t}$ admitting Hamiltonian potentials that are proper and bounded below.
Is $F$ necessarily proper on $\Lambda$? If so, then it would have a unique minimiser on $\Lambda$.
Properness of the volume functional on the set of normalised Reeb vector fields of a Sasaki manifold
was shown in \cite[Proposition 3.3]{he}.
\end{enumerate}

\newpage
\appendix

\section{The Duistermaat-Heckman theorem}\label{heckman}

\subsection{Statement of the theorem}\label{a}
The material in this section has been taken verbatim from various sources in the literature including \cite{berline, heckman, Yau, wu}.
We begin with the definition of a moment map which is required for the statement of the Duistermaat-Heckman theorem.
\begin{definition}
Let $(M,\,\omega)$ be a symplectic manifold and let $T$ be a real torus acting by symplectomorphisms on $(M,\,\omega)$.
Denote by $\mathfrak{t}$ the Lie algebra of $T$ and by $\mathfrak{t}^{*}$ its dual. Then we say that the action of $T$ is \emph{Hamiltonian}
if there exists a smooth map $\mu:M\to\mathfrak{t}^{*}$ such that for all $\zeta\in\mathfrak{t}$,
$-\omega\lrcorner\zeta=du_{\zeta}$, where $u_{\zeta}(x)=\langle\mu(x),\,\zeta\rangle$ for all $\zeta\in\mathfrak{t}$ and $x\in M$. We call
$\mu$ the \emph{moment map} of the $T$-action and we call $u_{\zeta}$ the \emph{Hamiltonian (potential)} of $\zeta$.
\end{definition}
Notice that $u_{\zeta}$ is invariant under the flow of $\zeta$. Indeed, we have that
$$\mathcal{L}_{\zeta}u_{\zeta}=du_{\zeta}\lrcorner\zeta=-\omega(\zeta,\,\zeta)=0.$$
Consequently, each integral curve of $\zeta$ must be contained in a level set of $u_{\zeta}$.

Now consider the Hamiltonian action of a real torus $T$ of rank $s$ on a symplectic manifold $(M,\,\omega)$
of real dimension $2n$. Identify $T$ with $(S^{1})^{s}\subset(\mathbb{C}^{*})^{s}$ and
 introduce complex coordinates $(\phi_{1},\ldots,\phi_{s})$ on $T$ via this identification. This induces coordinates
$(\eta_{1},\ldots,\eta_{s})\in\mathbb{R}^{s}$ on the Lie algebra $\mathfrak{t}$ of $T$, where $(\eta_{1},\ldots,\eta_{s})$ corresponds to the vector
$\sum_{i=1}^{s}\eta_{i}\frac{\partial}{\partial\phi_{i}},$ each $\frac{\partial}{\partial\phi_{i}}$ the vector field on $M$ induced by
the coordinate $\phi_{i}$ on $T$. For $\zeta\in\mathfrak{t}$ with coordinates $(b_{1},\ldots,b_{s})$ say, the flow on $M$ generated by $\zeta$
will have a fixed point set $M_{0}(\zeta)$ corresponding to the zero set of the vector field $\zeta$. This set has the following properties.
\begin{prop}[{\cite[Proposition 7.12]{berline}}]\label{finished}
The connected components $\{F_{i}\}$ of $M_{0}(\zeta)$ are smooth submanifolds of $M$. The dimensions of different connected components do not have
to be the same. The normal bundle $\mathcal{E}_{i}$ of $F_{i}$ in $M$ are orientable vector bundles with even-dimensional fibres.
\end{prop}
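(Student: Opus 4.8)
\textbf{Proof proposal for Proposition \ref{finished}.}

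The statement is a classical fact about the fixed point set of an isometric (or, here, Hamiltonian) circle/torus action, and the plan is to reduce it to the behaviour of the linearized action near a fixed point. First I would observe that the vector field $\zeta$ on $M$ is a Killing field for any $T$-invariant Riemannian metric $g$ (which exists since $T$ is compact — average an arbitrary metric over $T$), and that $M_0(\zeta)$ is precisely the fixed point set of the closure of the one-parameter subgroup generated by $\zeta$, hence the fixed point set of a compact connected abelian subgroup of isometries. For the smoothness of the components, the key step is to work at a point $x\in M_0(\zeta)$ and use the exponential map $\exp_x:T_xM\to M$, which is equivariant with respect to the isotropy representation of $T$ (equivalently of the closure of the flow of $\zeta$) on $T_xM$. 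Since this representation is linear and orthogonal, its fixed subspace $V_x:=\ker(d\zeta|_x)\subset T_xM$ is a linear subspace, and $\exp_x(V_x\cap B_\varepsilon)$ is exactly $M_0(\zeta)\cap \exp_x(B_\varepsilon)$ for small $\varepsilon$. This furnishes a chart exhibiting each connected component $F_i$ as a smooth totally geodesic submanifold; the dimensions may genuinely differ between components because $\dim V_x$ can vary, so nothing forces them to agree.

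For the normal bundle $\mathcal{E}_i$, the plan is to identify its fiber over $x\in F_i$ with the orthogonal complement $V_x^\perp\subset T_xM$, on which $T$ (through the isotropy representation) acts with \emph{no nonzero fixed vectors}. An orthogonal representation of a torus with no nonzero fixed vectors decomposes into a direct sum of two-dimensional irreducible pieces, each a nontrivial rotation representation of some character; in particular the fiber is even-dimensional and carries a canonical complex structure coming from these rotations, hence is oriented. Running this fiberwise and noting that the weights are locally constant along $F_i$ (the isotropy representation varies continuously and the set of characters is discrete), one gets that $\mathcal{E}_i\to F_i$ is an orientable vector bundle with even-dimensional fibers. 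Since the paper is ultimately in the Kähler/symplectic setting, one can alternatively phrase all of this complex-analytically: $\zeta$ (or $J\zeta$, depending on conventions) is real holomorphic, its zero set is a complex analytic subvariety, and smoothness plus the even-dimensional oriented normal bundle follow because $M_0(\zeta)$ is then a complex submanifold and $\mathcal{E}_i$ a holomorphic vector bundle; I would mention this as the quickest route but keep the Riemannian argument as the backbone since it needs no Kähler hypothesis.

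The main obstacle — really the only non-formal point — is justifying that $M_0(\zeta)\cap\exp_x(B_\varepsilon)$ is \emph{exactly} $\exp_x(V_x\cap B_\varepsilon)$ rather than merely contained in it: one must rule out fixed points of $\zeta$ that are ``hidden'' in directions where $d\zeta|_x\neq 0$. This is handled by the equivariance of $\exp_x$: if $y=\exp_x(v)$ is fixed by the flow of $\zeta$ and lies in a normal geodesic ball, then $v$ is fixed by the (linear) isotropy action, so $v\in V_x$; conversely geodesics with initial vector in $V_x$ stay pointwise fixed because the flow, being by isometries fixing $x$, commutes with $\exp_x$. Once this local normal form is in hand, everything else — connectedness of components being an open-and-closed argument, local constancy of the weights, orientability via the complex structure on $\mathcal{E}_i$ — is routine. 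Since this is a quoted result from \cite[Proposition 7.12]{berline}, in the actual paper I would most likely just cite it; the above is how I would reconstruct the proof if pressed.
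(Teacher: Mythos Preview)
Your reconstruction is correct and is the standard argument, and you correctly anticipated the paper's treatment: the paper does not prove this proposition at all but simply states it with the citation to \cite[Proposition 7.12]{berline}. There is nothing to compare against.
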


For a disconnected component $F$ of $M_{0}(\zeta)$ of real codimension $2k$ in $M$, let $\iota:F\to M$ denote the inclusion.
Then $\iota^{*}\omega$ is a symplectic form on $F$
so that $F$ is a symplectic submanifold of $M$. The normal bundle $\mathcal{E}$ of $F$ in $M$ has the structure of a symplectic
vector bundle and will have real dimension $2k$. We denote this induced symplectic form on $\mathcal{E}$ by $\tau$.
The flow of $\zeta$ will generate a fibre-preserving linear action $L\zeta:\mathcal{E}\to\mathcal{E}$ on $\mathcal{E}$ which is
an automorphism of $\mathcal{E}$ leaving $\tau$ invariant in the infinitesimal sense. We introduce an almost complex structure $I:\mathcal{E}\to\mathcal{E}$, i.e.,
an automorphism of $\mathcal{E}$ such that $I^{2}=-\operatorname{id}$, commuting with $L\zeta$ and compatible with
$\tau$ in the sense that $\tau(I\cdot,\,\cdot)$ defines an inner product on $\mathcal{E}$. This gives $\mathcal{E}$ the structure of
a complex vector bundle over $F$ with $L\zeta$ an automorphism of $\mathcal{E}$ preserving the complex structure.

Next, denote by $u_{1},\ldots,u_{R}\in\mathbb{Z}^{s}\subset\mathfrak{t}^{*}$ the weights of the induced representation of $\mathfrak{t}$ on $\mathcal{E}$.
Then we have a direct sum decomposition of vector bundles
$$\mathcal{E}=\bigoplus_{\lambda=1}^{R}\mathcal{E}_{\lambda},$$
where
$$\mathcal{E}_{\lambda}:=\{v\in\mathcal{E}:(e^{i\eta_{1}},\ldots,e^{i\eta_{s}})\cdot v=u_{\lambda}(\eta_{1},\ldots,\eta_{s})Iv\quad\textrm{for all $(\eta_{1},\ldots,\eta_{n})\in\mathfrak{t}$}\}.$$
Each $\mathcal{E}_{\lambda}$ is a vector bundle of rank $2n_{\lambda}$ say. Clearly we must have $k=\sum_{\lambda=1}^{R}n_{\lambda}$.
Consider now the complex vector bundle $\mathcal{E}^{1,\,0}$ of complex dimension $k$ endowed with the action of $L\zeta$ extended by $\mathbb{C}$-linearity.
Then we have an induced decomposition
$$\mathcal{E}^{1,\,0}=\bigoplus_{\lambda=1}^{R}\mathcal{E}^{1,\,0}_{\lambda},$$
where $L\zeta$ acts on the $\lambda$th factor by $iu_{\lambda}(b_{1},\ldots,b_{n})$, and so the action of $L\zeta$ on $\mathcal{E}^{1,\,0}$ will take the form
$$L\zeta=i\operatorname{diag}(1_{n_{1}}u_{1}(b),\ldots,1_{n_{R}}u_{R}(b)),$$
where $1_{n_{\lambda}}$ denotes the $n_{\lambda}\times n_{\lambda}$ identity matrix and $b=(b_{1},\ldots,b_{s})$ are the
coordinates of $\zeta$. Thus,
$$\det\left(\frac{L\zeta}{i}\right)=\prod_{\lambda=1}^{R}u_{\lambda}(b)^{n_{\lambda}}.$$
Note that this is homogeneous of degree $k$ in $b$.

We next choose any $L\zeta$-invariant connection on $\mathcal{E}^{1,\,0}$ with curvature matrix $\Omega$.
Finally, for a polyhedral set $U$ of a vector space $V$, we define the \emph{asymptotic cone}
$$\mathcal{C}(U):=\{v\in V:\textrm{there is a $v_{0}\in V$ such that $v_{0}+tv\in U$ for $t>0$ sufficiently large}\},$$
and for a subset $W$ of $V$, we define the \emph{dual cone}
$W^{\prime}:=\{\alpha\in V^{*}:\alpha(W)\subseteq\mathbb{R}_{\geq0}\}$. Now we can state the Duistermaat-Heckman theorem.
\begin{theorem}[{The Duistermaat-Heckman theorem \cite[Theorem 2.2]{wu}}]\label{dhthm}
Let $(M,\,\omega)$ be a (possibly non-compact) symplectic manifold of real dimension $2n$ with symplectic form $\omega$ on which there is a Hamiltonian
action of a real torus $T$ with moment map $\mu:M\to\mathfrak{t}^{*}$, where $\mathfrak{t}$ is the Lie algebra of $T$ and $\mathfrak{t}^{*}$
its dual. Assume that the fixed point set of $T$ is compact. If there exists
an element $\zeta_{0}\in\mathfrak{t}$ such that the component of the moment map $u_{\zeta_{0}}=\langle\mu,\,\zeta_{0}\rangle$ is proper and bounded below, then
\begin{equation}\label{dh}
\int_{M}e^{-\langle\mu,\,\zeta\rangle}\frac{\omega^{n}}{n!}=\sum_{F\in M_{0}(\zeta)}\int_{F}\frac{e^{-\iota^{*}\langle\mu,\,\zeta\rangle}e^{\iota^{*}\omega}}
{\det\left(\frac{L\zeta-\Omega}{2\pi i}\right)}
\end{equation}
for all $\zeta$ in the open cone $\operatorname{int}(\mathcal{C}(\mu(M))^{\prime})\subset\mathfrak{t}$, where the sum on the right-hand side
is taken over the connected components $F$ of the zero set $M_{0}(\zeta)$ of $\zeta$.
\end{theorem}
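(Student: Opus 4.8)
The plan is to exhibit the left-hand side of \eqref{dh} as the integral of an equivariantly closed form on $M$ and then to run the Atiyah--Bott--Berline--Vergne localisation argument in the non-compact setting, the hypotheses on $\zeta$ and on $u_{\zeta_{0}}$ being exactly what is needed to license the manipulations at infinity.

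First I would work in the Cartan model. On $T$-invariant differential forms set $d_{\zeta}\alpha:=d\alpha-\alpha\lrcorner\zeta$, so that $d_{\zeta}^{2}=\mathcal{L}_{\zeta}=0$. The moment map identity $-\omega\lrcorner\zeta=du_{\zeta}$ gives $d_{\zeta}(\omega-u_{\zeta})=0$, hence $\beta:=e^{\omega-u_{\zeta}}$ is equivariantly closed and its top-degree component is $e^{-u_{\zeta}}\omega^{n}/n!$. For each connected component $F\in M_{0}(\zeta)$ — a smooth symplectic submanifold with even-dimensional orientable normal bundle by Proposition \ref{finished} — the corresponding term on the right-hand side of \eqref{dh} is precisely $\int_{F}\iota^{*}\beta$ divided by the equivariant Euler form of the normal bundle of $F$, which, with the almost complex structure $I$, the $L\zeta$-invariant connection and its curvature $\Omega$ fixed as in the set-up above, equals $\det\left(\frac{L\zeta-\Omega}{2\pi i}\right)$. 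Thus \eqref{dh} is exactly the localisation formula for the form $\beta$, and it remains to establish that formula when $M$ is non-compact.

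For the non-compact localisation I would proceed as follows. Since the coadjoint action of a torus is trivial, $\mu$ is $T$-invariant, so $u_{\zeta_{0}}$ is $T$-invariant and its sublevel sets $\{u_{\zeta_{0}}\le c\}$ are compact $T$-invariant domains exhausting $M$. The condition $\zeta\in\operatorname{int}(\mathcal{C}(\mu(M))')$ forces, after subtracting a small multiple of $\zeta_{0}$ and invoking Proposition \ref{properr}, a bound $u_{\zeta}\ge\varepsilon\,u_{\zeta_{0}}-C$ outside a compact set; combined with the polynomial growth of the Duistermaat--Heckman measure $\mu_{*}(\omega^{n}/n!)$ this yields $\int_{M}e^{-\langle\mu,\zeta\rangle}\omega^{n}/n!<\infty$ together with the decay at infinity needed for Stokes' theorem to hold on $M$ without a boundary contribution. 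Now fix a $T$-invariant Riemannian metric $g$ and put $\theta:=-g(\zeta,\cdot)$, a $T$-invariant $1$-form with $\theta\lrcorner\zeta=-|\zeta|_{g}^{2}\le0$ vanishing exactly on $M_{0}(\zeta)$. Because $d_{\zeta}(d_{\zeta}\theta)=0$ and $d_{\zeta}\beta=0$, one checks $\beta\,e^{-t\,d_{\zeta}\theta}-\beta=-d_{\zeta}\big(\beta\,\theta\textstyle\int_{0}^{t}e^{-s\,d_{\zeta}\theta}\,ds\big)$, so, applying the Stokes estimate on the compact domains $\{u_{\zeta_{0}}\le c\}$ and letting $c\to\infty$, the quantity $\int_{M}\beta\,e^{-t\,d_{\zeta}\theta}$ is independent of $t>0$. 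Since $e^{-t\,d_{\zeta}\theta}=e^{-t\,d\theta}\,e^{-t|\zeta|_{g}^{2}}$, as $t\to+\infty$ the integrand concentrates in a tubular neighbourhood of $M_{0}(\zeta)$, and the standard Gaussian/Mathai--Quillen computation there — using the linear normal form of the $T$-action on the normal bundle of each $F$ and the chosen connection — evaluates the limit term by term as $\sum_{F}\int_{F}\iota^{*}\beta\big/\det\left(\frac{L\zeta-\Omega}{2\pi i}\right)$, which is the right-hand side of \eqref{dh}.

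The main obstacle is the control at infinity: one must show that the Stokes identity applied to $\beta\,\theta\int_{0}^{t}e^{-s\,d_{\zeta}\theta}\,ds$ produces no boundary term in the limit $c\to\infty$, and that the $t\to\infty$ concentration is uniform enough to interchange the limit with the integral. Both reduce to the estimate $|\beta|=O(e^{-\langle\mu,\zeta\rangle})$ up to polynomial factors, valid on the open cone, together with the super-exponential decay of $e^{-t|\zeta|_{g}^{2}}$ transverse to $M_{0}(\zeta)$; the compactness of the fixed point set $M_{0}(\zeta)$ then guarantees that only finitely many, genuinely convergent, terms occur on the right-hand side, so no further convergence issue arises.
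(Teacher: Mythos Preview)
Your proposal is correct in outline and follows a well-known route to localisation, but it is genuinely different from the paper's argument, and the main difference lies precisely in the non-compact boundary analysis that you flag as the obstacle.

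The paper does \emph{not} use the Witten--Berline--Vergne deformation $\beta\,e^{-t\,d_{\zeta}\theta}$. Instead, it reduces by analyticity to rational $\zeta=t\eta$ with $\eta$ integral, so that $\eta$ generates an honest $S^{1}$-action with proper Hamiltonian $H=\langle\mu,\eta\rangle$. It then builds an explicit primitive $\nu=\theta\wedge(d_{\zeta}\theta)^{-1}\wedge e^{\omega-tH}$ on $M\setminus M_{0}(\zeta)$ (this is Lemma~\ref{exact}), with $\theta=g(\eta,\cdot)/g(\eta,\eta)$ normalised so that $\theta\lrcorner\zeta=t$, and applies Stokes on the compact domains $\{H\le a\}\setminus\bigcup_{F}\psi(B_{\epsilon})$. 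The $\epsilon\to0$ limit near each $F$ is the standard compact-case computation. The essential point is the treatment of the outer boundary $H^{-1}(a)$: because $\eta$ is integral, $H^{-1}(a)\to M_{a}:=H^{-1}(a)/S^{1}$ is a circle orbibundle, $\theta|_{H^{-1}(a)}$ is a connection form, $d\theta|_{H^{-1}(a)}$ its curvature $F_{a}$, and the boundary integral reduces to an integral over $M_{a}$ involving only $\omega_{a}$ and $F_{a}$. The Duistermaat--Heckman variation formula then says $[\omega_{a}]$ depends affinely on $a$ while $[F_{a}]$ is constant, so the boundary term is $e^{-ta}$ times a polynomial in $a$ and vanishes as $a\to\infty$. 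No growth hypothesis on the auxiliary metric $g$ is needed.

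Your deformation approach, by contrast, must control the boundary integral of $\beta\,\theta\int_{0}^{t}e^{-s\,d_{\zeta}\theta}\,ds$ over $\{u_{\zeta_{0}}=c\}$, and this involves $|\zeta|_{g}$ and $|d\theta|$, whose growth at infinity is not governed by the symplectic data alone. Your appeal to ``polynomial growth of the Duistermaat--Heckman measure'' and ``$|\beta|=O(e^{-\langle\mu,\zeta\rangle})$ up to polynomial factors'' is exactly the right heuristic, but making it precise requires either a judicious choice of $g$ or an independent argument. The paper's passage through rational $\zeta$ and the symplectic quotient sidesteps this entirely, trading an analytic estimate for a topological one.
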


Under the assumptions on the moment map $\mu$ as in the theorem, $\mu(M)$ is a proper polyhedral set in $\mathfrak{t}^{*}$
and the elements of $\operatorname{int}(\mathcal{C}(\mu(M))^{\prime})\subset\mathfrak{t}$ are characterised as follows.
\begin{prop}[{\cite[Proposition 1.4]{wu}}]\label{properr}
Under the assumptions on $T$ and $\mu$ as in Theorem \ref{dhthm}, $u_{\zeta}=\langle\mu,\,\zeta\rangle$ is proper if
 and only if $\zeta\in\pm\operatorname{int}(\mathcal{C}(\mu(M))^{\prime})
\subset\mathfrak{t}$. Moreover, if $\zeta\in\operatorname{int}(\mathcal{C}(\mu(M))^{\prime})
\subset\mathfrak{t}$, then $u_{\zeta}(M)=[m_{\zeta},\,+\infty)$ for a suitable $m_{\zeta}\in\mathbb{R}$.
\end{prop}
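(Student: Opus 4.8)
The plan is to pass through the image polytope $P:=\mu(M)\subseteq\mathfrak{t}^{*}$. The first input I would invoke is the non-compact convexity theorem (in the form available in the symplectic literature and used by Wu): under the standing hypotheses — a Hamiltonian $T$-action with compact fixed point set, together with the existence of one $\zeta_{0}\in\mathfrak{t}$ whose Hamiltonian $u_{\zeta_{0}}$ is proper and bounded below — the moment map is a surjection $\mu:M\to P$ with connected fibres onto a proper, locally finite polyhedral convex set $P$ whose recession cone is exactly $\mathcal{C}(\mu(M))$, and which by the Minkowski–Weyl decomposition may be written as the sum of a polytope and $\mathcal{C}(\mu(M))$. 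Since $u_{\zeta_{0}}$ is proper and bounded below, the linear functional $\ell_{\zeta_{0}}:=\langle\cdot,\zeta_{0}\rangle$ is bounded below and proper on $P$; in particular it is strictly positive on $\mathcal{C}(\mu(M))\setminus\{0\}$, i.e., $\zeta_{0}\in\operatorname{int}(\mathcal{C}(\mu(M))^{\prime})$.

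Next I would reduce properness of $u_{\zeta}$ on $M$ to properness of $\ell_{\zeta}$ on $P$. For the ``only if'' direction, I argue by contraposition: if $\ell_{\zeta}$ is not proper on $P$, choose $\alpha_{j}\in P$ with $|\alpha_{j}|\to\infty$ but $\langle\alpha_{j},\zeta\rangle$ bounded, lift each $\alpha_{j}$ to $x_{j}\in M$ with $\mu(x_{j})=\alpha_{j}$, and observe that $(x_{j})$ can have no convergent subsequence since $\mu$ is continuous and $|\mu(x_{j})|\to\infty$; thus a sublevel set of $|u_{\zeta}|$ is non-compact. For the ``if'' direction, suppose $\ell_{\zeta}$ is proper on $P$. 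Then $\ell_{\zeta}$ is nonvanishing on the pointed polyhedral cone $\mathcal{C}(\mu(M))\setminus\{0\}$, and since $\ell_{\zeta}$ and $\ell_{\zeta_{0}}$ are both nonvanishing — hence, by convexity of the cone, sign-definite — and proper on this finitely generated cone, comparing them on the compact base $\{v\in\mathcal{C}(\mu(M)):\ell_{\zeta_{0}}(v)=1\}$ and using homogeneity shows they are mutually equivalent there; the decomposition $P=(\text{polytope})+\mathcal{C}(\mu(M))$ then yields constants $c_{1}>0$, $c_{2}\in\mathbb{R}$ with $|\ell_{\zeta}|\geq c_{1}\ell_{\zeta_{0}}-c_{2}$ on $P$. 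Consequently $\{x\in M:|u_{\zeta}(x)|\leq c\}\subseteq\{x\in M:u_{\zeta_{0}}(x)\leq (c+c_{2})/c_{1}\}$, which is compact because $M$ is exhausted by the compact sublevel sets of the proper, bounded-below function $u_{\zeta_{0}}$; being closed in $M$, the set on the left is therefore compact, so $u_{\zeta}$ is proper.

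It remains to identify which $\zeta$ make $\ell_{\zeta}$ proper on $P$. Properness of $\ell_{\zeta}$ on the polyhedral set $P$ is equivalent to properness of its restriction to the recession cone $\mathcal{C}(\mu(M))$ — this is the standard fact that the asymptotics of a polyhedral set are governed by its recession cone, one implication being immediate from the definition of $\mathcal{C}(\mu(M))$ and the other requiring the polyhedrality of $P$ — which in turn holds exactly when $\ell_{\zeta}$ does not vanish on $\mathcal{C}(\mu(M))\setminus\{0\}$. Because $\mathcal{C}(\mu(M))$ is a convex cone, its image under $\ell_{\zeta}$ is a convex cone in $\mathbb{R}$, so non-vanishing on the complement of the origin forces $\ell_{\zeta}>0$ throughout or $\ell_{\zeta}<0$ throughout, i.e., $\zeta\in\operatorname{int}(\mathcal{C}(\mu(M))^{\prime})$ or $-\zeta\in\operatorname{int}(\mathcal{C}(\mu(M))^{\prime})$. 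Combined with the previous paragraph this proves the first assertion. For the ``moreover'', fix $\zeta\in\operatorname{int}(\mathcal{C}(\mu(M))^{\prime})$; then $\ell_{\zeta}$ is strictly positive on the recession cone minus the origin, hence bounded below on $P$ (its infimum is attained among the finitely many vertices), so $u_{\zeta}$ is bounded below and, being proper, attains a minimum $m_{\zeta}$. Since $M$ is non-compact and $u_{\zeta}$ is proper, $u_{\zeta}$ is unbounded above, and since $M$ is connected $u_{\zeta}(M)$ is an interval; therefore $u_{\zeta}(M)=[m_{\zeta},+\infty)$.

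The main obstacle is the non-compact convexity theorem invoked at the outset — specifically the fact that $P=\mu(M)$ is a genuine proper, locally finite polyhedral set rather than merely a closed convex set, since the equivalence ``proper on $P\iff$ proper on the recession cone'' (in the nontrivial direction) and the mutual equivalence of two proper positive linear functionals on $\mathcal{C}(\mu(M))$ can both fail for pathological closed convex sets. In the present setting this is exactly the content imported from \cite{wu}, and establishing it (or a sufficient substitute) is the non-routine step; the remainder of the argument is soft point-set topology and elementary convex geometry.
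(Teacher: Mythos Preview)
The paper does not supply its own proof of this proposition: it is simply quoted as \cite[Proposition 1.4]{wu} and used as a black box, so there is no in-paper argument to compare against. Your sketch is a reasonable reconstruction of the standard route (moment image is a proper polyhedral set, reduce properness of $u_\zeta$ to properness of the linear functional on the recession cone, then read off the dual-cone characterisation), and you correctly identify that the only substantive input is the non-compact convexity/polyhedrality theorem for $\mu(M)$, which is precisely what is being cited from \cite{wu}.
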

\noindent That is, elements of $\operatorname{int}(\mathcal{C}(\mu(M))^{\prime})$ are precisely those elements of
$\mathfrak{t}$ whose Hamiltonian is proper and bounded below. Notice that this cone is non-empty because
it contains $\zeta_{0}$ by assumption. Then for each $\zeta\in\operatorname{int}(\mathcal{C}(\mu(M))^{\prime})$,
each connected component of the zero set $M_{0}(\zeta)$ of $\zeta$ must be compact because $u_{\zeta}=\langle\mu,\,\zeta\rangle$ is proper, and
moreover, it must contain a fixed point of the torus action by \cite[Proposition 1.2]{wu}. Hence, since the fixed point set of $T$ is
assumed to be compact in Theorem \ref{dhthm}, the sum on the right-hand side of \eqref{dh} is over a finite set
and so is itself finite for all such $\zeta$.

Now, the sum on the right-hand side of \eqref{dh} is over each connected component $F$ of the zero set $M_{0}(\zeta)$ of $\zeta$. The determinant
is a $k\times k$ determinant and should be expanded formally into a differential form of
mixed degree. Moreover, the inverse is understood to mean one should expand this
formally in a Taylor series, as is standard in index theory. We next study the right-hand side of \eqref{dh} in more detail.

Under the decomposition $$\mathcal{E}^{1,\,0}=\bigoplus_{\lambda=1}^{R}\mathcal{E}^{1,\,0}_{\lambda},$$
let $\Omega_{\lambda}$ be the component of the curvature matrix of $\Omega_{\mathcal{E}}$ corresponding to $\mathcal{E}_{\lambda}$. Then
we have that
\begin{equation*}
\begin{split}
\det\left(\frac{L\zeta-\Omega}{2\pi i}\right)&=\det\left(\frac{L\zeta}{2\pi i}\right)\det(1-(L\zeta)^{-1}\Omega)\\
&=\det\left(\frac{L\zeta}{2\pi i}\right)\prod_{\lambda=1}^{R}\det(1-(L\zeta)^{-1}\Omega_{\lambda}).\\
\end{split}
\end{equation*}
Fix one of the bundles $\mathcal{E}_{\lambda}$. Then
$$\det(1-(L\zeta)^{-1}\Omega_{\lambda})=\det\left(1+wi\Omega_{\lambda}\right)=\sum_{a\geq0}c_{a}(\mathcal{E}_{\lambda})w^{a}\in H^{*}(F,\,\mathbb{R}),$$
where $w=\frac{1}{u_{\lambda}(b)}$ and $c_{a}(\mathcal{E}_{\lambda})$ are the Chern classes of $\mathcal{E}_{\lambda}$ for $0\leq a\leq n_{\lambda}$
with $c_{0}=1$. Thus,
$$\det\left(\frac{L\zeta-\Omega}{2\pi i}\right)=\prod_{\lambda=1}^{R}u_{\lambda}(b)^{n_{\lambda}}\left(\sum_{a\geq0}c_{a}(\mathcal{E}_{\lambda})w^{a}\right).$$

In particular, if $F$ is an isolated fixed point, in which case $k=n$ and $\mathcal{E}$ is the trivial
bundle, then we may write the $n$ (possibly indistinct) weights as $u_{1},\ldots,u_{n}$. The Chern classes
and the measure $e^{\iota^{*}\omega}$ contribute non-trivially, and we arrive at the
contribution
$$e^{-\iota^{*}\langle\mu,\,\zeta\rangle}\prod_{\lambda=1}^{R}\frac{1}{u_{\lambda}(b)^{n_{\lambda}}}$$
of an isolated fixed point to the Duistermaat-Heckman formula.

We next wish to sketch the proof of Theorem \ref{dhthm}. Before we do so however, we must first discuss invariant forms on a symplectic manifold.

\subsection{Invariant forms}\label{b}

Consider a symplectic manifold $(M,\,\omega)$ of real dimension $2n$ endowed with the Hamiltonian action of a real torus $T$.
For $\zeta$ in the Lie algebra $\mathfrak{t}$ of $T$, denote by
$\Omega^{k}_{\zeta}(M)$ the space of smooth $k$-forms on $M$ which are invariant under the flow of $\zeta$, i.e.,
$\alpha\in\Omega^{k}_{\zeta}(M)$ if and only if $\mathcal{L}_{\zeta}\alpha=0$.
The wedge product of two invariant forms is also invariant, therefore we have an
algebra $\Omega^{*}_{\zeta}(M)$ of invariant forms on $M$. We define the \emph{equivariant derivative} $d_{\zeta}$ on
$\Omega^{*}_{\zeta}(M)$ by $$d_{\zeta}\alpha=d\alpha-\alpha\lrcorner\zeta.$$
This derivative has the properties that $d_{\zeta}^{2}=0$ and
$$d_{\zeta}(\alpha\wedge\beta)=d_{\zeta}\alpha\wedge\beta+(-1)^{p}\alpha\wedge d_{\zeta}\beta$$
for $\alpha$ a $p$-form and $\beta$ another differential form.

For $\alpha\in\Omega^{*}_{T}(M)$, we can write $$\alpha=\alpha_{[0]}+\alpha_{[1]}+\ldots+\alpha_{[2n]}$$
with $\alpha_{[i]}$ a differential form of degree $i$ in $\Omega^{*}_{T}(M)$. Then integration of invariant forms is defined by integrating over
the highest degree part of the form, i.e.,
$$\int:\Omega^{*}_{T}(M)\to\mathbb{R},\qquad\int\alpha:=\int_{M}\alpha_{[2n]}.$$
This leads to a version of Stokes' theorem for invariant forms: if an invariant form $\alpha$ is $d_{\zeta}$-exact, i.e.,
if $\alpha=d_{\zeta}\beta$ for another form $\beta$, then $\alpha_{[2n]}=d\beta_{[2n-1]}$, since contracting with $\zeta$ decreases the degree of a form.
We then have that
$$\int_{M}\alpha:=\int_{M}\alpha_{[2n]}=\int_{M}d\beta_{[2n-1]}=\int_{\partial M}\beta_{[2n-1]}.$$

Recall that $M_{0}(\zeta)$ denotes the zero locus on $M$ of $\zeta\in\mathfrak{t}$.
\begin{lemma}\label{exact}
Let $\alpha\in\Omega_{\zeta}^{*}(M)$ be $d_{\zeta}$-closed. Then $\alpha_{[2n]}$ is exact
on $M\setminus M_{0}(\zeta)$.
\end{lemma}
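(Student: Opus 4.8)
The statement is the standard localisation input for the Duistermaat–Heckman formula, and the plan is to construct an explicit primitive of $\alpha_{[2n]}$ on the complement of $M_0(\zeta)$ using a $\zeta$-invariant Riemannian metric. First I would fix a $T$-invariant (in particular $\zeta$-invariant) Riemannian metric $g$ on $M$; averaging any metric over the compact torus $T$ produces one. Let $\theta$ be the $g$-dual one-form to the vector field $\zeta$, so $\theta = g(\zeta,\cdot)$; since $g$ and $\zeta$ are both $\zeta$-invariant, $\mathcal{L}_\zeta\theta = 0$, so $\theta\in\Omega^1_\zeta(M)$. The key observation is that $d_\zeta\theta = d\theta - \theta\lrcorner\zeta = d\theta - |\zeta|_g^2$, and the scalar function $(d_\zeta\theta)_{[0]} = -|\zeta|_g^2$ is nowhere zero precisely on $M\setminus M_0(\zeta)$ — that is exactly where $\zeta$ has no zeros. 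Hence on $M\setminus M_0(\zeta)$ the equivariant form $d_\zeta\theta$ is invertible in the algebra $\Omega^*_\zeta(M\setminus M_0(\zeta))$ (its degree-zero component is a unit), and its inverse $(d_\zeta\theta)^{-1}$ is obtained by the usual finite geometric-series expansion, since the nilpotent part $d\theta$ raised to a power beyond $n$ vanishes.

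Next I would set $\beta := \theta \wedge (d_\zeta\theta)^{-1}$, a well-defined invariant form on $M\setminus M_0(\zeta)$. Using the Leibniz rule for $d_\zeta$ together with $d_\zeta^2 = 0$ (so $d_\zeta$ of the inverse of a $d_\zeta$-closed invertible element vanishes — here $d_\zeta(d_\zeta\theta) = 0$ forces $d_\zeta\big((d_\zeta\theta)^{-1}\big) = 0$ by differentiating the relation $(d_\zeta\theta)\wedge(d_\zeta\theta)^{-1} = 1$), one computes
\begin{equation*}
d_\zeta\beta = (d_\zeta\theta)\wedge(d_\zeta\theta)^{-1} - \theta\wedge d_\zeta\big((d_\zeta\theta)^{-1}\big) = 1.
\end{equation*}
Therefore, for any $d_\zeta$-closed $\alpha\in\Omega^*_\zeta(M)$, on $M\setminus M_0(\zeta)$ we have
\begin{equation*}
\alpha = \alpha\wedge d_\zeta\beta = d_\zeta(\alpha)\wedge\beta \pm \alpha\wedge d_\zeta\beta \mp d_\zeta(\alpha\wedge\beta),
\end{equation*}
which after using $d_\zeta\alpha = 0$ rearranges to $\alpha = d_\zeta(\pm\alpha\wedge\beta)$; concretely $\alpha = d_\zeta(\alpha\wedge\beta)$ up to the sign dictated by the degree of $\alpha$ in the graded Leibniz rule. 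I would then extract the top-degree component: since $d_\zeta$ lowers or preserves degree appropriately and contraction with $\zeta$ strictly lowers form degree, the degree-$2n$ part of $d_\zeta(\alpha\wedge\beta)$ equals $d\big((\alpha\wedge\beta)_{[2n-1]}\big)$. This exhibits $\alpha_{[2n]}$ as $d$ of a genuine $(2n-1)$-form on $M\setminus M_0(\zeta)$, which is the assertion.

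The only genuinely delicate point is bookkeeping with the $\mathbb{Z}$-grading: making sure the geometric-series inverse $(d_\zeta\theta)^{-1}$ is a legitimate element of $\Omega^*_\zeta$ (finite because $(d\theta)^{n+1} = 0$) and that the Leibniz signs work out so that $d_\zeta\beta = 1$ rather than a unit multiple, and then isolating the top-degree piece correctly. None of this is conceptually hard — it is the same manipulation underlying the Berline–Vergne/Atiyah–Bott localisation theorem — so I expect the write-up to be short, with the main care going into the sign in the graded Leibniz rule and the remark that ``$d_\zeta$-exact implies top part is $d$-exact'' (already noted in the paragraph preceding the lemma). A remark worth including: the same $\beta$ works simultaneously for all $d_\zeta$-closed $\alpha$, which is what makes the subsequent reduction of the integral to a tubular neighbourhood of $M_0(\zeta)$ possible.
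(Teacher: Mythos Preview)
Your proposal is correct and follows essentially the same approach as the paper: construct the invariant one-form $\theta = g(\zeta,\cdot)$, invert $d_\zeta\theta$ via the finite geometric series on $M\setminus M_0(\zeta)$, and use $\theta\wedge(d_\zeta\theta)^{-1}$ as a $d_\zeta$-primitive of $1$. The paper writes $\nu = \theta\wedge(d_\zeta\theta)^{-1}\wedge\alpha$ directly (i.e.\ your $\beta\wedge\alpha$ rather than $\alpha\wedge\beta$), which sidesteps the sign issue you flag --- with $\alpha$ on the right, $d_\zeta\nu = (d_\zeta\beta)\wedge\alpha = \alpha$ on the nose, whereas your ordering $\alpha\wedge\beta$ would pick up degree-dependent signs on each homogeneous component of $\alpha$; the paper also allows $\theta = g(\tilde\zeta,\cdot)$ for any positive multiple $\tilde\zeta$ of $\zeta$, a flexibility exploited downstream in the Duistermaat--Heckman computation.
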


\begin{proof}
Let $\theta$ be a one-form on $M\setminus M_{0}(\zeta)$ such that
\begin{equation}\label{hooray}
\mathcal{L}_{\zeta}\theta=0\quad\textrm{and}\quad\theta\lrcorner\zeta\neq0.
\end{equation}
Such a one-form can be constructed explicitly. Indeed, let $g$ be a $T$-invariant Riemannian metric on $M$, let $\tilde{\zeta}$ be
any non-zero positive smooth function times $\zeta$, and define
$$\theta(v)=g(\tilde{\zeta},\,v)\quad\textrm{for any vector field $v$ on $M$.}$$
This is well-defined on $M\setminus M_{0}(\zeta)$ as $\zeta$, hence $\tilde{\zeta}$, are non-zero on this set, and is easily seen to satisfy \eqref{hooray}.
We can then invert $d_{\zeta}\theta$ on $M\setminus M_{0}(\zeta)$ using a geometric series:
$$(d_{\zeta}\theta)^{-1}=\frac{1}{(d\theta-\theta\lrcorner\zeta)}=\frac{1}{(\theta\lrcorner\zeta)((\theta\lrcorner\zeta)^{-1}d\theta-1)}
=-(\theta\lrcorner\zeta)^{-1}-(\theta\lrcorner\zeta)^{-2}d\theta-(\theta\lrcorner\zeta)^{-3}(d\theta)^{2}-\cdots$$
Note that this geometric series is finite because the $(d\theta)^{k}$ vanish if $2k>2n=\dim M$, and we have that
$$d_{\zeta}\theta\wedge(d_{\zeta}\theta)^{-1}=1.$$
Applying $d_{\zeta}$ to this yields
$$d_{\zeta}\theta\wedge d_{\zeta}((d_{\zeta}\theta)^{-1})=0.$$
Further taking the wedge product with $(d_{\zeta}\theta)^{-1}$ on the left then leaves us with
$$d_{\zeta}[(d_{\zeta}\theta)^{-1}]=0.$$
Define $\nu$ by $$\nu:=\theta\wedge(d_{\zeta}\theta)^{-1}\wedge\alpha.$$
Then since $d_{\zeta}\alpha=0$ by assumption, we have that
\begin{equation*}
\begin{split}
d_{\zeta}\nu&=d_{\zeta}\theta\wedge(d_{\zeta}\theta)^{-1}\wedge\alpha=\alpha.
\end{split}
\end{equation*}
Taking the highest degree part of each side of this equality, we obtain the result.
\end{proof}

\subsection{Sketch of the proof of Theorem \ref{dhthm}}\label{c}
Since the left-hand side of \eqref{dh} is analytic on $\operatorname{int}(\mathcal{C}(\mu(M))^{\prime})\subset\mathfrak{t}$,
it suffices to prove \eqref{dh} for rational elements in this open cone. So let $\zeta\in\operatorname{int}(\mathcal{C}(\mu(M))^{\prime})$ be rational
and recall that the zero set $M_{0}(\zeta)$ of $\zeta$ is compact because the fixed point set of $T$ is compact by assumption; see the discussion after
Proposition \ref{properr}. Write $\zeta=t\eta$ for some integral point
$\eta\in\operatorname{int}(\mathcal{C}(\mu(M))^{\prime})$ and some $t>0$ and let $H:=\langle\mu,\,\eta\rangle$ denote the
Hamiltonian of $\eta$ which serves as a moment map of the induced $S^{1}$-action of $\{e^{i\eta}\}$ on $M$. Recall that
$H$ is a proper function bounded from below and so must tend to infinity as $x\to\infty$ in $M$.

Next, observe that $$e^{\omega-tH}=e^{-tH}\left(1+\omega+\ldots+\frac{\omega^{n}}{n!}\right)\in\Omega^{*}_{\zeta}(M)$$
and
$$d_{\zeta}e^{\omega-tH}=e^{\omega-tH}(d_{\zeta}(\omega-tH))=e^{\omega-tH}(-d(tH)-\omega\lrcorner\zeta)
=te^{\omega-tH}(-dH-\omega\lrcorner\eta)=0$$
so that $e^{\omega-tH}$ is $d_{\zeta}$-closed. An immediate consequence of Lemma \ref{exact} is
therefore that $e^{-tH}\frac{\omega^{n}}{n!}$ is exact off of the zero set $M_{0}(\zeta)$ of $\zeta$. Indeed, fix a $T$-invariant metric $g$ on $M$. Then
tracing through the proof of Lemma \ref{exact}, we see that $$e^{-tH}\frac{\omega^{n}}{n!}=d\nu_{[2n-1]}\quad\textrm{where
$\nu=\theta\wedge(d_{\zeta}\theta)^{-1}\wedge e^{\omega-tH}$ and $\theta=g(\tilde{\zeta},\,\cdot)$},$$
$\tilde{\zeta}$ here denoting any non-zero positive function times $\zeta$. We take $\tilde{\zeta}=\frac{\eta}{g(\eta,\,\eta)}$ in what follows.

Let $F$ denote each of the connected components of $M_{0}(\zeta)$ and recall that each is a smooth submanifold of $M$. Using the exponential map
of the $T$-invariant metric $g$ on $M$, we obtain a diffeomorphism $\psi$
from a neighbourhood $U$ of the zero section of the normal bundle $\mathcal{E}$ of $F$ in $E$ onto a neighbourhood $\psi(U)$ of
$F$ in $M$. For $\epsilon>0$, denote by $B_{\epsilon}$ the $\epsilon$-ball bundle in $\mathcal{E}$
and by $S_{\epsilon}$ its boundary. Since $M_{0}(\zeta)$
is compact and
$H(x)\to+\infty$ as $x\to\infty$ in $M$, we have by Stokes' theorem that
\begin{equation}\label{stokers}
\begin{split}
\int_{M}e^{-tH}\frac{\omega^{n}}{n!}&=\lim_{a\to+\infty}
\lim_{\epsilon\to 0}\int_{H^{-1}((-\infty,\,a])\setminus\cup_{F\in M_{0}(\zeta)}\psi(B_{\epsilon})}e^{-tH}\frac{\omega^{n}}{n!}\\
&=\lim_{a\to+\infty}\lim_{\epsilon\to 0}\int_{H^{-1}((-\infty,\,a])\setminus\cup_{F\in M_{0}(\zeta)}\psi(B_{\epsilon})}d\nu_{[2n-1]}\\
&=\lim_{\epsilon\to 0}\sum_{F\in M_{0}(\zeta)}\int_{\psi(S_{\epsilon})}\nu_{[2n-1]}+\lim_{a\to+\infty}\int_{H^{-1}(a)}\nu_{[2n-1]},\\
\end{split}
\end{equation}
where we recall the fact that $H$ is proper and that $a$ is a regular value of $H$ for all $a$ sufficiently large by \cite[Proposition 1.2]{wu}
because the fixed point set of the torus action is compact, so that $H^{-1}(a)$ is a smooth compact submanifold of $M$
for all such values of $a$.

Now, we have that
\begin{equation*}
\begin{split}
\nu_{[2n-1]}&=-e^{-tH}\theta\wedge\sum_{j=0}^{n-1}\frac{(d\theta)^{j}}{(\theta\lrcorner\zeta)^{j+1}}\wedge\frac{\omega^{n-1-j}}{(n-1-j)!}\\
&=-e^{-tH}\theta\wedge\sum_{j=0}^{n-1}\frac{(d\theta)^{j}}{t^{j+1}}\wedge\frac{\omega^{n-1-j}}{(n-1-j)!},
\end{split}
\end{equation*}
where again $\theta=\frac{g(\eta,\,\cdot)}{g(\eta,\,\eta)}$. For one connected component $F\in M_{0}(\zeta)$ of codimension $k$ say,
as in the proof of the Duistermaat-Heckman formula in the compact case \cite{heckman}, the only summand contributing to $\int_{\psi(S_{\epsilon})}\nu_{[2n-1]}$ in
the limit as $\epsilon\to0$ is the one with $j=k-1$. Therefore, computing as in \cite{heckman}, one sees that
\begin{equation}\label{amen}
\begin{split}
\lim_{\epsilon\to 0}\int_{\psi(S_{\epsilon})}\nu_{[2n-1]}
&=-\lim_{\epsilon\to 0}\int_{F}e^{-tH}\theta\wedge\frac{(d\theta)^{k-1}}{t^{k}}\wedge\frac{\omega^{n-k}}{(n-k)!}\\
&=-\lim_{\epsilon\to 0}\int_{F}e^{-tH}\tilde{\theta}\wedge(d\tilde{\theta})^{k-1}\wedge\frac{\omega^{n-k}}{(n-k)!}=\frac{e^{-\iota^{*}(tH)}e^{\iota^{*}\omega}}
{\det\left(\frac{L\zeta-\Omega}{2\pi i}\right)},
\end{split}
\end{equation}
where $\tilde{\theta}=\frac{g(\zeta,\,\cdot)}{g(\zeta,\,\zeta)}$ on the second line.

We finally deal with the term $\lim_{a\to+\infty}\int_{H^{-1}(a)}\nu_{[2n-1]}$. Since
for all $a$ sufficiently large $a$ is a regular value of $H$, the moment map of the $S^{1}$-action $\{e^{i\eta}\}$, the set
$H^{-1}(a)$ is a connected compact submanifold of $M$ on which the $S^{1}$-action is locally free. Let $M_{a}=H^{-1}(a)/S^{1}$ be the symplectic quotient
with canonical symplectic form $\omega_{a}$. The preimage $H^{-1}(a)\to M_{a}$ then has the structure of a orbi-bundle over $M_{a}$.
Moreover, since $a$ is a regular value of $H$, there exists a number $\delta>0$ such that $H^{-1}((a-\delta,\,a+\delta))$ is diffeomorphic to $H^{-1}(a)\times(-\delta,\,\delta)$. With respect to this diffeomorphism, the symplectic form $\omega$ on $H^{-1}(a)\times(-\delta,\,\delta)$ is, up
to an exact form, equal to $$\alpha\wedge dH-(H-a)F_{a}+\omega_{a}$$
for one (and hence any) connection $1$-form $\alpha$ on the orbibundle $H^{-1}(a)\to M_{a}$ with curvature
$F_{a}$. Now, when restricted to $H^{-1}(a)$, one can verify that $\omega|_{H^{-1}(a)}=\omega_{a}$,
$\theta|_{H^{-1}(a)}=:\alpha$ defines a
connection $1$-form, $d\theta|_{H^{-1}(a)}=:F_{a}$ is the curvature form of $\alpha$
 so that $d_{\zeta}\theta|_{H^{-1}(a)}=F_{a}-t$. And so we have that
\begin{equation*}
\begin{split}
\int_{H^{-1}(a)}\nu_{[2n-1]}&=\int_{H^{-1}(a)}\theta\wedge(d_{\zeta}\theta)^{-1}\wedge e^{\omega-tH}=\int_{H^{-1}(a)}\alpha\wedge(F_{a}-t)^{-1}\wedge e^{\omega_{a}-ta}\\
&=-\frac{e^{-ta}}{t}\int_{M_{a}}\left(1-\frac{F_{a}}{t}\right)^{-1}\wedge e^{\omega_{a}}=-\sum_{j=1}^{n-1}\frac{e^{-ta}}{t^{j+1}}\int_{M_{a}}\frac{\omega_{a}^{n-1-j}}{(n-1-j)!}\wedge F_{a}^{j}.
\end{split}
\end{equation*}
As $a\to+\infty$, the cohomology class of $\omega_{a}$ depends linearly on $a$ \cite{heckman, wu2}, whereas that of $F_{a}$ remains
fixed since the topology of the bundle $H^{-1}(a)\to M_{a}$ does not change as $a$ runs through a set of regular values.
So the integral over $M_{a}$ here is a polynomial in $a$. Consequently, $\int_{H^{-1}(a)}\nu_{[2n-1]}\to\nolinebreak 0$ exponentially as
$a\to+\infty$. Thus, combining this fact with \eqref{stokers} and \eqref{amen}, and noting that $tH=\langle\mu,\,\zeta\rangle$,
we arrive at the desired conclusion.

\subsection{Examples}\label{d}
We next consider some simple examples and see what formula \eqref{dh} yields for the weighted volume functional $F$.
\begin{example}\label{one}
Let $M=\mathbb{C}^{n}$ and consider the action of
the maximal torus $T=\{\operatorname{diag}(e^{i\eta_{1}},\ldots,e^{i\eta_{n}}):\eta_{i}\in\mathbb{R}\}$
in $GL(n,\,\mathbb{C})$ acting on $M$ with induced
coordinates $(\eta_{1},\ldots,\eta_{n})$ on the Lie algebra $\mathfrak{t}$ of $T$, where $(1,\,0,\ldots,0)\in\mathfrak{t}$ generates the vector field
$\operatorname{Im}(z_{1}\partial_{z_{1}})$ on $M$, etc. The fixed point set of $T$ is clearly compact.

For any $Y\in\{(\eta_{1},\ldots,\eta_{n})\in\mathfrak{t}:\eta_{i}>0\}$ and
for any $T$-invariant complete shrinking gradient K\"ahler-Ricci soliton $(M,\,\omega,\,X)$ with $X=\nabla^g f$ for $f:M\to\mathbb{R}$ smooth,
let $u_{Y}$ be the Hamiltonian potential of $Y$ normalised as in Definition \ref{moments} so that in particular, $\Delta_{\omega}u_{Y}+u_{Y}+\frac{1}{2}(JY)\cdot f=0$. Then
$$-u_{Y}(0)=(\Delta_{\omega}u_{Y})(0)+\underbrace{\frac{1}{2}((JY)\cdot f)(0)}_{=0}=\operatorname{div}(Y)=\sum_{j}\eta_{j},$$
and so the Duistermaat-Heckman theorem yields
\begin{equation*}
\begin{split}
F(\eta_{1},\ldots,\eta_{n})&=\int_{M}e^{-u_{Y}}\omega^{n}=\Pi_{j}\eta^{-1}_{j}\cdot e^{\sum_{j}\eta_{j}}.
\end{split}
\end{equation*}
Since this function is symmetric in its components, its unique critical point must be of the form $\lambda(1,\,\ldots,1)$ for some $\lambda>0$. It is
then easy to show that $\lambda=1$. The corresponding shrinking gradient K\"ahler-Ricci soliton is the flat Gaussian shrinking soliton on $\mathbb{C}^{n}$.
\end{example}

\begin{example}\label{two}
Let $M$ be $\mathbb{C}^{2}$ blown up at the origin and again consider the action of
the maximal torus $T=\{\operatorname{diag}(e^{i\eta_{1}},\,e^{i\eta_{2}}):\eta_{1},\,\eta_{2}\in\mathbb{R}\}$
in $GL(2,\,\mathbb{C})$ acting on $M$, with induced
coordinates $(\eta_{1},\,\eta_{2})$ on the Lie algebra $\mathfrak{t}$ of $T$, where $(1,\,0)\in\mathfrak{t}$ generates the vector field
$\operatorname{Im}(z_{1}\partial_{z_{1}})$ on $M$, etc. In this case, the weighted volume functional is given by
$$F:\{(\eta_{1},\,\eta_{2})\in\mathfrak{t}:\eta_{1},\,\eta_{2}>0\}\rightarrow\mathbb{R}_{>0},\quad
F(\eta_{1},\,\eta_{2})= \left\{
\begin{array}{rl}
\frac{e^{\eta_{1}}}{(\eta_{1}-\eta_{2})\eta_{2}}+\frac{e^{\eta_{2}}}{(\eta_{2}-\eta_{1})\eta_{1}} & \text{if } \eta_{1}\neq\eta_{2},\\
e^{\eta_{1}}(\eta_{1}^{-1}+\eta_{1}^{-2}) & \text{if } \eta_{1}=\eta_{2}.\\
\end{array} \right.$$
Again by symmetry, the unique critical point of $F$ here must have $\eta_{1}=\eta_{2}$, and a computation shows that $\eta_{1}=\eta_{2}=\sqrt{2}$ in this
case. The corresponding shrinking gradient K\"ahler-Ricci soliton is that of Feldman-Ilmanen-Knopf \cite{FIK} on this space.
\end{example}

\begin{example}\label{three}
More generally, let $M$ be the total space of the line bundle $\mathcal{O}(-k)$ over $\mathbb{P}^{n-1}$
for $0<k<n$ and consider the induced action of
the maximal torus $T=\{\operatorname{diag}(e^{i\eta_{1}},\ldots,e^{i\eta_{n}}):\eta_{i}\in\mathbb{R}\}$
in $GL(n,\,\mathbb{C})$ acting on $M$, with induced
coordinates $(\eta_{1},\ldots,\eta_{n})$ on the Lie algebra $\mathfrak{t}$ of $T$, where $(1,\,0,\ldots,0)\in\mathfrak{t}$ generates the vector field
$\operatorname{Im}(z_{1}\partial_{z_{1}})$ on $M$, etc. In this case, the weighted volume functional is given by
$$F:\{(\eta_{1},\ldots,\eta_{n})\in\mathfrak{t}:\eta_{i}>0\}\rightarrow\mathbb{R}_{>0},$$
\begin{equation*}
\begin{split}
F(\eta_{1},\ldots,\eta_{n})&=\sum_{i=1}^{n}\frac{e^{(k+1-n)\eta_{i}+\sum_{j\neq i}\eta_{j}}}{k\eta_{i}\Pi_{j\neq i}(\eta_{j}-\eta_{i})}\\
&=\frac{\sum_{i=1}^{n}(-1)^{i+1}\Pi_{j\neq i}\eta_{j}\Pi_{\substack{k,\,l\neq i\\ k>l}}(\eta_{k}-\eta_{l})
e^{(k+1-n)\eta_{i}+\sum_{j\neq i}\eta_{j}}}{k\Pi_{i=1}^{n}\eta_{i}\Pi_{i<j}(\eta_{i}-\eta_{j})}\quad\textrm{if $\eta_{k}\neq\eta_{l}$ for $k\neq l$.}
\end{split}
\end{equation*}
Again, by symmetry, the unique critical point of $F$ here must satisfy $\eta_{1}=\ldots=\eta_{n}$. By taking limits, one can write
down an expression for $F$ when this is the case. Differentiating the resulting expression and setting it equal to zero,
one obtains the polynomials  of \cite[equation (36)]{FIK}. For example, in low dimensions, when  $\eta_{1}=\ldots=\eta_{n}=:\eta$, we obtain the following formulae for $F$:
$$\begin{tabular}{|c|c|}\hline
Line bundle		& $F(\eta)$ 		\\\hline\hline
$\mathcal{O}(-1)\to\mathbb{P}^{1}$ 	& $\frac{(\eta+1)e^{\eta}}{\eta^{2}}$		\\\hline
$\mathcal{O}(-1)\to\mathbb{P}^{2}$ & $\frac{(2\eta^{2}+2\eta+1)e^{\eta}}{\eta^{3}}$\\\hline
$\mathcal{O}(-2)\to\mathbb{P}^{2}$ & $\frac{(\eta^{2}+2\eta+2)e^{2\eta}}{\eta^{3}}$\\\hline
$\mathcal{O}(-1)\to\mathbb{P}^{3}$ & $\frac{(9\eta^{3}+9\eta^{2}+6\eta+2)e^{\eta}}{\eta^{4}}$\\\hline
$\mathcal{O}(-2)\to\mathbb{P}^{3}$ & $\frac{(4\eta^{3}+6\eta^{2}+6\eta+3)e^{2\eta}}{\eta^{4}}$\\\hline
$\mathcal{O}(-3)\to\mathbb{P}^{3}$ & $\frac{(\eta^{3}+3\eta^{2}+6\eta+6)e^{3\eta}}{\eta^{4}}$\\\hline
\end{tabular}$$
The corresponding shrinking gradient K\"ahler-Ricci solitons are those of Feldman-Ilmanen-Knopf \cite{FIK} on these spaces.
\end{example}

\begin{example}
Let $L$ be the total space of a negative holomorphic line bundle over a Fano manifold $D$ of complex dimension $n$. By adjunction, in order for $L$ to admit
a shrinking gradient K\"ahler-Ricci soliton, we must have $c_{1}(-K_{D}\otimes L)>0$. Assuming that this is the case,
consider the action of the torus $T$ given by rotating the fibres of $L$. We have an induced
coordinate $w$ on the Lie algebra $\mathfrak{t}$ of $T$, where $1\in\mathfrak{t}$ will generate the vector field
$\operatorname{Im}(z_{i}\partial_{z_{i}})$ in a local trivialising chart of $L$. The zero set of every element of $\mathfrak{t}$
will be $D$, the zero section of $L$, and in this case the weighted volume functional $F$ on the domain
$\{\eta\in\mathfrak{t}:\eta>0\}$ is given by
\begin{equation}\label{lovely}
\begin{split}
F(\eta)&=\int_{D^{n}}\frac{e^{\eta}e^{\iota^{*}\omega}}{\eta\left(1+\frac{c_{1}(L)}{\eta}\right)}\\
&=\frac{e^{\eta}}{\eta}\int_{D^{n}}e^{\iota^{*}\omega}\left(1+\frac{c_{1}(L)}{\eta}\right)^{-1}\\
&=\frac{e^{\eta}}{\eta}\int_{D^{n}}e^{\iota^{*}\omega}\left(1-\frac{c_{1}(L^{*})}{\eta}\right)^{-1}\\
&=\frac{e^{\eta}}{\eta}\int_{D^{n}}\left(1+\omega+\frac{\omega^{2}}{2!}+\ldots\right)\left(1+\frac{c_{1}(L^{*})}{\eta}+\frac{c_{1}(L^{*})^{2}}{\eta^{2}}+\ldots\right)\\
&=\frac{e^{\eta}}{\eta}\int_{D^{n}}\sum_{i=0}^{n}\frac{\omega^{i}}{i!}\wedge\left(\frac{c_{1}(L^{*})^{n-i}}{\eta^{n-i}}\right)\\
&=\frac{e^{\eta}}{\eta}\sum_{i=0}^{n}\frac{1}{\eta^{n-i}i!}\int_{D^{n}}\omega^{i}\wedge c_{1}(L^{*})^{n-i}\\
&=\frac{e^{\eta}}{\eta^{n+1}}\sum_{i=0}^{n}\frac{\eta^{i}}{i!}\int_{D^{n}}c_{1}(K_{D}^{-1}\otimes L)^{i}\wedge c_{1}(L^{*})^{n-i}.\\
\end{split}
\end{equation}

This formula in particular applies to the total space of the line bundle $\mathcal{O}(-k)$ over $\mathbb{P}^{n-1}$ for
$0<k<n$. Its relationship to the formulae of Example \ref{three} is as follows. On the total space of
$\mathcal{O}(-k)$, we have two torus actions, one given by the standard action of a torus $T_{1}$ rotating the fibres of
$\mathcal{O}(-k)$, and another given by the action of a torus $T_{2}$ induced from the standard torus action on $\mathcal{O}(-1)$
that rotates the fibres. The formulae of Example \ref{three} with $\eta_{1}=\ldots=\eta_{n}$ are given with respect to the action of $T_{2}$, whereas formula \eqref{lovely} is with respect to $T_{1}$. Consequently, the formulae of Example \ref{three} with $\eta_{1}=\ldots=\eta_{n}$ are given by $F(k\eta)$, where $F$ is as in \eqref{lovely}.
\end{example}

\subsection{The domain of definition of the weighted volume functional}\label{e}

By Theorem \ref{dhthm} and Proposition \ref{properr}, we see that the weighted volume functional $F$ is defined on the open cone $\Lambda$ of elements of the Lie algebra of the torus admitting Hamiltonian potentials that are proper and bounded below if $\Lambda$ is non-empty. In this subsection, we characterise $\Lambda$ algebraically in the setting of asymptotically conical K\"ahler manifolds.

Our precise set-up is as follows. Let $(C_{0},\,g_{0})$ be a K\"ahler cone with apex $o$, complex structure $J_{0}$, and radial function $r$ so that $g_{0}=dr^{2}+r^{2}g_{S}$
for a Riemannian metric $g_{S}$ on the link $S=\{r=1\}$ of $C_{0}$. Let $\pi:M\to C_{0}$ be a quasi-projective resolution of $C_{0}$ that is equivariant with respect to the holomorphic isometric action on $C_{0}$ of the torus $T$ with Lie algebra $\mathfrak{t}$ generated by $\xi:=J_{0}r\partial_{r}$, and let $g$ be a K\"ahler metric on $M$ with
\begin{equation}\label{honeybunch}
|\pi_{*}g-g_{0}|_{g_{0}}=O(r^{-2})
\end{equation}
with respect to which $T$ acts isometrically in a Hamiltonian fashion with moment map
$\mu:M\to\mathfrak{t}^{*}$. Write $u_{Y}(x):=\langle\mu(x),\,Y\rangle, x\in M,$ for the Hamiltonian potential
of $Y\in\mathfrak{t}$ so that $du_{Y}=-\omega\lrcorner Y$, $\omega$ here the K\"ahler form of $g$, and set
$$\Lambda:=\{Y\in\mathfrak{t}:\textrm{$u_{Y}$ is proper and bounded below}\}.$$
Next, let $\mathcal{O}_{M}(M)$ (respectively $\mathcal{O}_{C_{0}}(C_{0})$) denote the global algebraic sections of the structure sheaf of $M$ (resp.~of $C_{0}$), and write $$\mathcal{O}_{M}(M)=\bigoplus_{\alpha\,\in\,\mathfrak{t}^{*}}\mathcal{H}_{\alpha}$$
for the weight decomposition under the action of $T$. Then we have:
\begin{theorem}\label{fme}
In the above situation,
\begin{equation*}
\Lambda=\{Y\in\mathfrak{t}:\textrm{$\alpha(Y)>0$ for all $\alpha\in\mathfrak{t}^{*}$ such that $\mathcal{H}_{\alpha}\neq\emptyset$ and $\alpha\neq0$}\}.
\end{equation*}
\end{theorem}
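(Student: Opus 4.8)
The plan is to prove the two inclusions separately, relating both descriptions of $\Lambda$ to the asymptotic behaviour of Hamiltonian potentials along the cone. The key bridge is that on a K\"ahler cone, the Hamiltonian potential of an element $Y \in \mathfrak{t}$ with respect to the cone metric $g_0$ is an explicit quadratic-in-$r$ function: writing $Y = \sum_i b_i \partial/\partial\phi_i$ in the torus coordinates, the potential of $Y$ on $C_0$ is $\tfrac12 r^2 \eta(Y)$ up to a constant, where $\eta$ is the contact form of the link; in particular it is proper and bounded below precisely when $\eta(Y) > 0$ everywhere on $S$. Because of the asymptotic estimate \eqref{honeybunch}, the Hamiltonian potential $u_Y$ on $M$ differs from this cone potential by a lower-order term, so $u_Y$ is proper and bounded below if and only if $\eta(Y) > 0$ on all of $S$. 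This gives a first, purely symplectic-geometric characterisation of $\Lambda$ as the set of $Y$ that are ``positive'' in the Reeb sense.

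First I would make the symplectic side precise. Pulling back $\omega$ via $\iota$ (the diffeomorphism away from $K$ furnished by the asymptotically conical structure), and using that $\mu$ is $T$-equivariant and that potentials of commuting torus elements add linearly, one shows $u_Y = \tfrac12 r^2 \eta(Y) + O(1)$ with the error controlled by \eqref{honeybunch}; here I would invoke Proposition \ref{properr} to identify $\Lambda = \operatorname{int}(\mathcal C(\mu(M))')$, and then match $\mathcal C(\mu(M))$ with the moment cone of the Reeb action on $C_0$. The outcome is the equality $\Lambda = \{ Y \in \mathfrak{t} : \eta_Y := \eta(Y) > 0 \text{ on } S\}$, i.e.\ the interior of the dual of the moment cone of $C_0$.

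Second, I would identify this Reeb-positivity cone with the cone on the right-hand side of the theorem, namely $\{Y : \alpha(Y) > 0 \text{ whenever } \mathcal H_\alpha \neq \emptyset, \alpha \neq 0\}$. Since $\pi$ is a resolution, $\mathcal O_M(M) = \mathcal O_{C_0}(C_0)$, and by Theorem \ref{t:affine} the coordinate ring of $C_0$ is a $\mathbb Z$-graded (indeed $T$-graded) ring of functions that are homogeneous under the $\mathbb{C}^*$-action with positive weights; the weights $\alpha$ appearing in the decomposition are exactly the lattice points of the moment cone $\mathcal C(\mu(M))$ (this is the standard fact that, for an affine $T$-variety with good $\mathbb{C}^*$-action, the weight monoid generates the moment cone). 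Hence $\alpha(Y) > 0$ for every nonzero weight $\alpha$ is equivalent to $Y$ lying in the interior of the dual of the moment cone, which is exactly the Reeb-positivity condition from the previous paragraph. Combining the two equalities gives the theorem.

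The main obstacle I anticipate is the second step: carefully proving that the weight monoid of $\mathcal O_{C_0}(C_0)$ spans precisely the moment cone $\mathcal C(\mu(M))$, and not a smaller or larger cone. This requires knowing that there are ``enough'' holomorphic functions on $C_0$ — concretely, that the algebra $\mathcal O_{C_0}(C_0)$ separates the rays of the moment cone — which follows from the affine algebraic structure (Theorem \ref{t:affine}) together with the fact that $M$ and $C_0$ have the same global functions since $\pi$ is a proper birational morphism onto a normal affine variety. One also has to be slightly careful that the moment cone computed from $\mu$ on $M$ agrees with the one computed on $C_0$; this is where the asymptotic condition \eqref{honeybunch} and the equivariance of $\pi$ do the work, since the image $\mu(M)$ and the image of the cone moment map differ only by a compact (hence cone-irrelevant) set. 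The properness/boundedness reduction of the first step is comparatively routine given Proposition \ref{properr}.
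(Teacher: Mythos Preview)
Your outline is essentially correct and arrives at the same intermediate characterisation as the paper, namely $\Lambda=\{Y\in\mathfrak t:g_S(Y,\xi)>0\text{ on }S\}$, but the two proofs are organised differently, and the paper's route for one inclusion is cleaner.

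For the inclusion $\Lambda\subseteq\{\alpha(Y)>0\}$, the paper does \emph{not} pass through the Reeb-positivity condition at all. Instead it argues directly: given $Y\in\Lambda$ and a nonconstant $f\in\mathcal H_\alpha$, it follows the backward flow of $-JY=\nabla^g u_Y$, uses Proposition~\ref{alix} to see that the orbit accumulates in the zero set of $Y$ (hence in $E$), and then the maximum principle on $|f|$ forces $\alpha(Y)>0$. This avoids the delicate boundary case in your approach, where $\eta(Y)$ vanishes somewhere on $S$ and you must argue carefully that $u_Y$ fails to be proper (your $O(1)$ error is really $O(\ln r)$, and ruling out properness when the leading coefficient degenerates requires a non-pointwise argument about level sets).

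For the reverse inclusion the paper does exactly what you propose: it identifies $\mathcal O_M(M)\cong\mathcal O_{C_0}(C_0)$ via the Remmert reduction, invokes \cite[Proposition~2.7]{collinss} to get $\{\alpha(Y)>0\}=\{g_S(Y,\xi)>0\}$ (this is precisely the ``weight monoid spans the moment cone'' statement you flagged as the main obstacle), and then integrates $g(Y,JX)$ along flow lines of $X$ to obtain $u_Y\geq cr^2$ on the end. So your anticipated difficulty is handled by citation rather than by a self-contained argument; if you want to avoid the citation, you would indeed need the affine-variety/moment-cone argument you sketch, and Theorem~\ref{t:affine} is the right input.

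In summary: your two-step factorisation $\Lambda\leftrightarrow\{\eta(Y)>0\}\leftrightarrow\{\alpha(Y)>0\}$ works, but for the first equivalence the paper replaces the harder direction with a short maximum-principle argument that bypasses asymptotic analysis entirely, and for the second it simply cites Collins--Sz\'ekelyhidi.
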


\subsection*{Proof of Theorem \ref{fme}}

Let $E$ denote the exceptional set of the resolution $\pi:M\to C_{0}$. In what follows, we will identify $M\setminus E$ with
$C_{0}\setminus\{0\}$ via $\pi$. Let us begin by making some useful observations. Let $X$ be the unique vector field on $M$ such that $d\pi(X)=r\partial_{r}$. Then $d\pi(JX)=\xi$, where $J$ denotes the complex structure on $M$, so that $JX\in\mathfrak{t}$ and $[X,\,Y]=0$ for every $Y\in\mathfrak{t}$.
Then we have:
\begin{lemma}\label{popp}
Let $Y\in\mathfrak{t}$ so that $Y$ defines a real holomorphic $g$-Killing vector field on $M$ with $[X,\,Y]=0$.
Then $Y$ is tangent to the level sets of $r$ on $C_{0}\setminus\{o\}$.
\end{lemma}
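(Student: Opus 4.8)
The plan is to reduce the statement to the elementary fact that an isometry of a metric cone fixing its apex preserves the radial function. First I would invoke the equivariance of the resolution: by Definition~\ref{equivariantt}, the holomorphic $T$-action on $M$ covers the $T$-action on $C_0$ through $\pi$, and since $\pi$ restricts to a biholomorphism $M\setminus E\xrightarrow{\sim}C_0\setminus\{o\}$, transporting the flow $(\phi_s)_{s\in\mathbb{R}}$ of $Y$ on $M$ via $\pi$ yields exactly the one-parameter subgroup $s\mapsto\exp(sY)\in T$ acting on $C_0\setminus\{o\}$. In particular this flow is complete and consists of isometries of $g_0$, each of which extends to an isometry of the metric completion of $(C_0,g_0)$ fixing the apex $o$ --- recall that the torus $T$ is, by construction, the closure of the flow of $\xi=J_0r\partial_r$ inside the isometry group of the link, all of whose elements fix $o$.

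Next I would use the intrinsic description of $r$ from Definition~\ref{cone} as the distance from $o$ in the metric completion. Since each $\phi_s$ is a $g_0$-isometry with $\phi_s(o)=o$, we obtain $r(\phi_s(x))=d_{g_0}(o,\phi_s(x))=d_{g_0}(\phi_s(o),\phi_s(x))=d_{g_0}(o,x)=r(x)$ for every $x\in C_0\setminus\{o\}$ and every $s\in\mathbb{R}$. Differentiating this identity at $s=0$ gives $Y\cdot r=dr(Y)=0$, which is precisely the assertion that $Y$ is tangent to the level sets of $r$ on $C_0\setminus\{o\}$.

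I do not expect any serious obstacle here; the only point worth a sentence of justification is that the flow of $Y$ is globally the torus action (so that the ``isometry fixing the apex'' argument genuinely applies), which holds because $Y$ lies in the Lie algebra of the compact torus $T$ rather than merely in the Lie algebra of all $g$-isometries --- in particular the $g$-Killing and $[X,Y]=0$ hypotheses of the lemma play no role. If one prefers to sidestep the distance characterisation, the same conclusion follows by noting that $T$ acts on the link $S=\{r=1\}$ and extends to $C_0=\mathbb{R}_{>0}\times S$ acting trivially on the radial factor, so every element of $\mathfrak{t}$ is automatically tangent to each level set of $r$; I would present the isometry argument as the main one and mention this equivalent viewpoint as a remark.
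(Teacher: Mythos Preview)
Your argument is correct and takes a genuinely different route from the paper's own proof. The paper proceeds by a direct differential-geometric computation: it writes out $\mathcal{L}_Y g_0 = 0$ using $g_0 = dr^2 + r^2 g_S$, then evaluates the resulting identity on the pair $(\xi,\xi)$, using $[Y,\xi]=0$ (a consequence of $Y\in\mathfrak{t}$) to kill the $\mathcal{L}_Y g_S$ term and read off $dr(Y)=0$. Your approach instead exploits the intrinsic description of $r$ as distance to the apex together with the fact that $T$ acts by $g_0$-isometries fixing $o$; this is more conceptual and makes transparent, as you observe, that the hypotheses ``$g$-Killing'' and ``$[X,Y]=0$'' in the lemma statement are redundant once $Y\in\mathfrak{t}$ is assumed. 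The paper's computation has the minor advantage of being self-contained at the level of tensor calculus, while yours relies on knowing (or checking) that every element of $T$ really does fix the apex and act by cone isometries --- but since $T$ is by construction the closure of the flow of $\xi$ in the isometry group of the link, this is immediate, and your alternative remark that $T$ acts trivially on the radial factor of $\mathbb{R}_{>0}\times S$ is perhaps the cleanest way to say it.
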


\begin{proof}
Since $T$ acts isometrically on $g$ and $g_{0}$, $Y$ will define a
holomorphic $g_{0}$-Killing vector field on $C_{0}$. We claim that such a vector field is tangent to the
level sets of $r$. Indeed, simply note that
$$0=\mathcal{L}_{Y}g_{0}=d(Y\cdot r)\otimes dr+dr\otimes d(Y\cdot r)+2rdr(Y)g_{S}+r^{2}\mathcal{L}_{Y}g_{S}.$$
Then plugging $\xi$ into both arguments on the right-hand side and observing that
$$[Y,\,\xi]=[Y,\,Jr\partial_{r}]=[Y,\,JX]=J[Y,\,X]=0$$
along the end of $C_{0}$ since $Y$ is holomorphic, we arrive at the fact that
\begin{equation*}
\begin{split}
-dr(Y)&=\frac{r}{2}(\mathcal{L}_{Y}g_{S})(\xi,\,\xi)=\frac{r}{2}\left((\mathcal{L}_{Y}(g_{S}(\xi,\,\xi))-2g_{S}([Y,\,\xi],\,\xi)\right)=0,\\
\end{split}
\end{equation*}
as required.
\end{proof}

We now demonstrate that
$$\Lambda\subseteq\{Y\in\mathfrak{t}:\textrm{$\alpha(Y)>0$ for all $\alpha\in\mathfrak{t}^{*}$ such that $\mathcal{H}_{\alpha}\neq\emptyset$ and $\alpha\neq0$}\}.$$
To this end, let $Y\in\Lambda$ so that the Hamiltonian potential $u_{Y}$ of $Y$ is proper and bounded below,
let $f$ be a non-constant holomorphic function on $M$ on which $Y$ acts with weight $\lambda$
so that $JY(f)=-\lambda f$, let $x\in M\setminus E$ be a point where $f(x)\neq0$,
and denote by $\gamma_{x}(t)$ the flow line of $-JY$ with $\gamma_{x}(0)=x$. Then
$$\frac{d}{dt}f(\gamma_{x}(t))=\lambda f(\gamma_{x}(t))$$
so that
\begin{equation}\label{star}
f(\gamma_{x}(t))=f(x)e^{-\lambda t}\quad\textrm{for all $t<0$}.
\end{equation}
Now, by definition, we have that $-JY=\nabla^{g}u_{Y}$ and so from
Proposition \ref{alix} we deduce that there is a sequence $t_{i}\to-\infty$ as $i\to+\infty$ such that $(\gamma_{x}(t_i))_i$ converges to a point $x_{\infty}\in M$ satisfying $\nabla^g u(x_{\infty})=0$. Since the fixed point set of $T$ is contained in $E$, we must have that $x_{\infty}\in E$. Let $x_{i}:=\gamma_{x}(t_{i})$. Then plugging $t_{i}$ into \eqref{star} yields the fact that
$$|f(x_{i})|=|f(x)|e^{-\lambda t_{i}}\to_{i\to\infty}\left\{
\begin{array}{rl}
+\infty & \text{if $\lambda < 0$},\\
0 & \text{if $\lambda > 0$.}
\end{array} \right.$$
Since $x_{i}\to x_{\infty}\in E$ as $i\to\infty$, we conclude from the maximum principle that $\lambda>0$ as required.

Next we show that
\begin{equation}\label{work}
\{Y\in\mathfrak{t}:\textrm{$\alpha(Y)>0$ for all $\alpha\in\mathfrak{t}^{*}$ such that $\mathcal{H}_{\alpha}\neq\emptyset$ and $\alpha\neq0$}\}\subseteq\Lambda.
\end{equation}
By \cite[Lemma 2.15]{Conlon}, $M$ is $1$-convex. By construction then, $\pi:M\to C_{0}$ will be the Remmert reduction
of $M$. In particular, we have that $\pi^{*}\mathcal{O}_{C_{0}}(C_{0})=\mathcal{O}_{M}(M)$ by the properties of the Remmert reduction.
Since $\pi:M\to C_{0}$ is equivariant with respect to the action of $T$, we thus see that
$Y\in\mathfrak{t}$ acts with weight $\lambda$ on $f\in\mathcal{O}_{C_{0}}(C_{0})$ if and only if
it acts with weight $\lambda$ on the unique lift $\pi^{*}f$ of $f$ to $\mathcal{O}_{M}(M)$. Applying
\cite[Proposition 2.7]{collinss}, we therefore deduce that
$$\{Y\in\mathfrak{t}:g_{S}(Y,\,\xi)>0\}=\{Y\in\mathfrak{t}:\textrm{$\alpha(Y)>0$ for all $\alpha\in\mathfrak{t}^{*}$ such that $\mathcal{H}_{\alpha}\neq\emptyset$ and $\alpha\neq0$}\}.$$
Consequently, in order to prove the inclusion \eqref{work}, it suffices to show that
\begin{equation*}
\{Y\in\mathfrak{t}:g_{S}(Y,\,\xi)>0\}\subseteq\Lambda.
\end{equation*}
This inclusion is established by the following proposition.
\begin{prop}
Let $Y\in\{Z\in\mathfrak{t}:g_{S}(Z,\,\xi)(x)>0\quad\textrm{for all $x\in S$}\}$ with Hamiltonian potential $u_{Y}$.
Then $u_{Y}\geq cr^{2}$ along the end of $C_{0}$ for some $c>0$. In particular, $u_{Y}$ is proper and bounded below.
\end{prop}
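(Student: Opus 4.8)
The plan is to reduce everything to the cone: I will write down an explicit Hamiltonian potential $u_{Y}^{0}$ for $Y$ with respect to the cone K\"ahler form $\omega_{0}$, observe that the positivity hypothesis forces $u_{Y}^{0}\geq c_{0}r^{2}$ on the whole end, and then show that the asymptotic condition \eqref{honeybunch} controls the discrepancy $u_{Y}-u_{Y}^{0}$. Throughout I identify $M\setminus E$ with $C_{0}\setminus\{o\}$ via $\pi$; under this identification $J=J_{0}$, the K\"ahler form $\omega$ of $g$ corresponds to $(\pi_{*}g)(J_{0}\cdot,\,\cdot)$, and since $J_{0}$ is a $g_{0}$-isometry, \eqref{honeybunch} yields $|\omega-\omega_{0}|_{g_{0}}=O(r^{-2})$ along the end.

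\emph{Step 1: the cone potential.} By Lemma \ref{popp}, $Y$ is tangent to the level sets of $r$; moreover, since $d\pi(JX)=\xi=J_{0}r\partial_{r}$ with $JX\in\mathfrak{t}$ and $\mathfrak{t}$ is abelian, $Y$ commutes with $\xi$ and with $r\partial_{r}$ on $C_{0}\setminus\{o\}$, and hence is invariant under the flow of $r\partial_{r}$; the same holds for $\xi$. Set $u_{Y}^{0}:=\tfrac{1}{2}g_{0}(\xi,\,Y)$. Using that $Y$ and $\xi$ are $g_{0}$-Killing and holomorphic with $[Y,\,\xi]=0$, that $\operatorname{Hess}_{g_{0}}(\tfrac{1}{2}r^{2})=g_{0}$, and that $\nabla^{g_{0}}J_{0}=0$, one computes $\nabla^{g_{0}}_{\xi}Y=\nabla^{g_{0}}_{Y}\xi=J_{0}Y$, whence $\nabla^{g_{0}}u_{Y}^{0}=-J_{0}Y$ and therefore $du_{Y}^{0}=-\omega_{0}\lrcorner Y$; thus $u_{Y}^{0}$ is a Hamiltonian potential of $Y$ for $\omega_{0}$. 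On $C_{0}\setminus\{o\}\cong\mathbb{R}_{>0}\times S$ we have $g_{0}(\xi,\,Y)(r,\,x)=r^{2}g_{S}(\xi,\,Y)(x)$, and by hypothesis $g_{S}(\xi,\,Y)$ is a positive continuous function on the compact link $S$, so $g_{S}(\xi,\,Y)\geq 2c_{0}$ for some $c_{0}>0$. Hence
\begin{equation*}
u_{Y}^{0}\geq c_{0}r^{2}\qquad\text{along the end of }C_{0}.
\end{equation*}

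\emph{Step 2: comparison and conclusion.} On the end one has $d(u_{Y}-u_{Y}^{0})=-\omega\lrcorner Y+\omega_{0}\lrcorner Y=-(\omega-\omega_{0})\lrcorner Y$. Since $|Y|_{g_{0}}^{2}(r,\,x)=r^{2}g_{S}(Y,\,Y)(x)\leq Cr^{2}$ and $|\omega-\omega_{0}|_{g_{0}}=O(r^{-2})$, the one-form $(\omega-\omega_{0})\lrcorner Y$ has $g_{0}$-norm $O(r^{-1})$. Integrating along the rays $t\mapsto(t,\,x)$ for $t\in[r_{0},\,r]$, with $r_{0}$ fixed and large enough that $\{r\geq r_{0}\}\subset M\setminus E$, gives
\begin{equation*}
|u_{Y}(r,\,x)-u_{Y}^{0}(r,\,x)|\leq\sup_{x\in S}|u_{Y}-u_{Y}^{0}|(r_{0},\,x)+\int_{r_{0}}^{r}\frac{C}{t}\,dt=O(\log r)
\end{equation*}
uniformly in $x\in S$. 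Combining with Step 1, $u_{Y}\geq c_{0}r^{2}-C'\log r\geq\tfrac{c_{0}}{2}r^{2}$ for $r$ large, which is the stated bound with $c=c_{0}/2$. Finally $u_{Y}$ is continuous on $M$, hence bounded below on the compact set $\pi^{-1}(\{r\leq R\})$, while $u_{Y}\geq cr^{2}$ on $\{r\geq R\}$; so $u_{Y}$ is bounded below, and each sublevel set $\{u_{Y}\leq a\}$ is a closed subset of $\pi^{-1}(\{r\leq\max(R,\sqrt{a/c})\})$, hence compact, so $u_{Y}$ is proper.

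The main technical point is Step 1: correctly identifying the cone potential as $\tfrac{1}{2}g_{0}(\xi,\,Y)$ and verifying $\nabla^{g_{0}}u_{Y}^{0}=-J_{0}Y$, together with the bookkeeping needed to compare the globally defined $u_{Y}$ on $M$ with this cone quantity through the identification $M\setminus E\cong C_{0}\setminus\{o\}$ and the asymptotics \eqref{honeybunch}. The remaining estimates amount to a routine integration along radial rays.
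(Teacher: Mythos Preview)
Your proof is correct and follows essentially the same route as the paper's: both integrate along the radial rays of the cone and use the asymptotics \eqref{honeybunch} to compare $g$ with $g_{0}$. The paper integrates $du_{Y}(X)=g(Y,\,JX)$ directly along the flow of $X$, splitting $g(Y,\,JX)=g_{0}(Y,\,JX)+O(1)=r^{2}g_{S}(Y,\,\xi)+O(1)$ and then integrating; you instead first identify the explicit cone Hamiltonian $u_{Y}^{0}=\tfrac{1}{2}g_{0}(\xi,\,Y)=\tfrac{1}{2}r^{2}g_{S}(Y,\,\xi)$ and then bound the difference $u_{Y}-u_{Y}^{0}$ by integrating $-(\omega-\omega_{0})\lrcorner Y$. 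These are two packagings of the same computation, and both yield the same $u_{Y}=\tfrac{1}{2}r^{2}g_{S}(Y,\,\xi)+O(\log r)$ estimate.
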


\begin{proof}
Let $x\in\{r=1\}$ and let $\gamma_{x}(t)$ denote the integral curve of $X$, the vector field on $M$ with $d\pi(X)=r\partial_{r}$, with $\gamma_{x}(0)=x$. Then we have that
\begin{equation}\label{honn}
\begin{split}
u_{Y}(\gamma_{x}(t))&=u_{Y}(\gamma_{x}(0))+\int_{0}^{t}du_{Y}(\dot{\gamma}_{x}(s))\,ds\\
&=u_{Y}(x)+\int_{0}^{t}g(-JY,\,X)(\gamma_{x}(s))\,ds\\
&=u_{Y}(x)+\int_{0}^{t}g(Y,\,JX)(\gamma_{x}(s))\,ds.\\
\end{split}
\end{equation}
Next observe that since $Y\in\mathfrak{t}$, $Y$ is tangent to the level sets of $r$ by Lemma \ref{popp}. Hence, the asymptotics \eqref{honeybunch}
give us that
\begin{equation*}
\begin{split}
g(Y,\,JX)(\gamma_{x}(s))&=g_{0}(Y,\,JX)+\underbrace{O(r^{-2})|Y|_{g_{0}}|JX|_{g_{0}}}_{=\,O(1)}\\
&=O(1)+r(\gamma_{x}(s))^{2}g_{S}(Y,\,\xi)\\
&=O(1)+r(x)^{2}e^{2s}g_{S}(Y,\,\xi),
\end{split}
\end{equation*}
where the final equality follows from the fact that $r(\gamma_{x}(s))=r(x)e^{s}$ because
$$\frac{\partial}{\partial s}(r(\gamma_{x}(s)))=r(\gamma_{x}(s))\quad\textrm{and $\gamma_{x}(0)=x$}.$$
Plugging this into \eqref{honn} yields
\begin{equation*}
\begin{split}
u_{Y}(\gamma_{x}(t))&=u_{Y}(x)+\frac{1}{2}r(x)^{2}g_{S}(Y,\,\xi)(e^{2t}-1)+O(t)\\
&=u_{Y}(x)-\frac{1}{2}g_{S}(Y,\,\xi)r(x)^{2}+\frac{1}{2}g_{S}(Y,\,\xi)r(\gamma_{x}(t))^{2}+O(\ln r(\gamma_{x}(t)))\\
&\geq cr(\gamma_{x}(t))^{2}
\end{split}
\end{equation*}
along the end of $C_{0}$ for some $c>0$, since $g_{S}(Y,\,\xi)>0$. From this, the assertion follows.
\end{proof}

\subsection{Coercive estimates on Hamiltonian potentials}\label{f}
The goal of this subsection is to prove sharp positive bounds on the growth of the Hamiltonian potential
of a real holomorphic Killing vector field on a complete shrinking gradient
K\"ahler-Ricci soliton $(M,\,g,\,X)$ that commutes with the soliton vector field $X$ under certain conditions.
Since $H^{1}(M)=0$ by \cite{wyliee}, such a vector field always admits a Hamiltonian potential.
Let $\omega$ denote the K\"ahler form of $g$ and recall that for each real holomorphic Killing vector field $Y$ on $M$ commuting with $X$, the Hamiltonian $u_{Y}$ of $Y$ is normalised so that $\Delta_{\omega}u_Y+u_Y+\frac{1}{2}JY\cdot f=0$. Since $du_Y=-\omega\lrcorner Y$ by definition, one sees that
\begin{eqnarray}
\Delta_{\omega}u_Y+u_Y=-\frac{1}{2}g(JY,X)=\frac{1}{2}g(\nabla u_Y,X)=\frac{1}{2}X\cdot u_Y,\label{evo-eqn-pot}
\end{eqnarray}
an identity that shall prove useful in what follows. We will prove:
\begin{prop}\label{prop-growth-pot}
Let $(M,\,g,\,X)$ be a complete shrinking gradient K\"ahler-Ricci soliton with bounded Ricci curvature
with soliton vector field $X=\nabla^{g}f$ for a smooth real-valued function $f:M\to\mathbb{R}$.
Let $Y$ be a real holomorphic Killing vector field on $M$ commuting with $X$ and assume that the Hamiltonian potential $u_{Y}$ of $Y$
is proper and bounded below. Then there exist positive constants $c_1$ and $c_2$ such that $c_{1}f\leq u_Y\leq c_{2}f$ outside of a compact set.
\end{prop}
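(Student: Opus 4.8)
The plan is to compare $u_Y$ with $f$ along the integral curves of the soliton vector field $X$, exploiting the evolution identity \eqref{evo-eqn-pot} together with the corresponding identity $\tfrac{d}{dt}f(\gamma_x(t)) = |\nabla^g f|^2(\gamma_x(t)) = 2f(\gamma_x(t)) - R_g(\gamma_x(t))$ from Lemma \ref{solitonid}. First I would fix notation: since $g$ has bounded Ricci curvature, the scalar curvature $R_g$ is bounded, the zero set of $X$ is compact by Lemma \ref{compactt}, and $f$ is proper and bounded below by Theorem \ref{theo-basic-prop-shrink}(i); after normalising $f$ so that $f+n \geq 0$ we may also assume $f \geq 1$ outside a compact set $K_0$ containing the zero set of $X$ and all critical points of $f$. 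By Claim \ref{hotmail} (in the proof of Proposition \ref{keywest}), every point of $M$ outside a slightly larger compact set lies on an integral curve $\gamma_x(t)$ of $X$ with $x$ in a fixed compact annulus $\mathcal{A} = f^{-1}([2A,4A])$, so it suffices to control the ratio $u_Y/f$ along such curves for large $t$.

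The key computation is to track $h(t) := u_Y(\gamma_x(t))$ and $\phi(t) := f(\gamma_x(t))$. Along $\gamma_x$ we have $\tfrac{d}{dt}h(t) = du_Y(X)(\gamma_x(t)) = g(\nabla^g u_Y, X)(\gamma_x(t)) = X\cdot u_Y(\gamma_x(t)) = 2(\Delta_\omega u_Y + u_Y)(\gamma_x(t))$ by \eqref{evo-eqn-pot}; but more usefully, since $Y = -J(JY)$ and $\nabla^g u_Y = -JY$, one has $\tfrac{d}{dt}h(t) = g(-JY, X) = g(Y, JX)$, and since $Y$ and $JX$ are both Killing vector fields of bounded... — here instead I would use the cleaner route: from \eqref{evo-eqn-pot}, $X\cdot u_Y = 2u_Y + 2\Delta_\omega u_Y$, and an elliptic/maximum-principle estimate on $\Delta_\omega u_Y$ is not available pointwise, so the right move is to compare the ODEs satisfied by $u_Y$ and $f$ directly. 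Note $\tfrac{d}{dt}(\phi(t)) = 2\phi(t) - R_g(\gamma_x(t))$, so $|\tfrac{d}{dt}\phi - 2\phi| \leq \sup|R_g| =: B$, giving (as in \eqref{fmll}) $(\phi(0) - \tfrac{B}{2})e^{2t} + \tfrac{B}{2} \leq \phi(t) \leq (\phi(0) + \tfrac{B}{2})e^{2t} - \tfrac{B}{2}$ for $t > 0$. For $u_Y$, I claim $\tfrac{d}{dt}h(t) = g(Y, JX)(\gamma_x(t))$, and since $Y \in \mathfrak{t}$ commutes with $X$ and $JX$, Lemma \ref{tannn} shows $\mathcal{L}_Y f = 0$, hence $g(Y, \nabla^g f) = g(Y,X) = 0$; this does not directly bound $g(Y,JX)$. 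Instead I would differentiate once more: using $[X,Y]=0$ and the soliton structure, $\tfrac{d}{dt}(g(Y,JX)) = (\mathcal{L}_X g)(Y,JX) + g(Y, \mathcal{L}_X(JX))$, and $\mathcal{L}_X(JX) = J\mathcal{L}_X X = 0$ while $\mathcal{L}_X g = g - 2\operatorname{Ric}(g)$ by the soliton equation, so $\tfrac{d}{dt}h'(t) = h'(t) - 2\operatorname{Ric}(g)(Y,JX)(\gamma_x(t))$. With $|\operatorname{Ric}|$ bounded and $|Y|, |JX|$ growing at most polynomially (Proposition \ref{keywest}, using $r(\gamma_x(t)) \sim e^{t}$ and $f \sim r^2$), the error term is $O(\phi(t)^{a/2})$ for some $a$; comparing the resulting second-order behaviour of $h$ with the quadratic growth $\phi(t) \sim \phi(0)e^{2t}$ gives $h(t) = C\phi(t) + \text{(lower order)}$ for an appropriate constant depending on $x$.

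The cleanest implementation, which I would adopt to avoid the polynomially-growing error terms, is this: set $w := u_Y - cf$ for a constant $c$ to be chosen, and compute $\Delta_\omega w - \tfrac12 X\cdot w$. From \eqref{evo-eqn-pot}, $\Delta_\omega u_Y - \tfrac12 X\cdot u_Y = -u_Y$, and from Remark \ref{rk-sol-id}, $\Delta_\omega f - \tfrac12 X\cdot f = -f$. Hence $w$ satisfies the \emph{same} drift-eigenvalue equation $\Delta_\omega w - \tfrac12 X\cdot w = -w$, i.e. $\Delta_{\omega, X/2} w = -w$. Now pick $c_1 > 0$ small so that $w := u_Y - c_1 f$ is bounded below (possible since $u_Y$ is bounded below and $f$ is proper, so $u_Y - c_1 f \to -\infty$ is avoided by taking... — actually $u_Y - c_1 f$ need not be bounded below; rather I fix $c_1$ so that $w$ is bounded \emph{above} on a level set and apply a maximum principle). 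I would argue: the function $v := u_Y + n$ and $f + n$ are both nonnegative, proper, and satisfy $\Delta_{\omega,X/2}v = -v + \text{const}$; dividing, the quotient $q := (u_Y + \text{const})/(f+n)$ is bounded because both solve the same equation and a Harnack-type inequality for the drift Laplacian (valid since $\operatorname{Ric}_f = \operatorname{Ric} + \operatorname{Hess} f = g$ is bounded below, so Cheng–Yau / Li–Yau estimates apply) forces comparable growth. The cleanest rigorous statement: apply the maximum principle to $w_\pm := u_Y - c_\pm f$ on sublevel sets, using that $\Delta_{\omega,X/2}w_\pm = -w_\pm$ so $w_\pm$ cannot have an interior positive maximum or negative minimum exceeding the boundary values, and that on any fixed level set $\{f = T_0\}$ (compact by properness) we have $\max w_\pm, \min w_\pm$ bounded; letting $T_0 \to \infty$ with $c_\pm$ chosen from the ratio on $\{f = T_0\}$ pins $c_1 f \leq u_Y \leq c_2 f$ for $f$ large. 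The main obstacle is making the maximum-principle argument airtight on the non-compact sublevel sets: one must rule out escape of the supremum of $w_\pm$ to infinity, which is exactly where properness of $u_Y$ (for the lower bound $c_1 f \leq u_Y$) and properness of $f$ together with the a priori polynomial growth bound on $u_Y$ from Proposition \ref{keywest} and Claim \ref{claim-growth-u}-type estimates (for the upper bound) come in; the bounded-Ricci hypothesis is essential both for those growth bounds and for the validity of the elliptic estimates on the drift Laplacian.
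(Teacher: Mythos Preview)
You correctly isolate the key identity: both $u_Y$ and $f$ satisfy the same drift eigenvalue equation $(\Delta_\omega - \tfrac{1}{2}X\cdot)v = -v$. This is precisely the paper's starting point. But your final implementation has a genuine gap. You propose to apply the maximum principle to $w := u_Y - cf$, asserting that ``$w_\pm$ cannot have an interior positive maximum or negative minimum exceeding the boundary values.'' This is false: the equation $\Delta_{\omega,X/2}w = -w$ has the \emph{wrong sign} for the maximum principle. At a positive interior maximum one gets $\Delta_{\omega,X/2}w \leq 0$, hence $-w \leq 0$, i.e.\ $w \geq 0$ --- no contradiction. The obstacle is not escape of the supremum to infinity, as you suggest, but the sign of the zeroth-order term.

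The paper's device to fix this is to work with the \emph{inverses}: apply the maximum principle to $u_Y^{-1} - Cf^{-1}$ on annular regions $\{r_1 \leq f \leq r_2\}$. Computing the drift Laplacian of $v^{-1}$ when $(\Delta_\omega - \tfrac{1}{2}X\cdot)v = -v$ produces the nonlinear gradient term $+2|\nabla v|^2/v^3$. At an interior maximum of $u_Y^{-1} - Cf^{-1}$ the first-order condition $\nabla(u_Y^{-1}) = C\nabla(f^{-1})$ lets one factor the result as $(u_Y^{-1} - Cf^{-1})(1 - 2Cu_Yf^{-3}|\nabla f|^2) \leq 0$. The already-established upper bound $u_Y \leq c_2 f$, together with $|\nabla f|^2 \leq 2f$, makes the second factor positive for $C$ chosen appropriately relative to $r_1$, forcing $u_Y^{-1} \leq Cf^{-1}$ at the maximum; letting $r_2 \to \infty$ (using properness of $u_Y$) finishes the lower bound.

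For the upper bound the paper's route is also more direct than your ODE manoeuvres: since $|\nabla u_Y|_g = |JY|_g = |Y|_g$ and Proposition~\ref{keywest} gives $|Y|_g^2 \leq cf$ outside a compact set, integrating $|\nabla u_Y|$ along geodesics from a fixed level set (using $f \sim d_g(p,\cdot)^2/4$) yields $u_Y \leq c_2 f$ immediately.
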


As a consequence of this proposition, we see, without appealing to the Duistermaat-Heckman theorem, that the weighted volume functional is defined on a complete shrinking gradient K\"ahler-Ricci soliton $(M,\,g,\,X)$ with bounded Ricci curvature on those elements
admitting a Hamiltonian potential that is proper and bounded below in the Lie algebra of any torus
that acts in a holomorphic Hamiltonian fashion on $M$ and contains the flow of $JX$, $J$ here denoting the complex structure of $M$.

\begin{proof}[Proof of Proposition \ref{prop-growth-pot}]
Let $|\,\cdot\,|_{g}$ denote the norm with respect to $g$. By Proposition \ref{keywest}, there exist positive constants $c$ and $c_0$ such that $|Y|^2_g(x)\leq cf(x)$ if $f(x)\geq c_0$. Since $|\nabla u_Y|_g=|JY|_g=|Y|_g$, one obtains the expected growth on $u_Y$ by integrating the previous estimate on the norm of $Y$.

We next prove the lower bound on $u_Y$. First notice that since $u_Y$ is proper and bounded below, $u_Y$ is strictly positive outside a sufficiently large compact set of the form $\{f\leq c_0\}$. Recall from Remark \ref{rk-sol-id} that the normalisation of $f$ is determined by the soliton identities which in this case yield $\Delta_{g}f-X\cdot f=-2f$. Using $f$ as a barrier function together with \eqref{evo-eqn-pot}, we compute the weighted Laplacian of the difference of the inverses of $u_Y$ and $f$ on the region where this difference makes sense. We have:
\begin{eqnarray}
\left(\Delta_{\omega}-\frac{1}{2}X\cdot\right)\left(\frac{1}{u_Y}-\frac{C}{f}\right)=\frac{1}{u_Y}-\frac{C}{f}+2\frac{|\nabla u_Y|^2}{u_Y^3}-2C\frac{|\nabla f|^2}{f^3},\label{evo-eqn-aux-fct}
\end{eqnarray}
where $C$ is a positive constant to be specified later. Assume that the function $u_Y^{-1}-Cf^{-1}$ attains its maximum at an interior point $x_0$ of a domain of the form $\{r_1\leq f\leq r_2\}$ with $r_1$ sufficiently large so that both $u_Y$ and $f$ are strictly positive. At such a point $x_0$, one sees that $\nabla(u_Y^{-1}-Cf^{-1})(x_0)=0$ and from the maximum principle that
\begin{eqnarray*}
0\geq\left(\Delta_{\omega}-\frac{1}{2}X\cdot\right)\left(\frac{1}{u_Y}-\frac{C}{f}\right)(x_0).
\end{eqnarray*}
This information, together with (\ref{evo-eqn-aux-fct}), implies that at $x_0$,
\begin{eqnarray*}
0&\geq&\frac{1}{u_Y}-\frac{C}{f}+2\frac{|\nabla u_Y|^2}{u_Y^3}-2C\frac{|\nabla f|^2}{f^3}\\
&=&\frac{1}{u_Y}-\frac{C}{f}+2u_Y|\nabla u_Y^{-1}|^2-2Cf|\nabla f^{-1}|^2\\
&=&\frac{1}{u_Y}-\frac{C}{f}+2u_YC^2|\nabla f^{-1}|^2-2Cf|\nabla f^{-1}|^2\\
&=&\left(\frac{1}{u_Y}-\frac{C}{f}\right)\left(1-2Cu_Yf|\nabla f^{-1}|^2\right)\\
&=&\left(\frac{1}{u_Y}-\frac{C}{f}\right)\left(1-2C\frac{u_Y}{f^3}|\nabla f|^2\right).
\end{eqnarray*}
Next, using the fact that $|\nabla f|^2$ grows quadratically by the soliton identities,
we see from the upper bound on $u_Y$ that on $M$,
\begin{eqnarray*}
2C\frac{u_Y}{f^3}|\nabla f|^2\leq \frac{2Cd}{f}
\end{eqnarray*}
for some positive constant $d$ uniform in $r_1$ and $r_2$. In particular, the term $(1-2Cu_Yf^{-3}|\nabla f|^2)$ is positive on $\{r_1\leq f\leq r_2\}$ as long as $2Cf^{-1}d$ is strictly less than $1$, or equivalently, as long as $C< (2d)^{-1}r_1.$

In summary, for any heights $r_1<r_2$ and any constant $C$ such that $C< (2d)^{-1}r_1$, we have that
\begin{equation*}
\max_{r_1\,\leq\,f\,\leq\,r_2}\left(\frac{1}{u_Y}-\frac{C}{f}\right)\leq \max\left\{0,\max_{ f\,=\,r_1}\left(\frac{1}{u_Y}-\frac{C}{f}\right),\max_{ f\,=\, r_2}\left(\frac{1}{u_Y}-\frac{C}{f}\right)\right\}.
\end{equation*}
Since $u_Y$ (and $f$) tend to $+\infty$ as $f$ approaches $+\infty$, one sees, by letting $r_2$ tend to $+\infty$, that
\begin{eqnarray}\label{wtf}
\max_{r_1\,\leq\, f}\left(\frac{1}{u_Y}-\frac{C}{f}\right)\leq\max\left\{0,\max_{ f\,=\, r_1}\left(\frac{1}{u_Y}-\frac{C}{f}\right)\right\}.
\end{eqnarray}
We choose $C$ and $r_1$ such that the right-hand side of \eqref{wtf} is non-positive and such that $C< (2d)^{-1}r_1$. Indeed, since $u_Y$ is proper, there exists some positive height $r_1$ such that $\min_{f=r_1}u_Y\geq 4d$. Thus, if $C:=(4d)^{-1}r_1$, then $C<(2d)^{-1}r_1$ and
\begin{eqnarray*}
\max_{ f\,=\, r_1}\left(\frac{1}{u_Y}-\frac{C}{f}\right)\leq 0,
\end{eqnarray*}
as required. This completes the proof of the proposition.
\end{proof}

\newpage
\bibliographystyle{amsalpha}

\bibliography{ref2}

\end{document}